\newcommand{\C}{\mathbb{C}}
\renewcommand{\P}{\mathbb{P}}
\newcommand{\R}{\mathbb{R}}
\newcommand{\D}{\mathbb{D}}
\newcommand{\Q}{\mathbb{Q}}
\newcommand{\Z}{\mathbb{Z}}
\newcommand{\cptwo}{\C\P^2}
\newcommand{\cpone}{\C\P^1}
\newcommand{\cptwobar}{\overline{\C\P}\,\!^2}
\newcommand{\rptwo}{\R\P^2}
\newcommand{\std}{std}
\newcommand{\sse}{\subseteq}
\DeclareMathOperator{\coker}{coker}
\theoremstyle{plain}
\newtheorem{theorem}{Theorem}[section]
\newtheorem{lemma}[theorem]{Lemma}
\newtheorem{prop}[theorem]{Proposition}
\newtheorem{cor}[theorem]{Corollary}
\theoremstyle{definition}
\newtheorem{definition}[theorem]{Definition}
\theoremstyle{remark}
\newtheorem{remark}[theorem]{Remark}
\newtheorem{example}[theorem]{Example}
\numberwithin{equation}{section}
\theoremstyle{plain}
\title{Weinstein handlebodies for complements of smoothed toric divisors}
\author[Acu]{Bahar Acu}
\address[B.\ Acu]{Department of Mathematics \\ ETH Z\"urich\\ Switzerland}
\email{bahar.acu@math.ethz.ch}
\author[Capovilla-Searle]{Orsola Capovilla-Searle}
\address[O.\ Capovilla-Searle]{Department of Mathematics \\ Uc Davis \\ Davis \\ CA \\ U.S.A.}
\email{ocapovillasearle@ucdavis.edu}
\author[Gadbled]{Agn\`{e}s Gadbled}
\address[A.\ Gadbled]{D\'epartement de Math\'ematiques \\ Universit\'e Paris-Saclay \\ Orsay \\ France}
\email{agnes.gadbled@universite-paris-saclay.fr}
\author[Marinkovi\'c]{Aleksandra Marinkovi\'c}
\address[A.\ Marinkovi\'c]{Matemati\v{c}ki Fakultet \\ Belgrade \\ Serbia}
\email{aleks@matf.bg.ac.rs}
\author[Murphy]{Emmy Murphy}
\address[E.\ Murphy]{Department of Mathematics \\ Northwestern University \\ Evanston \\ IL \\ U.S.A.}
\email{e\_murphy@math.northwestern.edu}
\author[Starkston]{Laura Starkston}
\address[L.\ Starkston]{Department of Mathematics \\ UC Davis\\ Davis \\ CA \\ U.S.A.}
\email{lstarkston@math.ucdavis.edu}
\author[Wu]{Angela Wu}
\address[A.\ Wu]{Department of Mathematics \\ Louisiana State University \\ Baton Rouge \\ U.S.A}
\email{awu@lsu.edu}
\begin{document}

\maketitle

\begin{abstract}
	We study the interactions between toric manifolds and Weinstein handlebodies. We define a partially-centeredness condition on a Delzant polytope, which we prove ensures that the complement of a corresponding partial smoothing of the total toric divisor supports an explicit Weinstein structure. Many examples which fail this condition also fail to have Weinstein (or even exact) complement to the partially smoothed divisor. We investigate the combinatorial possibilities of Delzant polytopes that realize such Weinstein domain complements.
	We also develop an algorithm to construct a Weinstein handlebody diagram in Gompf standard form for the complement of such a partially smoothed total toric divisor. The algorithm we develop more generally outputs a Weinstein handlebody diagram for any Weinstein 4-manifold constructed by attaching 2-handles to the disk cotangent bundle of any surface~$F$, where the 2-handles are attached along the co-oriented conormal lifts of curves on~$F$. We discuss how to use these diagrams to calculate invariants and provide numerous examples applying this procedure. For example, we provide Weinstein handlebody diagrams for the complements of the smooth and nodal cubics in $\cptwo$.
\end{abstract} 

\tableofcontents


\section{Introduction}
{
This article provides a novel connection between two areas that were not previously able to interact: Weinstein handlebody theory and toric geometry. We develop a systematic procedure to produce Weinstein handlebody diagrams in Gompf standard form for a large class of log Calabi-Yau divisor complements. This is done in two parts. First, we identify and explore a criterion which we prove ensures that a partial smoothing of the total toric divisor has a Weinstein domain as its neighborhood complement. This has been erroneously taken for granted in the past, with some suspecting that no hypothesis is required (which we show is false), and others assuming full monotonicity is required (which we show is much more restrictive than our condition except when the divisor is fully smoothed). In the second part, we show how to obtain standard Weinstein handlebody diagrams for Weinstein domains obtained by attaching $2$-handles along co-normal lifts to $D^*F$ (for $F=T^2$ this covers the complements of the divisors from the first part). Previously, the literature was lacking a proof that even the standard handlebody diagrams for $D^*F$ are Weinstein homotopic to the canonical Weinstein structure. We use this Weinstein homotopy to track the effect on attaching spheres for additional $2$-handles. We accomplish this via a new method which carefully tracks a contact isotopy, using a guiding singular Legendrian formed from the union of two overlapping smooth Legendrians that carry the key data on the Weinstein structure of $D^*F$. In the end, we are able to carry a collection of attaching spheres from a co-normal diagram to a handlebody diagram in standard form in an algorithmic manner. After simplifying these diagrams with equivalence moves, they provide key information about the symplectic invariants and geometric properties of these manifolds. Despite their usefulness, explicit Weinstein handlebody diagrams are not known for many fundamental examples of divisor complements. Translating an abstract description of a divisor to an explicit description of the Legendrian attaching spheres for its complementary handlebody is highly nontrivial in most examples. The aim of this article is to make progress on this important problem. 

Because this article brings together techniques and objects which have not typically been worked with together, we begin by introducing the main characters and their significance, and then explain our results connecting them.

Handlebody theory has been a key tool in smooth topology to encode complicated manifolds, calculate their invariants, and identify equivalences. The use of handlebodies in symplectic geometry first arose with Eliashberg's topological characterization of Stein manifolds \cite{Eliash}. To encode the symplectic structure ignoring the complex structure, Weinstein's simpler construction provides a model for symplectic handles \cite{Weinstein}. In dimension $4$, Gompf developed a standard form for \emph{Weinstein handlebody diagrams} which fully encode the Weinstein handle structure, and can be manipulated by equivalence moves \cite{Gompf}. Weinstein handlebody diagrams in dimension $4$ are front projections of Legendrian links in $(\#^k(S^1\times S^2), \xi_{std})$, encoding the Legendrian attaching spheres of the $2$-handles in the boundary of the $1$-handlebody.

Symplectic divisors are co-dimension $2$ symplectic submanifolds that may have controlled singularities. Donaldson proved that every closed integral symplectic manifold $(M,\omega)$ has a smooth symplectic divisor Poincar\'e dual to $k[\omega]$~\cite{Donaldson}, and Giroux proved such a divisor can be chosen so that the complement has a  Weinstein structure~\cite{GirouxICM, Giroux}. Another homology class of interest for a divisor is that which is Poincar\'e dual to the anti-canonical class (the first Chern class) of a symplectic manifold. A symplectic manifold together with a divisor in this homology class is called a \emph{log Calabi-Yau pair}. Such pairs gained importance through their significance in homological mirror symmetry~\cite{AurouxTduality, Abouzaid, GrossHackingKeel, HackingKeating}. This leads us to the setting of toric manifolds.

A \emph{toric manifold} is a symplectic manifold with an effective Hamiltonian action of the torus of the maximal dimension.  For a $4$-dimensional toric manifolds, the Hamiltonian action induces a moment map whose image is a 2-dimensional Delzant polytope, denoted by $\Delta$, which fully characterizes the toric manifold. The preimage under the moment map of all facets of the Delzant polytope is an invariant symplectic divisor which we call the \emph{total toric divisor.} 
Total toric divisors initially have normal crossing singularities, which in dimension $4$ are nodes. Nodes are precisely the fixed points of the toric action and they are mapped under the moment map to the vertices of the Delzant polytope. There is a one to one correspondence between the nodes of the total toric divisor and the vertices of the Delzant polytope $\Delta$, hence we refer to both a node and its image vertex with the same notation.  Any of the nodes can be smoothed so the divisor has fewer singularities. A total toric divisor and any of its smoothings are Poincar\'e dual to the anti-canonical class, so it is a log Calabi-Yau divisor.

The intersection of anti-canonical divisors and Donaldson divisors requires $c_1(M,\omega)$ to be a multiple of $[\omega]$, which means that the symplectic manifold is monotone. If we only consider smooth anti-canonical divisors, the complement can only be exact (a prerequisite for Weinstein) if we are in this monotone setting. However, if we consider divisors with nodes and multiple irreducible components, for example partially smoothed total toric divisors, the complement often supports a Weinstein structure even in the non-monotone setting. We identify a condition, which can be thought of as an analogue of the monotone condition for partially smoothed total toric divisors in toric $4$-manifolds.

\begin{definition}\label{defn:centered}
	For each vertex, $V$, of the Delzant polytope $\Delta$, we associate to it a ray $R$ generated by the sum of the edge vectors of $\Delta$ adjacent to $V$ and beginning at $V$. A Delzant polytope with a chosen subset $\{V_1, \ldots, V_k\}$ of the vertices is called \emph{$\{V_1, \ldots, V_k\}$-centered} if the corresponding rays $R_1, \ldots, R_k$ all intersect at a common single point in the interior of the Delzant polytope. The corresponding toric manifold is also called $\{V_1, \ldots, V_k\}$-centered.	If we denote by $r(V)\in\mathbb Z^2$ the direction of the ray in $V,$ then we define the \emph{slope} of a vertex $V$ in a Delzant polytope $\Delta$ as a vector $s(V)\in\mathbb Z^2$
given by a $-\pi/2$ rotation of the vector $r(V)$.
\end{definition}

The motivation for these definitions is that if we have a $\{V_1,\ldots, V_k \}$-centered toric manifold, then we show that the complement of the total toric divisor smoothed precisely at the nodes $V_1,\ldots,V_k$ admits a Weinstein structure. Moreover, the slope of a vertex generates the slope of a curve in the torus $T^2$ that determines the attaching sphere of the 2-handle.

\newtheorem*{thm:toricWein}{Theorem~\ref{thm:toricWein}}
\begin{thm:toricWein}
	Let $(M,\omega)$ be a toric $4$-manifold corresponding to Delzant polytope $\Delta$ that is $\{V_1,\ldots, V_k \}$-centered. Let $D$ denote the symplectic divisor obtained by smoothing the total toric divisor at the nodes $V_1,\ldots, V_k$. Then, there exists an arbitrarily small neighborhood $N$ of $D$ such that $M\setminus N$ admits the structure of a Weinstein domain.
	
Furthermore, $M\setminus N$ is Weinstein homotopic to the Weinstein domain $\mathcal{W}_{F,c}$ obtained by attaching Weinstein $2$-handles to the unit disk cotangent bundle of the torus, $D^*F$ with $F=T^2$, along the Legendrian co-normal lifts of co-oriented curves $c=\{\gamma_1,\cdots \gamma_k\}$ of slopes $s(V_1),\ldots, s(V_k)$. 
\end{thm:toricWein}

If $\{V_1,\dots, V_k \}$ is the set of all vertices of the polytope $\Delta$, then we show that the polytope is $\{V_1,\dots,V_k \}$-centered if and only if it is monotone. This corresponds to the intersection of the Donaldson-Giroux divisors with the log Calabi-Yau divisors. Note that there are only five monotone toric $4$-manifolds (up to a rescaling of a symplectic form), so the vast majority of examples come from partial smoothings where $\{V_1,\dots, V_k \}$ is a strict subset of the vertices of $\Delta$. In fact, we prove that these partially smoothed total toric divisor complements can realize many different manifolds.

\newtheorem*{thm:infinitemanifolds}{Theorem~\ref{thm:infinitemanifolds}}
\begin{thm:infinitemanifolds}
	There are infinitely many non-diffeomorphic Weinstein manifolds obtained by taking the completion of the complement of a neighborhood of a partially smoothed total toric divisor in a toric $4$-manifold.
\end{thm:infinitemanifolds}

The proof of Theorem~\ref{thm:infinitemanifolds} is based on an explicit construction of $\{V_1,\dots,V_k \}$-centered toric 4-manifolds, for any $k\in\mathbb N.$
We remark that we do not give a complete list of such toric manifolds, but we do further explore the question of existence of a partially centered toric 4-manifold for a given set of slopes. For instance, no toric 4-manifold is centered with respect to multiple vertices with the same slopes.
Additionally, we give one infinite family of distinct slopes that cannot be realized as slopes of the vertices of any partially centered Delzant polytope.
\newtheorem*{example non-centered}{Proposition~\ref{example non-centered}}
\begin{example non-centered}
 	For any $K\geq 2$, there is no $\{V_1,V_2,V_3,V_4\}$-centered Delzant polytope where 
 	$$s(V_1)=(1,1), s(V_2)=(1,2), s(V_3)=(-K,-1), s(V_4)=(0,-1).$$
\end{example non-centered}

By contrast, if one removes the partially centeredness requirement, any collection of slopes can be realised by a Delzant polytope. This shows that the centeredness criterion is a non-trivial constraint for toric manifolds, not only on the lengths of the edges of the polytope, but also on the combinatorial slope data.
 
\newtheorem*{proposition slopes}{Theorem \ref{proposition slopes}}
\begin{proposition slopes}
	For any choice of primitive vectors $\{(a_1, b_1), \ldots, (a_k,b_k)\}$
	there is a Delzant polytope with at least $k$ edges such that there are vertices $V_1,\ldots, V_k$ with the slopes $s(V_i)=(a_i,b_i),$ $i=1,\ldots, k.$ 
	
\end{proposition slopes}

As part of the proof of this theorem, we use the following lemmas, which may be of independent interest.

\newtheorem*{lemma normals}{Lemma \ref{lemma normals}}
\begin{lemma normals} For any choice of distinct primitive vectors $\{(a_1, b_1), \ldots, (a_k,b_k)\}$
	there is a Delzant polytope with at least $k$ edges, so that the inward normal vectors of $k$ edges are precisely $\{(a_1, b_1), \ldots, (a_k,b_k)\}.$
\end{lemma normals}

\newtheorem*{lemma blowup}{Lemma \ref{lemma blowup}}
\begin{lemma blowup}
	For any primitive vector $(a,b)\in\mathbb{Z}^2$ and any $n\in \mathbb{N}$ there is a Delzant polytope with $n$ vertices with the slope $(a,b).$ 
\end{lemma blowup}

One can slightly extend the list of realizable collections of slopes by considering almost toric fibrations \cite{Symington}. Our Theorem~\ref{thm:toricWein} also holds for some compact almost toric manifolds with an adapted centeredness condition, that is encoded in the base of the almost toric fibration (see for example Remark~\ref{remark one almost toric} and Section~\ref{section:CP2bu5}). Bases of almost toric fibrations need not be Delzant polytopes. Both the centeredness condition and Delzant condition impose constraints on the allowable polytopes or bases, and we hope future work will explore these conditions in greater depth.

Further justifying our centeredness condition, we show that in the non-centered case, the complement of a neighborhood of the divisor does not always support a Weinstein structure. In fact, for many non-centered cases we can prove that the complement is not even exact.

\newtheorem*{prop:notexact}{Proposition~\ref{prop:notexact}}
\begin{prop:notexact}
	Let $(M,\omega)$ be a symplectic toric manifold and $\Delta$ its Delzant polytope.
	Let $V_1,\dots, V_k$ be a subsets of the vertices of $\Delta$.
	Assume that $M$ fails to be $\{V_1,\dots, V_k\}$-centered because either 
	\begin{itemize}
		\item[(i)] Two rays associated to two of the vertices $\{V_1,\dots, V_k\}$ are parallel or anti-parallel but the lines extending them do not coincide, or;
		\item[(ii)] There exists three vertices, $\{V_{i_1}, V_{i_2}, V_{i_3}\}$ such that for the associated rays 
		$R_{i_1}, R_{i_2}, R_{i_3}$, $R_{i_1}$ intersects $R_{i_2}$ at a point $c_1 \in \mathring \Delta$ in the interior of the polytope $\Delta$ that does not belong to $R_{i_3}$.
	\end{itemize}
	Then the complement of its total toric divisor smoothed at 
	$\{V_1,\dots, V_k\}$ is not an exact symplectic manifold (and in particular cannot support a Weinstein handlebody structure).
\end{prop:notexact}

There are other situations where a toric manifold may fail to be $\{V_1,\dots, V_k \}$-centered which do not fall under the above cases. For example, the rays may all intersect at points outside the interior of the polytope, or the directed rays may fail to intersect at all because of the placement of the vertices and the directions of the rays. In certain examples, we prove that even though the complement of the $\{V_1,\dots,V_k \}$-smoothing is exact, it still does not admit a Weinstein structure. In these examples the obstruction comes from the fact that a Weinstein domain must have convex boundary. Correspondingly the neighborhood $N$ of the smoothed divisor must have concave boundary. We discuss the criterion that $N$ has a concave structure and provide these examples in Section~\ref{s:obstructions}.

Theorem~\ref{thm:toricWein} identifies the geometric significance from the toric perspective of Weinstein domains obtained from $D^*T^2$ by attaching $2$-handles to the co-oriented co-normal lifts of curves, we now turn to the question of how to study these Weinstein domains diagrammatically. Our next result develops a systematic procedure to take the Weinstein domains produced by Theorem~\ref{thm:toricWein} and convert them into Weinstein handlebody diagrams in the Gompf standard form. In fact, this procedure works more generally when we replace the torus $T^2$ with any closed surface. 

\newtheorem*{thm:cotangentdiagram}{Theorem~\ref{thm:cotangentdiagram}}
\begin{thm:cotangentdiagram}
Let $F$ be a closed surface and $c=\{\gamma_i\}_{i=1}^k$ a finite unordered collection of co-oriented curves in $F$. Let $\mathcal{W}_{F,c}$ denote the Weinstein domain obtained by attaching $2$-handles to $D^*F$ along the Legendrian co-normal lifts of the $\gamma_i$. Then, the Weinstein handle diagram in the standard form obtained by the procedure of Section~\ref{s:procedure}, represents a Weinstein domain  that is Weinstein homotopic to $\mathcal{W}_{F,c}$.
\end{thm:cotangentdiagram}

In this paper, we focus on applying this theorem to the output of Theorem~\ref{thm:toricWein} where $F$ is a torus, though this result applies more generally and we provide some example applications in the more general context in Section~\ref{section examples}. Weinstein manifolds obtained by attaching $2$-handles to cotangent bundles have appeared in other contexts such as~\cite{STW}, and we hope that this systematic diagrammatic procedure will find many future applications towards studying this interesting class of Weinstein domains. 

As part of the proof of Theorem~\ref{thm:cotangentdiagram} we first establish the base case: the Gompf standard handle diagram for $D^*F$ with no additional $2$-handles attached. In~\cite{Gompf}, Gompf produced Weinstein handlebody diagrams which are \emph{diffeomorphic} to $D^*F$. Although it was expected that these structures were Weinstein homotopic to the canonical (Morse-Bott) Weinstein structure on the cotangent bundle, the proof was lacking in the literature (see \cite{Ozbagci, OzbagciC} for the contact case). We fill this gap here.

\newtheorem*{thm:cot}{Theorem~\ref{thm:cot}}
\begin{thm:cot}
	Let $F$ be a closed surface. The Gompf handlebody diagram for $D^*F$ corresponds to a Weinstein structure which is Weinstein homotopic to the canonical Weinstein structure on the cotangent bundles of a surface.
\end{thm:cot}

Note that in the case that $F$ is a torus, this result follows from a classification of fillings by Wendl~\cite{Wendl}. Our proof is more direct and applies to all surfaces. Moreover the Weinstein homotopy we construct is an essential ingredient in the proof of Theorem~\ref{thm:cotangentdiagram}.

There are two primary advantages of working with a Weinstein handlebody diagram in standard form to study a symplectic manifold. The first is simplification by Weinstein Kirby calculus which helps identify equivalences between manifolds arising from distinct constructions and also reveals exact Lagrangian submanifolds built from the union of exact Lagrangian fillings of the Legendrian attaching spheres and the cores of the $2$-handles. 

The second advantage of a handlebody diagram, is the combinatorial computation of invariants. At the topological level, the fundamental group, homology, and intersection form of the $4$-manifold, and the homology of its boundary can be computed directly from the diagram. In Section~\ref{sec:applications} we provide the necessary formulas to easily perform these computations for some of the diagrams output by our algorithm when $F=T^2$. At the symplectic level, more subtle invariants like symplectic homology and the wrapped Fukaya category can be computed. Let $X_{\Lambda}$ be a Weinstein $4$-manifold constructed as the cylindrical completion of the Weinstein domain obtained by attaching $2$-handles to a $1$-handlebody along a Legendrian link $\Lambda \sse (\#^k(S^1 \times S^2), \xi_{std})$. The wrapped Fukaya category of the Weinstein manifold $X_{\Lambda}$ is given by modules over the Chekanov-Eliashberg dga of $\Lambda$~\cite{BEE, Ekholm, EkholmLekili}. Ekholm and Ng~\cite{EkholmNg} provide a combinatorial description of the Chekanov-Eliashberg dga for Legendrian links in $ \#^k(S^1 \times S^2)$,  allowing for a purely combinatorial way to compute the wrapped Fukaya category of $X_{\Lambda}$. Such an approach could provide insight towards mirror symmetry~\cite{EtguLekili, CM}, where these manifolds play an important role. 

We show that the symplectic invariants of the Weinstein manifolds appearing in Theorem~\ref{thm:cotangentdiagram} are non-trivial
using a result of the second author~\cite{orsola_thesis} (which is an extension of a result of Leverson~\cite{Leverson}).

\newtheorem*{prop:vanishingSH1}{Proposition~\ref{prop:vanishingSH}}
\begin{prop:vanishingSH1}
Let $F$ be an orientable surface and let $X$ be any Weinstein $4$-manifold constructed by attaching $1$ or $2$-handles to $D^*F$ and taking its cylindrical completion. Then $X$ has nonvanishing symplectic homology.
\end{prop:vanishingSH1}

\newtheorem*{cor:flexible}{Corollary~\ref{cor:flexible}}
\begin{cor:flexible}
	Any Weinstein $4$-manifold $X$ constructed by attaching $1$- or $2$-handles to $T^*F$ for $i=1,\ldots, k$ for any orientable closed surface $F$, is not a flexible Weinstein manifold.
\end{cor:flexible}

Although we have many toric examples which produce examples where we can distinguish the diffeomorphism types of the complements using topological invariants, there are also many cases where different toric manifolds and choices of smoothed nodes can lead to equivalent complements. For any toric $4$-manifold, the cylindrical completion of the complement of a neighborhood of the total toric divisor smoothed at a single node is Weinstein homotopic to the affine variety $\{ (x,y,z)\in \C^3~|~x(xy^2-1)=z^2\}\subset \C^2\}$ ~\cite{CM, EtguLekili, ACGMMSW1}. More generally, we show that if two complements of smoothings of divisors satisfy the following relation their cylindrical completions are symplectomorphic. We notationally identify a slope $(a_i,b_i)$ with a curve on $T^2$ of that slope.

\newtheorem*{propknodes}{Proposition~\ref{propknodes}}
\begin{propknodes}
	 Consider any $\{V_1, \ldots V_k\}$-centered toric $4$-manifold where $s(V_i)=(a_i,b_i),$ for all $i=1,\ldots, k$. If there is an $SL(2,\mathbb{Z})$ transformation mapping
                   the set $\{(a_1, b_1), \ldots, (a_k,b_k)\}$ to the set $\{(a'_1, b'_1), \ldots, (a'_k,b'_k)\}$    then the completions of Weinstein domains
$\mathcal{W}_{T^2, \{(a_1,b_1),\ldots, (a_k,b_k)\}}$ and $\mathcal{W}_{T^2, \{(a'_1,b'_1),\ldots, (a'_k,b'_k)\}}$
        are symplectomorphic. (The set of vertices and slopes that we consider are unordered.)
\end{propknodes}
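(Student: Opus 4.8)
The plan is to realize the $SL(2,\Z)$ transformation of slopes geometrically, as an exact symplectomorphism of $T^*T^2$ that carries the attaching data of one handle presentation onto the other, and then to appeal to the naturality of Weinstein handle attachment. The centered-toric hypothesis plays no essential role in the argument beyond identifying these $\mathcal W_{T^2,\cdot}$ with divisor complements via Theorem~\ref{thm:toricWein}; I would prove the statement directly for the $\mathcal W$'s.

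\emph{Realizing the transformation on the torus.} Let $A\in SL(2,\Z)$ carry the (multi-)set $\{(a_i,b_i)\}$ onto $\{(a_i',b_i')\}$. Since $SL(2,\Z)$ is the orientation-preserving mapping class group of $T^2=\R^2/\Z^2$, $A$ acts as a linear diffeomorphism $\phi_A\colon T^2\to T^2$; a simple closed curve of primitive slope $(a_i,b_i)$ is sent by $\phi_A$ to one of slope $A(a_i,b_i)$, and, $\phi_A$ being an orientation-preserving linear map, it carries co-orientations to co-orientations canonically. Thus $\phi_A$ takes a configuration of co-oriented curves realizing $\{(a_i,b_i)\}$ to one realizing $\{(a_i',b_i')\}$.

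\emph{Lifting to the cotangent bundle.} Next I would use the cotangent lift $\Phi_A\colon T^*T^2\to T^*T^2$, $\Phi_A(q,\xi)=(\phi_A(q),((d(\phi_A)_q)^{-1})^*\xi)$, which preserves the tautological Liouville form exactly (hence is an exact symplectomorphism) and sends the co-oriented conormal lift of any curve $\gamma$ to the co-oriented conormal lift of $\phi_A(\gamma)$. Consequently $\Phi_A$ carries the Legendrian co-normal lift of a slope-$(a_i,b_i)$ curve to the Legendrian co-normal lift of a slope-$(a_i',b_i')$ curve, with matching co-orientations. The one subtlety is that $\Phi_A$ does not preserve the round disk cotangent bundle $D^*_gT^2$ of a fixed flat metric $g$ — it carries it to $D^*_{\phi_A^*g}T^2$, the round disk bundle of another flat metric — so I would pass to completions: $\widehat{\mathcal W_{T^2,c}}$ is $T^*T^2$ with Weinstein $2$-handles attached along Legendrians in the ideal contact boundary $S^*T^2$, depending only on their Legendrian isotopy classes there, and $\Phi_A$ is an honest exact symplectomorphism of $T^*T^2$ preserving $S^*T^2$ and matching the two collections of attaching Legendrians.

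\emph{Concluding.} Finally I would invoke the standard fact that an exact symplectomorphism of $T^*T^2$ (or a Liouville isomorphism of Weinstein domains) carrying one family of disjoint attaching Legendrians to another induces a symplectomorphism — indeed a Weinstein deformation equivalence — between the results of attaching the corresponding $2$-handles. Applied to $\Phi_A$ this produces the desired symplectomorphism between $\widehat{\mathcal W_{T^2,\{(a_1,b_1),\ldots,(a_k,b_k)\}}}$ and $\widehat{\mathcal W_{T^2,\{(a_1',b_1'),\ldots,(a_k',b_k')\}}}$. I expect the main obstacle to be precisely the bookkeeping of the middle step: checking that $\Phi_A$ matches the \emph{co-oriented} conormal lifts rather than just the underlying Legendrian submanifolds, and handling the distortion of the disk bundle carefully — which is exactly why the conclusion must be phrased for the completions (equivalently, for Weinstein-homotopy classes) and not for a fixed compact model. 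The rest of the argument is formal.
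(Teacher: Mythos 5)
Your proof is correct, but it takes a genuinely different route from the paper's. The paper stays entirely inside the toric picture: it uses the hypothesis that a $\{V_1,\ldots,V_k\}$-centered Delzant polytope realizing the slopes exists, identifies each $\mathcal{W}_{T^2,c}$ with the complement of the corresponding smoothed toric divisor via Theorem~\ref{thm:toricWein}, applies $(G^{-1})^T$ to the polytope to produce an equivariantly symplectomorphic toric manifold (noting that centeredness is preserved by a linear map), and reads off the symplectomorphism of the completed complements from the equivariant symplectomorphism of the ambient toric manifolds. You instead work directly on $T^*T^2$: the linear torus automorphism, its exact cotangent lift matching co-oriented conormal lifts, and naturality of Weinstein handle attachment. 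The trade-off is instructive. The paper's argument is short given the machinery already in place, but it genuinely needs the centered hypothesis (Theorem~\ref{thm:toricWein} does not apply without it), which is why the proposition is stated with that assumption. Your argument never uses centeredness and therefore proves the stronger, purely cotangent-bundle statement — which is essentially the content the paper obtains separately, by diagrammatic $1$-handle slides, in Proposition~\ref{prop:1handles}; your cotangent-lift construction is the upstairs geometric realization of that result. Your handling of the two genuine subtleties (co-orientations are matched because $\phi_A$ is orientation-preserving and the conormal co-orientation convention is natural; the distortion of the round disk bundle is absorbed by the Liouville flow / passage to completions) is sound, and the final appeal to invariance of handle attachment under Legendrian isotopy of the attaching link in the ideal boundary is the same holonomy-homotopy fact the paper uses elsewhere (via \cite[Lemma 12.5]{CE}).
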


Proposition \ref{propknodes} motivated Proposition~\ref{prop:1handles} where we show that an $SL(2,\mathbb{Z})$ transformation taking the set of slopes $\{(a_1, b_1), \ldots, (a_k,b_k)\}$ to the set $\{(a'_1, b'_1), \ldots, (a'_k,b'_k)\}$ corresponds to Legendrian $1$-handle slides from $\mathcal{W}_{T^2, \{ (a_1, b_1), \ldots, (a_k,b_k)\}}$  to
$\mathcal{W}_{T^2, \{ (a'_1, b'_1), \ldots, (a'_k,b'_k)\}}$. 
This result is stronger and more general, since it shows that these Weinstein domains are Weinstein homotopic without requiring a centeredness condition.
Although the existence of an $SL(2,\Z)$ transformation relating the slopes is sufficient to ensure the domains are Weinstein homotopic, it is not necessary. We provide an example of a pair of Weinstein homotopic Weinstein domains output by our procedure such that the pair of associated slopes are not related by an $SL(2,\Z)$ transformation.

\newtheorem*{remark:sl2q}{Theorem~\ref{remark:sl2q}}
\begin{remark:sl2q} The existence of an $SL(2, \Z)$ transformation between sets of slopes $c$ and $c'$ is a sufficient condition to guarantee that $\mathcal{W}_{T^2, c}$ and $\mathcal{W}_{T^2, c'}$ are Weinstein homotopic, but it is not necessary (even when $\mathcal{W}_{T^2,c}$ and $\mathcal{W}_{T^2,c'}$ are both realizable as complements of partially smoothed total toric divisors).
\end{remark:sl2q}

We expect to find further examples with equivalent Weinstein complements through an almost toric extension of Theorem~\ref{thm:toricWein} via mutations of almost toric manifolds. Example~\ref{remark counterexample} provides some evidence towards this expectation.

We conclude our paper by applying our procedure to specific examples. 
Beyond the base case, the Weinstein diagrams we produce with Theorem~\ref{thm:cotangentdiagram} are generally new. In Section~\ref{section examples}, we carry out the procedure of Theorem~\ref{thm:cotangentdiagram} to produce Weinstein handlebody diagrams for a variety of examples of complements of partially or fully smoothed total toric divisors. We then apply sequences of Weinstein Kirby calculus moves to simplify the diagrams. We analyze these simplified diagrams to construct Lagrangian submanifolds in these Weinstein $4$-manifolds, and compute their homology. For a gentler introduction to Weinstein Kirby calculus and two more examples of our algorithm see~\cite{ACGMMSW1}. The following are a selection of the results we obtain in Section~\ref{section examples} by applying our algorithm to different toric manifolds.

\newtheorem*{thm:blowup_example}{Theorem~\ref{thm:blowup_example}}
\begin{thm:blowup_example}
	The Weinstein handlebody diagram of the complement of a total toric divisor smoothed in adjacent nodes of a blow up in any toric manifold is a standard Legendrian trefoil (see Figure \ref{alg_blowup} for an example).
\end{thm:blowup_example}

\begin{figure}
	\begin{center}
		\begin{tikzpicture}
			\node[inner sep=0] at (0,0) {\includegraphics[width=8 cm]{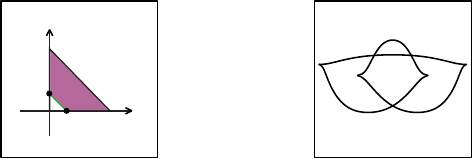}};
			\draw[->, line width=0.8mm] (0.9,0) -- (1.2,0);
			\draw[line width=0.8mm] (-1.2,0) -- (-1,0);
			\draw (-1,-0.4) rectangle (0.9,0.5) node[pos=.5] {Algorithm};
		\end{tikzpicture}
		\caption{The algorithm applied to the complement of the total toric divisor smoothed in adjacent nodes of a blow up of $\cptwo$.}
		\label{alg_blowup}
	\end{center}
\end{figure}

\newtheorem*{thm:cp1_example}{Theorem~\ref{thm:cp1_example}}
\begin{thm:cp1_example}
	The complement of the total toric divisor in $(\cpone\times\cpone, \omega_{a,a})$ smoothed in opposite nodes is Weinstein homotopic to the cyclic plumbing of two disk cotangent bundles of spheres, see Figure \ref{alg_cp1xcp1}.
\end{thm:cp1_example}

\begin{figure}
	\begin{center}
		\begin{tikzpicture}
			\node[inner sep=0] at (0,0) {\includegraphics[width=8 cm]{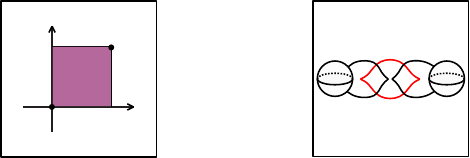}};
			\draw[->, line width=0.8mm] (0.9,0) -- (1.2,0);
			\draw[line width=0.8mm] (-1.2,0) -- (-1,0);
			\draw (-1,-0.4) rectangle (0.9,0.5) node[pos=.5] {Algorithm};
		\end{tikzpicture}
		\caption{The algorithm applied to the complement of the total toric divisor of $\cpone\times\cpone$ smoothed in opposite nodes, the Gompf Weinstein handlebody diagram is also depicted in Figure \ref{CP1xCP1simple}.}
		\label{alg_cp1xcp1}
	\end{center}
\end{figure}

\newtheorem*{thm:cubic}{Theorem~\ref{thm:cubic}}
\begin{thm:cubic}
 	The Weinstein handlebody diagram of the complement of a smooth cubic in $\cptwo$ is shown in Figure \ref{alg_cubic}.
\end{thm:cubic}

\begin{figure}
	\begin{center}
		\begin{tikzpicture}
			\node[inner sep=0] at (0,0) {\includegraphics[width=8 cm]{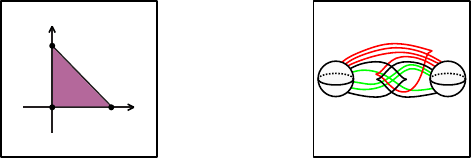}};
			\draw[->, line width=0.8mm] (0.9,0) -- (1.2,0);
			\draw[line width=0.8mm] (-1.2,0) -- (-1,0);
			\draw (-1,-0.4) rectangle (0.9,0.5) node[pos=.5] {Algorithm};
		\end{tikzpicture}
		\caption{The algorithm applied to the complement of the total toric divisor of $\cptwo$ smoothed in all nodes. The Gompf Weinstein handlebody diagram is also depicted in Figure \ref{CP23final}.}
		\label{alg_cubic}
	\end{center}
\end{figure}

\subsection*{Outline} This paper is organized as follows. In Section~\ref{section weinstein} we provide definitions and background material on Weinstein domains and Kirby calculus, as well as a review of Weinstein structures on cotangent bundles of smooth closed manifolds. In Section~\ref{section toric} we introduce the relevant background on toric manifolds. In Section~\ref{section smoothing} we construct the explicit Weinstein structure on the complement of a $\{V_1, \ldots, V_k\}$-smoothing of a total toric divisor in a $\{V_1, \ldots, V_k\}$-centered toric $4$-manifold, proving our first main result. We also discuss some extensions to almost toric manifolds. In Section~\ref{s: classifying} we prove numerous results about the variety of Weinstein $4$-manifolds arising as such complements using combinatorial properties of Delzant polytopes satisfying the centeredness condition. In Section~\ref{s:obstructions} we explain the importance of the centeredness condition in allowing the complement of the smoothed divisor to admit a Weinstein structure. Next we turn our focus to understanding Weinstein handlebody diagrams for these complements, and more general Weinstein domains of the form $\mathcal{W}_{F,c}$. We start with the base case in Section~\ref{s:cotangent}, proving that Gompf's Weinstein handle diagram for $D^*F$ is Weinstein homotopic to the canonical Weinstein structure. In Section~\ref{sec:curves} we state the algorithm to construct the fronts of co-normal lifts of co-oriented curves $\gamma_i\subset F$, and prove that the resulting standard Weinstein diagram is Weinstein homotopic to $\mathcal{W}_{F,c}$. Section~\ref{sec:applications} is dedicated to computing topological and symplectic invariants from the explicit Weinstein handlebodies of $\mathcal{W}_{F, c}$. Finally in Section~\ref{section examples} we provide numerous examples applying this procedure to obtain Weinstein handlebody diagrams for complements of partially smoothed divisors in toric manifolds as well as almost toric manifolds. We end with two basic examples of Weinstein handlebodies of $\mathcal{W}_{F, c}$ for higher genus, and non-orientable surfaces $F$.

\subsection*{Acknowledgments} This project grew out of our collaboration at the \href{https://icerm.brown.edu/topical_workshops/tw19-4-wiscon/}{2019 Research Collaboration Conference for Women in Symplectic and Contact Geometry and Topology (WiSCon)} that took place on July 22-26, 2019 at ICERM. The authors would like to extend their gratitude to the WiSCon organizers and the hosting institution ICERM and their staff for their hospitality. The authors would like to thank Jes\'us de Loera, Jonathan Evans, Yanki Lekili, Klaus Niederkrüger, Lenhard Ng, Steven Sivek, and Renato Vianna for useful conversations. We would like to thank Igor Uljarevi\'c  who helped us to prove Lemma \ref{lemma normals}. BA and AM would like to thank the Oberwolfach Research Institute for Mathematics for hosting them during an earlier stage of this project. OC-S is supported by NSF Graduate Research Fellowship under grant no. DGE-1644868. AG was partially supported by Wallenberg grant no. KAW 2016-0440 and the Fondation Math\'ematique Jacques Hadamard. AM is partially supported by Ministry of Education and Science of Republic of Serbia, project ON174034.
LS is supported by NSF grant no. DMS 1904074 and DMS 2042345. AW is supported by the Engineering and Physical Sciences Research Council [EP/L015234/1], the EPSRC Centre for Doctoral Training in Geometry and Number Theory (The London School of Geometry and Number Theory), University College London.


\section{Weinstein Domains and Kirby Calculus} \label{section weinstein}

Here we provide a brief introduction to Weinstein structures and diagrams. In Section~\ref{sec:weinstein_handle} we review some fundamental definitions and properties of Weinstein structures. In Section~\ref{s:kirby} we give a brief introduction to Weinstein handlebody diagrams and their Kirby calculus moves. In section~\ref{s:cotangentbackground} we review the example of the canonical Weinstein structure on the cotangent bundle and related definitions which are crucial throughout the article. For further background, we refer to \cite{Weinstein, CE, ACGMMSW1}. 

\subsection{Background on Weinstein domains}~\label{sec:weinstein_handle}

A \emph{Liouville vector field} $Z$ for a symplectic manifold $(W,\omega)$ is a vector field satisfying $\mathcal{L}_Z \omega = \omega$, and the $1$-form $\lambda = \iota_Z\omega$ which satisfies $d\lambda=\omega$ is called the \emph{Liouville form}. When the Liouville vector field on a manifold with boundary is transverse to the boundary it defines a contact structure on the boundary manifold and can be used to identify a collared neighborhood of the boundary with a piece of the symplectization of that contact boundary. This allows symplectic manifolds with boundary that are equipped with such Liouville vector fields to be glued together along their contact boundaries. One of the two pieces being glued must have the Liouville vector field pointing outward from the boundary (i.e. the boundary is \emph{convex}) and the other must have the Liouville vector field pointing inward from the boundary (i.e. the boundary is \emph{concave}). 

A symplectic manifold with boundary, equipped with a Liouville vector field pointing transversally outward from the boundary is called a \emph{Liouville domain}. Suppose $(W,\omega, Z)$ is a Liouville domain, with induced contact boundary $(\partial W, \xi=\ker(\alpha))$ where $\alpha = i^*\lambda$ and $\lambda = \iota_Z\omega$. Then we obtain the \emph{cylindrical completion} of $(W,\omega, Z)$ by gluing onto $\partial W$ the portion of the symplectization $(\R\times \partial W, d(e^t\alpha), \partial_t)$ where $t>0$. The cylindrical completion of a Liouville domain is a non-compact \emph{Liouville manifold} with a complete Liouville vector field.

Weinstein defined a model of a handle decomposition for symplectic manifolds in which he equipped the handle with a Liouville vector field so that the gluing of the handle attachment could be performed using only contact information on the boundary. The model Weinstein handle $H$ of index $k$ in dimension $2n$ for $k\leq n$ is a subset of $\R^{2n}$ with coordinates $(x_1,y_1,\cdots, x_n,y_n)$, with the standard symplectic structure $\omega = \sum_j dx_j\wedge dy_j$ and Liouville vector field
$$Z_k = \sum_{j=1}^k  \left( -x_j\partial_{x_j}+2y_j\partial_{y_j} \right) + \sum_{j={k+1}}^n  \left(\frac{1}{2}x_j\partial_{x_j}+\frac{1}{2} y_j\partial_{y_j}\right).$$ 
This Liouville vector field is \emph{gradient-like} for the Morse function 
$$\phi_k = \sum_{j=1}^k \left(-\frac{1}{2} x_j^2 + y_j^2\right) + \sum_{j={k+1}}^n \left(  \frac{1}{4}x_j^2+\frac{1}{4}y_j^2 \right).$$

\begin{figure}
	\centering
	\includegraphics[scale=.5]{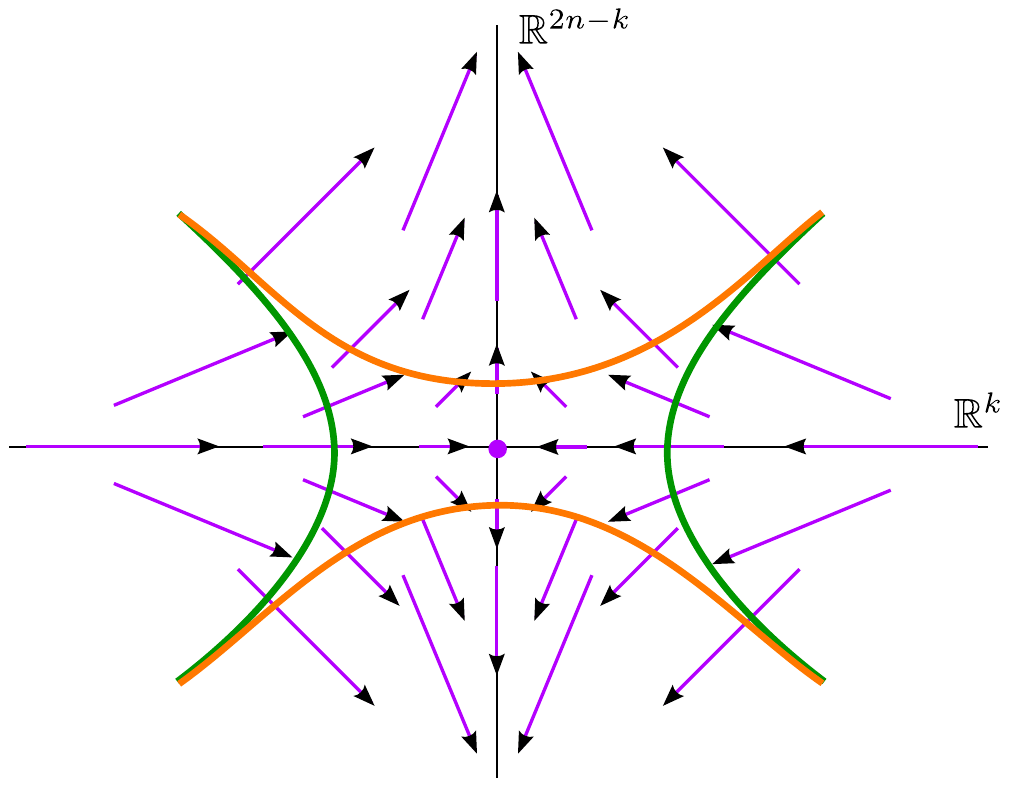}
	\caption{The model for the Weinstein handle $H$ of index $k$, with the associated gradient-like Liouville vector field $Z_k$.}
	\label{fig:model}
\end{figure}

The subset $H$ of $\R^{2n}$ is as in Figure~\ref{fig:model}, such that $\partial H$ decomposes into two pieces $\partial_- H$ and $\partial_+ H$ where $Z_k$ points inward along $\partial_-H$ and outward along $\partial_+H$ (with corners where $\partial_-H$ meets $\partial_+ H$). Then, $\partial_-H$ serves as the \emph{attaching region} for the handle $H$, which has concave boundary. This attaching region contains at its core, the \emph{attaching sphere} $L=\R^k_{(x_1,\dots, x_k)}\cap \partial_- H$. In fact, using the contact structure induced by the Liouville form on $\partial_-H$, the attaching sphere $L$ is contact isotropic (Legendrian in the case $k=n$), and $\partial_-H$ is a standard contact neighborhood of $L$. If we have a symplectic manifold $W$ with boundary and a Liouville vector field defined near $\partial W$ pointing outward (convexly), then we can attach the handle $H$ by gluing $\partial_-H$ to a standard neighborhood of a contact isotropic/Legendrian sphere in $\partial W$ (with its induced contact structure). The result is a new symplectic manifold with boundary $W'$, which also has convex boundary. 

In particular, we can build a manifold entirely from a union of Weinstein's model handles, starting with $0$-handles (which have empty attaching region) and gluing on handles sequentially (typically in increasing order of index $k$). After each handle attachment, the resulting manifold has a globally defined Liouville vector field, $Z$, pointing outward along the resulting boundary. Furthermore, the vector field is gradient-like for a global function $\phi:W\to \R$ which has exactly one critical point of index $k$ inside each $k$-handle. 

A manifold that is built from sequentially gluing on Weinstein handles is called a \emph{Weinstein domain}. We can also perform a cylindrical completion obtaining a \emph{Weinstein manifold}. Each Weinstein domain/manifold can be encoded by a \emph{Weinstein structure} which is a tuple $(W,\omega, Z,\phi)$ where $W$ is the compact manifold with boundary, $\omega$ is the global symplectic structure, $Z$ is the Liouville vector field which points transversally outward to $\partial W$ and $\phi$ is the function for which $Z$ is gradient-like. Sometimes $\phi$ is assumed to be Morse, but in this article we also allow $\phi$ to be Morse-Bott. Note that $Z$ and $\omega$ induce a contact structure on $\partial W$ (positively co-oriented with respect to the boundary orientation).

Given $(W,\omega_0, Z_0,\phi_0)$ and $(W,\omega_1, Z_1,\phi_1)$, we say that these Weinstein domains/manifolds are \emph{Weinstein homotopic} if there is a family of Weinstein structures $(W,\omega_t,Z_t,\phi_t)$ connecting them. Note that $(W,\omega_t)$ gives a symplectic deformation relating $(W,\omega_0)$ and $(W,\omega_1)$. If $W$ is a Weinstein domain, it may be that $(W,\omega_0)$ and $(W,\omega_1)$ are not symplectomorphic even if they are Weinstein homotopic (e.g. their symplectic volumes may differ). However, if $(W,\omega_0)$ and $(W,\omega_1)$ are Weinstein manifolds (with the cylindrical completion), then if they are Weinstein homotopic, they are actually symplectomorphic. For this reason, we often compare Weinstein domains by adding the cylindrical completion to make them Weinstein manifolds.

\subsection{Weinstein Kirby calculus} \label{s:kirby}
For Weinstein domains of dimension $4$, each of the handles has index $0$, $1$, or $2$. We can assume there is a single $0$-handle when the domain is connected. The attaching region of a $1$-handle is $S^0\times \{0\} \subset S^0 \times \D^3$, and is drawn as a pair of $3$-balls in $S^3$. Note that $\D^4$ with $n$ $1$-handles attached has boundary $\#^n( S^1\times S^2)$ with its standard contact structure. The attaching sphere of a $2$-handle is a Legendrian embedded circle (a Legendrian knot). The $4$-dimensional $2$-handle attachment is determined by the knot together with a framing, but in the Weinstein case, the framing is determined by the contact structure. More specifically, the contact planes along a Legendrian knot determine a framing by taking a vector field transverse to the contact planes. The contactomorphism gluing the attaching region of the $2$-handle to the neighborhood of the Legendrian identifies the product framing in the $2$-handle with the $tb-1$ framing. Here $tb$ denotes the contact framing (Thurston-Bennequin number), which is identified with an integer by looking at the difference between the contact framing and the Seifert framing (this must be appropriately interpreted when the Weinstein domain contains $1$-handles--see \cite{Gompf}). 

The Weinstein handlebody diagram we draw should specify the Legendrian attaching knots in $S^3$ along with the pairs of $3$-balls indicating the attachments of the $1$-handles. By removing a point away from these attachments, we reduce the picture in $S^3$ to a picture in $\R^3$. After a contactomorphism, the contact structure on $\R^3$ is $\ker(dz-ydx)$ in coordinates $(x,y,z)$. The \emph{front projection} is the map $\Pi: \R^3\to \R^2$ with $\Pi(x,y,z)=(x,z)$. A Legendrian curve in this contact structure is tangent to the contact planes, which happens precisely when the $y$-coordinate is equal to the slope $\frac{dz}{dx}$ of the front projection. Therefore, a Legendrian knot can be recovered from the diagram of its front projection with the requirement that the diagram has no vertical tangencies (instead it has cusp singularities where the knot is tangent to the fibers of the projection) and the crossings are always resolved so that the over-strand is the strand with the more negative slope (we orient the $y$-axis into the page to maintain the standard orientation convention for $\R^3$ so the over-strand is the strand with a more negative $y$-coordinate). In these front projections, the contact framing $tb$ can be computed combinatorially in terms of the oriented crossings and cusps of the diagram, when the diagram is placed in Gompf standard form, where the pairs of $3$-balls giving the attaching regions of $1$-handles related by a reflection across a vertical axis. Namely, $tb$ of a Legendrian knot $L$ in the front projection, $\Pi(L)$ can be computed by the formula
$$tb(L)=\text{writhe}(\Pi(L))-(\text{the number of right cusps in }\Pi(L)).$$

The set of moves that relate Weinstein handlebody diagrams in Gompf standard form for Weinstein homotopic Weinstein domains include Legendrian isotopy moves listed in~\cite{Gompf} which are the usual three Legendrian Reidemeister moves shown in Figure~\ref{Reid}, and the three moves we call \emph{Gompf moves $4,5$, and $6$} shown in Figure~\ref{Gompf456}. Weinstein handlebody diagrams are also equivalent if they are related by handle slides, handle pair cancellations/additions~\cite{DingGeiges}. Given two $k$-handles, $h_1$ and $h_2$, a \emph{handle slide} of $h_1$ over $h_2$ is given by isotoping the attaching sphere of $h_1$, and pushing it through the belt sphere of $h_2$. In the Weinstein context, the isotopy must be a Legendrian (or isotropic) isotopy. Diagrammatically, to perform a $2$-handle slide of $h_1$ over $h_2$ one takes the attaching spheres $S_1$ of $h_1$ and $S_2$ of $h_2$ and replaces $S_1$ by a connected sum of $S_1$ and $S_2$ with cusps as in Figure~\ref{2hslide}. See~\cite{ACGMMSW1} or~\cite{GompfStipsicz} for more information on Weinstein $1$-handle slides. A 1-handle $h_1$ and a 2-handle $h_2$ can be deleted or inserted together, provided that the attaching sphere of $h_2$ intersects the belt sphere of $h_1$ transversely in a single point. We call this a \emph{handle cancellation} or adding a \emph{canceling pair}. See~\cite{DingGeiges} for further details.

\begin{figure}
	\begin{center}
		\includegraphics[width=6cm]{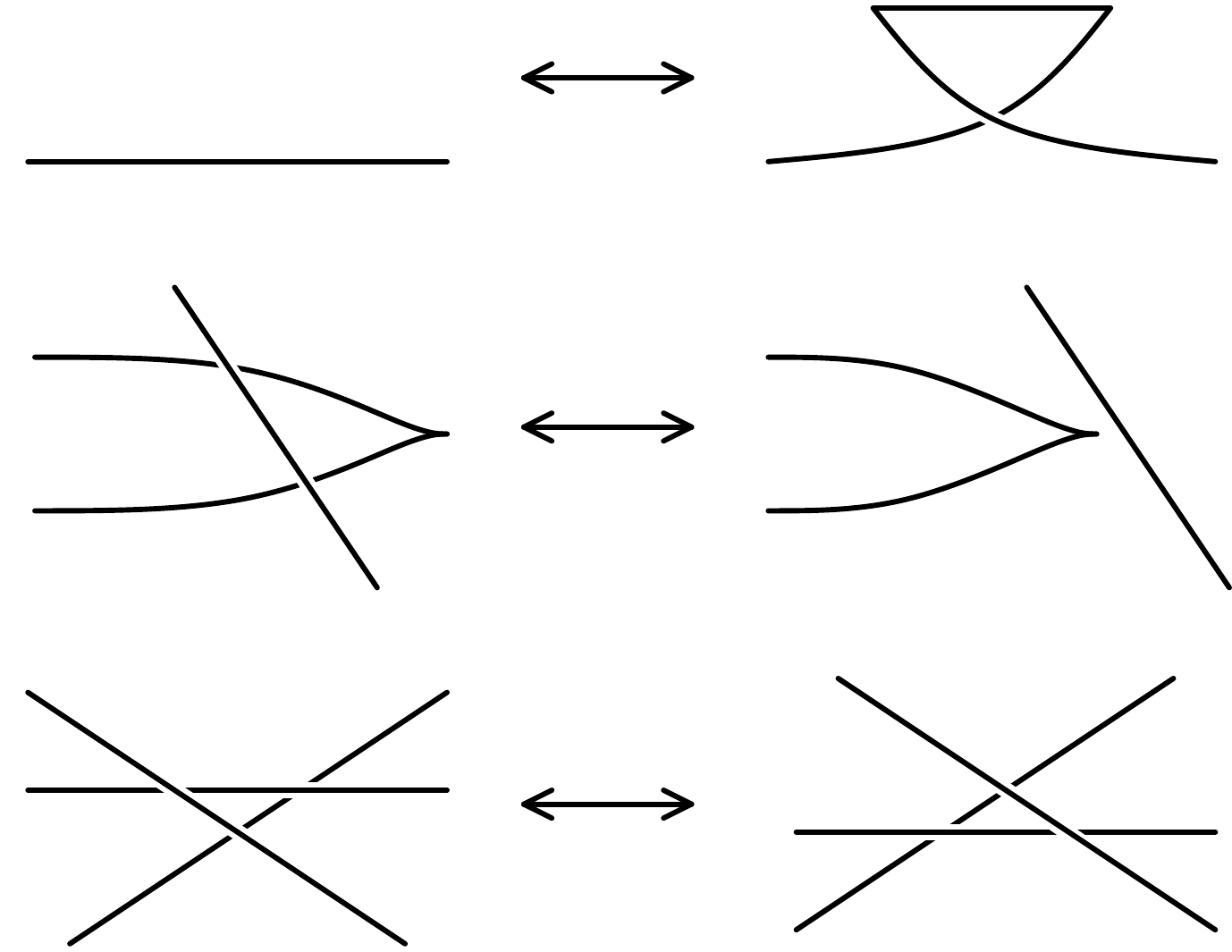}
		\caption{The Legendrian Reidemeister moves up to 180 degree rotation about each axis.}
		\label{Reid}
	\end{center}
\end{figure}

\begin{figure}
	\begin{center}
		\includegraphics[width=12cm]{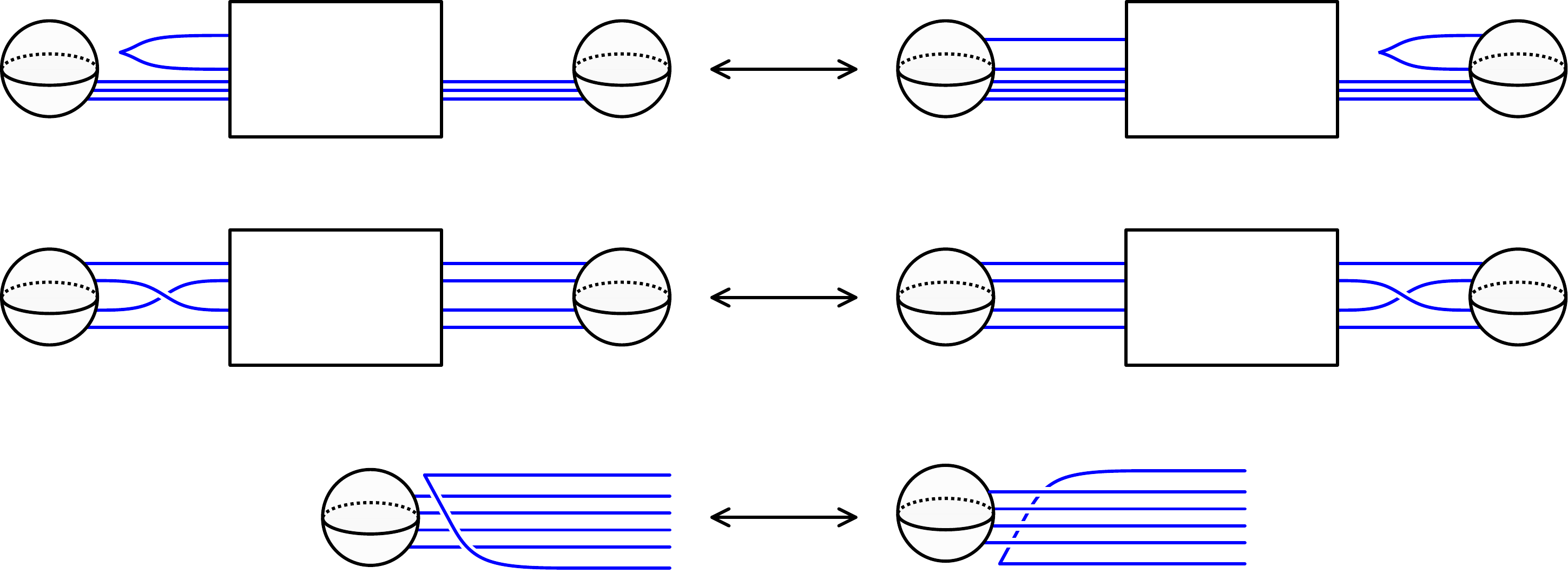}
		\caption{Gompf's three additional isotopic moves, up to 180 degree rotation about each axis.}
		\label{Gompf456}
	\end{center}
\end{figure}

\begin{figure}
	\begin{center}
		\includegraphics[width=5cm]{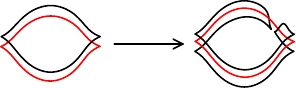}
		\caption{A 2-handle slide of the black unknot over the red unknot.}
		\label{2hslide}
	\end{center}
\end{figure}

\subsection{Background on Weinstein structures on the cotangent bundle} \label{s:cotangentbackground}

A key example of a Weinstein manifold is the cotangent bundle of a smooth manifold $F$. Since we are working in dimension $4$, we focus on the case where $F$ is a surface. With respect to local coordinates $(q_1,q_2)\in F$ and dual coordinates $(p_1,p_2)\in T_q^{*}F$, the canonical Liouville $1$-form is defined to be
$$\lambda_{can} = p_1dq_1+p_2dq_2.$$
(This can also be defined in a coordinate free manner.) The canonical symplectic structure is $\omega_{can} = d\lambda_{can}$ which in local coordinates is $dp_1\wedge dq_1 +dp_2\wedge dq_2$. The Liouville vector field $Z_{can}$ associated with $\lambda_{can}$ is determined by $\iota_{Z_{can}}\omega_{can} = \lambda_{can}$. In local coordinates, this canonical Liouville vector field is given by
$$Z_{can} = p_1\partial_{p_1} + p_2\partial_{p_2}.$$
This Liouville vector field is gradient-like for the radial function $\phi:T^*F\to \R$ defined in coordinates by $\phi(q_1,q_2,p_1,p_2)=p_1^2+p_2^2$. This function is Morse-Bott rather than Morse, but we allow this within our notion of Weinstein structures.

Observe that $Z_{can}$ vanishes along the zero section where $p_1=p_2=0$, and points radially outward in each cotangent fiber. The cotangent disk bundle is a Weinstein subdomain $D^*F =\{p_1^2+p_2^2\leq 1\}$. The boundary is the unit cotangent bundle $S^*F=\{p_1^2+p_2^2=1\}$, which inherits a contact structure because $Z_{can}= p_1\partial_{p_1}+p_2\partial_{p_2}$ is outwardly transverse to this boundary. More explicitly, we can describe this unit cotangent bundle where $p_1^2+p_2^2=1$ by coordinates $(q_1,q_2,\theta)$ where $p_1 = \cos\theta$ and $p_2 = \sin\theta$. Then the contact form is obtained by restricting the Liouville form $\lambda_{can}$ to the boundary, which in coordinates is:
$$\alpha = \cos\theta~dq_1 +\sin\theta~dq_2.$$

\begin{remark}
	Note that we did not necessarily need to make the boundary so regular. In fact, as long as the boundary of our domain is described by equations $p_1 = f(q_1,q_2,\theta)\cos\theta$ and $p_2 = f(q_1,q_2,\theta) \sin\theta$ for a positive real valued function $f$, the boundary remains transverse to the canonical Liouville vector field, so the Liouville form induces a contact form
	$$\alpha = f(q_1,q_2,\theta)\cos\theta~dq_1 +f(q_1,q_2,\theta) \sin\theta~dq_2$$
	inducing the same contact structure $\xi=\ker(\alpha)$.
\end{remark}

Given a co-oriented curve $\gamma \sse F$, we can define an associated Legendrian curve in $\mathcal S^*F$ as follows. 
\begin{definition} \label{def conormal} If $\gamma$ is a co-oriented curve on a surface $F$, with co-orientation corresponding to a choice of a normal vector $n$, then the oriented conormal Legendrian lift of $\gamma$ is
	$$\Lambda_{\gamma}=\{ (q,p) \in \mathcal S^{*}F~|~ q\in \gamma,\; p(v)=0 \;\forall v\in T_{q}\gamma,~\text{and}~p(n)>0\}.$$
\end{definition}


\section{Toric transformations}\label{section toric}

The purpose of this section is to introduce the basics of toric geometry, with an emphasis on dimension 4. We also give a brief introduction to almost toric manifolds. For more details on toric manifolds from the symplectic point of view, we refer to~\cite{CdS-toric, Audin} and for the algebraic geometry point of view, we refer to~\cite{Cox}. For almost toric manifolds, see~\cite{Symington, Evans}.

\subsection{The symplectic viewpoint.}\label{subsection 3.1} A \textit{toric manifold} is a symplectic manifold $(M^{2n},\omega)$ equipped with an effective Hamiltonian  $n$-dimensional torus action. This Hamiltonian action induces the existence of a moment map which plays a crucial role in the study of toric manifolds. 
After identifying the Lie algebra dual of the $n$-torus with $\mathbb R^n,$ the moment map is given by  $$\Phi=(\Phi_1,\ldots,\Phi_n):M\rightarrow\mathbb{R}^n$$
where  $\Phi_k,$ $k=1,\ldots, n,$ are Hamiltonian functions for the infinitesimal generators of the action. That is, they are defined up to constants by equations:
$$\iota_{X_k}\omega=-d\Phi_k,$$
where the vector fields $X_k,$ for $k=1,\ldots,n,$ generate the action of the $k$-th coordinate circle in $T^n.$

The standard example of a toric manifold is $(\mathbb{C}^n,\omega_{st}=\frac{i}{2}dz\wedge d\bar{z})$ with the $T^n$-action
$$(t_1,\ldots,t_n)*(z_1,\ldots,z_n)\mapsto(t_1z_1,\ldots,t_nz_n).$$
The generators of this action are Hamiltonian vector fields $X_k=i\left(z_k\frac{\partial}{\partial z_k}-\bar{z}_k\frac{\partial}{\partial \bar{z}_k}\right),$ for $k=1,\ldots,n,$
and the moment map is  $\Phi(z_1,\ldots, z_n)=\frac{1}{2}(|z_1|^2,\ldots,|z_n|^2)+const.$ The image of the moment map is a cone spanned by edges given by directions $e_i=(0,\ldots, 1,\ldots, 0),$ 
for any $i\in\{1,\ldots, n\}$, and the vertex of the cone is the moment map image of the fixed point of the action $(0,\ldots,0)\in\mathbb C^n.$ We call it \emph{a standard cone} (see Figure \ref{both sl} on the left).
If we reparametrise the torus $T^n$ acting on $\mathbb C^n$, then the corresponding moment map image of $\mathbb C^n$ is a new cone related to the standard cone by a transformation $G\in SL(n,\mathbb Z)$, where  $G^T\in SL(n,\mathbb Z)$ denotes the reparametrisation of the torus. More precisely, the edges of the new cone is given by directions $Ge_i,$ for any $i\in \{1,\ldots, n\}$ and, consequently, the inward normal vectors of the facets (a facet is an $n$-dimensional face of the cone) of the new cone are obtained by $(G^{-1})^T\in SL(n,\mathbb Z)$ transformation of the inward normal vectors of the standard cone. This can be seen using the properties of the inner product $\langle (G^{-1})^Tv,Ge\rangle=\langle v,G^{-1}Ge\rangle=\langle v,e\rangle,$ where $v\in\mathbb Z^2$ is an inward normal vector to an edge given by a direction $e\in\mathbb Z^2.$ We illustrate these transformations in Figure \ref{both sl} where $n=2$ and the action of $T^2$ on $\mathbb C^2$ is given by a matrix $G^T = \begin{pmatrix}
		a & b \\
		c & d
	\end{pmatrix}\in SL(2,\mathbb Z),$ i.e. the toric action is precisely $(t_1,t_2)*(z_1,z_2)\mapsto(t_1^at_2^bz_1,t_1^ct_2^dz_2).$

\begin{figure}
	\centering
	\includegraphics[width=14cm]{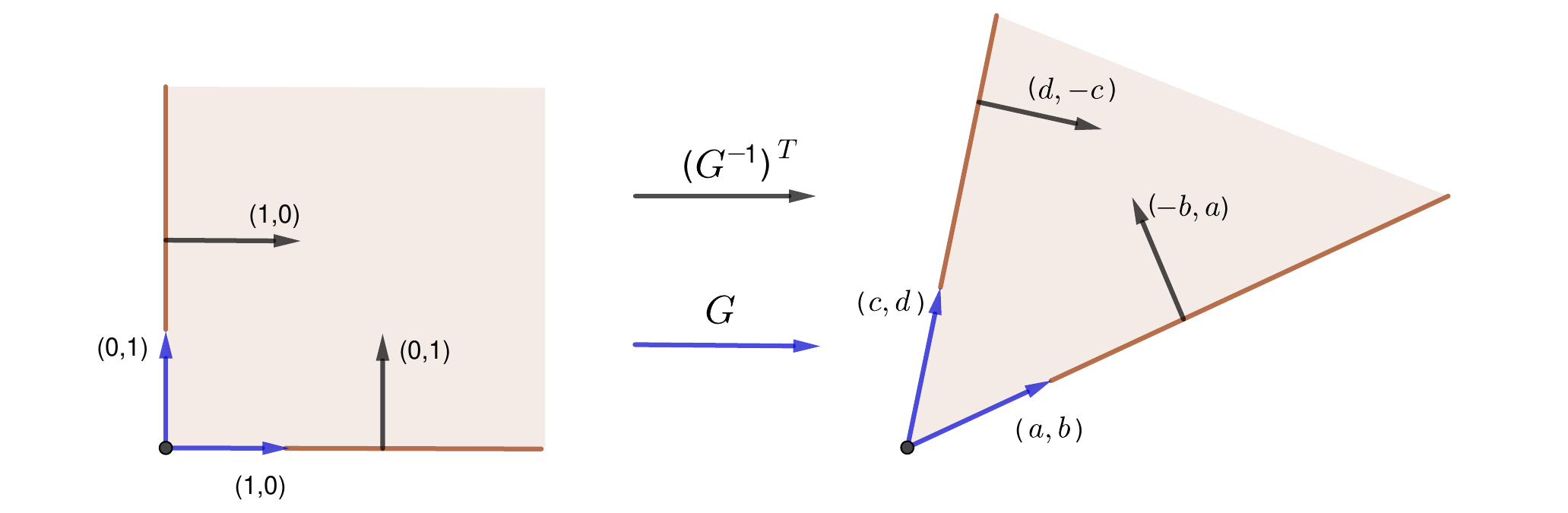}
	\caption{A moment map image of $\mathbb C^2:$ under the standard toric action- on the left,  under the 
	$G^T$-reparametrised toric action-on the right.}
	\label{both sl}
\end{figure}   
	
The equivariant Darboux theorem states that the neighborhood of any fixed point in a toric manifold $(M,\omega)$ is equivariantly symplectomorphic to $(\mathbb{C}^n,\omega_{st}=\Sigma_{j=1}^n\frac{i}{2}dz_j\wedge d\bar{z_j})$ with the standard toric action. That is, there is a  symplectomorphism that intertwines the standard toric action on $\mathbb{C}^n$ and the toric action on the neighborhood of a fixed point in $M.$ There is also a $G\in SL(n,\mathbb{Z})$ transformation between the images of the moment maps and $(G^{-1})^T$
transformation of the corresponding inward normal vectors of the facets of the cones. Therefore, the moment map image of a neighborhood of any fixed point is a convex cone and since an $SL(n,\mathbb{Z})$ transformation preserves a determinant, the inward normal vectors of the facets of the cone form a $\mathbb Z^n$-basis.

If $M$ is compact, then the moment map image $\Phi(M)\subset\mathbb{R}^{n}$ is a closed convex polytope (\cite{Ati}, \cite{GS}) whose vertices are the images of fixed points. 
Moreover, this polytope is
\begin{itemize}
	\item simple (there are $n$ edges meeting in every vertex),
	\item rational (all inward normal vectors to the facets are primitive vectors in $\mathbb Z^n$), and
	\item smooth (inward normal vectors of the facets meeting in a vertex form a $\mathbb Z^n$-basis).
\end{itemize}
According to the Delzant classification, every such polytope is a moment map image of a unique toric manifold, up to equivariant symplectomorphism. For that reason such a polytope is called a \emph{Delzant polytope}.

\begin{example}\label{example cp2} The complex projective space $\mathbb{CP}^n$ with the Fubini-Studi symplectic form $\omega_{FS}$ and the $T^n$ action induced from the standard $T^{n+1}$ action on $\mathbb{C}^{n+1}$ is a toric manifold. More precisely, since $(\mathbb{CP}^n,\omega_{FS})$ is a symplectic reduction of $\mathbb{C}^{n+1}$ with respect to the diagonal circle action that commutes with the toric action on $\mathbb{C}^{n+1}$, there is an induced $T^n$ action on the reduced space. The moment map image in $n=2$ case is depicted on the left in Figure \ref{fig2}.
\end{example}
 
\begin{figure}
	\centering
	\includegraphics[width=9cm]{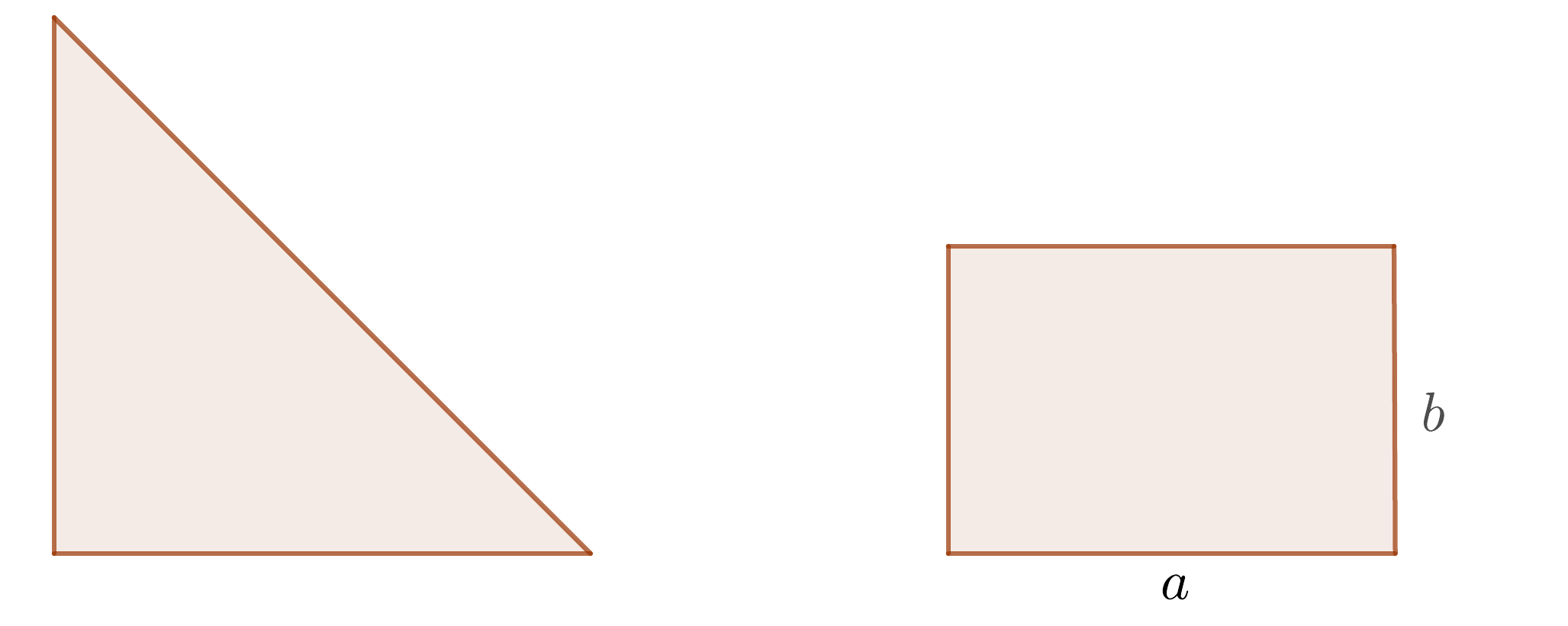}
	\caption{On the left: The moment map image of $(\mathbb{CP}^2,\omega_{FS})$. On the right: The moment map image of $(\mathbb{CP}^1\times\mathbb{CP}^1,\omega_{a,b}).$}
	\label{fig2}
\end{figure}

\begin{example}\label{example cp1xcp1} Consider $\mathbb{CP}^1\times\mathbb{CP}^1$ with the symplectic form $\omega_{a,b}=a\pi_1^*\omega_{st}+b\pi_2^*\omega_{st},$ where $ab>0,$ 
$\pi_k:\mathbb{CP}^1\times\mathbb{CP}^1\rightarrow \mathbb{CP}^1$ is the projection of the $k$-th component and $\omega_{st}$ is the standard area form on $\mathbb{CP}^1\cong S^2$
(recall that $\omega_{st}=d\theta\wedge dh$ in cylindrical coordinates away from the poles and that $\omega_{FS}=\frac{1}{4}\omega_{st}$). The $T^2$-action on $(\mathbb{CP}^1\times\mathbb{CP}^1,\omega_{a,b})$ given as the rotation of each sphere with the same speed is a toric action. The moment map in cylindrical coordinate is given by  $\Phi(\theta_1,h_1,\theta_2,h_2)=(ah_1,bh_2)$ and the moment polytope is depicted on the right in Figure \ref{fig2}.
\end{example}

\begin{example}\label{example blow up} A symplectic blow up of a toric manifold at a fixed point is also a toric manifold. The moment map image of a symplectic blow up is obtained by chopping off the corner of the original polytope at the vertex which is the image of the fixed point that we are blowing up. The inward normal vector of the new facet obtained after the chop is the sum of inward normal vectors of the facets meeting at the vertex where we performed the blow up. The distance of the new facet from the chopped vertex is given by the size of the blow up. See Figure \ref{fig3} for two examples of Delzant polytopes of symplectic blow ups of toric manifolds. Under the moment map, the exceptional divisor of the blow-up maps to the new facet.

\begin{figure}
	\centering
	\includegraphics[width=9cm]{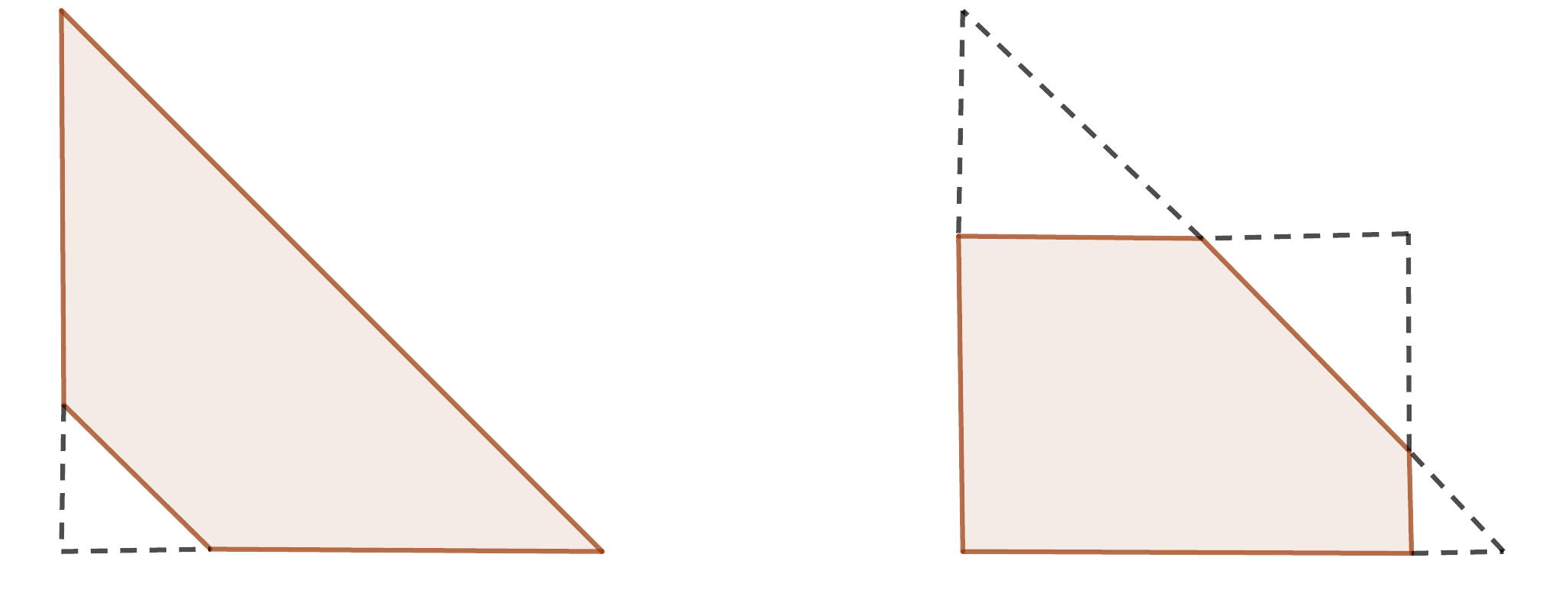}
	\caption{On the left: The moment map image of $\mathbb{CP}^2\sharp\overline{\mathbb{CP}^2}$. 
		On the right: The moment map image of $\mathbb{CP}^1\times\mathbb{CP}^1\sharp\overline{\mathbb{CP}^2},$ or $\mathbb{CP}^2\sharp\overline{2\mathbb{CP}^2}$.}
	\label{fig3}
\end{figure}       
\end{example}

Vector fields generating the toric action commute, i.e. they form a completely integrable system. Thus, a moment map is constant on every orbit and every orbit is isotropic with respect to $\omega.$ If $T^m$ is an effective Hamiltonian torus action on a symplectic manifold $(M^{2n},\omega)$, then $m\leq n$. Furthermore, every orbit with a $k$-dimensional stabilizer is mapped, under the moment map, to a point in a $k$-dimensional face of the moment polytope. In particular, free points are mapped to the interior of the polytope. Since all level sets of the moment map are connected \cite{Ati}, the preimage of a point in the polytope is exactly one orbit in $M$. Thus, in action-angle coordinates 
$$(p=(p_1,\ldots, p_n)\in\mathbb{R}^n,q=(q_1,\ldots, q_n)\in T^n),$$
 free orbits form a subset of $\mathbb{R}^n\times T^n\subset M$ where the moment map is simply given by $\Phi(p,q)=p$ and the symplectic form is $dp \wedge dq$. The pre-image of the interior of the polytope is symplectomorphic to a subset of the cotangent bundle $T^*T^n$ with the standard symplectic form. Similarly, the preimage of any $k$-dimensional face is symplectomorphic to a neighborhood of the zero section in the cotangent bundle $T^*T^k$ with the standard symplectic form.

\subsection{The algebraic viewpoint} Every compact toric manifold is a toric variety. Starting from the corresponding Delzant polytope $\Delta,$ mark each of the $k$ points in 
$\mathbb{Z}^n\cap \Delta$. Consider the toric variety defined as the closure\footnote{with respect to the Zariski topology} of the orbit of a point $[1:\cdots:1]\in \mathbb{CP}^{k-1}$ with respect to the action of the algebraic torus $(\mathbb{C}^*)^n$ with weights given by points in $\mathbb{Z}^n\cap \Delta.$ Being equivariantly embedded in $\mathbb{CP}^{k-1},$ this toric variety inherits a toric structure and a moment map image is precisely $\Delta.$ For more details, see \cite{CdS-toric}. \smallskip

Since toric manifolds are toric varieties we know that they admit prime divisors,  i.e. irreducible subvarieties of complex codimension one. A divisor in a toric variety is called a \emph{toric divisor} if it is invariant under the toric action. From the discussion above, we conclude that the preimage of any facet of the polytope is a toric divisor. Since it is a codimension $2$ symplectic submanifolds it is also a \emph{symplectic divisor}. The union of all toric divisors is the set of all points with nontrivial stabilizers and it is a connected singular subvariety.
The singular points are the intersection points of adjacent toric divisors and they are precisely the fixed points of the toric action. Throughout this article, we call this subvariety the \emph{total toric divisor}. As previously explained, the completion of the complement of a neighborhood of the total toric divisor is symplectomorphic to the cotangent bundle $T^*T^n$ with the standard symplectic form and a Weinstein structure.
On the other hand, the complement of a closed toric divisor may not admit a Weinstein structure. Namely, the complement of a toric divisor may still contain another closed toric divisor. Since a closed toric divisor has positive symplectic area, by Stokes' theorem the symplectic form on such a complement cannot be exact. Consequently, in this article we consider only total toric divisors.
\smallskip

An important property of toric varieties is their correspondence with fans. A \emph{fan} in $\mathbb R^n$ is a collection of cones of dimension $0,1,\ldots, n,$ where the $k$-dimensional cone is a convex span of $k$ linearly independent rays given by vectors in $\mathbb Z^n.$ Moreover, every face of a cone is again a cone in a fan, and the intersection of any two cones is a face of both cones. From an $n$-dimensional toric manifold one can obtain a fan in $\mathbb R^n$ by considering all the inward normal vectors to the facets of the polytope. The convex span of the vectors of the facets meeting in a vertex is precisely the $n$-dimensional cone of the fan. Moreover, from every fan one can construct a toric variety. A fan is \emph{complete} if the union of all cones is equal to the whole space $\mathbb{R}^n.$ A complete fan corresponds to a closed polytope and a compact toric variety.
A fan is \emph{regular} if all the vectors spanning $n$-dimensional cone form a $\mathbb{Z}^n$-basis and all the vectors spanning a lower dimensional cone can be completed to a 
$\mathbb Z^n$-basis. A regular, complete fan corresponds to a smooth compact polytope. See~\cite{Cox} for more information on fans and their associated polytopes.

\subsection{Toric 4-manifolds} Given any Delzant 2-dimensional polytope, there is a corresponding $SL(2,\mathbb{Z})$ transformation mapping that polytope to one of the two Delzant polytopes shown in Figure \ref{fig2} after some sequence of corner chops, possibly none. 
Note that chopping one corner in the rectangle is equivalent to chopping two corners in the triangle, as shown in Figure~\ref{fig3}. By the Delzant classification, every compact toric 4-manifold is therefore equivariantly symplectomorphic to $(\mathbb{CP}^2,\omega_{FS})$, 
 $(\mathbb{CP}^1\times\mathbb{CP}^1,\omega_{a,b})$, or to a sequence of symplectic blow ups of $(\mathbb{CP}^2,\omega_{FS})$. 
 
The pre-image of any edge of a 2-dimensional Delzant polytope is $\mathbb{CP}^1$. A total toric divisor of any toric 4-manifold is the union of transversally intersecting toric embeddings of $\cpone$ (with varying normal bundle data). For example, the total toric divisor of $(\mathbb{CP}^2,\omega_{FS})$ is a union of three $\cpone$'s intersecting at three transverse intersection points, which we can think of as a three-nodal cubic. In general, the number of singular points can be reduced by smoothing the divisor. If one smooths every singular point of the total toric divisor, it becomes a torus $T^2$.

\subsection{Almost toric manifolds}\label{intro to almost toric}
We now take a brief foray into the world of almost toric $4$-manifolds. This subsection can be skipped on a first reading. Almost toric manifolds generalize the notion of toric manifolds and also provide interesting examples of log-Calabi Yau pairs which we hope to further explore in future work. In the present article, almost toric manifolds make an appearance in Remark~\ref{remark one almost toric}, Example~\ref{remark counterexample}  and Section~\ref{section:CP2bu5}. 
	
An almost toric manifold is a symplectic manifold with an almost toric fibration, first introduced by Symington in~\cite{Symington} and classified in~\cite{LeungSymington}. We refer the reader to these articles and Evans' lecture notes~\cite{Evans}	for a thorough introduction.
	
\begin{definition}\label{defn:almosttoric}
	An \emph{almost toric fibration} of a compact symplectic $4$-manifold $(M,\omega)$ is a singular Lagrangian fibration $\pi$ over a surface $\mathcal{B}$  where the fibration either		
	\begin{itemize}
		\item is locally the moment map of a toric manifold with Lagrangian tori as regular 
		fibers and isotropic circles or points as singular fibers, or
		\item contains nodal singularities for which the singular fiber is a pinched torus. These singular fibers can be tori with multiple cycles pinched so long as these cycles are in the same homology class. (Nodal singularities are also called \emph{focus-focus} or \emph{of Lagrangian Lefschetz type}).
	\end{itemize}
\end{definition}

Similarly to the case of toric manifolds, the total space of an almost toric fibration can be reconstructed from its base $\mathcal{B}$ and some additional data. We consider only the almost toric fibrations constructed from toric manifolds where the base $\mathcal{B}$ of the almost toric fibration is always homeomorphic to a disk. In the presence of a nodal singularity, the local action-angle coordinates at a regular point in the neighborhood of the nodal singularity are not preserved when going around the singular value. The \emph{topological monodromy} encodes how the angle coordinates are transformed. It is described by the action of a matrix $A$ of $GL(2,\Z)$ on $H_1(F,\Z)$ of a regular fiber $F$ in the neighborhood of the singularity.	The \emph{affine monodromy} encodes the change of the action coordinates (more precisely the transformation of a so-called integral affine structure on the base). Because the change of coordinates is a symplectic transformation, the affine monodromy on the base is the transpose of the inverse $(A^{-1})^T$ of the topological monodromy. The study of integrable systems shows that if the nodal fiber is a torus where the circle of slope $(a,b)$ is pinched, then the topological monodromy along a clockwise oriented loop encircling the nodal singular value is
	$A_{(a,b)} = \begin{pmatrix}
		1-ab & a^2 \\
		-b^2 & 1+ab
	\end{pmatrix}$, see~\cite[Lemma 4.13]{Symington}.
On the base, we mark the change of coordinates by ``cutting'' the surface along a dotted segment from the nodal singular value to the boundary of the base.  When crossing these \emph{branch cuts} the local angle coordinates are equal up to applying the topological monodromy (or its inverse if we cross the cut along an anti-clockwise oriented loop), and the action coordinates are related by the affine monodromy. For a singular fiber, one dotted segment realizes the cut along an eigenline of the affine monodromy. In the example above, if the nodal fiber corresponds to pinching the circle of slope $(a,b)$, the affine monodromy is given by the matrix $(A_{(a,b)}^{-1})^T = A_{(-b,a)}$ with eigenline spanned by the vector $(-b,a)$.

\emph{Mutations} are important operations in the almost toric setting. Mutations are roughly the result of pushing a branch cut to the other side of the base so that the branch cut connects the nodal point to the other intersection point of the eigenline and the boundary of the base. The mutated base diagram encodes the same total space but gives a change in the choice of associated local coordinates. 	In practice, the mutated base diagram is obtained by cutting the base along the eigenline, keeping one half of the diagram invariant and gluing the other half after applying the affine monodromy (or its inverse depending on the choice of the half surface with respect to the cut). See the proof of Lemma~\ref{AThandle} and Section~\ref{section examples} for examples of mutations.


\section{Weinstein structure on the complement of a partially smoothed total toric divisor}\label{section smoothing}

The main goal of this section is to show that there exists a Weinstein structure on the complement of the neighborhood of a  partially smoothed total toric divisor and to describe this Weinstein structure in terms of Weinstein handle attachments in an explicit way. In section~\ref{section:Smoothing}, we describe the smoothing locally in a standard model neighborhood of a node at a topological level. In Section~\ref{section:Topologycomplements}, we compare the smooth topology of the complement of the smoothing with the complement of the unsmoothed divisor, and see that they differ by a smooth $2$-handle for each node which is smoothed. In Section~\ref{section:2handleWeinstein} we consider how to view this smooth $2$-handle symplectically, by identifying Lagrangian disks which can serve as the core and co-core of the $2$-handle. We focus on a local model, and explain how to transform this local model to any vertex using the Delzant properties of the corresponding polytope. This allows us to describe the attaching sphere of the Weinstein $2$-handle in terms of the slope we associate to the vertex in the moment polytope corresponding to the chosen node. Finally in Section~\ref{section:$2$-handle attachment}, we prove our main result, Theorem~\ref{thm:toricWein} by showing how to put together these local models to a global Weinstein structure. The $\{V_1,\dots, V_k\}$-centeredness condition ensures that the Weinstein structure we impose on the model complement of the smoothed node can be glued in compatibly at all of the nodes associated with vertices $V_1,\dots, V_k$ simultaneously. In Section~\ref{section:almosttoric}, we discuss an extension of our main theorem to the almost toric setting.

\subsection{A local model of smoothing one node.} \label{section:Smoothing}

In a $4$-dimensional manifold~$M$, the smoothing of a ($\omega$-orthogonal) node with two symplectic branches, $\Sigma_1$ and~$\Sigma_2$, has the following local model. There exists a local Darboux chart~$(\C^2, \omega_{\std})$ at the node such that in this chart the image of the intersection is the origin, the divisor~$\Sigma_1$ corresponds to $\left \{{(z_1,z_2) \in \C^2 \, | \, z_2 = 0} \right\}$, and the divisor~$\Sigma_2$ corresponds to $\left \{{(z_1,z_2) \in \C^2 \, | \, z_1 = 0} \right\}$. The smoothing of the node corresponds to locally substituting the model nodal divisor $\left \{{(z_1,z_2) \in \C^2  \, |  \, z_1 \cdot z_2 = 0} \right\}$ with the hypersurface $\Sigma = \left \{{(z_1,z_2) \in \C^2  \, | \,  z_1 \cdot z_2 = \varepsilon^2} \right\}$ for some $\varepsilon > 0$.

In order to patch this model in, it is useful to observe that this smoothing is the orbit space under the circle action $e^{i\theta}\cdot(z_1,z_2)=(e^{i\theta}z_1,e^{-i\theta}z_2)$ of the real curve $\{(x_1,x_2)\mid x_1,x_2\in \R, \; x_1x_2=\varepsilon^2 \}$. In toric coordinates, the smoothing is described by the equations $\{p_1p_2=\frac{1}{4}\varepsilon^4 \textrm{ and } q_1=-q_2\}$. Using this toric perspective, we modify the smoothing so that it is supported in an arbitrarily small neighborhood of the node as follows.

Let $\gamma(t)=(\gamma_1(t),\gamma_2(t))$ be a smooth curve in $\R^2$ as in Figure~\ref{fig:smooth} with the following properties
\begin{itemize}
	\item $\gamma(t)=(0,-t)$ for $t\leq -3\varepsilon$,
	\item $\gamma(t)=(t,0)$ for $t\geq 3\varepsilon$,
	\item $\gamma_1(t)\gamma_2(t)=\varepsilon^2$ for $-2\varepsilon<t<2\varepsilon$, and
	\item $\gamma_1'(t)>0$ for $t>-3\varepsilon$ and $\gamma_2'(t)<0$ for $t<3\varepsilon$.
\end{itemize}

\begin{figure}
	\centering
	\includegraphics[scale=.35]{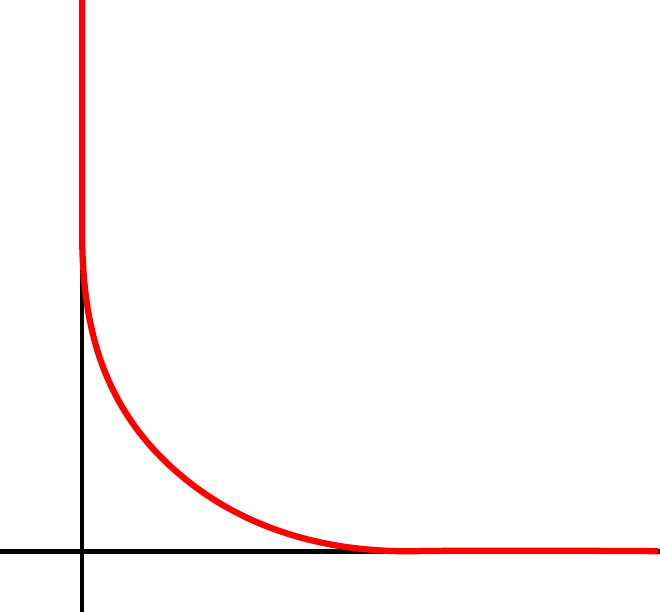}
	\caption{A curve $\gamma(t)$ in $\R^2\subset \C^2$ which is used in the construction of the model smoothing. (We are not viewing this as a moment map image.)}
	\label{fig:smooth}
\end{figure}

Let $\Sigma$ be the orbit space of $\gamma$ under the circle action. In toric coordinates, we can think of $\Sigma$ as the set of points where $p_1=\gamma_1(t)$, $p_2=\gamma_2(t)$, and $q_1=-q_2$. Note that outside of a $3\varepsilon$ neighborhood of the origin, $\Sigma$ agrees with the original model nodal divisor, and inside a smaller neighborhood it agrees with the complex smoothing $\{z_1z_2=\varepsilon^2\}$.

At the topological level, the smoothing replaces two transversally intersecting disks with an annulus. As $\varepsilon \to 0$, the core circle of the annulus shrinks to a point which is the intersection point $0$ of the transverse disks in the model nodal divisor.

\subsection{The topology of the complements.} \label{section:Topologycomplements}

We want to compare the complement of a neighborhood of the nodal divisor to the complement of a neighborhood of the smoothing. The nodal divisor can be viewed as the preimage of the two axes under the toric moment map. Therefore, a regular neighborhood of the nodal divisor is locally given by the preimage of a regular neighborhood of the axes under the toric moment map. In particular, a regular neighborhood of the nodal divisor can be chosen so that its complement is the preimage of the shaded region in Figure~\ref{fig:compl} under the moment map. Denote this preimage by $U$. Then, when $\varepsilon$ is sufficiently small, $U$ is disjoint from both the nodal divisor and its $\varepsilon$-smoothing. 

\begin{figure}
	\centering
	\includegraphics[scale=.35]{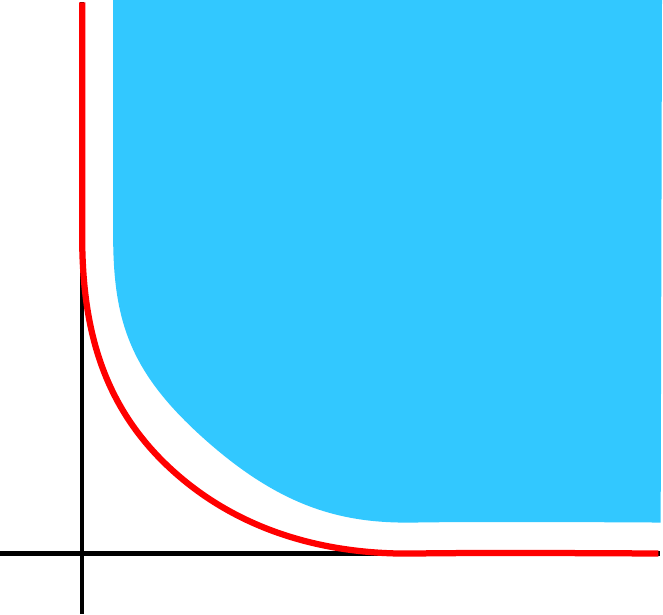}
	\caption{The complement $U$ of a neighborhood of the nodal divisor is the preimage of the blue shaded region under the moment map image.}
	\label{fig:compl}
\end{figure}

The regular neighborhood of the smoothing that we are considering has a different shape in this local neighborhood, because the smoothing does not fill the entire preimage of the curve $\gamma(t)$ under the toric action, but only the portions where $q_1=-q_2$. Therefore, this regular neighborhood can be thought of as the union of the neighborhoods of the curve $q_1=-q_2$ in the tori that lie over a neighborhood of $\gamma(t)$. As we move over points further from $\gamma(t)$, the neighborhood of the curve $q_1=-q_2$ in the fibers of these points shrinks and eventually disappears.

\begin{figure}
	\centering
	\includegraphics[scale=.5]{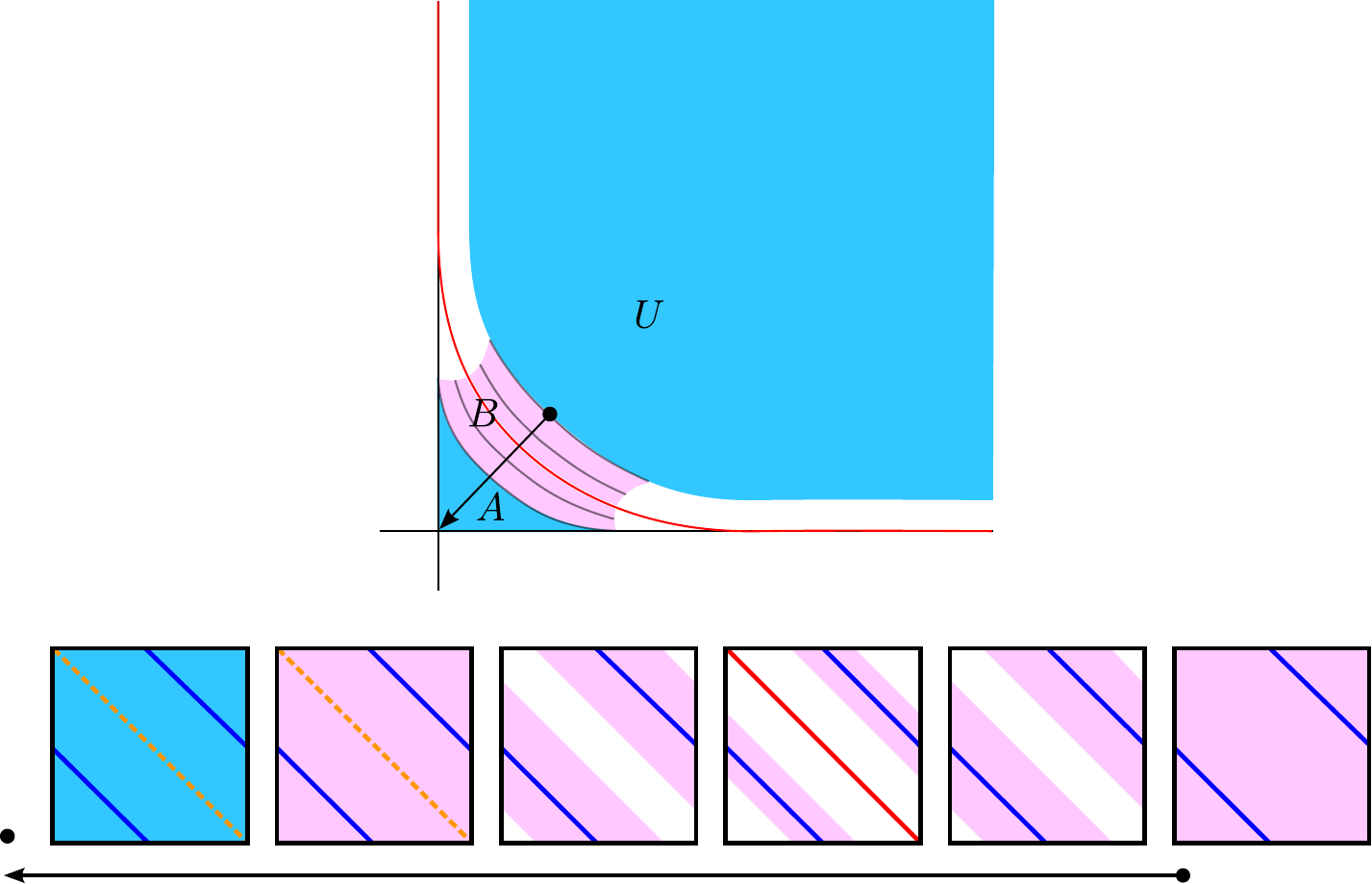}
	\caption{This figure represents the complement of a neighborhood of the smoothing. This complement is made up of three parts: $U$, $A$ and $B$. $U$ is the preimage of the unbounded blue shaded region in the upper figure which agrees with Figure~\ref{fig:compl}. Both $U$ and $A$ are the full preimages of the blue shaded regions. $B$ is a subset of the preimage of the connecting region shaded in pink. The part of the fiber above points in this region included in $B$ depends on distance from the curve $\gamma$ as indicated by the arrow and the squares at the bottom of the figure representing torus fibers over a slice. The pink shaded region indicates the portion of each fiber in this slice which is included in $B$. The core of the attaching $2$-handle is given by the union of the dark blue curves in the $1$-parameter family of torus fibers lying over the arrow, and the co-core is the union of the the dashed orange curves lying in the $A$ region with boundary lying on the boundary of the complement of the neighborhood of the smoothing where the $B$ region opens a gap.}
	\label{fig:complsm}
\end{figure}

 Instead of removing an entire neighborhood of the origin in $\mathbb{C}^2$, in the complement of the smoothing we include a smaller neighborhood of the origin. We obtain the complement of a regular neighborhood of the smoothing by gluing together three pieces:
 \begin{enumerate}
 	\item The region $U$ which is the preimage of a region complementary to a regular neighborhood of the axes in  the moment map image $\R_{\geq 0}\times \R_{\geq 0}$. This is the complement of a regular neighborhood of the nodal divisor.
 	\item A smaller neighborhood of the origin $A$, which can also be taken as the preimage under the moment map of a neighborhood of the origin in $\R_{\geq 0}\times \R_{\geq 0}$ as shown shaded in blue in Figure~\ref{fig:complsm}.
 	\item A piece $B$ diffeomorphic to $S^1\times D^3$. With respect to the moment map, $B$ lies above the  connecting region shaded in pink in Figure~\ref{fig:complsm}. $B$ is given by an annulus in each torus fiber above points in this connecting region. The width of the annuli increases as the fibers move away from the curve $\gamma$ and towards the image of $U$ and $A$.
 \end{enumerate}
 
 Observe that $B$ is smoothly attached to $A$ along a solid torus $S^1\times D^2$ since they are glued along the annuli in the torus fibers in the interval where the moment image of $A$ and $B$ meet. Similarly $B$ is attached to $U$ along a solid torus as well. The union $A\cup B$ is still diffeomorphic to a $4$-ball since $B$ just further thickens a collar of a circle in the boundary of the $4$-ball $A$. Moreover, $A\cup B$ is glued to $U$ (the complement of the nodal divisor) along a solid torus $S^1\times D^2$. Therefore, gluing $A\cup B$ to $U$ amounts to attaching a smooth $2$-handle to $U$. Observe that the attaching circle, $S^1\times \{0\}\subset S^1\times D^2$, inside the attaching region on the boundary of $A\cup B$ is a curve of slope $(1,-1)$ in a single torus fiber in the boundary of $U$. 

This completes the topological picture of the complement of the smoothing. Next, we discuss the symplectic and Weinstein aspects of this 2-handle attachment.

\subsection{The Weinstein structure on the $2$-handle} \label{section:2handleWeinstein}

	To understand the $2$-handle attachment symplectically, we need to identify it with Weinstein's model handle. In this section we first discuss how we insert Weinstein's handle into our local model of the complement of the smoothing of the total toric divisor at a node. The core and co-core of a Weinstein handle are Lagrangian disks that intersect transversally, and we identify such disks in our toric local model to motivate our embedding of Weinstein's handle. Then we use a symplectomorphism arising from an $SL(2,\Z)$ transformation to send our local model to the neighborhood of any toric node, and we examine the image of the core and co-core of the Weinstein $2$-handle under this symplectomorphism. In section~\ref{section:$2$-handle attachment} we establish the global result by showing how to glue the Weinstein structures from all of these $2$-handles compatibly with a Weinstein structure on the complement of a neighborhood of the nodal total toric divisor. Our goal in this section is to establish a solid understanding of where the Weinstein core and co-core are embedded, which we see in section~\ref{section:$2$-handle attachment} is the key behind establishing a global gluing theorem utilizing the centeredness condition.
	
	We focus on the model $2$-handle $A\cup B$ from the previous subsection. We noted that in this model, the attaching sphere is a $(1,-1)$ curve lying in a single torus fiber in the boundary of the $2$-handle. A potential Weinstein core of the $2$-handle is provided by a Lagrangian disk with boundary on this $(1,-1)$ curve. The Weinstein co-core is another Lagrangian disk in the $2$-handle that transversally intersects the core at a single point. In order to identify the symplectomorphism that carries Weinstein's model structure for a $2$-handle (see section~\ref{sec:weinstein_handle}) onto $A\cup B$ (to endow the complement of the smoothing with a Weinstein structure), we first identify the core and co-core as Lagrangian disks.
	
	In action-angle coordinates (defined away from the total toric divisor), a special class of surfaces is given by parametrizations of the form
	\begin{equation}\label{eq:param}
	\{(p_1(t), p_2(t), q_1(s), q_2(s)) | 0\leq t\leq 1, 0\leq s \leq 2\pi\}\subset (D^*T^2, dp_1\wedge dq_1+dp_2\wedge dq_2).
	\end{equation}
	
	A Lagrangian disc $D$ whose inclusion map into the toric manifold is denoted by $i$, must satisfy $i^*\omega = \sum_{k=1}^2i^*(dp_k\wedge dq_k)=0$. If we assume $D$ has a parameterization as in (\ref{eq:param}) the Lagrangian condition translates to: 
	\begin{equation}\label{Lagrangian equation}
	p_1'(t) q_1'(s)+p_2'(t)q_2'(s)=0.
	\end{equation}

	Recall that the attaching sphere for the $2$-handle in the smooth model was a curve above the point $p_1=p_2=2\varepsilon$, which is at the core of the interval where the image of $B$ meets the image of $U$. This curve has $q_1(s) =s$, and $q_2(s)=-s+\pi$. Observe that the disk
	\begin{equation}\label{core in action-angle coordinates}
	D_c = \{ (2\varepsilon t, 2\varepsilon t, s, -s+\pi) \mid 0\leq t \leq 1, 0\leq s \leq 2\pi \}
	\end{equation}
	is Lagrangian because it satisfies equation (\ref{Lagrangian equation}), and it lies completely inside of $A\cup B$. The intersection with $B$ is shown in Figure~\ref{fig:complsm} with a dark blue curve in the torus fibers. From this picture, we can see that the $2$-handle deformation retracts onto $D_c$, so this can serve as a core of the handle.
	
	Note that $D_c$ lies in the moment map preimage of the ray of the direction $(1,1)$ emanating from the vertex of the polytope. In this model, this is precisely the ray $R$ associated to the vertex $V$ as in Definition \ref{defn:centered}.
	
Similarly, the co-core can be represented by the Lagrangian disk
\begin{equation}\label{toric co-core in the standard model}
	D_{cc} = \{ (\varepsilon' t, \varepsilon' t, s, -s) \mid 0\leq t \leq 1, 0\leq s \leq 2\pi \}.
\end{equation}
where $\varepsilon'<\varepsilon$ is chosen such that belt sphere $\{(\varepsilon,\varepsilon, s, -s)\}$, the boundary of $D_{cc}$, lies on the boundary of $A\cup B\cup U$. The co-core $D_{cc}$ is denoted in Figure~\ref{fig:complsm} by the dotted orange curves ($q_1=-q_2$) lying in the fibers over the segment where $p_1=p_2$ and $p_1,p_2\leq \varepsilon$. In complex coordinates $(z_1,z_2)$, these two disks are described by 
	
\begin{equation}\label{chart core in the standard model}
	D_c = \{ (2\varepsilon t e^{is}, 2\varepsilon t e^{-i(s+\pi)})\} = \{(z_1, -\overline{z_1}) \mid |z_1|<2\varepsilon \}
\end{equation}
	and
\begin{equation}\label{chart co-core in the standard model}
	D_{cc} = \{(\varepsilon' t e^{is}, \varepsilon' t e^{-is} )\} = \{ (z_1, \overline{z_1}) \mid |z_1|<\varepsilon' \}.
\end{equation}
	
This description also shows that these disks are smooth at the origin. Moreover, one can check that $D_{cc}$ and $D_c$ intersect transversally only at the origin (see~\cite{ACGMMSW1} for the explicit computation). Observe in this model that the slope of the attaching sphere in the torus is $(1,-1)$. 

Next we discuss how to send this model to the neighborhood of any vertex via an invariant symplectomorphism induced by an $SL(2,\mathbb Z)$ transformation of the moment map images, and observe how this symplectomorphism acts on the core and attaching sphere of the handle. In order to proceed we make use of the combinatorial properties of the Delzant polytope close to any vertex where we perform the smoothing. As introduced in  Definition \ref{defn:centered}, a \emph{ray} $R_V$ of a vertex $V$ is a ray starting in $V$ that is generated by the vector $r(V)\in\mathbb Z^2$ being equal to the sum of the edge vectors of $\Delta$ adjacent to $V$ and a \emph{slope} of a vertex $V$ is a vector $s(V)\in\mathbb Z^2$
given by a $-\pi/2$ rotation of the vector $r(V)$. In particular, the ray of the vertex in the standard cone is given by a vector $(1,1)$ and a slope of this vertex is a vector $(1,-1).$ The key role of a ray and a slope is that the core $D_c$ and the co-core $D_{cc}$ of the 2-handle  just defined project under the moment map precisely to the ray given by direction $(1,1)$ and the slope of the attaching sphere in the torus is precisely $(1,-1)$.

Let us now use these results when smoothing an arbitrary node $V$ of the total toric divisor.
Denote by $G\in SL(2,\mathbb Z)$  the transformation mapping the standard cone to the moment map image of an equivariant Darboux neighborhood $U$ of the fixed point that maps to a vertex $V$ (Figure \ref{both sl}).  The transformation $G$ uniquely defines an equivariant symplectomorphism $\psi$ between $\mathbb C^2$ and the neighborhood $U$. Being equivariant, $\psi$ maps orbits in $\mathbb C^2$ to the orbits in $U.$ In action-angle coordinates (defined away from the total toric divisor), any surface given by parametrizations (\ref{eq:param})
is mapped under $\psi$ to the surface
	\begin{equation}\label{eq:paramnew}
	\{(G(p_1(t), p_2(t)),(G^{-1})^T (q_1(s), q_2(s)) | 0\leq t\leq 1, 0\leq s \leq 2\pi\}\subset (D^*T^2, dp_1\wedge dq_1+dp_2\wedge dq_2).
	\end{equation}	
In particular, using the description of the core (\ref{core in action-angle coordinates}) and the co-core (\ref{toric co-core in the standard model}) of the standard model, we obtain an explicit model of the core $\psi(D_c)$ and the co-core $\psi(D_{cc})$ of the desired Weinstein 2-handle. Since the moment map in action-angle coordinates is simply a projection onto the $(p_1,p_2)$-plane, both the core $\psi(D_c)$ and the co-core $\psi(D_{cc})$ project  to the ray given by the direction $G(1,1)$. From the polytope perspective, the  transformation $G$ is linear and preserves edges. Thus, it maps the ray of the standard cone, given by the direction $(1,1),$ to the ray $R_V$ of the vertex $V$. Therefore, both the core $\psi(D_c)$ and the co-core $\psi(D_{cc})$ project under the moment map precisely to the ray $R_V.$ Further, according to the parametrization (\ref{eq:paramnew}) the slope of the curve in $T^2$ representing the attaching sphere of the 2-handle  is given by the vector $(G^{-1})^T(1,-1)$. According to Proposition \ref{prop def of slopes}, a slope $s(V)$ of the vertex $V$ is  equal to the  difference of inward normal vectors of the edges meeting in $V,$ taken in the counter clock-wise direction. As explained in Section 3, when $G$ is the transformation of the polytopes, then $(G^{-1})^T$ is the transformation of the inward normal vectors. Thus, being linear, the transformation $(G^{-1})^T$  maps the slope $(1,-1)$ of the vertex of the standard cone to the slope $s(V)$ of the vertex $V$ and this is precisely the slope of the attaching sphere. This proves the following important property.

\begin{lemma}\label{lemma slope}
	Let $(M,\omega)$ be a toric symplectic 4-manifold and let $V$ be the vertex in the moment polytope that is the moment map image of the fixed point where we smooth the total toric divisor. Then the slope of the curve in the torus $T^2$ representing the corresponding attaching sphere of a 2-handle is equal to $s(V).$ 
\end{lemma}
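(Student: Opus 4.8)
The plan is to reduce the global statement to the local model already analyzed in this section, using the equivariant Darboux theorem and the $SL(2,\Z)$-equivariance of the whole construction. First I would recall that, by the equivariant Darboux theorem quoted in Section~\ref{section toric}, a neighborhood of the fixed point over $V$ is equivariantly symplectomorphic to a neighborhood of the origin in $(\C^2,\omega_{\std})$ with the standard $T^2$-action; on moment map images this is realized by a transformation $G^T\in SL(2,\Z)$ carrying the standard cone (with inward normals $e_1=(1,0)$, $e_2=(0,1)$) to the cone at $V$ (with inward normals $u_1,u_2$ the actual inward normals of the two edges of $\Delta$ meeting at $V$). Write $G^T$ for the matrix with $G^T e_i = u_i$, equivalently $G$ is the reparametrization of the torus action, acting on the angle coordinates $q=(q_1,q_2)$.

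Next I would track what this change of coordinates does to the attaching sphere. In the standard local model analyzed above, the smoothing is performed, the $2$-handle $A\cup B$ is built, and its attaching sphere is the curve in a torus fiber with $q_1'(s)=-q_2'(s)$, i.e.\ the curve of slope $(1,-1)=e_1-e_2$ in $(q_1,q_2)$-coordinates, as computed just before the lemma. The key observation is that the entire smoothing-and-handle-attachment construction of Sections~\ref{section:Smoothing}--\ref{section:Topologycomplements} is natural with respect to equivariant symplectomorphisms: applying $G$ to the action-angle coordinates $(p,q)$ transports the standard local picture at the origin to the local picture at $V$, sending the smoothing to the smoothing and the model $2$-handle to the model $2$-handle. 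Hence the attaching sphere at $V$ is the image under the $q$-coordinate change of the slope-$(e_1-e_2)$ curve; since the $q$-coordinate change is (the inverse transpose of) the $p$-coordinate change $G^T$ which sends $e_i\mapsto u_i$, and slopes of curves in a torus transform by the inverse transpose of the transformation on $p$-coordinates (equivalently, a homology-class computation: the curve of slope $v$ in $q$-space has tangent direction $v$, and under $q\mapsto (G^T)^{-1}q$ hm — I should be careful here, let me phrase it homologically instead), the slope becomes $G^T(e_1-e_2)=u_1-u_2$, the difference of the inward normals of the edges meeting at $V$.

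I would present the slope-transformation step homologically to avoid sign/convention pitfalls: the attaching circle defines a primitive homology class in $H_1(T^2)$, and in the standard model this class is $e_1-e_2$ when $H_1(T^2)$ is identified with $\Z^2$ via the $q$-coordinates; the reparametrization $G$ of the torus identifies the standard $H_1(T^2)$ with the actual one so that the basis dual to the edge-normal basis is sent appropriately, and chasing the identification shows the class becomes $u_1-u_2$. The main obstacle I anticipate is purely bookkeeping: pinning down exactly which $SL(2,\Z)$ matrix acts on which coordinates ($p$ versus $q$, the matrix versus its transpose versus its inverse), since the excerpt fixes the convention that the reparametrization $G\in SL(2,\Z)$ of the action corresponds to $G^T$ acting on the moment image. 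Once that convention is applied consistently, the computation that $e_1-e_2 \mapsto u_1-u_2$ is immediate and linear, and the lemma follows; the rest (that the local model is faithfully transported, that $p$ is translated so $V$ is at the origin, that the construction only depends on the local equivariant model) has already been established above.
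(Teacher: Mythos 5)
Your proposal is correct and follows essentially the same route as the paper: verify the slope is $(1,-1)=e_1-e_2$ in the standard local model, then transport by the equivariant $SL(2,\Z)$ change of coordinates, using the fact that the attaching circle's direction in the torus fiber transforms by the same matrix as the inward normals (dual to the transformation of the moment polytope), yielding $v-u$. The only cosmetic difference is that the paper justifies this transformation law via the perpendicularity of the slope to the moment image of the core (Equation~(\ref{Lagrangian equation})) and the explicit inner-product identity $\langle (G^{-1})^T x, Gy\rangle=\langle x,y\rangle$, whereas you phrase it through the duality of action-angle coordinates / a homology-class argument --- these are the same observation.
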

	
\subsection{The global Weinstein structure on the complement} \label{section:$2$-handle attachment}

We are now ready to prove our first main result. We show how to glue the canonical Weinstein structure on the complement of a neighborhood of the total toric divisor with a Weinstein structure on the $2$-handles near the nodes $V_i$. Note that the centeredness condition is important to ensure that our model structure on the $2$-handles glues nicely with the chosen Weinstein structure on the subdomain of $T^*T^2$.

\begin{theorem}\label{thm:toricWein}
Let $(M,\omega)$ be a toric $4$-manifold corresponding to Delzant polytope $\Delta$ that is $\{V_1,\ldots, V_k \}$-centered. Let $D$ denote the symplectic divisor obtained by smoothing the total toric divisor at the nodes $V_1,\ldots, V_k$. Then there exists an arbitrarily small neighborhood $N$ of $D$ such that $M\setminus N$ admits the structure of a Weinstein domain.
	
Furthermore, $M\setminus N$ is Weinstein homotopic to the Weinstein domain obtained by attaching $k$ Weinstein $2$-handles to the unit disk cotangent bundle of the torus, $D^*T^2$, along the Legendrian co-normal lifts of co-oriented curves of slope $s(V_1),\ldots, s(V_k)$. 
\end{theorem}

\begin{proof}[Proof of Theorem \ref{thm:toricWein}]
We begin with an outline of the proof and then provide detailed formulas to fill in the three main steps. The first main step is to determine the Weinstein structure we want on $E$, the complement of a small neighborhood of the total toric divisor. For this we use the canonical (Morse-Bott) Weinstein structure on $T^*T^2$, together with the centeredness condition to designate the position of the zero section. The second main step is to embed the model Weinstein structure on a $2$-handle into our model neighborhood of the node. Then we use the equivariant symplectomorphism coming from the appropriate $SL(2,\Z)$ transformation to send this model to a neighborhood of the node $V_i$. 
The third main step is to show how to glue the Weinstein structure on the handle to the Weinstein structure on $E$ in a neighborhood of the attaching region in a way that avoids creating any additional critical points. The gluing we perform occurs in a local neighborhood of the node $V_i$, and the Weinstein structure outside of this neighborhood agrees with the canonical structure on $E$. Therefore we can repeat this gluing at each of the nodes $V_1,\dots, V_n$ independently to obtain the global Weinstein structure.

\textbf{Step 1: placing the center.} The complement of a sufficiently small neighborhood of the total toric divisor is the preimage under the moment map of a closed disk in the interior of $\Delta$  (topologically a disk, not necessarily geometrically a disk). This is a natural Weinstein domain $E$ whose completion is $T^*T^2$.  We call the point in the polytope where the rays associated to $\{V_1,\ldots, V_n \}$ intersect the \emph{$\{V_1,\dots, V_n\}$-center}. Note that this point lies in the interior of a $\{V_1,\dots, V_n\}$-centered polytope. We use the canonical cotangent Liouville structure on the domain $E$, by identifying the Lagrangian torus fiber over the point in $\Delta$ given as the $\{V_1,\dots, V_n\}$-center with the zero section of $T^*T^2$. The $(p_1,p_2)$ coordinates on $\Delta$ (after translating $\Delta$ so that its $\{V_1,\dots, V_n\}$-center occurs at the origin) is identified with the $(p_1,p_2)$ coordinates in the cotangent model, and the toric $(q_1,q_2)$ coordinates on the $T^2$ fibers correspond to the cotangent coordinates on the zero section $T^2$. Because $\Delta$ is star-shaped with respect to the $\{V_1,\dots, V_n\}$-center, if we consider the complement of a small neighborhood of its boundary, $D=\Delta\setminus \nu(\partial \Delta)$, the vector field $p_1\partial_{p_1}+p_2\partial_{p_2}$ is transverse to $\partial D$. Therefore the canonical Liouville vector field is transverse to $\partial E$, and so it induces a contact structure on $\partial E$. The induced contact form on $\partial E$ is  $i^*(\iota_{Z_{can}}\omega) = i^*(p_1dq_1+p_2dq_2)$ where $i:\partial E\to E$ is the inclusion.

The attaching sphere of a Weinstein $2$-handle is a Legendrian knot in the contact boundary of the complement of the neighborhood of the nodal divisor. Since we want to be able to attach $2$-handles for all the vertices in the chosen subset $\{V_1,\ldots, V_n \}$, to a fixed Liouville structure on $E$, we need the attaching curves for each of these $2$-handles to be Legendrian in $\partial E$. Here we begin to see the purpose of the $\{V_1,\ldots, V_n \}$-centeredness condition. As explained in Lemma~\ref{lemma slope} and the discussion before it, the Lagrangian disk we use as the core of our Weinstein $2$-handle at $V_i$ projects to the ray of the direction $r(V_i)$ and the attaching sphere of this core is a curve in a torus fiber over a point on this ray of slope $s(V_i)$. This attaching sphere is Legendrian precisely when $s(V_i)$ is in the kernel of $p_1dq_1+p_2dq_2$ when $(p_1,p_2)$ are the coordinates in the polytope of the point on the ray $r(V_i)$ which the attaching sphere maps to. Since we designated that $(p_1,p_2)=(0,0)$ at the $\{V_1,\dots, V_n\}$-center, a point on the ray has $(p_1,p_2)$ coordinates which are a scalar multiple of $r(V)$. Since $r(V_i)$ and $s(V_i)$ are orthogonal, this ensures that $s(V_i)$ is in the kernel of $p_1dq_1+p_2dq_2$ when $(p_1,p_2)$ lies on the ray. Thus the centeredness condition ensures that the attaching spheres of the Weinstein $2$-handles are Legendrian. This perspective gives the first hint of why this condition is a natural requirement. In step 3, we see how the $\{V_1,\dots, V_n\}$-centeredness condition is used in gluing embedded Weinstein structures.

\textbf{Step 2: embedding the model Weinstein structure on a $2$-handle into our standard model.}	
The natural Liouville structure on a Weinstein $2$-handle in $(\R^4, d\widetilde x_1\wedge d\widetilde y_1+d\widetilde x_2\wedge d\widetilde y_2)$ has Liouville form 
$$\lambda_2=-\widetilde x_1d\widetilde y_1-\widetilde x_2d\widetilde y_2-2\widetilde y_1d\widetilde x_1-2\widetilde y_2d\widetilde x_2$$ 
which corresponds to the Liouville vector field 
$$Z_2=-\widetilde x_1\partial_{\widetilde x_1}-\widetilde x_2\partial_{\widetilde x_2}+2\widetilde y_1\partial_{\widetilde y_1}+2\widetilde y_2\partial_{\widetilde y_2}.$$
Notice that this vector field vanishes at the origin. The core of the $2$-handle is the stable manifold of the zero which is the $(\widetilde x_1,\widetilde x_2)$-plane: $\{\widetilde y_1=\widetilde y_2=0\}$. The co-core of the handle is the unstable manifold of the zero which is the $(\widetilde y_1,\widetilde y_2)$-plane: $\{\widetilde x_1=\widetilde x_2=0\}$.

As in the standard model from section~\ref{section:2handleWeinstein}, we want to align the core of Weinstein's model $2$-handle with the Lagrangian disk 
\begin{equation}\label{eq core}
D_c = \{(r \varepsilon e^{i\theta}, -r \varepsilon e^{-i\theta})\mid 0\leq r\leq 1,\; 0\leq \theta \leq 2\pi\}.
\end{equation}
This disk lies in the plane where $z_2=-\overline{z_1}$. In real coordinates this means $x_2=-x_1$ and $y_2=y_1$. Similarly we want the co-core of the 2-handle to align with the Lagrangian disk
\begin{equation}\label{eq co-core}
D_{cc} = \{(r \varepsilon' e^{i\theta}, r \varepsilon' e^{-i\theta})\mid0\leq r\leq 1,\; 0\leq \theta \leq 2\pi \}.
\end{equation}
This disk lies in the plane where $z_2=\overline{z_1}$. In real coordinates this means $x_2=x_1$ and $y_2=-y_1$.

We define a (linear) symplectomorphism of $(\R^4,\omega_{std})$ which sends the core of the standard handle to $D_c$ and the co-core of the standard handle to $D_{cc}$ as follows.
$$\Psi(\widetilde x_1,\widetilde y_1, \widetilde x_1, \widetilde y_2) = \left(\frac{\sqrt{2}}{2}(\widetilde x_1-\widetilde y_2),\frac{\sqrt{2}}{2}(\widetilde x_2+\widetilde y_1), -\frac{\sqrt{2}}{2}(\widetilde x_1+\widetilde y_2),\frac{\sqrt{2}}{2}(\widetilde x_2-\widetilde y_1)   \right)$$
with inverse map
$$\Psi^{-1}(x_1,y_1,x_2,y_2) = \left(\frac{\sqrt{2}}{2}(x_1-x_2),\frac{\sqrt{2}}{2}(y_1-y_2),\frac{\sqrt{2}}{2}(y_1+y_2),-\frac{\sqrt{2}}{2}(x_1+x_2) \right)$$

Notice that the plane $\{\widetilde y_1=\widetilde y_2=0\}$ is sent under $\Psi$ to the plane $\{x_2=-x_1,\; y_2=y_1\}$ and the plane $\{\widetilde x_1=\widetilde x_2=0\}$ is sent to $\{x_1=x_2,\; y_2=-y_1 \}$. Pulling back the Liouville form $\lambda_2$ under $\Psi^{-1}$ gives us the Liouville form on the $2$-handle in our model:
$$\lambda_h=(\Psi^{-1})^*(\lambda_2) = \left(-\frac{1}{2}y_1+\frac{3}{2}y_2 \right)dx_1+\left(\frac{1}{2}x_1+\frac{3}{2}x_2 \right)dy_1+\left(\frac{3}{2}y_1-\frac{1}{2}y_2 \right)dx_2+\left(\frac{3}{2}x_1+\frac{1}{2}x_2 \right)dy_2$$
The Liouville vector field $\omega_{std}$-dual to the form $\lambda_h$ is
$$Z_h = \left(\frac{1}{2}y_1-\frac{3}{2}y_2 \right)\partial_{y_1}+\left(\frac{1}{2}x_1+\frac{3}{2}x_2 \right)\partial_{x_1}+\left(-\frac{3}{2}y_1+\frac{1}{2}y_2 \right)\partial_{y_2}+\left(\frac{3}{2}x_1+\frac{1}{2}x_2 \right)\partial_{x_2}$$
Thus, $\Psi$ provides the embedding into our model neighborhood of a node. After performing the $T^2$-equivariant symplectomorphism induced by the appropriate $SL(2,\Z)$ transformation, this provides a Liouville structure (which is gradientlike for a function with a single index $2$ critical point) on the complement of the smoothing in a neighborhood of a node $V_i$.

\textbf{Step 3: glue the Liouville structure from the $2$-handle to the canonical Liouville structure on $U$.} 

Now we have proposed Liouville vector fields and forms on the pieces of the complement of the smoothing near the nodes $V_1,\dots, V_n$ and on $E$, the complement of the nodal total toric divisor. If we compare these two Liouville structures on the overlap of these pieces, they do not fully agree. Therefore, to glue them together to get a global Weinstein structure, we need to interpolate between these two Liouville structures on their overlap. To show that we get a global Weinstein structure on the complement of the smoothing with our handles, we need to ensure that these Liouville structures can be glued together in a way that does not introduce new zeros of the vector field in the interpolating region. 

To explain this interpolation more explicitly, we work in our standard model neighborhood of the node. The Liouville form $\lambda_h$ on the $2$-handle is defined on this neighborhood. The second Liouville form is defined on $E$ by $\lambda_{can}=p_1dq_1+p_2dq_2$ with Liouville vector field $Z_{can}=p_1\partial_{p_1}+p_2\partial_{p_2}$, where the $(p_1,p_2)$ origin is placed at the $\{V_1,\dots, V_n\}$-center of the polytope. We intersect $E$ with the neighborhood $N_i$ of a vertex $V_i$ to find the overlapping region where both forms are defined. To get from $E\cap N_i$ to our standard local model we use the equivariant symplectomorphism $\psi_i$ induced by the appropriate $SL(2,\Z)$ transformation as explained in Section~\ref{section:2handleWeinstein}. Under this symplectomorphism $\psi_i^{-1}(E\cap N_i)$ is the region denoted by $U$ in Section~\ref{section:Topologycomplements}. Pulling back, the Liouville form is $\lambda_T:=\psi_i^*\lambda_{can}$ and the Liouville vector field is $Z_T:=d\psi_i^{-1}(Z_{can})$. By the $\{V_1,\dots, V_n\}$-centeredness assumption, $Z_{can}$ is tangent to the ray $r(V_i)$. Because $\psi_i$ sends the direction of the ray in the standard model $(1,1)$ to $r(V_i)$, $Z_T=d\psi_i^{-1}(Z_{can})$ is tangent to the ray $(1,1)$ in our standard model. Because the core $D_c$ and co-core $D_{cc}$ project to the ray $(1,1)$, $Z_T$ is tangent to $D_c$ and $D_{cc}$. The other Liouville vector field $Z_h$, arising from Weinstein's model on the $2$-handle also is tangent to the core $D_c$ and co-core $D_{cc}$. Let $i_1: D_{cc}\to \C^2$ be the inclusion of the co-core $D_{cc}$ and $i_2:D_c\to \C^2$  the inclusion of the core $D_c$. Since both $D_c$ and $D_{cc}$ are Lagrangian, the tangency of the Liouville vector fields implies that $i_1^*\lambda_h = i_1^*\lambda_T=0$ and $i_2^*\lambda_h=i_2^*\lambda_T=0$. This is the first key property which we use to get a good interpolation between $\lambda_T$ and $\lambda_h$. Note that this key property relies crucially on the $\{V_1,\dots, V_n\}$-centeredness assumption.

Recall from Section~\ref{section:Topologycomplements} that in our standard local model, we decompose the complement of the smoothing into three pieces $A$, $B$, and $U$. We perform the interpolation between $\lambda_h$ and $\lambda_T$ in the region $B$. Note that $B$ is a solid torus which deformation retracts onto the attaching sphere $C$. This is the second property we need to construct a good interpolation.

Next we define a function $\rho:B\to [0,1]$ which depends only on the momentum coordinates, which is identically $0$ near $A$ and is identically $1$ near $U$. We assume that $\rho$ has level sets as depicted in Figure~\ref{fig:complsm}, and is monotonically decreasing from $1$ to $0$ across the level sets from $U$ to $A$. Observe that the Liouville vector fields $Z_h$ and $Z_T$ are both inwardly transverse to the level sets of $\rho$. This is the last property we need to prove we can obtain a good interpolation.

Since $B$ deformation retracts onto the attaching circle $C$, the inclusion $i:C\to B$ induces an isomorphism on cohomology $i^*: H^1(B)\to H^1(C)$. Because $\lambda_h$ and $\lambda_T$ both pull back to the $0$-form on $C$, we must have that their restrictions to $B$ are co-homologous so the difference is exact: 
$$\lambda_h|_B - \lambda_T|_B = dH.$$
The restriction to the core vanishes, ($i_2^*(\lambda_h - \lambda_T)=0$), so $H$ must be constant along the core. After possibly changing $H$ by a constant, we can assume that $H$ vanishes along the core (and thus also along the co-core where $H$ must also be constant for similar reasons). 

Now we define the interpolated Liouville structure.
$$\lambda = \lambda_h+d(\rho H) = \lambda_h+\rho dH + Hd\rho = (1-\rho)\lambda_h+\rho\lambda_T+Hd\rho.$$

This form is Liouville for the standard symplectic form, agrees with $\lambda_h$ near $\rho^{-1}(0)$, and agrees with $\lambda_T$ near $\rho^{-1}(1)$. Moreover, at points $p$ in the core or co-core $H=0$ so $\lambda(p) = (1-\rho)\lambda_h(p)+\rho\lambda_T(p)$ is exactly the convex interpolation between the two forms. In particular, $i_1^*\lambda$ and $i_2^*\lambda$ vanish. Therefore, the dual Liouville vector field is tangent to the chosen core and co-core. This implies that the stable and unstable manifolds of the Liouville vector field $Z$ corresponding to $\lambda$ agree with those of $\lambda_h$, so the core and co-core of this modified Liouville structure on the $2$-handle agree with the model. 

The final thing to check is that the only point where $\lambda$ (or equivalently $Z$) vanishes is the origin. The significance of this is to ensure that the Weinstein structure we construct only has a single handle in the neighborhood of the node $V_i$ (the $2$-handle we understand). Because $Z_h$ and $Z_T$ are both inwardly transverse to the level sets of $\rho$, the convex combination $(1-\rho)Z_h+\rho Z_T$ is inwardly transverse to the level sets of $\rho$. The Liouville vector field is given by $Z=(1-\rho)Z_h+\rho Z_T+Z_{err}$ where $Z_{err}$ is $\omega_{std}$ dual to $Hd\rho$.

Because $\omega(Z_{err},\cdot) = Hd\rho$, we get that $\iota_{Z_{err}}\omega|_{\operatorname{ker}d\rho} = 0$. This implies that $Z_{err}$ is actually tangent to the level sets of $\rho$. Since $(1-\rho)Z_h+\rho Z_T$ is transverse to the level sets of $\rho$ and $Z_{err}$ is tangent, we conclude that $Z=(1-\rho)Z_h+\rho Z_T+Z_{err}$ is transverse to the level sets of $\rho$ and in particular is non-vanishing in $B$. Therefore, the only point in the model neighborhood where $\lambda$ or $Z$ vanishes is the node point $V_i$.

This shows that the complement of an arbitrarily small neighborhood of the $\{V_1,\dots, V_n \}$-smoothing of the total toric divisor supports the structure of a Weinstein domain. The Weinstein structure has a single $2$-handle for each node $V_i$, which is attached to the complement of the total toric divisor along a curve of slope $s(V_i)$ in the torus fiber over a point on the ray $R_{V_i}$. 

The last technical point is that the complement of the neighborhood of the total toric divisor is not precisely $D^*T^2$ for a round disk $D$, but there is a Weinstein homotopy which relates these two by shrinking the boundary to a round disk. If we perform this boundary shrinking homotopy away from the attaching regions of the $2$-handles (away from the rays), we get a Weinstein homotopy from the Weinstein structure we have constructed on the smoothing to the Weinstein manifold obtained from $D^*T^2$ by attaching $2$-handles along the co-oriented co-normal lifts of curves of slope $s(V_i)$ for $i=1,\dots, n$. (Note that the co-oriented co-normal lift of a curve of slope $s(V_i)$ lives in a torus fiber over a point in the ray $R_{V_i}$, which is precisely the attaching sphere of the $2$-handle we attach near $V_i$.)
\end{proof}

\subsection{An almost toric point of view} \label{section:almosttoric}

In this subsection we take a brief detour to describe smoothing a node and the corresponding Weinstein $2$-handle attachment in the complement from the almost toric viewpoint.

Recall from Section~\ref{intro to almost toric} that almost toric manifolds generalize toric manifolds by allowing torus fibers to have nodal singularities. We restrict ourselves to almost toric manifolds whose base is a disk. For any almost toric $4$-manifold, the preimage of the (undotted) boundary of the almost toric base is a symplectic hypersurface we call the \emph{total almost toric divisor}.

\subsubsection{An almost toric model via nodal trade.} 
First we discuss how the smoothed model toric divisor as in Section~\ref{section:Smoothing} can be  understood as a total almost toric divisor via operations called nodal trades. For any toric manifold, a \emph{nodal trade} transforms the node of the total toric divisor at a vertex of the moment polytope into a nodal singularity of an almost toric fibration above a point in the interior of the base. In order to understand the relation between the two models, we follow \cite[Section 7]{Evans}. We consider the \emph{Auroux system} described by the Lagrangian fibration $H = (H_1,H_2): \C^2 \rightarrow \R^2$ with 
$H_1 (z_1, z_2)=\frac{1}{2} |z_1 z_2 - \varepsilon^2|^2$ and $H_2(z_1,z_2) = \frac{1}{2} ( |z_2|^2-|z_1|^2)$. The image of this Lagrangian fibration is the closed half plane of points with non-negative first coordinates as on the left of Figure~\ref{fig:nodaltradecorner}. 
There is one singular fiber at the point $p=(\frac{1}{2} |\varepsilon^2|^2, 0)$. The preimage of the boundary is the smooth conic $\{z_1 z_2 = \varepsilon^2\}$. $H$ is an almost toric fibration with a branch cut along $\{(x,0) | x > \frac{1}{2} |\varepsilon^2|^2\}$ (see~\cite[Lemma 7.2]{Evans}). Notice that in this description, $\{z_1z_2=0\}$ lies in $H_1^{-1}(\frac{1}{2} |\varepsilon^2|^2)$, the node lies in the singular fiber at $p$, and the total almost toric divisor corresponds locally to the local model of smoothing (away from the interpolation locus). We are therefore describing the same smoothing as in Section~\ref{section:Smoothing} but with another Lagrangian fibration than the standard moment map. In order to recover a similar projection to the first quadrant, we perform a mutation with respect to the $(-1,0)$-eigenray to get an almost toric fibration of the same space whose base is shown in the central image of Figure~\ref{fig:nodaltradecorner}. This mutated base is obtained by keeping the quadrant above the $(-1,0)$-eigenline unchanged and applying the inverse of the affine monodromy $\begin{pmatrix}
	1 & 1 \\
	0 & 1
\end{pmatrix}^{-1} = 
\begin{pmatrix}
	1 & -1 \\
	0 & 1
\end{pmatrix}$
to the quadrant below the eigenline (as we cross the cut in the anti-clockwise direction when gluing the two halves), so that the boundary edge spanned by the vector $(0,-1)$ is mapped to the edge spanned by the vector $(1,-1)$. We apply an $SL(2,\Z)$ transformation to obtain the standard nodal trade corner. The $SL(2,\Z)$ transformation fixes the $(0,1)$-edge and transforms the $(1,-1)$-edge to a $(1,0)$-edge. The $(1,0)$-eigenray is mapped to the $(1,1)$-eigenray as on the right of Figure~\ref{fig:nodaltradecorner}.

\begin{figure}
	\centering
	\includegraphics[width=3cm]{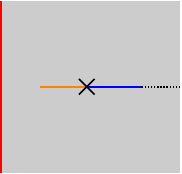}
	\hspace{1.5cm}
	\includegraphics[width=3cm]{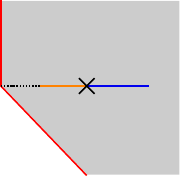}
	\hspace{1.5cm}
	\includegraphics[width=3cm]{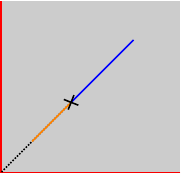}
	\caption{The different steps to get the nodal trade picture for the standard Delzant corner: on the left the image of the Auroux system, in the center the base after a mutation, on the right the base after a $SL(2,\Z)$-transformation.}
	\label{fig:nodaltradecorner}
\end{figure}

By this procedure, the local model of $\C^2$ with the standard torus action and Delzant polytope $\Delta$ given by the upper right quadrant in the plane (as on the left of Figure~\ref{both sl}) perturbs the standard moment map to an almost toric fibration the base of which consists of the upper right quadrant of the plane, and one dotted segment in the direction~$(1,1)$ joining the origin to an interior point $x$, as shown on the right of Figure~\ref{fig:nodaltradecorner}.

We denote this local model of $\C^2$ after a nodal trade $(\mathcal{B},\pi)$. The base $\mathcal{B}$ encodes the following information:
\begin{itemize}
	\item the preimage under $\pi$ of any interior point excluding $x$ is a smooth Lagrangian torus,
	\item the preimage under $\pi$ of any point on the edge, including the origin, is an isotropic circle,
	\item the preimage under $\pi$ of $x$ is a singular Lagrangian torus with a single (Lagrangian) node,
	\item the dotted segment lies here on a ray called the \emph{eigenray} of the node and encodes the branch cut.
\end{itemize}
Note that in~\cite{Symington} the eigenray begins at the marked point, while in our setting it starts at the vertex, in order to match the rays used in the centeredness condition. The line prolonging the eigenray is called an \emph{eigenline}.

\subsubsection{The topology of the complement.} 
Consider the local model $(\mathcal{B}, \pi)$ of a nodal trade as above. Let $\Sigma'$ denote the total almost toric divisor in $(\mathcal{B},\pi)$, ie. the preimage of the boundary of the quadrant. Note that the preimage of the origin is a circle because of the nodal trade. The preimages under $\pi$ of the complement of both a neighborhood of the boundary of the quadrant and a neighborhood of the branch cut correspond to a perturbation of the open subset $U$ defined in Section~\ref{section:Topologycomplements}. The complement of $\Sigma'$ is obtained by adding in a neighborhood of the nodal fiber. Topologically this amounts to adding a $2$-handle attached along the vanishing cycle which collapses to the node. In the Auroux system model, the vanishing cycle corresponds to the attaching sphere of Section~\ref{section:Topologycomplements} and is an orbit of the Hamiltonian vector field associated to $H_2$. In the local angle coordinates, the vanishing cycle corresponds to a circle of 
slope $(0,1)$. To get the base of the nodal trade, we perform a mutation and an $SL(2,\Z)$ transformation, see Figure \ref{fig:nodaltradecorner}}. Under our conventions, a mutation reverses the orientation of the eigenray along which we perform the mutation. This reverses the corresponding orientation of the slope of the attaching sphere, so under a mutation, our circle of slope $(0,1)$ is preserved but is oppositely oriented with a slope of $(0,-1)$. The $SL(2,\Z)$ transformation mapping the base to the standard corner then maps in the fibers the $(0,-1)$ curve to the $(1,-1)$ curve which is the same attaching sphere as in the standard toric picture of Section~\ref{section:2handleWeinstein}. See Section $4.1$ in~\cite{LeungSymington} for more details on nodal trades and the topology of the complement of such a total almost toric divisor via a handle attachment.  

\begin{figure}
	\centering
	\includegraphics[scale=.5]{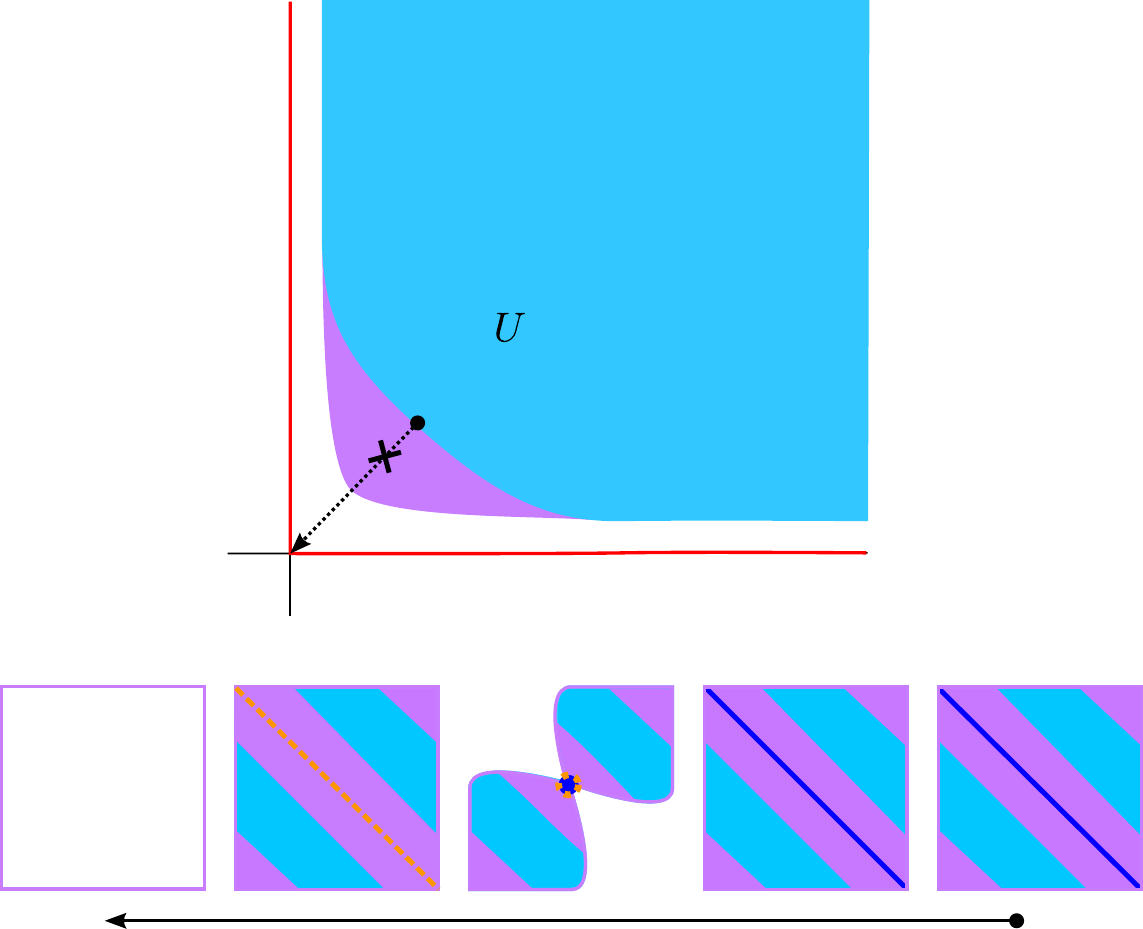}
	\caption{The handle attachment picture in the almost toric point of view. The core of the $2$-handle is the union of the blue curves in the tori over the points in the moment map image over the dotted line between the dot and the x together with the collapsed singular point over the x. A neighborhood of this core gives the $2$-handle, which is shown in purple.}
	\label{fig:divisorcomplementalmosttoric}
\end{figure}

\subsubsection{The Weinstein structure on the 2-handle.}

\begin{lemma}\label{AThandle}
In the local model $(\mathcal{B}, \pi)$ of a nodal trade, the core $D_c$ and co-core $D_{cc}$ of the $2$-handle of Section~\ref{section:2handleWeinstein} correspond to disks projecting via the almost toric fibration $\pi$ on the ray starting at the corner and with the direction $(1,1)$. The slope of the attaching sphere is equal to $(1,-1)$.
\end{lemma}

\begin{proof}
Let us show that in the local model of a nodal trade $(\mathcal{B}, \pi)$ , the core $D_c$ and co-core $D_{cc}$ of the $2$-handle project through $\pi$ onto half segments lying along the eigenline as shown in Figure~\ref{fig:divisorcomplementalmosttoric}. The co-core $D_{cc}$ projects onto the half segment from the marked point x to a point near the vertex $(0,0)$ 
denoted by the orange segment on the right of Figure~\ref{fig:nodaltradecorner}. The core $D_c$ projects onto a half segment from x to a point further in the interior of the original polytope denoted by the blue segment the right of Figure~\ref{fig:nodaltradecorner}.

To verify these projections of the core and co-core in an almost toric manifold, let us see where the disks of $\C^2$ described by the formulas~\ref{chart core in the standard model} and \ref{chart co-core in the standard model} project in the Auroux system. Under the Lagrangian fibration $H = (H_1,H_2)$, both of the images of the disks $D_{c}$ and $D_{cc}$ vanish under $H_2$. The value of $H_1$ is greater than $\frac{1}{2} \varepsilon^4$ along $D_{c}$. The image of the disk $D_{cc}$ under $H_1$ is positive and less than $\frac{1}{2} \varepsilon^4$. As the eigenline is fixed under the mutation with respect to the $(-1,0)$-eigenray, the projections of the disks $D_{cc}$ and $D_c$ do not change. The $SL(2,\Z)$ transformation to the nodal trade corner maps the $(1,0)$-eigenray  to the $(1,1)$-eigenray. The disks $D_{cc}$ and $D_c$ now project to half segments along this diagonal eigenray, as on the right of Figure~\ref{fig:nodaltradecorner}. 
\end{proof}

Let $V$ be a vertex of the Delzant polytope of a toric manifold with slope $s(V)$. 
There is a symplectomorphism induced by an $SL(2,\Z)$ transformation as in the proof of Lemma~\ref{lemma slope}, which carries a neighborhood of the standard model to the neighborhood of the vertex equivariantly. Applying this symplectomorphism after performing the nodal trade in the model yields a nodal trade at the vertex. This symplectomorphism takes the slope $(1,-1)$ to $s(V)$ exactly as in the toric case.

\subsubsection{More general handle attachment in the almost-toric setting} \label{sss:ATgeneral}
In general, a neighborhood of any fiber with a single singularity is described by the local model where one attaches a $2$-handle to the complement of a neighborhood of the union of the total almost toric divisor and the branch cuts. The core disk of the attached $2$-handle passes through the nodal point of the divisor and projects to a half segment along the corresponding branch cut. If the branch cut is along an eigenray spanned by the vector $(a,b)$ then the attaching sphere is the $(b,-a)$-circle. However, whenever the branch cut and eigenray of a nodal singularity agree, we can find Lagrangian disks near the nodal singularity which can serve as the core and co-core for a Weinstein 2-handle attachment. The core $D_c$ projects to the portion of the eigenray starting at the singular value x and continues in the direction away from the branch cut. The intersection of $D_c$ and each fiber above these points consists of a circle whose slope is the vanishing cycle of the nodal singularity. The co-core $D_{cc}$ projects to the portion of the eigenray starting at the singular value x and continues along the branch cut. The intersection of $D_{cc}$ and each fiber above these points is again a circle given by the vanishing cycle of the singularity. The eigenlines of the topological and affine monodromies are orthogonal, thus when the branch cut direction agrees with the eigenline of the affine monodromy, these disks are Lagrangian as in Section~\ref{section:2handleWeinstein}. Note that a complement of an total almost toric divisor does not always have a natural description as a smoothing of a toric divisor.

Almost toric fibrations are also allowed to have several distinct singularities in the same fiber or equivalently to have various fibers each with a single singularity projecting to distinct points along an eigenray. For an almost toric fibration with $k$ singularities in the same fiber, $k \geq 2$, the complement of the total almost toric divisor has a Weinstein structure built by attaching $2$-handles near each singular point to $D^*T^2$. The core and co-core disks have attaching spheres with parallel slopes, translated by different amounts in the torus so that the core and co-core disks of distinct handles are disjoint from each other.

\subsubsection{The global Weinstein structure on the complement}

Consider an almost toric fibration as above where all branch cuts are colinear to eigenlines.
Choose the subset of standard toric vertices where one wants to consider smoothing and perform a nodal trade at these vertices. The centeredness condition can then be adapted to almost toric manifolds in the following way. An almost toric polytope is centered if all its eigenlines intersect at a common single point in the interior of the polytope without crossing any branch cuts. 

The proof of Theorem~\ref{thm:toricWein} as in Section~\ref{section:$2$-handle attachment} can be carried out similarly in the almost toric case. Because the eigenray in the nodal trade picture plays a similar role as the ray at a vertex, Step 1 of the proof is carried out in exactly the same way using the centeredness notion above. For Step 2, using the Lagrangian disks $D_c$ and $D_{cc}$ described in Section~\ref{sss:ATgeneral}, we can find a similar formula for a linear symplectomorphism which carries Weinstein's standard model of a $4$-dimensional $2$-handle to the neighborhood of the almost toric nodal singularity and maps the core and co-core in Weinstein's model to $D_c$ and $D_{cc}$. As the core Lagrangian disk projects on the eigenray via the almost toric fibration, the arguments of Step 3 hold in this almost toric setting. Therefore, Theorem~\ref{thm:toricWein} holds when substituting~$D$ with the total almost toric divisor. 

The original assumption and statement of Theorem~\ref{thm:toricWein} is recovered from the almost toric assumption and statement by performing nodal trades at each of the nodes that are to be smoothed. Note that our centeredness condition as stated above may not cover all of the base diagrams that can arise in the almost toric setting, however this version suffices for the examples we describe in Section~\ref{section examples}. We hope to extend this condition to more general almost toric base diagrams in future work.

\section{Combinatorial Possibilities for Delzant Polytopes and resulting Weinstein divisor complements}\label{s: classifying}

In this section, we explore the possibilities for the combinatorial data in the statement of Theorem~\ref{thm:toricWein} given by the set of slopes of the nodes we smooth and the centeredness condition. In Section \ref{subsection Delzant} we show that  for any set of primitive vectors there is a toric 4-manifold with a subset of vertices that have the desired slopes (Theorem~\ref{proposition slopes}). However, it is not always possible to find a toric 4-manifold that is also centered with respect to these vertices and we deeply explore the centeredness condition in Section \ref{subsection centeredness}. We also address when our construction of Weinstein domains yields repetition. Namely, equivariantly symplectomorphic toric 4-manifolds give rise to symplectomorphic completions of corresponding Weinstein domains, as we explain in detail in Section \ref{subsection repetitions}.

We first recall the definitions of a ray and a slope of a vertex, which are significantly used throughout this section. As in Definition \ref{defn:centered}, 
	a ray $R_V$ of a vertex $V$ of a Delzant polytope $\Delta$ is generated by the vector $r(V)\in\mathbb Z^2$ that is equal to the sum of the edge vectors of $\Delta$ adjacent to $V$ and beginning at $V$ (see Figure \ref{fig ray}). Further, the slope of a vertex $V$ is a vector $s(V)\in\mathbb Z^2$
given by a $-\pi/2$ rotation of the vector $r(V)$.

\begin{figure}
	\centering
	\includegraphics[width=10cm]{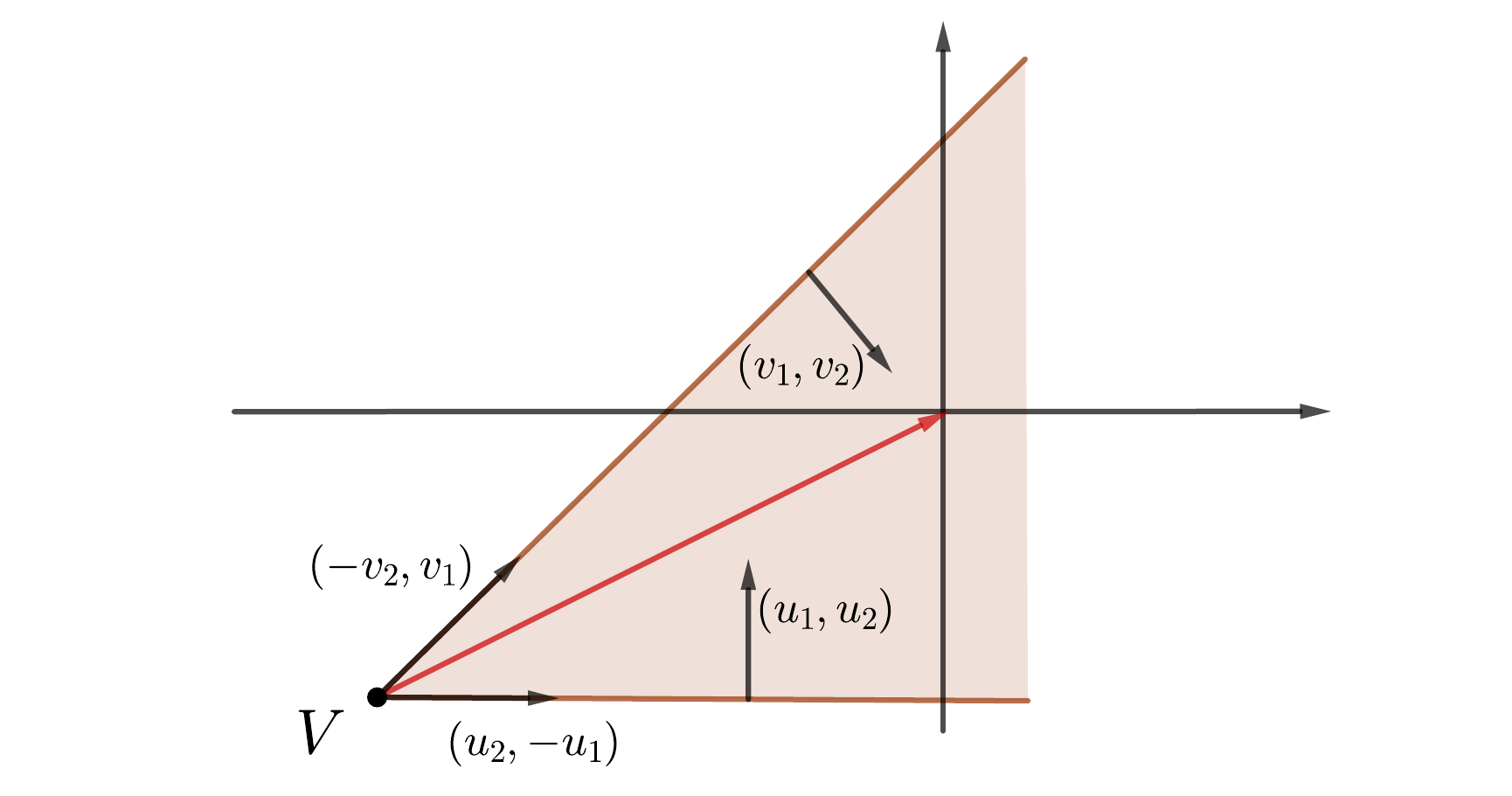}
	\caption{A vertex with a ray (depicted with a red arrow) generated by $r(V)=(u_2-v_2,v_1-u_1)$ and  the slope $s(V)=(v_1-u_1,v_2-u_2)$.}
	\label{fig ray}
\end{figure}      

An equivalent characterisation of a slope of a vertex is given by the following proposition.

\begin{prop}\label{prop def of slopes} A slope $s(V)\in\mathbb Z^2$ of a vertex $V$ is equal to the difference of the inward normal vectors of the edges meeting in $V$, taken in the counterclockwise direction.
\end{prop}
\begin{proof}
Denote by $v=(v_1,v_2)$ and $u=(u_1,u_2)$ inward normal vectors of the edges meeting in $V,$ taken in the counterclockwise direction (Figure \ref{fig ray}). Then, the vectors of these edges are $(-v_2,v_1)$ and $(u_2,-u_1)$ and, consequently,  the direction of the ray in $V$ is 
$r(V)=(-v_2+u_2,v_1-u_1)$. According to Definition \ref{defn:centered} the slope of the vertex $V$ is equal to $s(V)=\begin{pmatrix}
   0 & 1 \\
    -1 & 0
  \end{pmatrix}r(V)=(v_1-u_1,v_2-u_2)=v-u.$
  \end{proof}
  
Observe that vectors $r(V)$ and $s(V)$ are primitive vectors in $\mathbb Z^2$, meaning they are not an integer multiply of any other vector in $\mathbb Z^2.$ Being related by a $\pi/2$ rotation, it is enough to check that $s(V)$ is primitive. Assume $v_1-u_1=ka$ and $v_2-u_2=kb,$ for some $a,b,k\in\mathbb{Z}$. Since the vectors $u$ and $v$ form a $\Z^2$-basis we have $v_1u_2-u_1v_2=1.$ Thus, we obtain $k(au_2-bu_1)=1$ and $-k(v_1b+v_2a)=1$ and the observation follows. 

\subsection{On the richness of Delzant family}\label{subsection Delzant}
The main purpose of this section is to prove the following theorem.

\begin{theorem}\label{proposition slopes}
For any choice of primitive vectors $\{(a_1, b_1), \ldots, (a_k,b_k)\}$ there is a Delzant polytope with at least $k$ edges such that there are vertices $V_1,\ldots, V_k$ with the slopes $s(V_i)=(a_i,b_i),$ $i=1,\ldots, k.$ 
\end{theorem}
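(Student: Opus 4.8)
The plan is to argue entirely on the level of fans, assembling the two quoted lemmas. Recall that a $2$-dimensional Delzant polytope is equivalent to a complete regular fan in $\mathbb{R}^2$ — a cyclically ordered collection of primitive rays, every pair of consecutive ones forming a positively oriented $\mathbb{Z}^2$-basis — where the rays are the inward normals of the edges, and where the slope of a vertex $V$ is $v-u$ with $u,v$ the inward normals at $V$ ordered so that $(v,u)$ is a positively oriented basis (this is the convention of Lemma~\ref{lemma slope}).

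First I would list the distinct vectors occurring among $(a_1,b_1),\dots,(a_k,b_k)$, say $w_1,\dots,w_m$, with $w_j$ occurring $N_j\geq 1$ times, so $\sum_j N_j=k$. By Lemma~\ref{lemma normals} there is a Delzant polytope whose fan $F'$ is complete, regular, and has all of $w_1,\dots,w_m$ among its rays. For each $j$ let $u_j$ be the ray of $F'$ immediately counterclockwise from $w_j$, so that $\det(w_j,u_j)=1$. Now I would subdivide the cone between $w_j$ and $u_j$ by inserting the $N_j$ new rays $w_j+u_j,\,2w_j+u_j,\dots,N_jw_j+u_j$; a one-line determinant computation shows that every consecutive pair of rays in the refined fan still forms a positively oriented $\mathbb{Z}^2$-basis, and this refinement is exactly the fan obtained from $N_j$ successive corner blow-ups as in the proof of Lemma~\ref{lemma blowup}. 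Reading off slopes with the convention above, the vertex between $w_j+u_j$ and $u_j$, together with the vertices between $\ell w_j+u_j$ and $(\ell-1)w_j+u_j$ for $\ell=2,\dots,N_j$, all have slope exactly $w_j$ — that is $N_j$ distinct vertices of slope $w_j$, and these lie in distinct cones for distinct $j$.

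Finally I would note that the $m$ subdivisions can be carried out one after another without interference, since the $j$-th one only alters the open cone strictly between $w_j$ and its counterclockwise neighbor, hence changes neither any $w_{j'}$ itself nor the counterclockwise neighbor of any $w_{j'}$ with $j'\neq j$. The resulting fan $F''$ is again complete and regular, so by the Delzant correspondence it is the fan of a Delzant polytope; this polytope has at least $k$ edges and $k$ distinguished vertices $V_1,\dots,V_k$ with $s(V_i)=(a_i,b_i)$, which is the claim. I expect the only genuinely delicate point to be the orientation bookkeeping — inserting the new rays on the \emph{counterclockwise} side of each $w_j$ so that the created vertices acquire slope $+w_j$ rather than $-w_j$, and checking that repeated directions, or directions that happen to be adjacent to one another in $F'$, do not cause the blow-up sequences to collide or to produce the wrong slopes. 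This is careful combinatorics rather than a new idea; the substantive content is already contained in Lemmas~\ref{lemma normals} and~\ref{lemma blowup}.
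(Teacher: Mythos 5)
Your proposal is correct and follows essentially the same route as the paper: list the distinct slope vectors with multiplicities, invoke Lemma~\ref{lemma normals} to build a complete regular fan containing them as rays, and then for each such ray $w$ with counterclockwise neighbor $u$ insert the blow-up rays $\ell w+u$, $\ell=1,\dots,N$, exactly as in Lemma~\ref{lemma blowup}, so that each new consecutive pair has difference $w$ and hence produces a vertex of slope $w$. Your extra attention to the determinant check, the orientation convention, and the non-interference of the subdivisions in distinct cones only makes explicit what the paper leaves implicit.
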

Throughout this section, we use the characterisation of a slope given by Proposition \ref{prop def of slopes}.
Note that, at this moment we don't impose the centeredness condition. In particular, this result demonstrates some of the richness of the Delzant family, and may be of independent interest. The properties of $2$-dimensional Delzant  polytopes listed in Lemma \ref{lemma normals} and Lemma \ref{lemma blowup} are used to prove Theorem~\ref{proposition slopes}. Indeed, the proof of Theorem~\ref{proposition slopes} only works for dimension $4$, and it is not known whether an analogous result holds for higher dimensional toric manifolds. 

\begin{lemma} \label{lemma normals} For any choice of distinct primitive vectors $(a_i,b_i),$ $i=1,\ldots, k,$ where $k\geq1,$ there is a Delzant polytope with at least $k$ edges, so that the inward normal vectors of $k$ edges are precisely $(a_i,b_i),$ $i=1,\ldots, k.$
\end{lemma}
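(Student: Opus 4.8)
The plan is to pass from polytopes to fans. Recall that a Delzant polytope in $\R^2$ corresponds to a smooth (regular) complete fan in $\R^2$, with the edges of the polytope in bijection with the rays of the fan, the inward normal vector of an edge being precisely the primitive generator of the corresponding ray. Moreover, in dimension two every smooth complete toric variety is projective and hence is the toric manifold of a moment (Delzant) polytope. Consequently it suffices to produce a smooth complete fan in $\R^2$ whose set of rays contains the rays generated by $(a_1,b_1),\dots,(a_k,b_k)$.

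First I would arrange the data into a (not yet smooth) complete fan. After relabeling, assume the vectors $v_i=(a_i,b_i)$ are listed in increasing polar angle. Whenever two cyclically consecutive vectors subtend an angle $\ge \pi$ — in particular when $k\le 2$ — insert a primitive vector strictly between them; after one such round of insertions every consecutive angular gap is $<\pi$. This yields a cyclically ordered list $w_1,\dots,w_m$ with $m\ge 3$, containing all the $v_i$, such that the two-dimensional cones spanned by the consecutive pairs $w_j,w_{j+1}$ (indices taken mod $m$) are strongly convex and cover $\R^2$; together with their faces they form a complete fan $\Sigma_0$.

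Next I would refine $\Sigma_0$ to a smooth fan by inserting rays inside its two-dimensional cones, retaining every ray already present. Let a two-dimensional cone be spanned by $u$ and $u'$ with $\det(u,u')\ne \pm 1$. Sweeping counterclockwise from $u$, let $u_1$ be the first primitive lattice point of the cone encountered; by Pick's theorem the triangle with vertices $0,u,u_1$ contains no lattice points other than its vertices, so $|\det(u,u_1)|=1$ and $\{u,u_1\}$ is a $\Z^2$-basis. Replacing $u$ by $u_1$ and iterating, and noting that the lattice area of the remaining triangle strictly decreases at each step, the procedure terminates and subdivides the cone into smooth subcones; this is the classical toric resolution of the cyclic quotient singularity attached to the cone. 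Performing this on every two-dimensional cone of $\Sigma_0$ produces a smooth complete fan $\Sigma$ whose rays include $\R_{\ge 0}v_1,\dots,\R_{\ge 0}v_k$.

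Finally, $\Sigma$ is the fan of a smooth compact toric surface, which is projective, hence admits a Delzant moment polytope $\Delta$; its edges biject with the rays of $\Sigma$, with inward normals the corresponding primitive ray generators. Thus $\Delta$ has at least $k$ edges, $k$ of which have inward normal vectors exactly $(a_1,b_1),\dots,(a_k,b_k)$. The step requiring the most care is the smooth subdivision: checking via Pick's theorem that the counterclockwise ``first primitive lattice point'' rule produces a $\Z^2$-basis at each stage and that the process terminates (equivalently, invoking the Hirzebruch--Jung resolution of toric surface singularities). One should also note that refinement only adds rays, so the prescribed rays are never lost, and recall that a regular complete two-dimensional fan always arises from a Delzant polytope.
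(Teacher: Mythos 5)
Your proposal is correct and follows essentially the same route as the paper: reduce to building a complete regular fan containing the prescribed rays, order the vectors by angle and insert extra rays to handle wide (concave) gaps, then regularize each two-dimensional cone by repeatedly taking the first primitive lattice point counterclockwise and verifying via Pick's theorem that consecutive generators form a $\Z^2$-basis. The only cosmetic differences are that you treat the $k\le 2$ case uniformly by ray insertion rather than by an $SL(2,\Z)$ transformation of the $\mathbb{CP}^2$ triangle, and you explicitly invoke projectivity of smooth complete toric surfaces to return from the fan to a Delzant polytope.
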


\begin{proof} \emph{The $k=1$ case.} We first show that for any primitive vector $(a,b)\in\mathbb{Z}^2$ there is a Delzant polytope with an edge whose inward normal vector is $(a,b)$. Let $p,q\in\mathbb{Z}$ be one of the solutions of Bezout's equation $ap-bq=1.$
Then, the $SL(2,\mathbb{Z})$ transformation 
$G=\begin{pmatrix}
   a & q \\
    b & p
  \end{pmatrix},$
maps the vector $(1,0)$ to the vector
$(a,b).$
The moment map image of the complex projective space $\mathbb{CP}^2$ with the standard toric action is a triangle with one inward normal $(1,0)$. If $G$ is the transformation of the inward normal vectors then
$(G^{-1})^T=\begin{pmatrix}
   p & -b \\
    -q & a
  \end{pmatrix}$ 
is the transformation of the polytopes, that is, $(G^{-1})^T$ maps the standard triangle to the triangle with one inward normal vector $(a,b)$ as shown in Figure \ref{trougao}.  The triangle we obtain is the moment map image of 
$\mathbb{CP}^2$ with a reparametrized toric action. In general, after an appropriate $SL(2,\mathbb{Z})$ transformation, every Delzant polytope can be transformed into one with the desired inward normal vector.

\begin{figure}
	\centering
	\includegraphics[width=10cm]{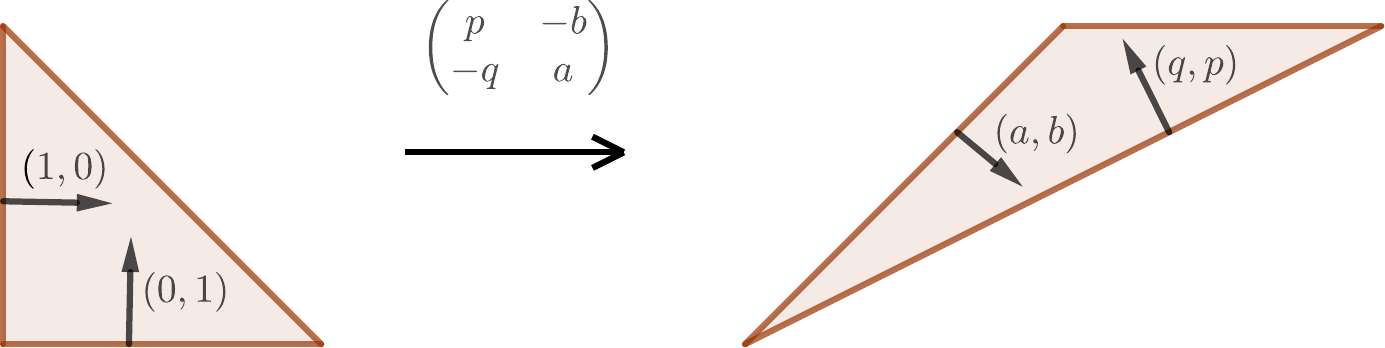}
	\caption{$SL(2,\mathbb Z)$ transformation of the polytope of $\mathbb{CP}^2$.}
	\label{trougao}
\end{figure}  

\begin{figure}
	\centering
	\includegraphics[width=14cm]{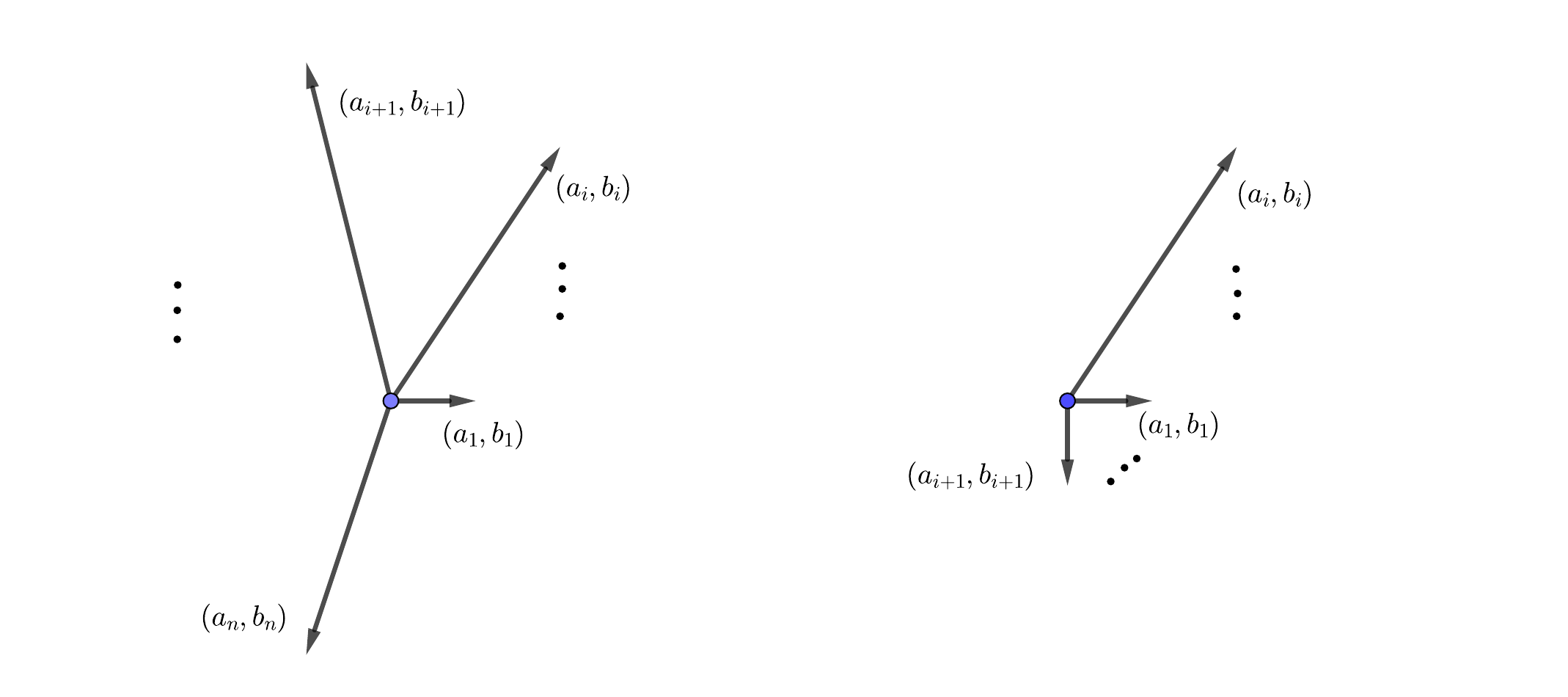}
	\caption{Two fans with vectors $(a_i,b_i),$ $i=1,\ldots, n$. The left one is complete, the right one is not.}
	\label{fan}
\end{figure}  

\emph{The $k\geq 2$ case.} Since the position of vertices and the length of edges are irrelevant, it is enough to find a complete and regular fan in $\mathbb R^2$ that is a collection of cones where some of the 2-dimensional cones are spanned by the desired vectors. 
Draw all the vectors $(a_i,b_i),$ $i=1,\ldots, k,$ in the $\mathbb{R}^2$-plane where the length of each vector is $\sqrt{a_i^2+b_i^2}$.  Up to relabeling the list of slopes, we can assume that the vector adjacent to the vector $(a_i,b_i)$ in the counter clockwise direction is precisely $(a_{i+1},b_{i+1})$ for $i=1,\ldots, k-1$, and the vector adjacent to the vector $(a_k,b_k)$ in the counter clockwise direction is $(a_1,b_1)$ (see Figure \ref{fan}). The collection of all these vectors and of the convex cones spanned by the pairs of adjacent vectors is a fan. If every two adjacent vectors span a convex cone, then this fan is complete. Moreover, if any two adjacent vectors form a $\mathbb{Z}^2$ basis then this fan is also regular. If the fan is not both regular and complete, we have to add more vectors to produce new cones and obtain desired fan. We will now explain this procedure. 

Start with a vector $(a_i,b_i)$ and the adjacent vector $(a_{i+1},b_{i+1})$ in the counter clockwise direction. We will consider two cases.

\emph{Case 1: the span of the two vectors  $(a_i,b_i), (a_{i+1},b_{i+1})$ (taken in the counter clockwise direction) is convex and they do not form a $\mathbb Z^2$-basis (Figure \ref{steps}).} Note that the determinant of two vectors is equal to the area of the parallelogram spanned by these vectors, so the area of the triangle spanned by these vectors is equal to the half of the determinant. Pick's theorem~\cite{Pick} tells us that if $\Delta$ is a triangle whose vertices lie in $\mathbb{Z}^2$, then
$$Area(\Delta)=I+\frac{B}{2}-1,$$
where $I$ is the number of lattice points contained in the interior of the triangle and $B$ is the number of lattice points contained on the boundary of $\Delta$.
Since the vertices of every such triangle are in $\mathbb{Z}^2$, it follows that $B\geq3.$ Two vectors form a $\mathbb{Z}^2$-basis if and only if the area of the spanned triangle is $\frac{1}{2}$. In that case, $B=3$ and $I=0$, and therefore there are no $\Z^2$ lattice points in the triangle except at its vertices. Note also that the two edges from the origin are primitive vectors, and so there are no $\Z^2$ lattice points along these edges, except at its vertices.

Consider now a triangle obtained from a vector $(a_i,b_i)$ and the adjacent vertex $(a_{i+1},b_{i+1})$ and mark all the interior and boundary lattice points in this triangle.
\begin{figure}
	\centering
	\includegraphics[width=14cm]{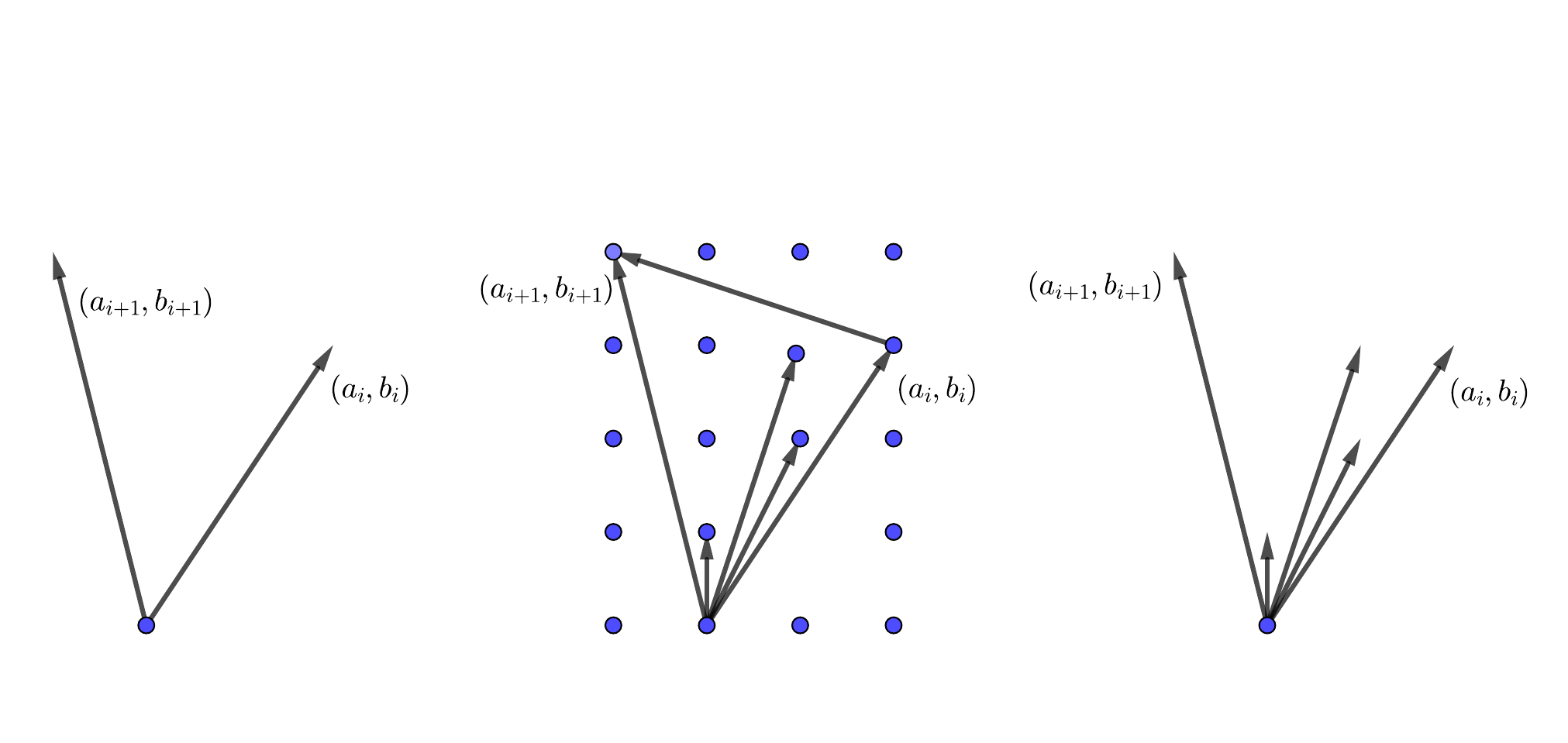}
	\caption{Adding new vectors to make the fan that contains the vectors $(a_i,b_i)$ and $(a_{i+1},b_{i+1})$ regular.}
	\label{steps}
\end{figure}  
Starting from the vector $(a_i,b_i)$ move in the counter clockwise direction. When we encounter a lattice point we add to the fan a new primitive vector $u$ that contains this lattice point. Since there are no interior and boundary points in the triangle spanned by $u$ and $(a_i,b_i)$, we know that they form $\mathbb{Z}^2$-basis. We ignore points that are multiples of $u,$ even if they are contained in the triangle spanned by $(a_i,b_i)$ and $(a_{i+1},b_{i+1})$. We continue from the vector $u$ in the counter clockwise direction to the next lattice point and its associated primitive vector $v$. If $v=(a_{i+1},b_{i+1})$, then $u$ and $(a_{i+1},b_{i+1})$ form a $\mathbb{Z}^2$-basis and we are done. If $v\neq (a_{i+1},b_{i+1})$, then it will be a new vector in our fan. We proceed in the same manner until we get to the vector $(a_{i+1},b_{i+1}).$ The procedure ends because there is a finite number of lattice points in the triangle spanned by $(a_i,b_i)$ and $(a_{i+1},b_{i+1})$.

\emph{Case $2$: the span of the two vectors $(a_i,b_i)$ and $(a_{i+1},b_{i+1})$ (taken in the counter clockwise direction) is concave (see for example Figure  \ref{steps2})} If this is the case, we add a new vector $v$ in between $(a_i,b_i)$ and $(a_{i+1},b_{i+1})$  so that we obtain two convex cones, the first one spanned by $(a_i,b_i)$ and $v$ and the second one spanned by $v$ and $(a_{i+1},b_{i+1}).$ We consider these two convex two triangles, track all the integer lattice points in both of them and, as in case $1$, add all the necessary vectors to our fan.

 \begin{figure}
	\centering
	\includegraphics[width=14cm]{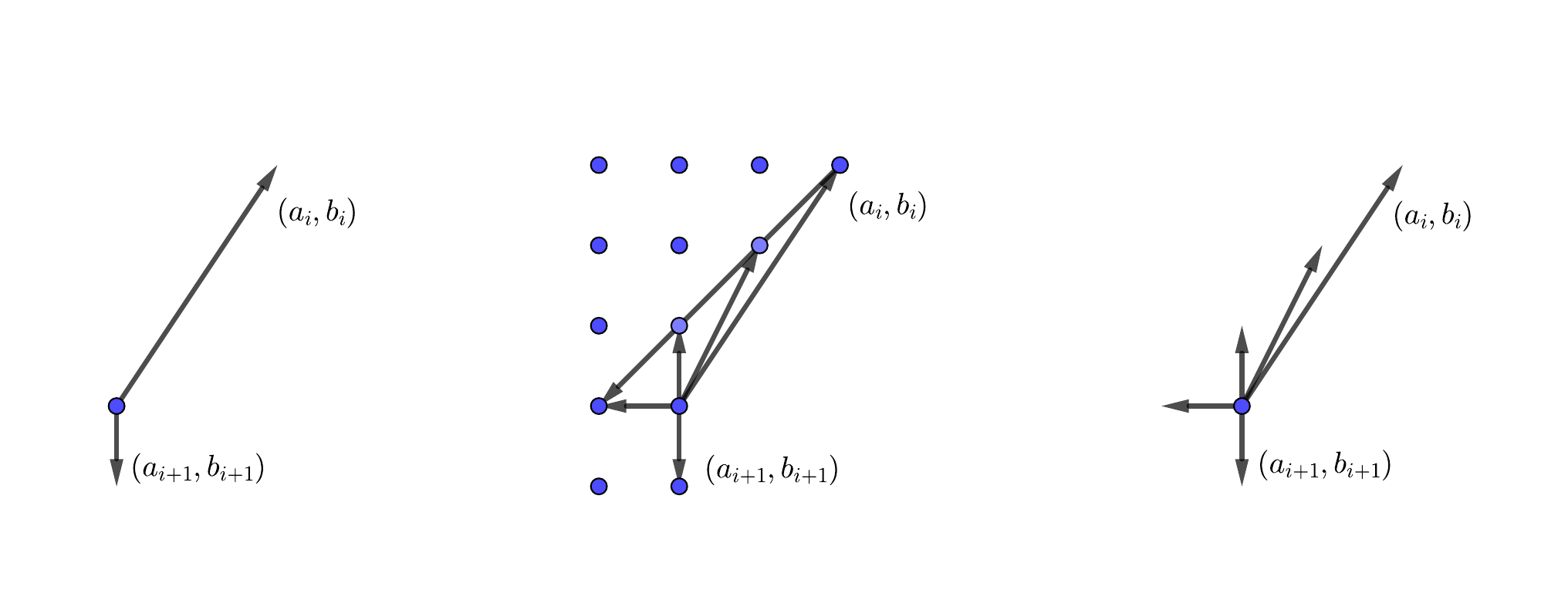}
	\caption{Completing the fan with vectors $(a_i,b_i)$ and $(a_{i+1},b_{i+1})$ by adding vectors so that pairs of vectors span convex cones.}
	\label{steps2}
\end{figure}

We apply the procedure described above to each pair of adjacent vectors of the original fan and obtain a fan that is both regular and complete.  All rational polytopes associated to this fan are closed, convex, simple, and smooth. In other words, they are Delzant. 
\end{proof}

\begin{example} Let us find a Delzant polytope with inward normal vectors $(3,2)$ and $(1,3).$ Start with the fan with two vectors $(3,2)$ and $(1,3)$, and two cones, one convex and one concave, as shown on the left in Figure \ref{primer}. Consider the triangle spanned by the vectors $(3,2)$ and $(1,3)$ and find all the interior and boundary integral lattice points. The first point moving in the counter clockwise direction from the vector $(3,2)$ is $(1,1).$ The next one is $(1,2)$ followed by $(1,3).$ Next, the span of vectors $(1,3)$ and $(3,2)$ in the counter clockwise direction is concave, so we first add the vector $(-1,0).$ In the convex triangle spanned by the vectors $(1,3)$ and $(-1,0)$ we find the vector $(0,1).$ The span by vectors $(-1,0)$ and $(3,2)$ is concave, so we add the vector $(0,-1).$ In the convex triangle spanned by $(-1,0)$ and $(0,-1)$ we don't add any vector, since these two already from a $\Z^2$-basis. Next, in the convex triangle spanned by the vectors $(0,-1)$ and $(3,2)$ we find the vectors $(1,0)$ and $(2,1).$ Finally, the convex hulls of the set of vectors $\{(3,2),(1,1),(1,2),(1,3),(0,1),(-1,0),(0,-1),(1,0),(2,1)\}$ produces a regular and complete fan.

\begin{figure}
	\centering
	\includegraphics[width=16cm]{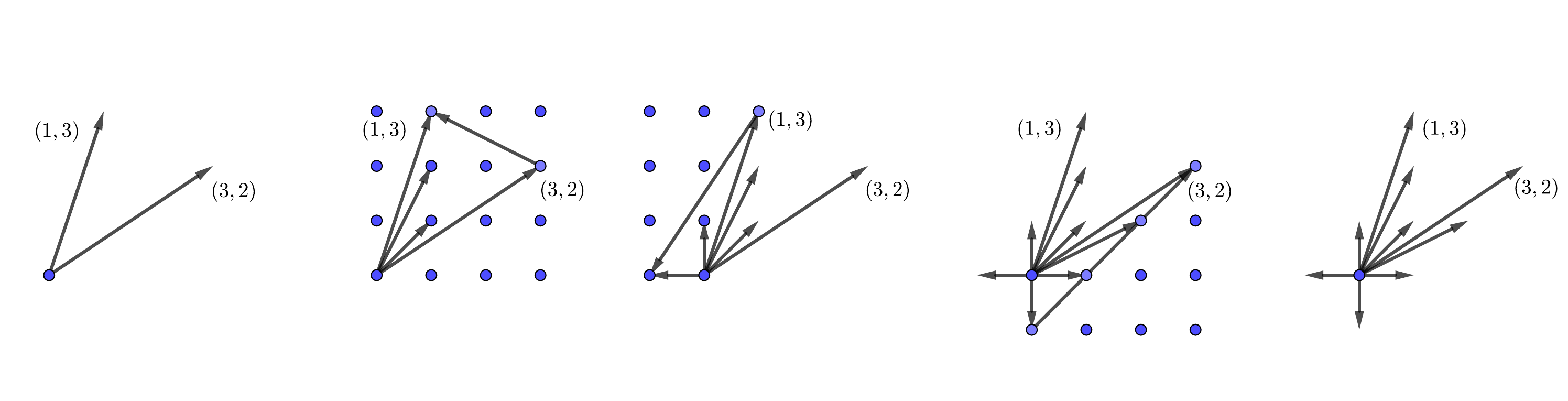}
	\caption{An example of the steps taken to find a complete and regular fan containing the vectors $(3,2)$ and $(1,3)$.}
	\label{primer}
\end{figure}   
\end{example}

\begin{lemma}\label{lemma blowup}
For any primitive vector $(a,b)\in\mathbb{Z}^2$ and any $n\in \mathbb{N}$ there is a Delzant polytope with $n$ vertices with the slope $(a,b).$ 
\end{lemma}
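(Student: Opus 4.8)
The plan is to start from a Delzant polytope produced by the $k=1$ case of Lemma~\ref{lemma normals} and then perform a sequence of $n$ blow-ups, each one creating a new vertex of slope $(a,b)$ without destroying the vertices created before it. Throughout, set $w=(a,b)$.

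First I would apply Lemma~\ref{lemma normals} to the single primitive vector $w$, obtaining a Delzant polytope $\Delta_0$ with an edge $e$ whose inward normal vector is $w$. Let $f$ be the edge of $\Delta_0$ adjacent to $e$ in the counterclockwise direction and let $u$ be its (primitive) inward normal. Recall from Section~\ref{section toric} (see Example~\ref{example blow up}) that blowing up a Delzant polytope at a vertex with blow-up size $\varepsilon>0$ replaces the corner at that vertex by a new edge whose inward normal is the sum of the inward normals of the two edges through the vertex, and that the result is again Delzant provided $\varepsilon$ is small enough. I would blow up $\Delta_0$ at the vertex $e\cap f$ to produce a new edge $e_1$, inserted between $e$ and $f$, with inward normal $w+u$. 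By the sign convention for the slope of a vertex (the difference of the inward normals of the two adjacent edges; cf.\ Lemma~\ref{lemma slope} and Figure~\ref{slope2}), the vertex where $e_1$ meets $f$ then has slope $(w+u)-u=w=(a,b)$.

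Then I would proceed by induction. Suppose we have reached a Delzant polytope in which, reading counterclockwise along a portion of the boundary, the edges are $e,\,e_m,\,e_{m-1},\dots,e_1,\,f$ with inward normals $w,\ mw+u,\ (m-1)w+u,\ \dots,\ w+u,\ u$. Blowing up at the vertex $e\cap e_m$ creates a new edge $e_{m+1}$ between $e$ and $e_m$ with inward normal $w+(mw+u)=(m+1)w+u$; the vertex where $e_{m+1}$ meets $e_m$ then has slope $\big((m+1)w+u\big)-(mw+u)=w=(a,b)$, and this blow-up alters none of the previously constructed slope-$(a,b)$ vertices $e_j\cap e_{j-1}$ ($2\le j\le m$) or $e_1\cap f$, nor does it destroy $e$ (it only shortens it, so $e$ survives with positive length for $\varepsilon$ small). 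Carrying out $n$ such blow-ups, with all blow-up sizes chosen small enough that the successive corner truncations remain disjoint and inside $\Delta_0$ and each intermediate polytope stays Delzant, yields a Delzant polytope with $n$ vertices of slope $(a,b)$.

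There is no serious obstacle here: the argument is an induction whose entire content is the one-line computation $\big((m+1)w+u\big)-(mw+u)=w$. The only points demanding attention are that one fixes the counterclockwise ordering of edges consistently so that the difference of inward normals comes out as $+(a,b)$ rather than $-(a,b)$ — the convention fixed in Figure~\ref{slope2} — and that the finitely many blow-up sizes are taken small enough to keep each polytope Delzant, which is always possible since only finitely many blow-ups are performed.
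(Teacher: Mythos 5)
Your proposal is correct and follows essentially the same route as the paper: apply the $k=1$ case of Lemma~\ref{lemma normals} to get an edge with inward normal $(a,b)$, then iteratively blow up the corner between $e$ and the most recently created edge, producing edges with normals $j(a,b)+u$ and hence new vertices of slope $(a,b)$ at each stage. The extra care you take about the counterclockwise sign convention and about choosing the blow-up sizes small enough is a welcome addition but does not change the argument.
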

\begin{proof}
According to Lemma \ref{lemma normals} there is a Delzant polytope $\Delta$ with one edge $e$ given by inward normal vector $(a,b).$ We show that desired polytope can be obtained from $\Delta$ by the procedure of chopping off the corners precisely $n$ times. As $\Delta$ is closed, our resulting polytope is also closed. Recall that chopping off a corner of a Delzant polytope $\Delta$ corresponds to blowing up a toric manifold whose moment map image is $\Delta,$ as seen in Example \ref{example blow up}.  Denote by $f$ the edge of $\Delta$ adjacent to the edge $e$ in the counter clockwise direction and by $u$ its inward normal vector. We now chop the corner of $\Delta$ given by the vertex where the edges $e$ and $f$ meet. We obtain a new edge $e_1$ with inward normal $(a,b)+u.$ The slope of the vertex where $e_1$ and $f$ meet is precisely $(a,b)$ (see the leftmost image in Figure \ref{blowups}). We proceed in the same way in the case $n\geq2$. For clarity, we explain the next step. By chopping off the corner given by the vertex where $e$ and $e_1$ meet we obtain a new edge $e_2$ with the inward normal vector $2(a,b)+u.$ The vertex where the edges $e_1$ and $e_2$ meet has the desired slope $(a,b)$ (see the image on the right in Figure \ref{blowups}). In general, we chop off precisely $n$ corners to obtain the desired Delzant polytope and the corresponding fan gets $n$ new vectors $j(a,b)+u,$ $j=1,\ldots n$ (Figure \ref{blowupsfan}).

\begin{figure}
	\centering
 	\includegraphics[width=12cm]{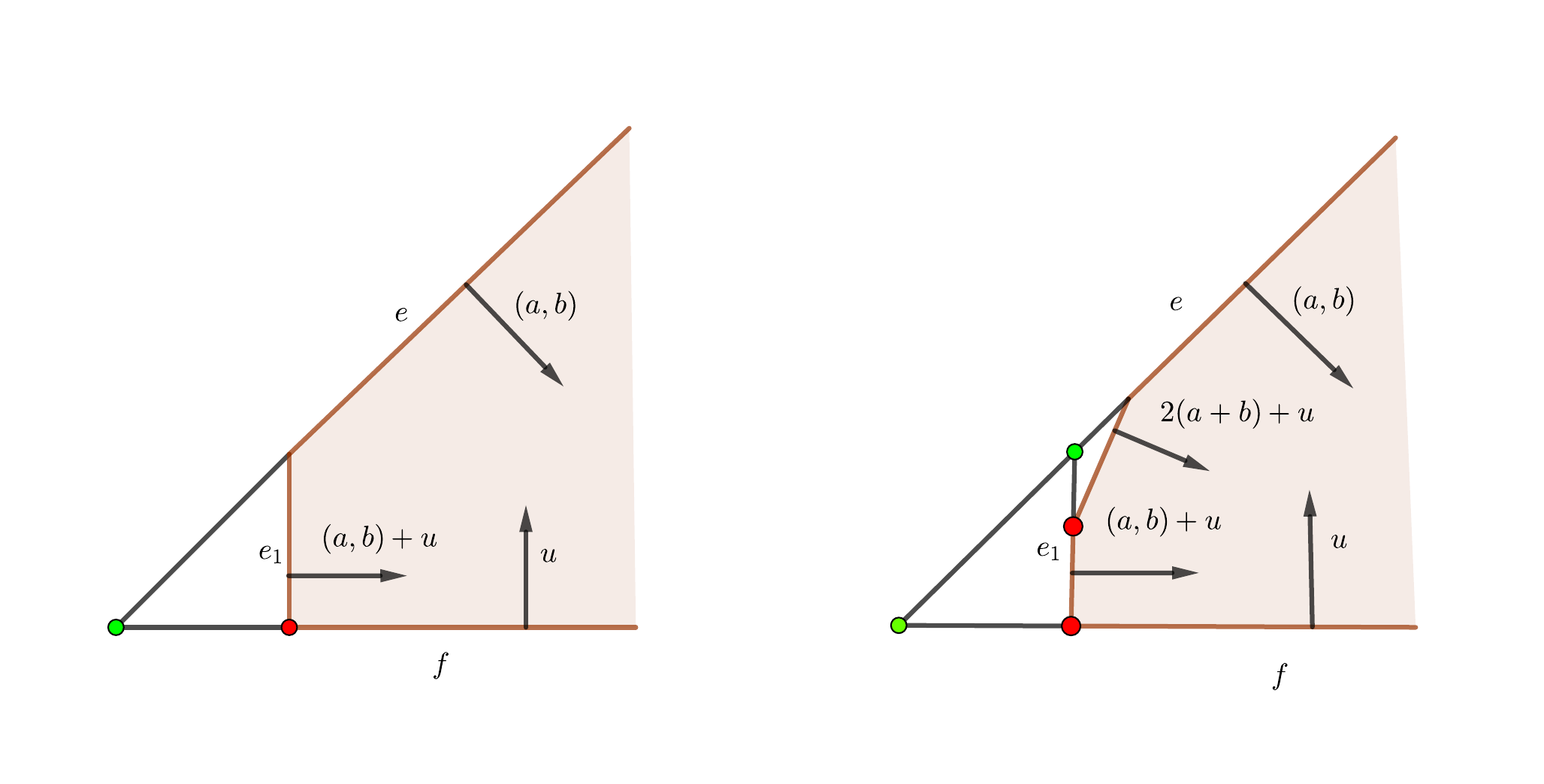}
 	\caption{Chopping the corners given by the green vertices; the slope at the red vertices is $(a,b).$}
 	\label{blowups}
\end{figure}  

\begin{figure}
	\centering
	\includegraphics[width=12cm]{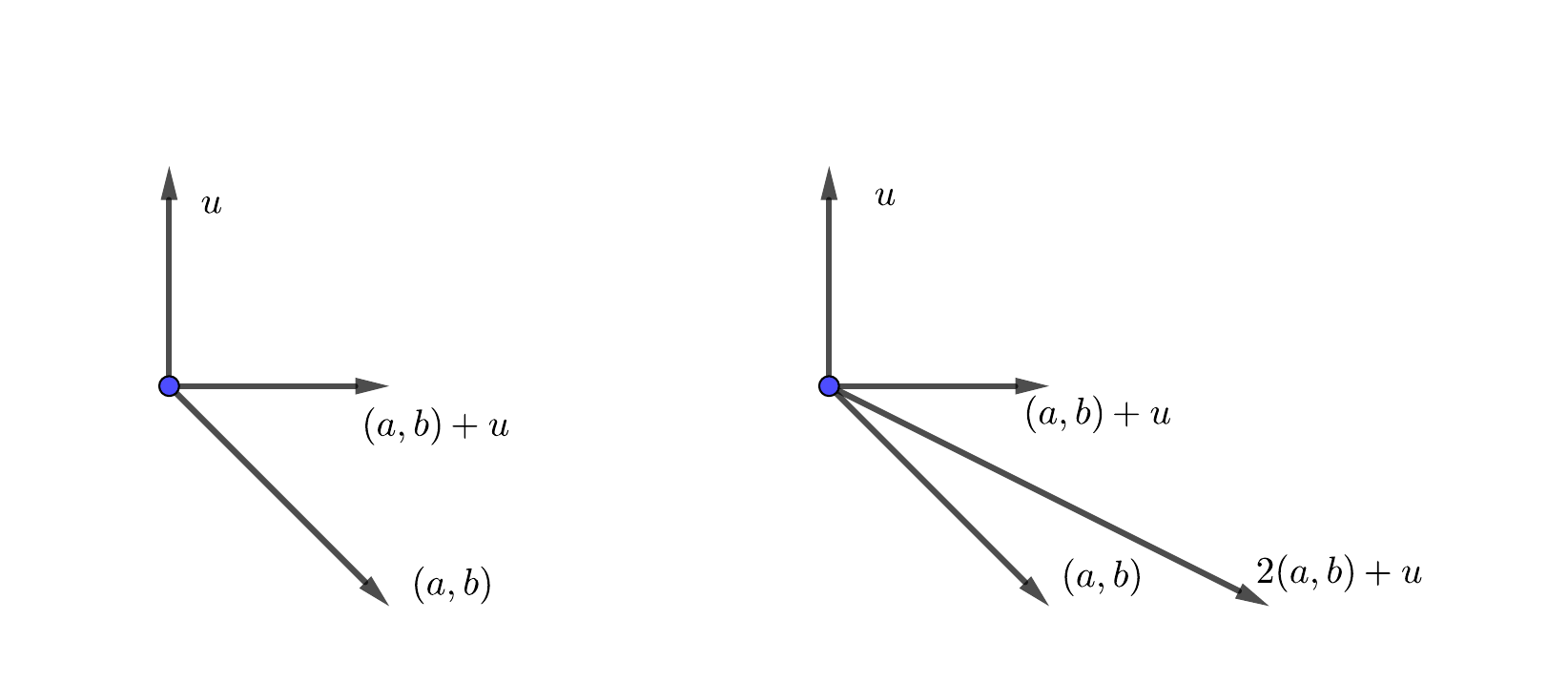}
	\caption{Associated fans after the chopping of the corners as seen in Figure~\ref{blowups}.}
	\label{blowupsfan}
\end{figure}  
\end{proof}

We are now ready to prove the main result of this subsection.

\begin{proof}[Proof of Theorem~\ref{proposition slopes}.]
From the collection of vectors $(a_i,b_i),$ $i=1,\ldots, k$, we take the largest possible pairwise distinct subset and use these to make a fan $F$. As explained in the proof of Lemma \ref{lemma normals} we can complete the fan $F$ to a regular fan $F'$ by adding new vectors so that every two adjacent vectors form a $\mathbb{Z}^2$-basis. Next, for every chosen vector $(a_i,b_i),$ if $n_i\geq 1$ is the number of times this vector appears in the given collection $(a_i,b_i),$  $i=1,\ldots, k,$ we add $n_i$ vectors $j(a_i,b_i)+u_i,$ $j=1,\ldots n_i$ to the fan $F'$ as explained in the proof of Lemma \ref{lemma blowup}. Here $u_i$ denotes the vector in $F'$ adjacent to the vector $(a_i,b_i)$ in the counter clockwise direction. Note that even if the vector $(a_i,b_i)$ appears only once in the given collection, we have to add an additional vector $(a_i,b_i)+u,$ in order to obtain the vertex with the slope $(a_i,b_i)$ in the corresponding polytope. This way we obtain a complete and regular fan $F''$, and any Delzant polytope given by the fan $F''$ is a polytope with desired slopes. 
\end{proof}

\subsection{Non-uniqueness of Weinstein manifolds from Delzant polytopes}\label{subsection repetitions}
In this section, we explore when the smoothing of the vertices of two Delzant polytopes leads to symplectomorphic completions of the corresponding Weinstein domains. For example, the Weinstein complements of any two total toric divisors each with a single node smoothed, will always be Weinstein homotopic so their completions will be symplectomorphic. This example was seen in \cite{ACGMMSW1} and was proven there using $1$-handle slides from Weinstein Kirby calculus. Here we give a much more general statement of when two complements  may end up being symplectomorphic  (Proposition~\ref{propknodes}), as well as some more computationally useful corollaries and examples. 
 
Further we show that the symplectomorphisms provided by Proposition~\ref{propknodes} do not make a complete list of identifications--there exist complements of smoothed divisors which are not related by the condition given in Proposition~\ref{propknodes} but are symplectomorphic (see Remark \ref{remark counterexample0} and Theorem \ref{remark counterexample}). Nevertheless, in practice, many specific examples of Weinstein domains arising as the complements of partially smoothed total toric divisors which are not related by Proposition~\ref{propknodes} can be distinguished by their topological invariants such as their second homology and intersection form. The computation of these invariants is explained in Section~\ref{sec:applications}, and explicitly computed for the examples in Section \ref{section examples}.

Recall that $\mathcal{W}_{T^2, \{(a_1,b_1)\ldots (a_k,b_k)\}}$ denotes the Weinstein domain constructed by attaching $k$ Weinstein 2-handles along the Legendrian co-normal lifts of the curves $(a_1,b_1),\ldots,(a_k,b_k)$ in the torus $T^2$.
      
\begin{prop} \label{propknodes} 
 Consider any $\{V_1, \ldots V_k\}$-centered toric $4$-manifold where $s(V_i)=(a_i,b_i),$ for all $i=1,\ldots, k$. If there is an $SL(2,\mathbb{Z})$ transformation mapping the set $\{(a_1, b_1), \ldots, (a_k,b_k)\}$ to the set $\{(a'_1, b'_1), \ldots, (a'_k,b'_k)\}$ then the completions of Weinstein domains $\mathcal{W}_{T^2, \{(a_1,b_1),\ldots, (a_k,b_k)\}}$ and $\mathcal{W}_{T^2, \{(a'_1,b'_1),\ldots, (a'_k,b'_k)\}}$ are symplectomorphic. (The set of vertices and slopes that we consider are unordered.)
\end{prop}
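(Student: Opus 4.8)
The plan is to realize the statement as a chain of symplectomorphisms, using Theorem~\ref{thm:toricWein} at both ends. Let $T\in SL(2,\Z)$ be the given transformation and let $\Delta$ be the Delzant polytope of the given $\{V_1,\ldots,V_k\}$-centered toric manifold $(M,\omega)$, so that $s(V_i)=(a_i,b_i)$. Recall from the proof of Lemma~\ref{lemma slope} that if the inward edge normals transform by a matrix $G\in SL(2,\Z)$ then the moment polytope transforms by $(G^{-1})^{T}$, while the slope $s(V)=v-u$ (the difference of the two inward edge normals at $V$) transforms by $G$. So first I would set $\Delta'=(T^{-1})^{T}\Delta$; this is again a Delzant polytope, since $SL(2,\Z)$ acting on $\R^2$ preserves simplicity, rationality and smoothness of polytopes. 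Writing $V_i'$ for the image of $V_i$, we then have $s(V_i')=T\cdot(a_i,b_i)=(a_i',b_i')$ (possibly after relabeling the indices, if $T$ only matches the two sets of slopes as sets, which is harmless below since $\mathcal{W}_{T^2,c}$ depends only on the underlying set $c$). Let $(M',\omega')$ be the toric $4$-manifold with moment polytope $\Delta'$; since $\Delta'$ is an $SL(2,\Z)$-image of $\Delta$, equivalently since $M$ carries a reparametrized torus action with moment image $\Delta'$, there is an equivariant symplectomorphism $\Phi\colon M\to M'$.

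The step that needs genuine checking is that $\Delta'$ is $\{V_1',\ldots,V_k'\}$-centered. For this I would note that $(T^{-1})^{T}$ is a linear automorphism of $\R^2$ preserving $\Z^2$ and primitivity, and compatible with the normal transformation in the sense that $\langle (T^{-1})^{T}w,\,Tn\rangle=\langle w,n\rangle$; hence it carries the primitive edge vectors of $\Delta$ at $V_i$ to the primitive edge vectors of $\Delta'$ at $V_i'$, and therefore carries the ray $r(V_i)$ (issued from $V_i$ in the direction of the sum of those edge vectors) to $r(V_i')$. It also carries $\mathring\Delta$ onto $\mathring{\Delta}'$. Consequently, if $r(V_1),\ldots,r(V_k)$ all pass through one common point of $\mathring\Delta$, then $r(V_1'),\ldots,r(V_k')$ all pass through its image in $\mathring{\Delta}'$, so $\Delta'$ is centered as claimed.

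Finally I would transport the smoothing across $\Phi$. Since $\Phi$ intertwines the torus actions, it carries the toric divisor of $M$ to the toric divisor of $M'$ and the node over $V_i$ to the node over $V_i'$; the smoothing of Section~\ref{section:Smoothing} is a local toric modification near each node, so $\Phi$ sends a $\{V_1,\ldots,V_k\}$-smoothing $D\subset M$ together with a small neighborhood $N$ to a $\{V_1',\ldots,V_k'\}$-smoothing $D'\subset M'$ with neighborhood $N'=\Phi(N)$, and hence restricts to a symplectomorphism $M\sm N\to M'\sm N'$. Applying Theorem~\ref{thm:toricWein} on the left (using centeredness of $\Delta$) and on the right (using centeredness of $\Delta'$) identifies the completions of $M\sm N$ and $M'\sm N'$ with $\mathcal{W}_{T^2,\{(a_1,b_1),\ldots,(a_k,b_k)\}}$ and $\mathcal{W}_{T^2,\{(a_1',b_1'),\ldots,(a_k',b_k')\}}$ respectively, recalling from Section~\ref{sec:weinstein_handle} that Weinstein-homotopic domains have symplectomorphic completions. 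Composing the three symplectomorphisms proves the proposition. I expect the only real obstacle to be the linear-algebra bookkeeping with $SL(2,\Z)$ — keeping straight which of $T$, $T^{T}$, $T^{-1}$, $(T^{-1})^{T}$ acts on normals, on the polytope, and on slopes — together with the centeredness check above; once these are in place the remainder is a direct appeal to Theorem~\ref{thm:toricWein} and to the naturality of the smoothing construction under equivariant symplectomorphisms, where the fact that Theorem~\ref{thm:toricWein} allows arbitrarily small neighborhoods means no further compatibility of choices on the two sides is required.
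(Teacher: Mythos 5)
Your proof is correct and follows essentially the same route as the paper's: transform the polytope by $(T^{-1})^{T}$, observe that the slopes transform by $T$ and that centeredness is preserved, and then apply Theorem~\ref{thm:toricWein} on both sides via the induced equivariant symplectomorphism of the smoothed-divisor complements. Your explicit check that the linear map carries the rays $r(V_i)$ to the rays $r(V_i')$ and $\mathring\Delta$ onto $\mathring{\Delta}'$ is a welcome elaboration of the paper's one-line remark that a linear transformation preserves the center.
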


\begin{proof}
Take any toric symplectic 4-manifold $M$ so that there are vertices $V_1,\ldots, V_k$ in the corresponding moment polytope $\Delta$ with $s(V_i)=(a_i,b_i),$ $i=1,\ldots, k,$ and such that $\Delta$ is $\{V_1, \ldots V_k\}$-centered. According to Theorem \ref{thm:toricWein} and Lemma \ref{lemma slope}, $\mathcal{W}_{T^2, \{(a_1,b_1),\ldots, (a_k,b_k)\}}$  is Weinstein homotopic to the complement of the total toric divisor of $M$ smoothed at the fixed points of the toric action that map under the moment map to the vertices $V_1,\ldots, V_k$. If $G\in SL(2,\mathbb{Z})$ is the given transformation of pairs, then $(G^{-1})^T\in SL(2,\mathbb{Z})$ is the transformation that maps the polytope $\Delta$ to some Delzant polytope $\Delta'$ with the vertices $V_1', \ldots, V_k'$ such that  $s(V_i')=(a_i',b_i'),$ $i=1,\ldots, k.$ Note that $\Delta'$ will be $\{V'_1, \ldots V'_k\}$-centered, since linear transformations preserve the centeredness condition. Thus, we obtain another Delzant polytope and the corresponding toric symplectic manifold $M'$ is equivariantly symplectomorphic to $M.$
Therefore, the completions of the complement of the total toric divisor of $M$ smoothed at the vertices $V_1,\ldots, V_k$ and the complement of the total toric divisor of $M'$ smoothed at the vertices $V_1',\ldots, V_k'$ are symplectomorphic.     
\end{proof}
          
\begin{remark} The result of Proposition \ref{propknodes} cannot be extended to $-SL(2,\mathbb Z)$ transformations in a way that is consistent with our conventions relating the orientation of the slope and the ray. Namely, every $-SL(2,\mathbb Z)$ transformation of the 2-torus is a composition of the reflection
        $G=\begin{pmatrix}
   -1 & 0 \\
    0 & 1
  \end{pmatrix}$      
and an $SL(2,\mathbb Z)$ transformation and therefore it is enough to observe only the reflection. Since $G$ only changes the orientation of the acting torus, the corresponding   moment map image changes by a reflection over $x$-axis. Thus, the smoothing of the node that we perform after the change of the orientation of the 2-torus is related to the smoothing of the standard node also with a reflection over $x$-axis. Since the ray given by the direction $(1,1)$ reflects to the ray given by the direction $G(1,1)=(-1,1),$ a core and a co-core of the reflected handle will project to the ray given by the direction $(-1,1)$. Further, the slope of the attaching sphere is given by a vector orthogonal to the ray and by our convention it is a $-\pi/2$ rotation of the vector of the ray, hence the slope is $(1,1).$  
However, according to Section \ref{subsection 3.1}, when $G$ is the transformation of the rays, then $(G^{-1})^T$ is the transformation of the slopes. Thus, the slope of the attaching sphere of the reflected handle should be   $(G^{-1})^T(1,-1)=G(1,-1)=(-1,-1).$ Therefore, we will not consider  $-SL(2,\mathbb Z)$ transformations. 
\end{remark}
         
We now focus on the set of two slopes. Note first that if $(a_1,b_1),(a_2,b_2)\in \Z^2$ and if $\det\begin{pmatrix}
   a_1 & a_2 \\
    b_1 & b_2
  \end{pmatrix}=1$, 
then according to Proposition \ref{proposition 2 good slopes} there is a toric 4-manifold with two vertices with the slopes $(a_1,b_1)$ and $(a_2,b_2)$ and that is centered with respect to these two vertices.
     
\begin{cor} \label{corollary det=1}
Let $(a_i,b_i)\in \Z^2$ for $i=1,2$. If   $\det\begin{pmatrix}
   a_1 & a_2 \\
    b_1 & b_2
  \end{pmatrix}=\det\begin{pmatrix}
   a_1' & a_2' \\
    b_1' & b_2'
  \end{pmatrix}=\pm1$ then the completions of $\mathcal{W}_{T^2, \{(a_1,b_1), (a_2,b_2)\}}$ and $\mathcal{W}_{T^2, \{(a'_1,b'_1),(a'_2,b'_2)\}}$ are symplectomorphic.
\end{cor}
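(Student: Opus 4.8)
The plan is to deduce this from Proposition~\ref{propknodes} together with the transitivity of the left multiplication action of $SL(2,\mathbb{Z})$ on itself. Write $A=\begin{pmatrix} a_1 & a_2 \\ b_1 & b_2\end{pmatrix}$ and $A'=\begin{pmatrix} a_1' & a_2' \\ b_1' & b_2'\end{pmatrix}$, so the hypothesis reads $\det A=\det A'=\pm 1$ (both equal to $+1$ or both equal to $-1$). First I would reduce to the determinant $+1$ case: by definition $\mathcal{W}_{T^2,\{(a_1,b_1),(a_2,b_2)\}}$ is built by attaching two Weinstein $2$-handles along \emph{disjoint} Legendrian conormal lifts, so it depends only on the unordered set $\{(a_1,b_1),(a_2,b_2)\}$, and interchanging the two slopes exchanges the columns of $A$ while multiplying $\det A$ by $-1$. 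Hence, after possibly performing this harmless swap in each of the two sets of slopes, we may assume $A,A'\in SL(2,\mathbb{Z})$.

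Next I would produce the required transformation: set $G:=A'A^{-1}$. Since $SL(2,\mathbb{Z})$ is a group, $G\in SL(2,\mathbb{Z})$ and $GA=A'$, so the linear map $G$ sends the pair of columns of $A$ onto the pair of columns of $A'$; in other words $G$ carries $\{(a_1,b_1),(a_2,b_2)\}$ to $\{(a_1',b_1'),(a_2',b_2')\}$. Finally, since $\det A=1$ the vectors $(a_1,b_1)$ and $(a_2,b_2)$ form a $\mathbb{Z}^2$-basis, so by Proposition~\ref{proposition 2 good slopes} there is a $\{V_1,V_2\}$-centered toric $4$-manifold with $s(V_i)=(a_i,b_i)$. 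Feeding this toric manifold and the transformation $G$ into Proposition~\ref{propknodes} yields that the completions of $\mathcal{W}_{T^2,\{(a_1,b_1),(a_2,b_2)\}}$ and $\mathcal{W}_{T^2,\{(a_1',b_1'),(a_2',b_2')\}}$ are symplectomorphic, which is the claim.

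There is no genuine obstacle here: the statement is a short consequence of results already established. The only points needing (minor) care are the order-independence of disjoint Weinstein $2$-handle attachments invoked in the first reduction, and the fact that the version of Proposition~\ref{propknodes} being used requires a \emph{centered} toric model as input — which is precisely what Proposition~\ref{proposition 2 good slopes} supplies in the two-slope case once $\det A=\pm 1$.
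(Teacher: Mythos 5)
Your proposal is correct and follows essentially the same route as the paper: invoke Proposition~\ref{proposition 2 good slopes} to supply the required $\{V_1,V_2\}$-centered polytope, obtain $G=A'A^{-1}\in SL(2,\mathbb{Z})$ from transitivity of left multiplication, and conclude via Proposition~\ref{propknodes}. Your explicit handling of the determinant $-1$ case by swapping the (unordered) slopes is a small clarification the paper's written proof glosses over, but it is the intended reduction.
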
   
 
\begin{proof}  In order to apply Proposition \ref{propknodes} we only have to notice that the matrix
 $\begin{pmatrix}
   a_1' & a_2' \\
    b_1' & b_2'
  \end{pmatrix}\cdot \begin{pmatrix}
   a_1 & a_2 \\
    b_1 & b_2
  \end{pmatrix}^{-1}$ belongs to $SL(2,\mathbb Z)$ and it maps $\begin{pmatrix}
   a_1 & a_2 \\
    b_1 & b_2
  \end{pmatrix}$ to $\begin{pmatrix}
   a_1' & a_2' \\
    b_1' & b_2'
  \end{pmatrix}.$
\end{proof}     
       
\begin{cor} Let $(a_1,b_1)$ and $(a_2,b_2)$ be the primitive vectors 
for which there is a $\{V_1,V_2\}$-centered Delzant polytope with $s(V_i)=(a_i,b_i),$ $i=1,2.$    
If
   $ \begin{pmatrix}
   a_1' & a_2' \\
    b_1' & b_2'
  \end{pmatrix}\cdot \begin{pmatrix}
   a_1 & a_2 \\
    b_1 & b_2
  \end{pmatrix}^{-1}\in SL(2,\mathbb{Z})$ 
then the completions of $\mathcal{W}_{T^2, \{(a_1,b_1), (a_2,b_2)\}}$  and  $\mathcal{W}_{T^2, \{(a'_1,b'_1),(a'_2,b'_2)\}}$ are symplectomorphic.      
\end{cor}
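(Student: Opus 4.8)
The plan is to deduce this immediately from Proposition~\ref{propknodes}, the only work being a short bookkeeping check on which direction the matrix relation encodes. First I would set
$A=\begin{pmatrix} a_1 & a_2 \\ b_1 & b_2\end{pmatrix}$, $A'=\begin{pmatrix} a_1' & a_2' \\ b_1' & b_2'\end{pmatrix}$,
and let $G:=A(A')^{-1}$, which by hypothesis lies in $SL(2,\mathbb{Z})$. From $GA'=A$, reading off columns gives $G\cdot(a_i',b_i')^T=(a_i,b_i)^T$ for $i=1,2$, hence $G^{-1}\in SL(2,\mathbb{Z})$ is an $SL(2,\mathbb{Z})$ transformation carrying the slope set $\{(a_1,b_1),(a_2,b_2)\}$ to $\{(a_1',b_1'),(a_2',b_2')\}$. (Equivalently one may use $G$ itself and the fact that symplectomorphism of completions is symmetric, so the direction is immaterial.)

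Next I would invoke the standing hypothesis of the corollary: there exists a $\{V_1,V_2\}$-centered Delzant polytope with $s(V_i)=(a_i,b_i)$, so the pair $(a_1,b_1),(a_2,b_2)$ is realized by a $\{V_1,V_2\}$-centered toric $4$-manifold. This is precisely the input required by Proposition~\ref{propknodes}, which, together with the $SL(2,\mathbb{Z})$ transformation $G^{-1}$ found above, yields that the completions of $\mathcal{W}_{T^2,\{(a_1,b_1),(a_2,b_2)\}}$ and $\mathcal{W}_{T^2,\{(a_1',b_1'),(a_2',b_2')\}}$ are symplectomorphic. That finishes the argument.

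There is no genuine obstacle here; the substance is entirely contained in Proposition~\ref{propknodes} (and, for the unconditional determinant $\pm1$ version of Corollary~\ref{corollary det=1}, in the existence statement Proposition~\ref{proposition 2 good slopes} guaranteeing the centered polytope exists). The one point that deserves a sentence of care is verifying that it is the \emph{left} product $A(A')^{-1}$ — rather than $(A')^{-1}A$ or a transpose — whose membership in $SL(2,\mathbb{Z})$ is equivalent to the existence of an $SL(2,\mathbb{Z})$ transformation between the two slope sets; this is exactly the computation $GA'=A \iff G$ sends each column of $A'$ to the corresponding column of $A$, so the hypothesis as stated is the correct one.
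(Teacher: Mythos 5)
Your argument is correct and is essentially the paper's own proof: both reduce the claim to Proposition~\ref{propknodes} by observing that the hypothesis $A(A')^{-1}\in SL(2,\mathbb{Z})$ is equivalent to the existence of a matrix $G\in SL(2,\mathbb{Z})$ carrying one slope set to the other (the paper writes this as $G\cdot A(A')^{-1}=\mathrm{Id}$ and invokes transitivity of $SL(2,\mathbb{Z})$ on itself, while you simply take $G=A(A')^{-1}$ and read off columns). Your explicit column-wise check of which of $A(A')^{-1}$ versus $(A')^{-1}A$ is the right product, and your explicit appeal to the corollary's standing hypothesis that a $\{V_1,V_2\}$-centered polytope exists, are both correct and consistent with the paper.
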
   
 
\begin{proof} We apply Proposition \ref{propknodes} directly since the given $SL(2,\mathbb Z)$ transformation maps 
$\begin{pmatrix}
   a_1 & a_2 \\
    b_1 & b_2
  \end{pmatrix}$ to $\begin{pmatrix}
   a'_1 & a'_2 \\
    b'_1 & b'_2
  \end{pmatrix}$
\end{proof}   

\begin{remark} \label{remark counterexample0} 
Proposition~\ref{propknodes} motivated the stronger result given in Proposition~\ref{prop:1handles}, where we show that an $SL(2,\mathbb{Z})$ transformation of the sets of slopes corresponds to 1-handle slides, and therefore leads to Weinstein homotopic domains. Note that Proposition~\ref{prop:1handles}  does not require the existence of a
$\{V_1, \ldots V_k\}$-centered toric $4$-manifold with the desired slopes. Since centeredness is a restrictive condition,  Proposition \ref{propknodes} does not give the complete list of identifications. For instance, the pairs of slopes
  $\begin{pmatrix}
   1 & 1 \\
    0 & 0
  \end{pmatrix}$ and    $\begin{pmatrix}
   1 & -1 \\
    0 & 0
  \end{pmatrix},$ 
cannot be realized by our method, because there does not exist $\{V_1,V_2\}$-centered polytope with $s(V_1)=s(V_2)$. However, the Weinstein domains $\mathcal{W}_{T^2, \{(1,0),(1,0)\}}$ and $\mathcal{W}_{T^2, \{(1,0),(-1,0)\}}$ are Weinstein homotopic, as we show in Example \ref{remark counterexample}. 
  
Further, even when centeredness is realizable, there are symplectomorphic examples not covered by Proposition \ref{propknodes} or Proposition \ref{prop:1handles}. In particular, the pairs of slopes
  $\begin{pmatrix}
   1 & 3 \\
    -1 & 1
  \end{pmatrix}$ and    $\begin{pmatrix}
   4 & 0 \\
    1 & 1
  \end{pmatrix},$ can be realized by $\{V_1,V_2\}$-centered Delzant polytopes (see Figure \ref{two possible}), but are related only by an $SL(2,\mathbb Q)$ transformation
  $\begin{pmatrix}
  1 & -3 \\
   \frac{1}{2} &  -\frac{1}{2}
  \end{pmatrix}$  
and not by an $SL(2,\mathbb Z)$ transformation. Still, the Weinstein domains $\mathcal{W}_{T^2, \{(1,-1),(3,1)\}}$ and $\mathcal{W}_{T^2, \{(0,1),(4,1)\}}$ are Weinstein homotopic, as we show in Theorem \ref{remark:sl2q}.
\end{remark}

\begin{figure}
	\centering
	\includegraphics[width=13cm]{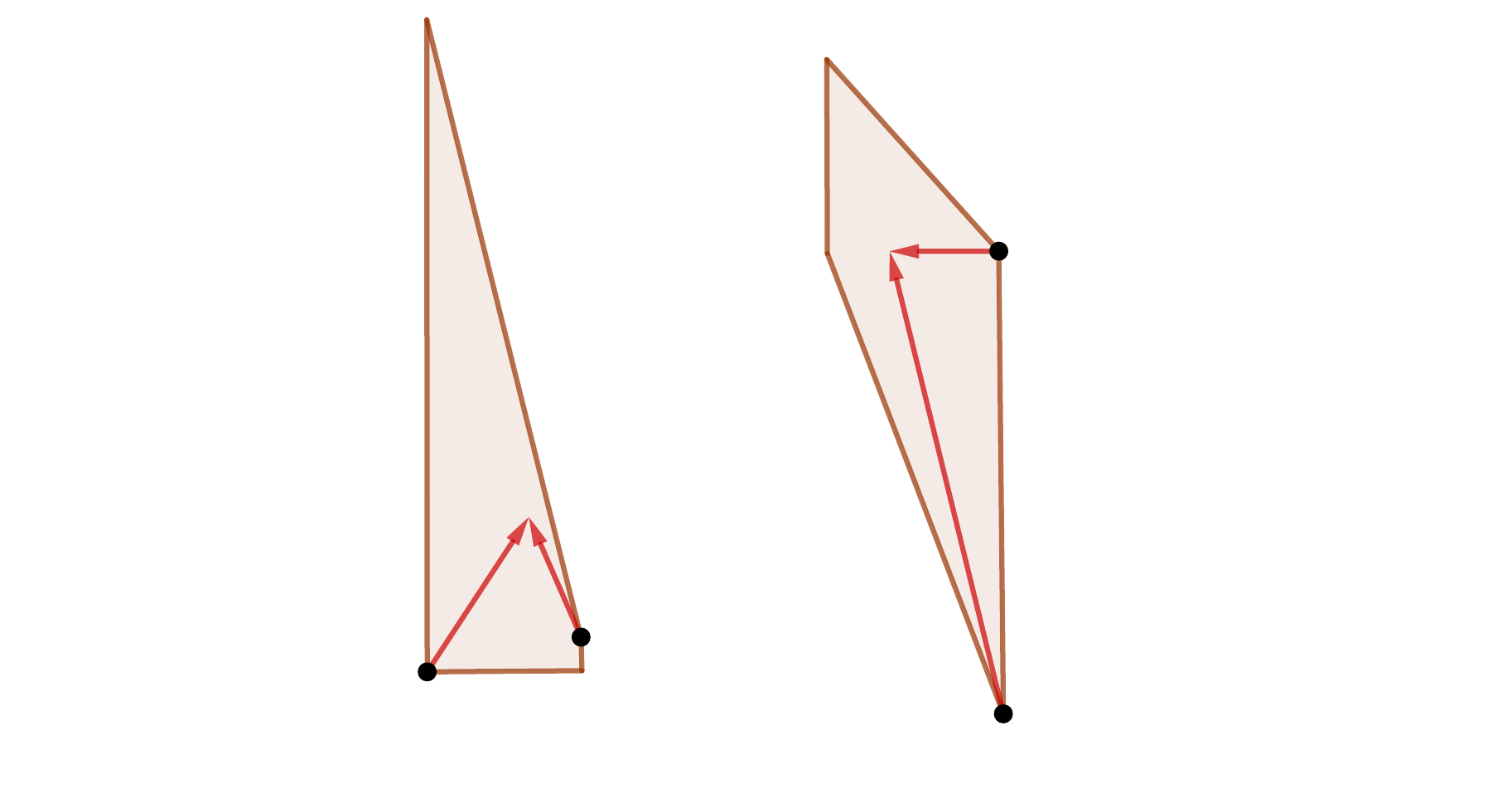}
	\caption{$\{V_1,V_2\}$-centered Delzant polytopes, with the slopes $(1,-1), (3,1)$  (the left one) and $(4,1),(0,1)$  (the right one). 
		They are not related by $SL(2,\mathbb Z)$ transformation.}
	\label{two possible}
\end{figure}  
    
We now give a list of examples where we can apply our results.

\begin{example} \label{example CP2} \emph{Complements of smoothed total toric divisors in $\C P^2$.} The complement of the total toric divisor of $\mathbb{CP}^2$ smoothed in two nodes does not depend on the choice of which two nodes we smooth.
First note that $\mathbb{CP}^2$ is centered (see Corollary \ref{monotone=centered}), so we are allowed to smooth all three vertices.
Denote the vertices of the polytope by $V_1,$ $V_2$, and $V_3$ where $s(V_1)=(1,-1),$ $s(V_2)=(1,2)$ and $s(V_3)=(-2,-1)$. 
The $SL(2,\mathbb{Z})$ transformation 
$\begin{pmatrix}
   0 & -1 \\
    1 & -1
  \end{pmatrix}$ shows that the completions of
$\mathcal{W}_{T^2, \{s(V_1), s(V_2)\}}$ and $\mathcal{W}_{T^2, \{s(V_2), s(V_3)\}}$ are symplectomorphic.
The $SL(2,\mathbb{Z})$ transformation 
$\begin{pmatrix}
   -1 & 1 \\
    -1 & 0
  \end{pmatrix}$
 shows that the completions of $\mathcal{W}_{T^2, \{s(V_1), s(V_2)\}}$ and $\mathcal{W}_{T^2, \{s(V_3), s(V_1)\}}$ are symplectomorphic.
\end{example}
       
\begin{example} \label{example rectangle} \emph{Complements of smoothed total toric divisors in $\mathbb{CP}^1 \times \mathbb{CP}^1$.}
\begin{enumerate}
	\item The complement of the total toric divisor of $(\mathbb{CP}^1\times\mathbb{CP}^1, \omega_{a,a})$ smoothed in two opposite nodes does not depend on the choice of opposite nodes. First note that the rays of opposite vertices in a rectangle intersect in a point if and only if the rectangle is the square. Thus, we are imposing here the condition that both $\mathbb{CP}^1$'s have the same symplectic area. The result now follows from the fact that the $\pi/2$ rotation is an $SL(2,\mathbb Z)$ transformation $J=\begin{pmatrix}
	   0 & -1 \\
	    1 & 0
	  \end{pmatrix}$ 
	and it maps one pair of opposite vertices to the other pair of opposite vertices in the moment map image. Then, $(J^{-1})^T$ will be the transformation of the desired inward normal vectors. Note that in this very special case, both matrices coincide. 
	\item The complement of the total toric divisor of $(\mathbb{CP}^1\times\mathbb{CP}^1,\omega_{a,b}),$ where $b/2<a<2b,$ smoothed in any two adjacent nodes is independent of the choice of adjacent nodes. The condition $b/2<a<2b$ ensures that the rays of any two adjacent vertices intersect in the interior of the moment map image. The result follows from that fact that any two adjacent vertices in the rectangle can be $SL(2,\mathbb{Z})$-rotated to any other two adjacent vertices. 
	 
	\item One could also compare the complements after smoothing opposite nodes and the complements after smoothing adjacent nodes in $(\mathbb{CP}^1\times\mathbb{CP}^1, \omega_{a,a})$. Since the determinant of the slopes for opposite vertices is zero and the determinant for adjacent vertices is $\pm2$, there is no $SL(2,\mathbb{Z})$ transformation between the sets of slopes, so our method does not give an answer. However, having different determinants is enough to conclude that the corresponding homologies of the Weinstein manifolds are different, as we will show later in Remark \ref{remark two nodes}.
	Thus, the corresponding Weinstein manifolds are not  even homeomorphic. 
	 
	\item The complement of the total toric divisor of $(\mathbb{CP}^1\times\mathbb{CP}^1,\omega_{a,a})$ smoothed at any three nodes does not depend on the choice of three nodes, which follows after finding appropriate rotations.
\end{enumerate}
\end{example}
 
\begin{example}\emph{Complements of smoothed total toric divisors in $\mathbb{CP}^2\sharp3\overline{\mathbb{CP}^2}$}. The complement of the total toric divisor of $\mathbb{CP}^2\sharp3\overline{\mathbb{CP}^2}$ smoothed at any three alternating nodes does not depend on the choice of the nodes. Also, the complements of the total toric divisor of $\mathbb{CP}^2\sharp3\overline{\mathbb{CP}^2}$ smoothed at any three adjacent nodes does not depend on the choice of the nodes.  
\end{example}      
        
\subsection{The centeredness condition}\label{subsection centeredness}

In this section we discuss a restrictiveness of the centeredness condition. We have already seen that there is a Delzant polytope with any given set of slopes (Theorem \ref{proposition slopes}). However, the centeredness condition may not follow. For instance, if two vertices have the same slope, then, since the slope and the direction of the ray are related by a $\pi/2$ rotation, we conclude that these vertices have parallel rays that will never intersect in a point. We are able to overcome this particular obstacle using almost toric manifolds (introduced in Section~\ref{intro to almost toric}) instead of  toric manifolds. Unfortunately, there is not yet a systematic characterization of the sets of slopes for which there exists a toric $4$-manifold that is centered with respect to those slopes. Even in the case of just two slopes $\{s(V_1), s(V_2)\}$ the existence of $\{V_1,V_2\}$-centered polytope with those two slopes is not in general obvious. We are able to show that if two slopes form a $\mathbb{Z}^2$-basis, then we can find such a polytope (Proposition \ref{proposition 2 good slopes}). We expect the same is true for arbitrary sets of two or three distinct slopes. However, the case of four slopes includes more obstacles. We give an example of four (and more) slopes that cannot be realised with the centeredness condition (Proposition \ref{example non-centered}). We also show how the centeredness condition relates to monotone toric symplectic $4$-manifolds, and produce infinite families of examples of partially centered toric $4$-manifolds (Theorem \ref{thm:infinitemanifolds}).

\begin{prop} \label{proposition 2 good slopes}  Let $(a_1,b_1)$ and $(a_2,b_2)$ be any two primitive vectors that span a positively oriented $\mathbb Z^2$-basis (i.e. the determinant of the vectors is $1$). Then, there is a $\{V_1,V_2\}$-centered Delzant polytope such that $s(V_i)=(a_i,b_i),$ $i=1,2.$
\end{prop}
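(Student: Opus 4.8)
The plan is to first normalize the two slopes by an $SL(2,\mathbb{Z})$ transformation, and then to exhibit a single explicit $\{V_1,V_2\}$-centered Delzant polytope in the normalized case.

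First I would record how $SL(2,\mathbb{Z})$ transformations interact with the centered condition. For $G\in SL(2,\mathbb{Z})$, the transformation $(G^{-1})^T$ of polytopes (the one used in the proofs of Lemma~\ref{lemma normals} and Lemma~\ref{lemma slope}) sends Delzant polytopes to Delzant polytopes and acts on the inward edge normals, hence on the vertex slopes $s(V)$ (which are differences of consecutive normals), by $G$. Since $(G^{-1})^T$ again lies in $SL(2,\mathbb{Z})$, it preserves the integer lattice, so it sends primitive edge vectors to primitive edge vectors and therefore carries each ray $r(V)$ to the ray $r((G^{-1})^TV)$; it also maps the interior of a polytope onto the interior of its image. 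Consequently $(G^{-1})^T$ takes a $\{V_1,V_2\}$-centered Delzant polytope to a $\{(G^{-1})^TV_1,(G^{-1})^TV_2\}$-centered one, with the two slopes transformed by $G$. Because $SL(2,\mathbb{Z})$ acts transitively on positively oriented $\mathbb{Z}^2$-bases and $a_1b_2-a_2b_1=1$ by hypothesis, it suffices to construct \emph{one} $\{V_1,V_2\}$-centered Delzant polytope whose two distinguished vertex slopes form some fixed positively oriented basis; applying the $G\in SL(2,\mathbb{Z})$ that carries that basis to $((a_1,b_1),(a_2,b_2))$ then produces the polytope we want.

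For the model, I would use the pentagon $\Delta_0=\{\,p\in\R^2 : p_1\ge p_2,\ p_2\le 1,\ p_1\ge 0,\ p_2\ge -2,\ p_1\le 2\,\}$, which is the blow-up at the corner $(0,1)$ of the rectangle $[0,2]\times[-2,1]$ (a Delzant polytope for $\cpone\times\cpone$) and hence is itself Delzant; its inward edge normals are $(1,0),(0,1),(-1,0),(0,-1),(1,-1)$. The blow-up creates the two vertices $V_1=(1,1)$ and $V_2=(0,0)$, both lying on the new edge with normal $(1,-1)$, and one checks directly that $\{s(V_1),s(V_2)\}$ equals $\{(1,0),(0,1)\}$ (or $\{(-1,0),(0,-1)\}$, according to the sign convention for $s$), a positively oriented basis in either case. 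At $V_1$ the two primitive edge vectors are $(1,0)$ and $(-1,-1)$, so $r(V_1)$ is the downward vertical ray from $(1,1)$; at $V_2$ they are $(1,1)$ and $(0,-1)$, so $r(V_2)$ is the rightward horizontal ray from $(0,0)$. These two rays meet precisely at $(1,0)$, which satisfies all five defining inequalities of $\Delta_0$ strictly and hence lies in the interior of $\Delta_0$; thus $\Delta_0$ is $\{V_1,V_2\}$-centered. Combining this with the first step, $(G^{-1})^T\Delta_0$ is the desired polytope.

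I do not expect a serious obstacle. The only real care is bookkeeping: keeping straight the sign conventions for $s(V)$ and for the direction of $r(V)$, and checking that the two rays of the model meet \emph{inside} $\Delta_0$ rather than on or outside its boundary (which is why the blown-up corner is taken small relative to the rectangle). Conceptually the proposition is easy precisely because, with only two distinguished vertices, the centered condition merely asks that two rays — automatically non-parallel, since $s(V_1)$ and $s(V_2)$ are linearly independent — intersect in the interior, and after the $SL(2,\mathbb{Z})$ normalization this is witnessed by the single explicit pentagon above.
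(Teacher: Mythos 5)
Your proof is correct and follows essentially the same route as the paper: reduce to a single model case via the contragredient $SL(2,\mathbb{Z})$ action on polytopes (which preserves the centered condition and transforms slopes by $G$), then exhibit an explicit Delzant polytope obtained by blowing up a corner so that the two new vertices have the required slopes and intersecting rays. The only difference is cosmetic --- the paper's model is the moment polytope of $\mathbb{CP}^2\sharp\overline{\mathbb{CP}^2}$ with slopes $(0,-1)$ and $(1,0)$, checked by a figure, whereas you use a blown-up rectangle and verify the interior intersection of the rays by explicit inequalities.
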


\begin{proof} 
The transformation $G=\begin{pmatrix}
   a_2 & -a_1 \\
    b_2 & -b_1
  \end{pmatrix}\in SL(2,\mathbb Z)$ maps  the vectors $(0,-1)$ and $(1,0)$ to the vectors $s(V_1)$ and $s(V_2)$ respectively.
One can easily find a Delzant polytope so that the slopes of two vertices are $s(V_1')=(0,-1)$ and $s(V_2')=(1,0)$,
 for example, the moment map image of
$\mathbb{CP}^2\sharp\overline{\mathbb{CP}}^2$ shown in Figure \ref{two slopes} on the left.
If $G$ is the transformation of the inward normal vectors, then, as explained in Lemma \ref{lemma slope}, $(G^{-1})^T$ is the transformation of the corresponding polytopes. 
The polytope on the right in Figure \ref{two slopes}, obtained as $(G^{-1})^T$-transformation of the polytope on the left in Figure \ref{two slopes}, is a polytope with the desired slopes.

\begin{figure}
	\centering
	\includegraphics[width=15cm]{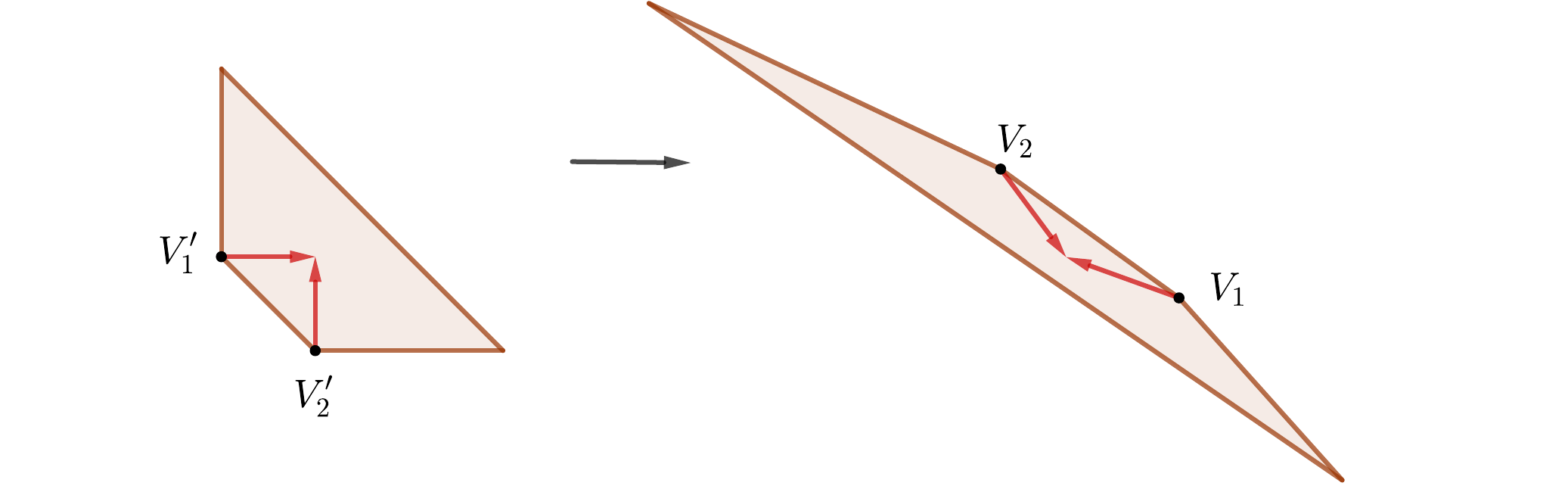}
	\caption{$SL(2,\mathbb Z)$ transformation of the polytope of $\mathbb{CP}^2\sharp\overline{\mathbb{CP}}^2$.}
	\label{two slopes}
\end{figure}   
\end{proof}

Since the vectors used in the proof of previous proposition are obtained by blowing up a standard Delzant corner, Proposition \ref{proposition 2 good slopes}
can be reformulated as follows.

\begin{prop}\label{prop centered blow} If we blow up a Delzant polytope at a vertex $V$ and the size of the blow up is sufficiently small, we obtain  two new vertices $V_1$ and $V_2$ 
and a $\{V_1,V_2\}$-centered Delzant polytope.
\end{prop}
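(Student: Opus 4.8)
\begin{proof}[Proof proposal for Proposition~\ref{prop centered blow}]
The plan is to reduce to the standard Delzant corner by an $SL(2,\Z)$ transformation and a translation, and then do the (short) explicit computation there. Let $V$ be the vertex at which we blow up, and let $u,v\in\Z^2$ be the inward normal vectors of the two edges of $\Delta$ meeting at $V$, labelled so that $u$ is the normal of the edge traversed just before $V$ and $v$ that of the edge just after $V$ in the counterclockwise direction. By the smoothness condition $\{u,v\}$ is a $\Z^2$-basis, and the same computation as in the discussion preceding Proposition~\ref{prop centered blow} (see Figure~\ref{fig ray}) together with convexity shows $\det(u,v)=1$. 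As in the proof of Proposition~\ref{propknodes} (and Lemma~\ref{lemma slope}), an $SL(2,\Z)$ transformation acting on the polytope sends edges to edges and primitive edge vectors to primitive edge vectors, hence sends each ray $r(\cdot)$ to a ray, sends the interior of the polytope to the interior, and commutes with blowing up; translations do the same. Composing with such a transformation and a translation, we may therefore assume $V$ is the origin, $u=(1,0)$, $v=(0,1)$, and a neighborhood of $V$ in $\Delta$ agrees with a neighborhood of the origin in the quadrant $\{x\ge 0,\ y\ge 0\}$.

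Now blow up $\Delta$ at $V$ by a size $\varepsilon>0$ chosen small enough that the new edge does not reach either neighbour of $V$; by Example~\ref{example blow up} the result is again a Delzant polytope. The blow-up chops the corner along the line $x+y=\varepsilon$, whose inward normal is $u+v=(1,1)$, and replaces $V$ by the two new vertices $V_1=(0,\varepsilon)$ (on the old $u$-edge $\{x=0\}$) and $V_2=(\varepsilon,0)$ (on the old $v$-edge $\{y=0\}$), joined by the new edge.

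It remains to compute the rays. At $V_1$ the two incident primitive edge vectors pointing into the polytope are $(0,1)$ (continuing up the old $u$-edge) and $(1,-1)$ (along the new edge toward $V_2$), so $r(V_1)=\{(t,\varepsilon)\mid t\ge 0\}$. At $V_2$ the two incident primitive edge vectors are $(-1,1)$ (along the new edge toward $V_1$) and $(1,0)$ (continuing along the old $v$-edge), so $r(V_2)=\{(\varepsilon,s)\mid s\ge 0\}$. These two rays meet, and meet only, at the point $(\varepsilon,\varepsilon)$. Finally $(\varepsilon,\varepsilon)$ lies in the interior of the blown-up polytope: it strictly satisfies $x+y>\varepsilon$, $x>0$, $y>0$, and for $\varepsilon$ small it lies in the fixed neighbourhood of the origin described above, hence also strictly satisfies all the inequalities coming from the remaining (unchanged, bounded-away-from-$V$) edges of $\Delta$. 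Thus the blown-up polytope is $\{V_1,V_2\}$-centered.
\end{proof}

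\noindent\emph{Remark on the main difficulty.} There is essentially no deep obstacle here: the statement becomes a one-line computation once one is at the standard corner. The only points requiring care are (i) justifying the reduction to the standard corner, i.e.\ that $\det(u,v)=1$ and that $SL(2,\Z)$ transformations and translations preserve Delzant-ness, the rays, the interior, and blow-ups; and (ii) the phrase ``sufficiently small'' must simultaneously guarantee that the blow-up stays Delzant and that the intersection point $(\varepsilon,\varepsilon)$ stays interior with respect to the far-away edges of $\Delta$ — both are open conditions on $\varepsilon$, so they are compatible. Note this argument is just the coordinate computation already implicit in the proof of Proposition~\ref{proposition 2 good slopes}, reorganized to emphasize the blow-up.
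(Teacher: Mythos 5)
Your proof is correct and follows essentially the same route as the paper's: reduce to the standard corner by an $SL(2,\Z)$ transformation, observe that the two new vertices have rays of direction $(1,0)$ and $(0,1)$ meeting at $(\varepsilon,\varepsilon)$, and use smallness of the blow-up to keep that point interior. The paper's version is just terser (it delegates the smallness condition to Figure~\ref{fig blow up size}), whereas you spell out the coordinates and the interiority check explicitly.
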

\begin{proof} After an appropriate $SL(2,\mathbb Z)$ transformation, we can assume that we are blowing up the standard corner and therefore obtain two new vertices with rays $(1,0)$
and $(0,1).$ The importance of the size of the blow up is depicted in Figure \ref{fig blow up size}. In particular, a small enough blow up allows the polytope to be centered with respect to the two new vertices.

\begin{figure}
	\centering
	\includegraphics[width=12cm]{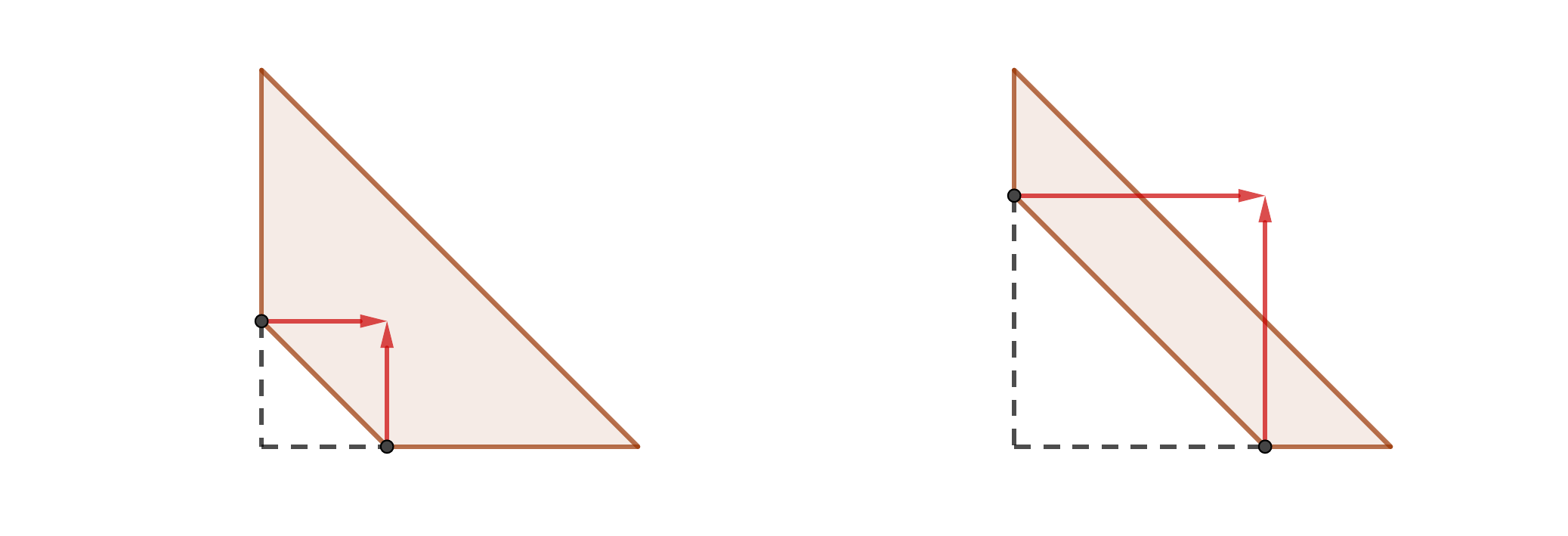}
	\caption{Small size of the blow up-on the left, large size of the blow up-on the right.}
	\label{fig blow up size}
\end{figure}  
\end{proof}

Without loss of generality, we can require that the desired rays for a centeredness condition intersect at the origin. Indeed, if the rays intersect in one point, then an appropriate translation of the polytope, that corresponds to adding a constant to a moment map, moves the intersection point to the origin.  If $\nu_1,\ldots, \nu_n\in\mathbb Z^2$ denote the vectors of a given fan, then every corresponding polytope is given by 
\begin{equation}\label{eq:delzant}
	\Delta=\bigcap_{k=1}^n\{x\in\mathbb R^2|<x,\nu_k>\hskip2mm\geq\lambda_k\},
\end{equation}
where $\lambda_1,\ldots,\lambda_n$ are any real numbers so that the intersection of half-spaces is not an empty set. Translation of the polytope is obtained by an appropriate change of constants $\lambda_i$'s. The condition $\lambda_1<0,\ldots, \lambda_n<0$ guarantees that the origin is contained in the interior of $\Delta.$

We now give a necessary and sufficient condition so that a ray contains the origin.

\begin{lemma}\label{lemma same lambdas} 
Let $\nu_i$ and $\nu_{i+1}$ be inward normal vectors of the edges meeting in a vertex $V.$
The ray from the vertex $V$ contains the origin if and only if $\lambda_i=\lambda_{i+1}$ and $\lambda_i<0.$
\end{lemma}

\begin{proof}  Let $\nu_i=(v_1,v_2)$ and $\nu_{i+1}=(u_1,u_2)$ be given inward normal vectors as in Figure \ref{fig ray}. Then, the corresponding edges are given by equations  $<(x,y),(v_1,v_2)>=\lambda_{i}$ and  $<(x,y),(u_1,u_2)>=\lambda_{i+1}.$ Since the vertex $V$ is the common point of these edges and the polytope is regular, that is $v_1u_2-v_2u_1=1,$ we obtain
$$V=(\lambda_iu_2-\lambda_{i+1}v_2,\lambda_{i+1}v_1-\lambda_iu_1).$$
In order to choose $\lambda_i$ and $\lambda_{i+1}$ so that the ray in $V$ goes through $(0,0)$ we will first find the equation of the ray. The direction of the ray is $r(V)=(-(v_2-u_2),v_1-u_1)$ and therefore the normal vector to the ray is  $(v_1-u_1,v_2-u_2)$ and the equation of the line containing the ray is
$$<(v_1-u_1,v_2-u_2),(x,y)>=0.$$
We substitute $V$ into the equation of the line and obtain  $(\lambda_i-\lambda_{i+1})(u_1v_2-v_1u_2)=0.$ Because of the regularity of the polytope we find $\lambda_i=\lambda_{i+1}.$ We conclude that the ray of the vertex $V$ goes through $(0,0)$ if and only if $\lambda_i=\lambda_{i+1}.$ The condition $\lambda_i<0$ is required to have the origin in the intersection of the given two half-spaces.
\end{proof}

The result of Lemma \ref{lemma same lambdas} leads us to recall the definition of a monotone Delzant polytope. A Delzant polytope given by Equation (\ref{eq:delzant}) where $\lambda_1,\ldots,\lambda_n$ are real numbers, is called \emph{monotone} if $\lambda_1=\cdots=\lambda_n.$ Such a polytope corresponds to a monotone symplectic (toric) manifold. Thus, we conclude the following.

\begin{figure}
	\centering
	\includegraphics[width=15cm]{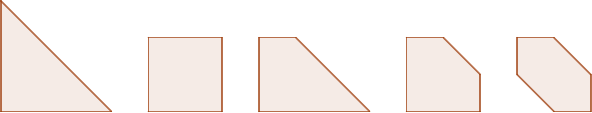}
	\caption{Monotone Delzant 2-dimensional polytopes}
	\label{fig monotone}
\end{figure} 

\begin{prop} \label{monotone=centered} A Delzant polytope is centered with respect to all of its vertices if and only if it is monotone.
\end{prop}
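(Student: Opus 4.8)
The plan is to derive the statement directly from Lemma~\ref{lemma same lambdas}, which already characterizes when a single ray passes through the origin. First I would observe that, up to a translation of the polytope (equivalently, adding a constant to the moment map), we may assume that if the polytope is centered with respect to all its vertices, then the common intersection point of all the rays is the origin, and moreover the origin lies in the interior of $\Delta$. Then $\Delta = \bigcap_{k=1}^n \{x \in \R^2 \mid \langle x, \nu_k\rangle \geq \lambda_k\}$ with all $\lambda_k < 0$. The key computation in the proof of Lemma~\ref{lemma same lambdas} shows that, for a vertex $V$ where the edges with inward normals $\nu_i$ and $\nu_{i+1}$ meet, the ray $r(V)$ passes through the origin if and only if $\lambda_i = \lambda_{i+1}$.

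Next I would run this equivalence over all vertices simultaneously. If $\Delta$ is centered with respect to every vertex, then at each vertex the two $\lambda$-values of its adjacent edges coincide. Since the edge-adjacency graph of the boundary of a $2$-dimensional polytope is a single cycle through all facets, this forces $\lambda_1 = \lambda_2 = \cdots = \lambda_n$, which is exactly the definition of a monotone Delzant polytope. Conversely, if $\Delta$ is monotone, then all $\lambda_k$ are equal (and, since the origin must be interior for the defining inequalities to be consistent with a nonempty full-dimensional polytope containing the origin, they are negative), so at each vertex the two adjacent free coefficients agree; by the ``if'' direction of Lemma~\ref{lemma same lambdas} every ray $r(V)$ passes through the origin. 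In particular all the rays meet at the single interior point $0$, so $\Delta$ is centered with respect to all of its vertices.

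The main subtlety — and the only place that needs a little care rather than a direct appeal to Lemma~\ref{lemma same lambdas} — is the reduction at the start: one must check that the ``centered with respect to all vertices'' hypothesis really does force the common intersection point into the interior and that translating the polytope so this point is the origin does not affect either the centeredness property or the monotonicity property. Translation changes each $\lambda_k$ by $\langle t, \nu_k\rangle$ for the translation vector $t$, so equality of all $\lambda_k$ is not translation-invariant in general; however, monotonicity is defined precisely so that the canonical monotone representative has this property, and the point is that a polytope is monotone (in the sense that it is equivalent to a monotone symplectic manifold) exactly when \emph{some} translate has all $\lambda_k$ equal. So I would phrase the argument as: $\Delta$ is centered with respect to all vertices $\iff$ after translating so the rays meet at the origin, all adjacent pairs $\lambda_i = \lambda_{i+1}$ $\iff$ all $\lambda_k$ are equal (using connectivity of the facet cycle) $\iff$ that translate, hence $\Delta$, is monotone. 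This also implicitly uses the standard fact that the $n$ normals $\nu_1, \dots, \nu_n$ of a complete $2$-dimensional fan, listed in cyclic order, positively span $\R^2$, which is what guarantees the translate with equal $\lambda_k$ actually has the origin in its interior (so that it is a genuine monotone Delzant polytope and not degenerate).
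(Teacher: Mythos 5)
Your proposal is correct and follows essentially the same route as the paper, which deduces the proposition directly from Lemma~\ref{lemma same lambdas} (the ray at a vertex passes through the origin iff the two adjacent free coefficients $\lambda_i$, $\lambda_{i+1}$ coincide) together with the cyclic adjacency of the facets. Your added care about the translation normalization and the interiority of the common point is a reasonable elaboration of what the paper leaves implicit, but it is not a different argument.
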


Note that, up to a rescaling of the symplectic form, there are exactly five monotone Delzant 2-dimensonal polytopes  (see Figure \ref{fig monotone}). These correspond to
$\mathbb{CP}^2,$ $(\mathbb{CP}^1\times\mathbb{CP}^1,\omega_{a,a}),$ $\mathbb{CP}^2\sharp\overline{\mathbb{CP}^2},$ $\mathbb{CP}^2\sharp\overline{2\mathbb{CP}^2}$, and 
$\mathbb{CP}^2\sharp\overline{3\mathbb{CP}^2}$. In particular, there is no fully centered polytope with more than $6$ vertices. Thus, our primary source of examples comes from partially centered polytopes.
 
In the remainder of the section we explore the range of partially centered polytopes. On the one hand, we provide an infinite list of slopes for which there is no partially centered polytope realizing those slopes. On the other hand, we find an infinite list of slopes for which there is a polytope that is partially centered with respect to vertices realizing those slopes. These examples initiate the study of the existence of partially centered Delzant polytopes, but we hope to understand these objects more fully in the future.

\begin{prop}\label{example non-centered}
There is no $\{V_1,V_2,V_3,V_4\}$-centered Delzant polytope where 
$$s(V_1)=(1,1), s(V_2)=(1,2), s(V_3)=(-K,-1), s(V_4)=(0,-1),$$
for any $K\geq2.$
\end{prop}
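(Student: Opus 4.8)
The plan is to argue by contradiction using the characterization of rays through the origin from Lemma~\ref{lemma same lambdas}. Suppose there is a $\{V_1,V_2,V_3,V_4\}$-centered Delzant polytope realizing these four slopes. After a translation we may assume the common intersection point of the four rays is the origin, so by Lemma~\ref{lemma same lambdas} each vertex $V_i$ is cut out by two half-spaces sharing a common free coefficient $\lambda_i<0$. First I would write down, for each $i$, the pair of inward normal vectors $(u^{(i)},v^{(i)})$ of the edges meeting at $V_i$: by the formula relating slope and normals ($s(V)=v-u$, with $v,u$ a positively oriented $\mathbb Z^2$-basis) each such pair is constrained by $v^{(i)}-u^{(i)}=s(V_i)$ together with $\det(v^{(i)},u^{(i)})=1$. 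This gives a one-parameter family of admissible normal pairs for each slope (adding a multiple of $s(V_i)$ to both $u^{(i)}$ and $v^{(i)}$), and using the remark following Lemma~\ref{lemma slope} these are all primitive.

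Next I would use the explicit coordinates of a vertex from Lemma~\ref{lemma same lambdas}: when the two normals at $V_i$ are $v=(v_1,v_2)$, $u=(u_1,u_2)$ with $\lambda_i=\lambda$, the vertex sits at $V_i=(\lambda(u_2-v_2),\lambda(v_1-u_1))=-\lambda\,(v_2-u_2, -(v_1-u_1))=-\lambda\cdot r(V_i)/\|\cdot\|$-type expression — i.e. $V_i$ lies on the ray direction $r(V_i)=(-(v_2^{(i)}-u_2^{(i)}), v_1^{(i)}-u_1^{(i)})$ scaled by $-\lambda_i>0$. So the vertex positions are determined by the slopes (which fix the ray directions $r(V_i)$) and by the scalars $-\lambda_i$. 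The key geometric input is now convexity and the Delzant (smoothness) condition: the four vertices together with the edges emanating from them must bound a convex region, the edge directions are $\perp$ to the normals, and consecutive edges around the polytope must have normals forming a positively oriented basis. I would compute the ray directions from the given slopes:
$r(V_1)\parallel(-1,0)$, $r(V_2)\parallel(-2,1)$, $r(V_3)\parallel(1,-K)$, $r(V_4)\parallel(1,0)$, and observe that $r(V_1)$ and $r(V_4)$ are anti-parallel. Thus $V_1$ and $V_4$ lie on opposite sides of the origin along the same line (the $x$-axis). Then I would track the cyclic order in which $V_1,V_2,V_3,V_4$ must appear around the polytope boundary, forced by the angular order of the normals/edge directions, and derive a constraint: placing $V_2$ and $V_3$ consistently with convexity while $V_1,V_4$ are pinned on a line through the interior point forces an inequality on $K$ that fails for $K\ge 2$. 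Concretely I expect the obstruction to come from requiring that the edge of $V_3$ with normal having the large $-K$ contribution still closes up convexly with the edges of $V_2$ and $V_4$; the "budget" of the free coefficients $\lambda_i$ and the edge lengths cannot absorb a slope as steep as $(-K,-1)$ once the other three slopes and the common-center condition are fixed.

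The main obstacle will be organizing the case analysis cleanly: a priori there are choices of which normal-pair representative occurs at each vertex and which cyclic order the vertices take, and one must rule out all of them. I would handle this by first normalizing via an $SL(2,\mathbb Z)$ transformation (allowed, since it preserves centeredness and Delzant-ness and acts on slopes) to fix, say, the normals at $V_1$ to be $(1,0)$ and $(0,1)$ — consistent with $s(V_1)=(0,1)-(1,0)=(-1,1)$ up to relabeling, though one must first check $(1,1)$ can be normalized this way, or else pick the correct model vertex — and then the positions of $V_2,V_3,V_4$ relative to $V_1$ become a finite computation in the remaining integer parameters. The crux is then a single linear-inequality contradiction: convexity of the quadrilateral (or polygon with these four among its vertices) together with the origin lying strictly inside forces, for each $K\ge2$, an incompatibility between the direction $(-K,-1)$ of $r(V_3)$ and the directions forced at the adjacent vertices. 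I would present this as a short direct computation once the normalization is in place, flagging that the inequality genuinely uses $K\ge 2$ (the case $K=1$ corresponding to an allowed configuration, consistent with the monotone del Pezzo $\mathbb{CP}^2\sharp\overline{3\mathbb{CP}^2}$).
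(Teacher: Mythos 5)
Your setup is the right one, and it is in fact the same skeleton the paper uses: translate so the common center is the origin, invoke Lemma~\ref{lemma same lambdas} to force the two supporting half-spaces at each $V_i$ to share a coefficient $\lambda_i<0$, parametrize the admissible normal pairs at each vertex by a free integer (the family with $v^{(i)}-u^{(i)}=s(V_i)$ and $\det(v^{(i)},u^{(i)})=1$), and place each $V_i$ on its ray. But a computational error derails what you then build on this: from $s(V)=(s_1,s_2)$ one gets $r(V)=(-s_2,s_1)$, so $r(V_1)=(-1,1)$, not $(-1,0)$. Hence $r(V_1)$ and $r(V_4)=(1,0)$ are not anti-parallel, $V_1$ and $V_4$ are not pinned on a common line through the origin, and the geometric picture you base the contradiction on collapses. (Note also that two anti-parallel rays meeting at a common interior point are not by themselves an obstruction --- that is exactly the configuration for opposite nodes of $\cpone\times\cpone$, which is centered.)

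More seriously, the proposal never actually produces the contradiction; it predicts that a ``single linear-inequality contradiction'' will fall out of convexity of the quadrilateral on $V_1,\dots,V_4$ with the origin inside. That condition is too weak: the polytope may have many more vertices, convexity of four points is automatic, and no single inequality of this kind fails for all $K\ge 2$. The contradiction in the paper comes from requiring that each chosen vertex satisfy \emph{all} the half-space inequalities $\langle V_j,u^k\rangle\geq\lambda_k$ and $\langle V_j,v^k\rangle\geq\lambda_k$ attached to the edges at the other chosen vertices, combined crucially with the \emph{integrality} of the free parameters $a,b,c,d$ in the normal pairs. Chasing these inequalities yields $c\le -1$, then $\lambda_2\ge\lambda_3\ge\lambda_4$, then the two-sided bound $\frac{K-2}{2K-1}\le b\le 1$, whose integer solutions leave only the cases $b=1$ (any $K\ge2$) and $b=0$, $K=2$; each surviving case is then eliminated by several further inequalities. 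Without imposing the full system of half-space constraints and exploiting integrality, the argument as sketched cannot close, so this is a genuine gap rather than missing bookkeeping. The $SL(2,\Z)$ normalization you suggest is legitimate but does not remove this work: it transforms all four slopes simultaneously and still leaves the same system of inequalities to be analyzed.
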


\begin{proof}
Suppose there were such a polytope. Denote by $u^k$ and $v^k,$ $k=1,2,3,4$ the inward normal vectors of the edges meeting in these vertices. Then, according to Proposition \ref{prop def of slopes}, it follows that the corresponding slopes are $s(V_k)=v^k-u^k$. Due to the smoothness of the polytope, represented by the equalities $\det(v^k,u^k)=1,$  we obtain
\begin{align*}
	u^1&=(a-2,a-1),&~v^1&=(a-1,a),\\
	u^2&=(b-1,2b-1),&~v^2&=(b,1+2b),\\
	u^3&=(Kc+1+K,c+1),&~v^3&=(Kc+1,c),\\
	u^4&=(1,d+1),&~v^4&=(1,d),
\end{align*}
for some integers $a,b,c,d.$ Without loss of generality (by an $SL(2,\Z)$ transformation) we may assume that $d=0.$ Then, the inward normal vectors of the edges meeting in $V_4$ are $(1,1)$ and $(1,0).$ 

The corresponding rays of the vertices are
$$r(V_1)=(-1,1), r(V_2)=(-2,1), r(V_3)=(1,-K), r(V_4)=(1,0).$$ If the rays intersect in the origin, then the vertices  are  
$$V_1=(-\lambda_1,\lambda_1), V_2=(-2\lambda_2,\lambda_2), V_3=(\lambda_3,-K\lambda_3),~\text{and}~V_4=(\lambda_4,0),$$
where $\lambda_k<0,$ for all $k=1,2,3,4,$ are defined in (\ref{eq:delzant}).

Since the polytope is defined as an intersection of half-spaces, we have to check that all the vertices belong to each half-space. That is, we have to check
$$\langle V_j,u^k\rangle\geq\lambda_k~\text{and}~\langle V_j,v^k\rangle\geq\lambda_k,$$
for all $j, k=1,2,3,4.$ Since the cases $j=k$ are already satisfied, we have to check
 the following system of inequalities

\begin{align*}
 \langle(-\lambda_1,\lambda_1),u^k\rangle&\geq\lambda_k,~&~\langle(-\lambda_1,\lambda_1),v^k\rangle&\geq\lambda_k,~\text{for}~k=2,3,4;\\
  \langle(-2\lambda_2,\lambda_2),u^k\rangle&\geq\lambda_k,~&~\langle(-2\lambda_2,\lambda_2),v^k\rangle&\geq\lambda_k,~\text{for}~k=1,3,4;\\
    \langle(\lambda_3,-K\lambda_3),u^k\rangle&\geq\lambda_k, ~&~\langle(\lambda_3,-K\lambda_3),v^k\rangle&\geq\lambda_k,~\text{for}~k=1,2,4; \\
  \langle(\lambda_4,0),u^k\rangle&\geq\lambda_k, ~&~\langle(\lambda_4,0),v^k\rangle&\geq\lambda_k,~\text{for}~k=1,2,3.
\end{align*}

From the inequalities $\langle(\lambda_3,-K\lambda_3),v^4\rangle\geq\lambda_4$ and $\langle(\lambda_4,0),u^3\rangle\geq\lambda_3$, it follows that  
$\lambda_3\geq\lambda_4 $ and $(Kc+1+K)\lambda_4\geq\lambda_3.$
Thus, $c\leq-1.$

\begin{figure}
	\centering
	\includegraphics[width=8cm]{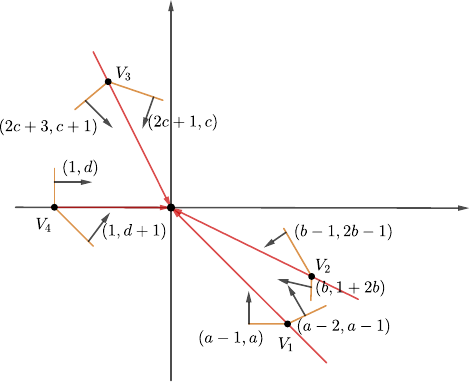}
	\caption{Vertices on the centered rays and the adjacent inward normal vectors.}
	\label{fig1}
\end{figure}    
   
Next, from the inequality $\langle(-2\lambda_2,\lambda_2),v^3\rangle\geq\lambda_3$ it follows that $-((2K-1)c+2)\lambda_2\geq\lambda_3.$ Since $(2K-1)c+2\leq-1$ we obtain     $\lambda_2\geq\lambda_3(\geq\lambda_4).$ From the inequality $\langle(\lambda_4,0),v^2\rangle\geq\lambda_2$ it follows that $b\lambda_4\geq\lambda_2$,    so $b\leq1.$ On the other hand, since    $\langle(\lambda_3,-K\lambda_3),u^2\rangle\geq\lambda_2$, it follows that    $((1-2K)b+K-1)\lambda_3\geq\lambda_2.$ Thus,  we conclude that     $((1-2K)b+K-1)\lambda_3\geq\lambda_3.$ This is possible only if $b\geq \frac{K-2}{2K-1}.$ We have two possibilities:  $b=1$ and $K\geq2$, or $b=0$ and $K=2.$

\emph{Case $1$: $b=1$ and $K\geq2$ }. If $b=1$ then $\lambda_2=\lambda_3=\lambda_4.$ We now use inequalities $\langle(-2\lambda_2,\lambda_2),u^1\rangle\geq\lambda_1$ and   
$\langle(-\lambda_1,\lambda_1),v^2\rangle\geq\lambda_2$ from which we conclude that     $(3-a)\lambda_2\geq\lambda_1$ and  $2\lambda_1\geq\lambda_2.$ Therefore,     $a\geq2\frac{1}{2}.$ Next, from the inequality $\langle(\lambda_4,0),v^1\rangle\geq\lambda_1$ it follows that $(a-1)\lambda_4\geq\lambda_1.$ However, since $\lambda_1\geq\frac{1}{2}\lambda_2=\frac{1}{2}\lambda_4$ we conclude that $a\leq 1\frac{1}{2}$ and arrive at a contradiction. Thus, it is not possible that $b=1.$ This completes the proof for $K\geq 3$.

\emph{Case $2$: $b=0$, and $K=2.$} Since $((1-2K)b+K-1)\lambda_3\geq\lambda_2$ and $\lambda_2 \geq \lambda_3$, it follows that $\lambda_2=\lambda_3.$ From the inequalities $\langle(-\lambda_1,\lambda_1),v^2\rangle\geq\lambda_2$ and $\langle(-2\lambda_2,\lambda_2),u^1\rangle\geq\lambda_1$ it follows that $\lambda_1\geq\lambda_2$ and  $(3-a)\lambda_2\geq\lambda_1.$ We conclude that $a\geq2.$ Next, from the inequality $\langle(-2\lambda_2,\lambda_2), v^3\rangle\geq\lambda_3$ it follows that $(-3c-2)\lambda_2\geq\lambda_3$, and since $\lambda_2=\lambda_3$ we conclude that $c\geq-1.$ We have already shown that $c\leq-1.$ Therefore, $c=-1.$ Furthermore, from the inequality     $\langle(\lambda_4,0),u^1\rangle\geq\lambda_1$ it follows that            $(a-2)\lambda_4\geq\lambda_1$. Since $\lambda_1\geq\lambda_2=\lambda_3\geq\lambda_4$ it follows that $a\leq 2.$ Therefore, $a=2$ and $\lambda_1=\lambda_2=\lambda_3=\lambda_4.$
However, if we compare  the inequality $\langle V_4,v^1\rangle\geq\lambda_1$ with $\lambda_1=\lambda_4$ we find that $V_4$ belongs to the edge given by vector $v^1=(1,2)$ coming from $V_1$. We again arrive at a contradiction since the edges coming from $V_4$ are given by $u^4=(1,1)$ and $v^4=(1,0).$
\end{proof}

\begin{remark} \label{remark one almost toric}
The Delzant polytope corresponding to $\mathbb{CP}^2\sharp\overline{2\mathbb{CP}^2}$ shown in Figure \ref{f-cp} on the left contains 4 vertices with the slopes 
$$s(V_1)=(1,1), s(V_2)=(1,2), s(V_3)=(-2,-1), s(V_4)=(0,-1).$$
According to Proposition \ref{example non-centered}, no matter what size the allowable toric blow ups are,  the polytope cannot be $\{V_1,V_2,V_3,V_4\}$-centered. However, we are able to obtain a Weinstein domain 
$$\mathcal{W}_{T^2, \{ (1, 1), (1,2),(-2,-1), (0,-1)\}}$$ 
as a divisor complement using almost toric manifolds introduced in Section~\ref{intro to almost toric}. Namely, consider $\mathbb{CP}^2\sharp\overline{\mathbb{CP}^2}$ constructed by taking the monotone size blow up of $\mathbb{CP}^2$ at the node $A$ (the blow up which creates nodes at $B_1$ and $B_2$ of Figure \ref{f-cp} on the right).  Then, perform a nodal trade of the vertex $B_2$ and produce an almost toric manifold with a Lagrangian fibration such that the pre-image of the singular point marked by x is a pinched torus and the pre-image of $B_2$ is an isotropic circle. Note that we then have a $(1,0)$ eigenray extending from $B_2$ to x, hence the vanishing class of the singular fiber is $(0,-1)$. As explained in Section \ref{section:almosttoric} a neighborhood of the singular fiber is filled with a 2-handle attached to the set of regular points along the vanishing circle.

We now  blow up this almost toric manifold at the node $B_1$. We ensure that the size of the blow up is such that the preimage of $B_2$ is a new node of the blow up.  This is allowed because the fiber over $B_2$ is a circle. This way we obtain an almost toric manifold with base the polytope with a singular fiber in the interior shown on the right of Figure~\ref{f-cp}. Consider the divisor given as the pre-image under this Lagrangian fibration of the boundary of the polytope and then smooth the divisor at the nodes $V_1$, $V_2$, and $V_3$.  The complement of this smoothed divisor is Weinstein homotopic to $\mathcal{W}_{T^2, \{ (1, 1), (1,2),(-2,-1), (0,-1)\}}.$ 

\begin{figure}
	\centering
	\includegraphics[width=9cm]{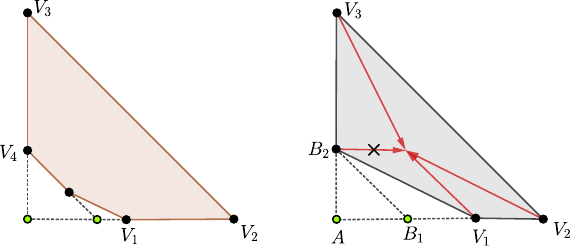}
	\caption{A Delzant polytope of $\mathbb{CP}^2\sharp\overline{2\mathbb{CP}^2}$ on the left, an almost toric base of $\mathbb{CP}^2\sharp\overline{2\mathbb{CP}^2}$ on the right.}
	\label{f-cp}
\end{figure}   
\end{remark}

We finish this section by presenting infinitely many distinct Weinstein manifolds obtained by our construction as complements of smoothed total toric divisors in toric $4$-manifolds. That is, we find an infinite list of slopes for each of which there is a partially centered Delzant polytope. As we noted earlier, this can only arise with partially smoothed total toric divisors since there are only finitely many monotone Delzant polytopes.

\begin{theorem} \label{thm:infinitemanifolds} 
There are infinitely many non-diffeomorphic Weinstein domains obtained by taking the complement of a neighborhood of a partially smoothed total toric divisor in a toric $4$-manifold.
\end{theorem}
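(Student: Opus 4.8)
The plan is to realize, for every $k\ge 3$, a toric $4$-manifold whose Delzant polytope is centered with respect to a $k$-element subset of its vertices, to feed this into Theorem~\ref{thm:toricWein}, and then to distinguish the resulting Weinstein completions by their second Betti numbers. The construction of the centered polytopes is the substantive part; the rest is elementary algebraic topology.

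\textbf{Reduction to a homology computation.} Suppose $M_k$ is a toric $4$-manifold whose Delzant polytope is $\{V_1,\dots,V_k\}$-centered, and let $W_k$ be the completion of the Weinstein domain $M_k\setminus N$ produced by Theorem~\ref{thm:toricWein}. By that theorem $W_k$ is the completion of $\mathcal{W}_{T^2,\{s(V_1),\dots,s(V_k)\}}$, i.e.\ of the Weinstein domain obtained from $D^*T^2$ by attaching $k$ Weinstein $2$-handles along the Legendrian conormal lifts $\Lambda_{\gamma_i}$ of co-oriented curves $\gamma_i\subset T^2$ of slopes $s(V_i)$. Since $D^*T^2$ deformation retracts onto the zero section $T^2$ and the oriented conormal lift of $\gamma_i$ is a section of $S^*T^2$ over $\gamma_i$, the $i$-th attaching circle represents $\pm s(V_i)\in\Z^2=H_1(D^*T^2)$. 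Writing $\partial\colon\Z^k\to\Z^2$ for the map carrying the $i$-th generator to $s(V_i)$, excision identifies $H_*(W_k,D^*T^2)$ with $\Z^k$ in degree $2$ and $0$ otherwise, so the long exact sequence of the pair gives a short exact sequence
$$0\longrightarrow H_2(D^*T^2)\cong\Z\longrightarrow H_2(W_k)\longrightarrow \ker\partial\longrightarrow 0,$$
whence $b_2(W_k)=1+\operatorname{rank}(\ker\partial)=1+k-\operatorname{rank}(\operatorname{im}\partial)$. In a centered configuration the slopes $s(V_1),\dots,s(V_k)$ are pairwise distinct primitive vectors (two vertices with equal slope have parallel rays, which cannot meet in a single interior point), and a line through the origin contains only two primitive vectors; hence for $k\ge 3$ at least two of the $s(V_i)$ are $\Q$-linearly independent, so $\operatorname{rank}(\operatorname{im}\partial)=2$ and $b_2(W_k)=k-1$. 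The manifolds $W_3,W_4,W_5,\dots$ therefore have pairwise distinct second Betti numbers, hence are pairwise non-diffeomorphic, and Theorem~\ref{thm:infinitemanifolds} follows once the polytopes are constructed. (One could equally well distinguish them via $H_1$ or the intersection form, but $b_2$ is the cleanest.)

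\textbf{Constructing $\{V_1,\dots,V_k\}$-centered polytopes for all $k$.} For $k\le 6$ one can take a monotone Delzant polygon (by Proposition~\ref{monotone=centered} these are exactly the polygons centered with respect to all of their vertices), so it remains to handle $k\ge 7$, where genuinely non-monotone polygons are needed. By Lemma~\ref{lemma same lambdas}, a vertex lies on a ray through the origin exactly when its two bounding half-spaces share a common negative free coefficient, and—importantly—this coefficient may vary from vertex to vertex. The plan is to build a ``toothed'' polygon: fix a complete regular fan with $3k$ rays (such fans exist for every $k$, e.g.\ by iteratively blowing up $\cptwo$), group the edges into $k$ consecutive triples $(e_i^-,e_i^+,t_i)$, and choose free coefficients so that $e_i^-$ and $e_i^+$ carry a common negative coefficient $\lambda_i$ while the transition edges $t_i$ carry other negative coefficients; then each vertex $V_i=e_i^-\cap e_i^+$ has its ray through the origin. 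Equivalently, one can proceed inductively from a small blow-up of $\cptwo$ (which is $\{V_1,V_2\}$-centered by Proposition~\ref{prop centered blow}): repeatedly blow up at a non-centered vertex $W$ whose two adjacent edges carry distinct coefficients $c_u<c_v<0$, choosing the new edge to have coefficient exactly $c_u$; then the intersection of the new edge with the $c_u$-edge is a new centered vertex at the same center, and the other new vertex again has distinct adjacent coefficients, ready for the next step.

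\textbf{Main obstacle.} The delicate point is verifying that these coefficients can be chosen so that the polygon is a genuine nondegenerate bounded smooth Delzant polygon with the origin in its interior—equivalently, that the codimension-$k$ affine subspace cut out by the conditions $\operatorname{coeff}(e_i^-)=\operatorname{coeff}(e_i^+)$ meets the open cone of coefficient vectors defining honest polytopes for the chosen fan. This is precisely the type of constraint that can fail (cf.\ Proposition~\ref{example non-centered}), so the fan and the transition coefficients must be chosen with care—e.g.\ the transition edges long and the teeth shallow—to keep every edge of positive length while the new coefficients introduced at each blow-up remain in the admissible range. Exhibiting one explicit infinite family of admissible fans-with-coefficients completes the proof; I expect the homological bookkeeping of the previous steps to be routine by comparison.
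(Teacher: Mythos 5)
Your overall strategy coincides with the paper's: build, for every $k$, a toric $4$-manifold whose polytope is centered with respect to $k$ of its vertices, feed it into Theorem~\ref{thm:toricWein}, and distinguish the resulting completions by a topological invariant. Your homological reduction is correct (the paper gets away with less: since $W_k$ is $D^*T^2$ with $k$ two-handles attached, $\chi(W_k)=k$ already separates the homotopy types, with no need to compute $\operatorname{rank}(\operatorname{im}\partial)$). The problem is that the substantive half of the argument --- actually exhibiting an infinite family of partially centered Delzant polytopes --- is not carried out. You name this yourself as the ``main obstacle'' and end with ``Exhibiting one explicit infinite family of admissible fans-with-coefficients completes the proof.'' That exhibition \emph{is} the proof: monotone polygons only reach $k\le 6$, so without it you have finitely many examples and the theorem is not established.

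The gap is not merely one of bookkeeping. In your inductive blow-up step, Lemma~\ref{lemma same lambdas} forces the new edge's free coefficient to equal $c_u$ exactly, which \emph{fixes} the size of the blow-up; you must then verify that this forced size is admissible, i.e.\ that the new supporting line $\langle x,u+v\rangle=c_u$ cuts off only the corner at $W$ and that every previously constructed vertex still satisfies the new half-space inequality (and all the old ones). This can genuinely fail --- Proposition~\ref{example non-centered} exhibits slope data for which no choice of coefficients works --- so the existence of a consistent infinite chain of choices must be demonstrated, not assumed. The paper does exactly this: it writes down, for each $k$, explicit half-spaces realizing the moment polytope of $(\cpone\times\cpone)\#\,2k\,\cptwobar$ in which consecutive pairs of edges share a common negative coefficient $\lambda_i$ (so the corresponding rays pass through the origin), and it imposes the explicit chain of inequalities $\lambda_1<2\lambda_0$, $\lambda_j<3\lambda_{j-1}$, $\lambda_k<2\lambda_{k-1}$, $\lambda<k\lambda_k$, which is precisely the verification that every vertex lies in every half-space and the origin lies in the interior. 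To complete your proof you would need to supply an analogous explicit family together with the inequalities certifying admissibility at every stage.
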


\begin{proof}
We will show that there is a $\{V_0,\ldots, V_{k+1}\}$-centered Delzant polytope, for any $k\geq1,$ where 
$$s(V_0)=(0,-1), s(V_1)=(1,1), s(V_2)=(1,2), \ldots, s(V_k)=(1,k),~\text{and}~s(V_{k+1})=(-1,-1).$$ 
Then, the rest of the proof follows from Theorem \ref{thm:toricWein} and the fact that the diffeomorphism types (in fact homotopy types) of these manifolds are distinguished by their Euler characteristic (since they are built with increasing numbers of $2$-handles). Start with the half-spaces:
$$x\geq\lambda_0 ~\text{and} ~ x+y\geq \lambda_0,$$
for any $\lambda_0<0.$ The line $x+y=\lambda_0$ and $x=\lambda_0$ intersect at a vertex $V_0=(\lambda_0,0).$ Then, take any $\lambda_1<0$ such that \emph{$\lambda_1<2\lambda_0$} and the half-spaces  
$$2x+3y\geq\lambda_1 ~\text{and}~x+2y\geq\lambda_1.$$ 
The lines  $x+y=\lambda_0$ and $2x+3y=\lambda_1$ intersect at the vertex $A_1=(3\lambda_0-\lambda_1,\lambda_1-2\lambda_0)$  (see Figure \ref{nova fig}). The lines $2x+3y=\lambda_1$ and $x+2y=\lambda_1$ intersect at the vertex  $V_1=(-\lambda_1,\lambda_1).$ Then, take any $\lambda_2<0$ such that \emph{$\lambda_2<3\lambda_1$}, and the half-spaces  $$2x+5y\geq\lambda_2 ~\text{and}~ x+3y\geq\lambda_2.$$
The lines $x+2y=\lambda_1$ and $2x+5y=\lambda_2$ intersect at the vertex $A_2=(5\lambda_1-2\lambda_2,\lambda_2-2\lambda_1).$  The lines $2x+5y=\lambda_2$ and $x+3y=\lambda_2$ intersect at the vertex  $V_2=(-2\lambda_2,\lambda_2).$ Then, take any $\lambda_3<0$ such that \emph{$\lambda_3<3\lambda_2$} and take the half-spaces
$$2x+7y\geq\lambda_3~\text{and}~ x+4y\geq\lambda_3.$$  
The lines $x+3y=\lambda_2$ and $2x+7y=\lambda_3$ intersect at the vertex $A_3=(7\lambda_2-3\lambda_3,\lambda_3-2\lambda_2).$ The lines $2x+7y=\lambda_3$ and $x+4y=\lambda_3$ intersect at the vertex $V_3=(-3\lambda_3,\lambda_3).$ 

We continue in this manner until we need to consider the half-spaces $$2x+(2k-1)y\geq\lambda_{k-1} ~\text{and}~ x+ky\geq\lambda_{k-1},$$
where \emph{$\lambda_{k-1}<3\lambda_{k-2}$}. We obtain the vertex $A_{k-1}=((2k-1)\lambda_{k-2}-(k-1)\lambda_{k-1},\lambda_{k-1}-2\lambda_{k-2})$  as the intersection of the lines $2x+(2k-3)y=\lambda_{k-2}$ and  $2x+(2k-1)y=\lambda_{k-1}$. We also obtain the vertex $V_{k-1}=(-(k-1)\lambda_{k-1},\lambda_{k-1})$ as the intersection of the lines $2x+(2k-1)y=\lambda_{k-1}$ and $x+ky=\lambda_{k-1}$.

For the vertex $V_k$ consider the half-spaces
$$x+(k+1)y\geq\lambda_k~\text{and}~y\geq\lambda_k,$$
where \emph{$\lambda_k<2\lambda_{k-1}$}. The lines $x+ky=\lambda_{k-1}$ and $x+(k+1)y=\lambda_k$ intersect at the vertex $A_k=((k+1)\lambda_{k-1}-k\lambda_k,\lambda_k-\lambda_{k-1}).$ The lines $x+(k+1)y=\lambda_k$
and $y=\lambda_k$ intersect at the vertex $V_k=(-k\lambda_k,\lambda_k).$
  
Consider now the half space
$$-x\geq \lambda,$$
for any $\lambda<k\lambda_k.$ The lines $y=\lambda_k$ and $-x=\lambda$ intersect at the vertex $B=(-\lambda,\lambda_k).$ We finish this circle of half-spaces by adding the half-space
$$-y\geq\lambda_0.$$
The lines $-x=\lambda$ and $-y=\lambda_0$ intersect at the vertex $C=(-\lambda,-\lambda_0).$ The lines $-y=\lambda_0$ and $x=\lambda_0$ intersect at the vertex  $V_{k+1}=(\lambda_0,-\lambda_0).$ The polytope that we have constructed is a moment map image of $(\mathbb{CP}^1\times\mathbb{CP}^1)\sharp\overline{2k\mathbb{CP}^2}$, and all the inequalities between the $\lambda_i$ for $i=1,\ldots, k$ are imposed to get the sufficiently small sizes of blow ups so that the desired rays intersect inside the polytope. See Figure~\ref{nova fig} for an example for $k=3$.
\end{proof}

\begin{figure}
	\centering
	\includegraphics[width=15cm]{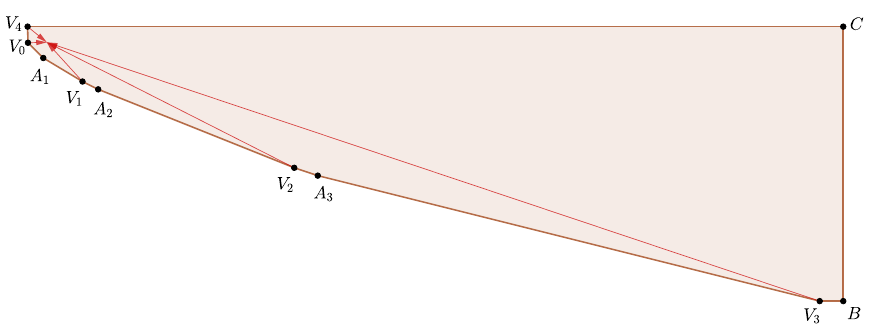}
	\caption{$\{V_0,\ldots, V_4\}$-centered polytope}
	\label{nova fig}
\end{figure}


\section{Non-centered toric manifolds and obstructions} \label{s:obstructions}

The assumption on the toric manifold to be $\{V_1, \ldots , V_k\}$-centered in our main theorem and algorithm is actually crucial. Indeed, we prove in this section that there are many toric manifolds which are not $\{V_1, \ldots, V_k \}$-centered such that the complement of the $\{V_1, \ldots, V_k \}$-smoothing of the total toric divisor does not support a Weinstein structure. In fact, in many cases the complement is not even exact.

\subsection{Exactness obstructions}
We first consider two cases that cause a toric manifold to fail to be $\{V_1,\ldots, V_n \}$-centered, and show that in these cases the symplectic form is not exact on the complement of a regular neighborhood of the $\{V_1,\ldots, V_k \}$-smoothed divisor.

\begin{prop}\label{prop:notexact}
Let $(M,\omega)$ be a symplectic toric manifold and $\Delta$ its Delzant polytope.
Let $V_1,\dots, V_k$ be a subsets of the vertices of $\Delta$.
Assume that $M$ fails to be $\{V_1,\dots, V_k\}$-centered because either 
\begin{itemize}
\item[(i)] Two rays associated to two of the vertices $V_{i_1}, V_{i_2}\in \{V_1,\dots, V_k\}$ are parallel or anti-parallel but the lines extending them do not coincide, or;
\item[(ii)] There exist three vertices, $V_{i_1},V_{i_2}, V_{i_3} \in \{V_1, \ldots, V_k\}$ such that for the associated rays 
$R_{i_1}, R_{i_2}, R_{i_3}$, $R_{i_2}$ intersects $R_{i_3}$ at a point $c_2 \in \mathring \Delta$ in the interior of the polytope $\Delta$ that does 
not belong to $R_{i_1}$.
\end{itemize}
Then the complement of its total toric divisor smoothed at 
$\{V_1,\dots, V_k\}$ is not an exact symplectic manifold (and in particular cannot support a Weinstein structure).
\end{prop}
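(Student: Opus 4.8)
The plan is to prove non‑exactness by producing, in each of the two cases, an explicit closed $2$‑cycle $S$ in the complement with $\int_S\omega\neq 0$; since $\omega=d\lambda$ globally on an exact symplectic manifold would force $\int_S\omega=\int_{\partial S}\lambda=0$ for every cycle $S$, this shows $[\omega]\neq 0$ in $H^2$ and hence that the complement is neither exact nor Weinstein. A preliminary remark is that the conclusion does not depend on the size of $N$: shrinking $N$ changes the complement only by a collar, leaving $H^2$ and $[\omega]$ unaffected, so I may work with the specific neighborhood coming from the construction.

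First I would recall, from Section~\ref{section:Topologycomplements} and the subsequent subsection on the core and co‑core, the purely topological/symplectic input that does \emph{not} use centeredness: the complement of a regular neighborhood of the $\{V_1,\dots,V_k\}$‑smoothed divisor is built from a subdomain $U\subset T^*T^2$ (the moment‑map preimage of a disk in $\mathring\Delta$, with zero section identified with the fiber over a chosen interior point $O$, so $\omega|_U=d\lambda_{can}$) by attaching, for each smoothed node $V_i$, a $2$‑handle whose Lagrangian core $D_c^{(i)}$ is the disk $\{(z_1,-\overline{z_1})\}$ through the node, with boundary the attaching circle $\gamma_i$ — a primitive curve of slope $s(V_i)$ sitting in the torus fiber over a point $P_i$ of the ray $R(V_i)$. (Since $D_c^{(i)}=\{z_1z_2\le 0\}$ misses $\{z_1z_2=\varepsilon^2\}$, it lies in the complement.) The key computation I would record is that, with its natural orientation, $\int_{\gamma_i}\lambda_{can}=\langle P_i-O,\,s(V_i)\rangle=\langle V_i-O,\,s(V_i)\rangle$, where the second equality uses that the direction of $R(V_i)$ is orthogonal to $s(V_i)$ (both being read off from the two edge vectors at $V_i$), so the answer is independent of how far along $R(V_i)$ we place $\gamma_i$.

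Next I would build the cycles from relations among the classes $[\gamma_i]=s(V_i)\in H_1(T^2;\mathbb{Z})=\mathbb{Z}^2$. In case (i), $R_{i_1}\parallel R_{i_2}$ forces $s(V_{i_1})=\pm s(V_{i_2})$ (slopes are primitive), so $\gamma_{i_1}\mp\gamma_{i_2}$ is null‑homologous in $U$ (indeed already in $\partial U\cong T^3$) and bounds a surface $\Sigma\subset U$. In case (ii), since $R_{i_2}$ and $R_{i_3}$ meet we have $s(V_{i_2})\not\parallel s(V_{i_3})$, so $s(V_{i_1})=a\,s(V_{i_2})+b\,s(V_{i_3})$ for unique $a,b\in\mathbb{Q}$; clearing denominators gives integers $m_{i_1}\neq 0,m_{i_2},m_{i_3}$ with $\sum_j m_{i_j}s(V_{i_j})=0$, hence $\Sigma\subset U$ with $\partial\Sigma=\sum_j m_{i_j}\gamma_{i_j}$. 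In both cases put $S=\Sigma\cup\bigcup_j m_{i_j}D_c^{(i_j)}$, a closed $2$‑cycle in the complement; as the cores are Lagrangian, $\int_S\omega=\int_\Sigma\omega=\int_{\partial\Sigma}\lambda_{can}=\sum_j m_{i_j}\langle V_{i_j}-O,\,s(V_{i_j})\rangle=\sum_j m_{i_j}\langle V_{i_j},\,s(V_{i_j})\rangle$, the $O$‑term dropping out since $\sum_j m_{i_j}s(V_{i_j})=0$.

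It then remains to see this number is nonzero. In case (i) it equals $\langle V_{i_1}-V_{i_2},\,s(V_{i_1})\rangle$, which is nonzero exactly because the two parallel rays lie on distinct lines (both lines have normal $s(V_{i_1})$, so they coincide iff $\langle V_{i_1}-V_{i_2},s(V_{i_1})\rangle=0$). In case (ii), using that $c_2\in R_{i_2}\cap R_{i_3}$ gives $\langle V_{i_j},s(V_{i_j})\rangle=\langle c_2,s(V_{i_j})\rangle$ for $j=2,3$ (because $c_2-V_{i_j}$ is along $R(V_{i_j})\perp s(V_{i_j})$), the sum collapses to $m_{i_1}\langle V_{i_1}-c_2,\,s(V_{i_1})\rangle$. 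The main — though entirely elementary — obstacle is to show this is nonzero, i.e. that $c_2$ does not lie on the line $\ell_{i_1}$ extending $R_{i_1}$: here I would invoke convexity of $\Delta$, since $R_{i_1}$ emanates from the vertex $V_{i_1}$ pointing into $\Delta$, one has $\ell_{i_1}\cap\Delta=R_{i_1}\cap\Delta$, so the hypothesis $c_2\in\mathring\Delta\setminus R_{i_1}$ yields $c_2\notin\ell_{i_1}$, that is $\langle c_2,s(V_{i_1})\rangle\neq\langle V_{i_1},s(V_{i_1})\rangle$. Thus $\int_S\omega\neq 0$ in both cases, completing the proof.
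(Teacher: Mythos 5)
Your proof is correct and follows the same overall strategy as the paper's: in both cases one builds a closed $2$-cycle out of the Lagrangian core disks lying over the rays together with a capping surface in the cotangent-bundle part $U$, exploits the linear dependence $\sum_j m_{i_j}s(V_{i_j})=0$ forced by the hypotheses, and shows the cycle has nonzero symplectic area. Two differences in execution are worth recording. First, the paper constructs the capping surface explicitly --- a Lagrangian $2$-chain in the single torus fiber over $c_2$ together with an annulus $A$ lying over a straight segment from $c_1$ to $c_2$ --- and computes $\int_A\omega$ by direct parameterization; you instead take an arbitrary $2$-chain $\Sigma\subset U$ with the prescribed boundary and evaluate $\int_\Sigma\omega=\int_{\partial\Sigma}\lambda_{can}$ by Stokes' theorem, reducing everything to the pairings $\langle V_{i_j}-O,\,s(V_{i_j})\rangle$; this makes the independence of all choices manifest and avoids the explicit annulus computation. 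Second, in case (ii) the hypothesis only excludes $c_2$ from the ray $R_{i_1}$, whereas the nonvanishing requires $c_2$ to avoid the whole line $\ell_{i_1}$ extending $R_{i_1}$; the paper's proof passes from one to the other without comment, while you close this gap with the convexity observation that $\ell_{i_1}\cap\Delta=R_{i_1}\cap\Delta$ because $\Delta$ is contained in the cone at $V_{i_1}$ spanned by its edge vectors. That is a small but genuine improvement in rigor over the published argument.
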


\begin{proof}
	To prove $(M,\omega)$ is not exact we construct a 2-cycle $S$ built as the union of piecewise smooth surfaces such that $\langle \omega, [S]\rangle \neq 0$.
	
	In case (i), let $c_2$ be any point in $\mathring \Delta$ lying on $R_{i_2}$. In case (ii), $c_2$ is the intersection of $R_{i_2}$ with $R_{i_3}$. In both cases, let $c_1$ be any point in $\mathring \Delta$ lying on $R_{i_1}$. Let $\gamma(t)=(\alpha t+x_0, \beta t+y_0)$, $t\in [0,1]$ be the straight line segment from $c_1$ to $c_2$. See Figure~\ref{fig:nonexact}.
	
\begin{figure}
	\centering
	\includegraphics[width=11cm]{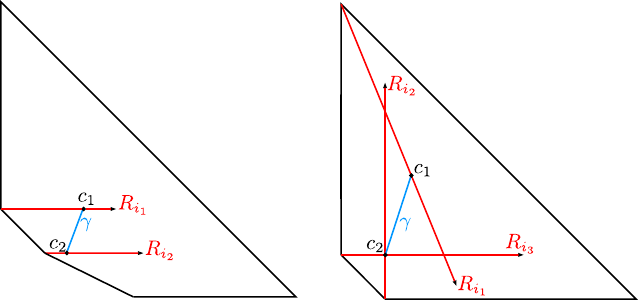}
	\caption{Non-centered examples, with case (i) on the left and case (ii) on the right.}
	\label{fig:nonexact}
\end{figure}
	
	Let $(a_j,b_j)$ denote the slope $s(V_{i_j})$ for $j=1,2,3$ (or $j=1,2$ in case (i)). Then $R_{i_j}$ is a ray with the direction $r(V_{i_j})=(-b_j,a_j)$. Recall that the complement of the divisor which has been smoothed at a vertex $V_{i_j}$ contains a Lagrangian disk $D_j$ with the node $V_{i_j}$ at its center such that $D_j$ projects to the ray $R_{i_j}$. Then, parameterized in toric coordinates $(p_1(t), p_2(t), q_1(t), q_2(t))$,
	$$D_j = \{(-b_jt+x_j, a_j t+y_j, a_js+\theta_j, b_js+\phi_j) \mid t\in [0,\delta_j], s\in [0,2\pi] \}$$
	where $\delta_j>0$, $(x_j,y_j)$ are the coordinates in $\R^2$ for the vertex $V_{i_j}$, and $(\theta_j,\phi_j)\in T^2$ is a constant point chosen so that $D_j$ lies in the complement of the small neighborhood of the smoothed divisor. Choose $\delta_1$ so that $(-b_1\delta_1+x_1, a_1\delta_1+y_1)=c_1$ in order to have $D_1$ project to the segment of $R_{i_1}$ ending at the point $c_1$. Choose $\delta_2$ and $\delta_3$ so that $(-b_2\delta_2+x_2,a_2\delta_2+y_2)=(-b_3\delta_3+x_3,a_3\delta_3+y_3)=c_2$, in order to have $D_2$ project to the segment of $R_{i_2}$ ending at $c_2$ and $D_3$ project to the segment of $R_{i_3}$ ending at $c_2$.
	
	Let $A$ be the annulus lying over the line segment $\gamma$ parameterized as
	$$A = \{ (\alpha t+x_0, \beta t+y_0, a_1 s+\theta_1, b_1 s+\phi_1) \mid t\in [0,1], s\in[0,2\pi] \}.$$
	
	Note that because the directions of the rays are orthogonal to the slopes $s(V_{i_j})=(a_j,b_j)$, the slopes $\{(a_j,b_j)\}$ are linearly dependent if and only if the directions of the rays $\{(-b_j,a_j)\}$ are. If two of these slopes are parallel or anti-parallel, we are in case (i) and there is a linear dependence relation $k_1(a_1,b_1)+k_2(a_2,b_2) =0$ with $k_1\neq 0$, $k_1,k_2\in \Q$. In case (ii), we have three distinct slopes in $\Z^2$ which must be linearly dependent in $\Q^2$, so there exist coefficients $k_1,k_2,k_3\in \Q$ such that $\sum k_j (a_j,b_j) = 0$ with $k_1\neq 0$. After multiplying by an integer, we may assume that all $k_j\in \Z$.
	
	For $j=2,3$ let $C_j=\partial D_j$, and let $C_1 = \partial (A\cup D_1)$. Then, for all $j$, $C_j$ is a curve in the $T^2$ fiber over $c_2$ of slope $(a_j,b_j)$ and this slope records the homology class of $C_j$ in $H_1(T^2)$. Since $\sum k_j (a_j,b_j) = 0$, there is a 2-chain $S'$ of $T^2$ with boundary given by $|k_1|$ copies of $C_1$ and $|k_2|$ copies of $C_2$ (in case (ii) we also have $|k_3|$ copies of $C_3$), with orientations specified by the signs of the $k_j$. We can form a $2$-cycle $S$ in $M$ by 
	$$S = S'+k_1A +\sum k_j D_j.$$
	
	Then 
	$$\langle [\omega], [S] \rangle = \int_{S'}\omega + k_1\int_A \omega + \sum k_j\int_{D_j} \omega.$$
	Since $S'$ is a sum of Lagrangian subsurfaces of the $T^2$ fiber over $c_2$, the first term in the sum is zero. Similarly, since each $D_j$ is Lagrangian, the last term is also zero. Therefore
	$$\langle [\omega], [S] \rangle = k_1\int_A \omega.$$
	Recall that $k_1\neq 0$, so it suffices to show that the symplectic area of $A$ is non-zero.
	
	Because $(a_1,b_1)$ and $(-b_1,a_1)$ are an orthogonal basis for $\R^2$, we can write $(\alpha,\beta) = \zeta_1(a_1,b_1)+\zeta_2(-b_1,a_1)$. Notice that because the line extending $R_{i_1}$ of direction $(-b_1,a_1)$ contains $c_1$ and does not contain $c_2$, we must have that $\zeta_1\neq 0$ (the vector connecting $c_1$ to $c_2$ has a non-zero component in the direction orthogonal to $R_{i_1}$). Using the parametrization of $A$ and the standard form of $\omega$ in toric coordinates we find that
	$$\int_A \omega = \int_{0 \leq t \leq 1} \int_{0 \leq s \leq 2 \pi} (\alpha,\beta) \cdot (a_1,b_1)\, ds\, dt = 2\pi \zeta_1 (a_1^2+b_1^2)\neq 0. $$
	Therefore $\langle [\omega], [S]\rangle\neq 0$, so $\omega$ is not exact on the complement of a small neighborhood of the divisor smoothed at $\{V_1,\ldots, V_k \}$.
\end{proof}

Although these cases cover many examples of non-centered toric manifolds, there are some other reasons why a toric manifold may fail to be $\{V_1,\ldots, V_k \}$-centered. If neither of the hypotheses (i) or (ii) of Proposition~\ref{prop:notexact} are satisfied, and the polytope is not $\{V_1,\ldots, V_k \}$-centered, then we must have that at least one of the following is true:
\begin{itemize}
	\item[(iii)] there is a pair of rays $R_{j_1}$ and $R_{j_2}$ which do not intersect, but are not parallel or anti-parallel, or
	\item[(iv)] the common intersections of rays lie outside the interior of the polytope $\mathring \Delta$.
\end{itemize}

Examples of these situations are shown in Figures~\ref{fig:disorder} and~\ref{fig:centareas}. In these scenarios, we are not currently able to generally rule out the existence of a Weinstein structure on the complement in all examples. However, we do have some additional obstructions which can rule out certain examples. In particular, we construct examples of case (iv) where the complement of a neighborhood of the $\{V_1,\ldots, V_k\}$-smoothed divisor is exact, but does not support a Weinstein structure because there is no compatible \emph{convex} Liouville structure.

\begin{figure}
	\centering
	\includegraphics[width = 6cm]{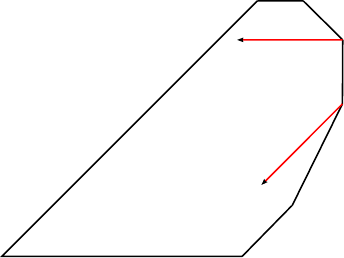}
	\caption{A non-centered example where two rays at the chosen vertices never intersect.}
	\label{fig:disorder}
\end{figure}

\subsection{Convexity obstructions}
The existence of a convex Liouville structure on the complement of a regular neighborhood $N$ of the $\{V_1,\ldots, V_k \}$-smoothed divisor is equivalent to the existence of a concave Liouville structure on the neighborhood $N$ of the divisor (defined near $\partial N$). Here we show that there exist smoothings of total toric divisors which do not admit concave neighborhoods. 

As a basic example, consider a toric representation of $\cptwo\# 9\cptwobar$. The boundary total toric divisor represents the homology class $3h-e_1-\cdots-e_9$. After smoothing all nodes, the smoothed divisor becomes a smoothly embedded torus representing the homology class $3h-e_1-\cdots-e_9$. This torus is a symplectic submanifold. The homology class shows that the self-intersection number of the torus is $0$, so its normal bundle is trivial. The standard neighborhood theorem shows that a regular neighborhood of this torus is symplectomorphic to $T^2\times D^2$ with a product symplectic structure $\omega_T\oplus \omega_D$, where $\omega_T$ is some symplectic area form on $T^2$ and $\omega_D$ is some symplectic area form on $D^2$. The restriction of $\omega_T\oplus \omega_D$ to the boundary $T^2\times S^1$ is not exact because $\omega_T\oplus \omega_D$ evaluates positively on $T^2\times \{\theta_0\}$. If the restriction of the symplectic form to the boundary of a neighborhood is not exact, the neighborhood cannot be concave. Therefore the complement cannot have convex boundary.

For a toric representation of $\cptwo\#\ell\cptwobar$ with $\ell>9$, the analogous smoothing of all vertices gives a divisor which is a smooth torus of negative self-intersection number. Because this is a negative definite intersection form, the divisor admits convex neighborhoods, but it cannot admit concave neighborhoods by the following \emph{relative} cohomology obstruction.

\begin{lemma}[c.f. \cite{LiMak}]
	Let $\Sigma$ denote a normal crossing divisor with $k$ components in a symplectic $4$-manifold $(X,\omega)$. Let $(N,\omega)$ be a standard regular neighborhood of $\Sigma$ and let $Q$ denote its intersection form. Let $\mathbf{a}\in \left(\R_{>0}\right)^k$ denote the vector of symplectic areas of the components of $\Sigma$ (ordered consistently with $Q$). Suppose there does not exist any $z\in \R_{>0}^k$ such that $Qz=\mathbf{a}$. Then there is no inward pointing (concave) Liouville vector field for $\omega$ defined along a neighborhood of the boundary of $N$.
\end{lemma}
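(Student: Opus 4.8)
The obstruction is a Stokes-type argument comparing the action of the Liouville form on the components of $\Sigma$ with the symplectic areas. First I would set up the relevant neighborhood geometry: if $(N,\omega)$ is a standard regular (plumbing) neighborhood of the normal crossing divisor $\Sigma = \Sigma_1 \cup \cdots \cup \Sigma_k$, then $N$ deformation retracts onto $\Sigma$, and by standard neighborhood theorems $N$ is determined symplectically by the areas $\mathbf a = (a_1,\dots,a_k)$ and the intersection data $Q = (\Sigma_i \cdot \Sigma_j)$. The key point is that $H_2(N;\R)$ is spanned (at least rationally) by the classes $[\Sigma_i]$, while $H_2(N,\partial N;\R)$ is spanned by the dual disk-fiber classes $[D_j]$ (meridional disks to $\Sigma_j$), with the pairing given by $[\Sigma_i]\cdot[D_j] = \delta_{ij}$; under the long exact sequence of the pair, the image of $[\Sigma_i]$ in $H_2(N,\partial N;\R)$ is $\sum_j Q_{ij}[D_j]$.

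Next I would argue by contradiction. Suppose there exists an inward-pointing (concave) Liouville vector field $Z$ for $\omega$ near $\partial N$, so that $\alpha := \iota_Z\omega|_{\partial N}$ is a negative contact form and in particular $d\alpha = \omega|_{\partial N}$, i.e. $\omega$ restricted to $\partial N$ is exact. Then for each $j$ the class $[D_j]$ can be represented by a surface with boundary a curve $C_j \subset \partial N$; pushing the meridional disk $D_j$ slightly so its boundary lies on $\partial N$ and applying Stokes, $\int_{D_j}\omega = \int_{C_j}\alpha =: b_j$. I would like to conclude the $b_j$ are positive: this is where concavity enters — the boundary curve $C_j$ of the meridian can be taken to be a positively transverse (Reeb-like) or at least a positively oriented loop with respect to the contact form on the concave side, so $\int_{C_j}\alpha>0$. (This is the crux; see below.) Now pair with the divisor classes: $a_i = \int_{\Sigma_i}\omega = \langle [\omega], [\Sigma_i]\rangle$, and since the image of $[\Sigma_i]$ in relative homology is $\sum_j Q_{ij}[D_j]$ while $[\omega]$ lifts to a relative class evaluating as $b_j$ on $[D_j]$, we get $a_i = \sum_j Q_{ij} b_j$, i.e. $Q\mathbf b = \mathbf a$ with $\mathbf b \in \R_{>0}^k$. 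This contradicts the hypothesis that no $z\in\R_{>0}^k$ solves $Qz = \mathbf a$.

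**The main obstacle.** The delicate step is establishing $b_j = \int_{C_j}\alpha > 0$, i.e. that the meridional boundary circles evaluate positively against a concave Liouville form. Morally this holds because on the concave filling side the induced contact structure is co-oriented oppositely to the convex (Weinstein) side, and the meridian of $\Sigma_j$ links $\Sigma_j$ positively, so it is isotopic to a small positively-transverse loop in the fiber $S^1$ direction of the circle bundle over $\Sigma_j$; there $\alpha$ restricts to a negative multiple of $d\theta$ but the orientation of $C_j$ as $\partial D_j$ also flips, and the two sign flips combine to give positivity. I would make this precise using the explicit local model of a plumbing neighborhood (a disk bundle over $\Sigma_j$ near the smooth part, with the radial–angular toric coordinates already used in Section~\ref{section:Topologycomplements}) and the fact that a concave $\omega$-compatible $Z$ near $\partial N$ must point into $N$, which forces $\iota_Z\omega$ to evaluate positively on outward-linking loops. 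Once this sign is pinned down, the rest is linear algebra via the long exact sequence of $(N,\partial N)$ and Stokes' theorem. I would also remark that this is essentially the argument of \cite{LiMak}, recorded here in the form we need; the reader wanting full details of the neighborhood normal form and the sign computation can consult that reference.
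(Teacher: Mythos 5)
Your overall strategy --- lift $[\omega]$ to a relative class in $H^2(N,\partial N)$ using the Liouville primitive $\alpha$ on $\partial N$, identify its Lefschetz dual with a vector $z$ satisfying $Qz=\mathbf{a}$ by evaluating on the meridional disks, and use concavity to force $z\in(\R_{>0})^k$ --- is the same as the paper's. But the crux step, as you have written it, contains a genuine gap consisting of two compensating errors. First, the Stokes identity $\int_{D_j}\omega=\int_{C_j}\alpha$ is false: $\alpha$ is only defined near $\partial N$, and any surface representing $[D_j]$ must meet $\Sigma_j$ and hence leave the collar where $\alpha$ lives, so there is no Stokes computation to perform. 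The correct evaluation of the relative class $[(\omega,\alpha)]$ on $[D_j]$ is the difference $z_j=\int_{D_j}\omega-\int_{C_j}\alpha$, and the failure of your identity is precisely the quantity $z_j$ the argument needs. As a sanity check, for a disk bundle over a closed surface with Euler number $n<0$ and area $a>0$ your identity would force $z_j=0$, while $Qz=\mathbf{a}$ forces $z=a/n\neq 0$. Worse, if $b_j$ really equaled $\int_{D_j}\omega$, it would be positive for \emph{any} regular neighborhood since $D_j$ is a symplectic disk, with no input from concavity, and your argument would show that $Qz=\mathbf{a}$ always admits a positive solution --- contradicting exactly the examples the lemma is built to handle.

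Second, the sign you assign to $\int_{C_j}\alpha$ is backwards. Concavity gives $\alpha\wedge d\alpha<0$ with respect to the boundary orientation of $\partial N$; along $\partial T_j$, choosing a positively oriented frame $(\partial_\theta,\zeta_1,\zeta_2)$ with $\zeta_1,\zeta_2$ symplectically orthogonal to the meridional disk, the cross terms drop and one gets $\alpha(\partial_\theta)\,\beta_j(\zeta_1,\zeta_2)<0$, hence $\alpha(\partial_\theta)<0$ and $\int_{C_j}\alpha<0$, not $>0$: the heuristic that ``two sign flips combine to give positivity'' does not survive the explicit computation. Once both points are corrected the proof closes immediately and coincides with the paper's: $z_j=\int_{D_j}\omega-\int_{C_j}\alpha$ is positive because the first term is the positive area of a symplectic disk and the second, being negative, is subtracted. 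So the architecture of your argument is sound, but the step you yourself identify as the crux does not go through as stated.
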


\begin{proof}
	We can interpret the convex/concave conditions (where the Liouville vector field points transversally outward/inward along the boundary) in terms of the Liouville and contact forms as follows. A Liouville vector field $Z$ for a symplectic form $\omega$ is dual to the Liouville 1-form $\lambda = \iota_Z\omega$. If $Z$ is defined in a neighborhood of the boundary of $N$, then so is $\lambda$. Let $i:\partial N\to N$ denote the inclusion and let $\alpha=i^*\lambda$. Because of the Liouville condition, $d\alpha = i^*\omega$. 
	
	To say that $Z$ points transversally outward (respectively inward) from the boundary is equivalent to saying that $\omega\wedge \omega(Z,\zeta_1,\zeta_2,\zeta_3)>0$ (respectively $<0$) where $(\zeta_1,\zeta_2,\zeta_3)$ is a positively oriented basis frame for $T(\partial N)$ (oriented using the standard ``outward normal first'' convention for the boundary orientation). Now using the Liouville condition, 
	$$\omega\wedge \omega (Z,\zeta_1,\zeta_2,\zeta_3)=\lambda\wedge \omega(\zeta_1,\zeta_2,\zeta_3)=\alpha\wedge d\alpha(\zeta_1,\zeta_2,\zeta_3).$$
	Therefore, the condition that the Liouville vector field $Z$ points outward (respectively inward) to $\partial N$ is equivalent to asking that $\alpha\wedge d\alpha>0$ (respectively $<0$) with respect to the boundary orientation on $\partial N$.

	Now we relate this to the linear algebra condition, and explain the meaning of $z$. Consider the following portion of the long exact sequence of a pair in cohomology:
	$$\xymatrix{
		H^1(\partial N) \ar[r]^{\rho} & H^2(N,\partial N) \ar[r]^{\pi} & H^2(N) \ar[r]^{i^*} & H^2(\partial N)
	}.$$
	A class in $H^2(N,\partial N)$ is represented by a pair $(\tau,\alpha)$ where $\tau \in \Omega^2(N)$ is closed and $\alpha\in \Omega^1(\partial N)$ satisfies $d\alpha = i^*\tau$ (see \cite[p. 78]{BottTu}). The maps act as $\rho([\alpha]) = [(0,\alpha)]$,  $\pi([\tau,\alpha])= [\tau]$, and $i^*$ is the pull-back by the inclusion. Observe under this formulation, a Liouville form for the symplectic form $\omega$ precisely determines a class $[(\omega, \alpha)]\in H^2(N,\partial N)$ such that $\pi([(\omega,\alpha)]) = [\omega]$. By exactness, such classes exist precisely when $i^*[\omega]=0$. We assume this basic pre-requisite to finding a convex/concave Liouville structure. Then we want to understand when such a class $[(\omega,\alpha)]$ exists with the additional convex/concave condition that $\alpha\wedge d\alpha$ is a positive/negative area form on $\partial N$.
	
	Note that by Poincar\'e-Lefschetz duality, $H^2(N,\partial N)\cong H_2(N)$. Since $H_2(N)$ is freely generated by the components of $\Sigma$, an element of $H_2(N;\R)$ is represented by a vector $z\in \R^k$. For each class $[(\omega,\alpha)]\in H^2(N,\partial N)$, there is a corresponding Poincar\'e-Lefschetz dual class $z\in H_2(N)$. In general, Poincar\'e duality transforms the evaluation pairing between cohomology and homology into the intersection pairing on homology. Since the evaluation pairing on a component $\Sigma_i$ of $\Sigma$ is given by
	$$\langle [(\omega,\alpha)], \Sigma_i\rangle = \int_{\Sigma_i} \omega = a_i,$$
	we obtain the condition that $z$ is dual to an element $[(\omega,\alpha)]$ such that $\pi([(\omega,\alpha)]) = [\omega]$ precisely when $Qz=\mathbf a$.
	
	The standard neighborhood $N$ of $\Sigma$ is symplectically modeled as in~\cite{GayStipsicz}, built from gluing together pieces of two different types. First, for every irreducible component $\Sigma_i$ of $\Sigma$ of genus $g_i$, which intersects $n_i$ other divisors at nodes, we include a piece of the form $S_i\times D^2$ where $S_i$ is a surface of genus $g_i$ with $n_i$ boundary components. These pieces each come with a product symplectic form $\beta_i+rdr\wedge d\theta$, where $\beta_i$ is an area form on $S_i$ tuned to have a specific form near the boundary components. The other type of piece is a Darboux ball containing a neighborhood of a node. The specific choices of these pieces and the symplectomorphisms gluing them together to give the model of $N$ are specified in~\cite{GayStipsicz}. For our purposes, what is important is that, away from the nodes, the symplectic structure near $\Sigma$ is identified using a standard neighborhood theorem with a product symplectic structure on $\Sigma_i\times D^2$. For each component $\Sigma_i$ of $\Sigma$, let $T_i$ denote an orthogonal symplectic disk $\{p\}\times D^2$ under this identification.
	
	Now suppose that $N$ has a concave Liouville structure defined near $\partial N$. Then as above, there is a $1$-form $\alpha$ on $\partial N$ such that $d\alpha = i^*\omega$, and $\alpha\wedge d\alpha<0$ on $\partial N$ (with respect to the boundary orientation). We consider the quantities
	$$t_i:=\int_{\partial T_i} \alpha.$$
	Choose a frame for $T\partial N$ along $\partial T_i$ of the form $(\partial_\theta, \zeta_1,\zeta_2)$ where $\zeta_1$ and $\zeta_2$ are symplectically orthogonal to $T_i$ and $\partial_\theta$ is tangent to $\partial T_i$. Then if $(\zeta_1,\zeta_2)$ are ordered positively with respect to the form $\beta_i$ on $\Sigma_i$, we have that $(\partial_\theta,\zeta_1,\zeta_2)$ is a positively oriented frame with respect to the boundary orientation on $\partial N$. Therefore,
	$$\alpha(\partial_\theta)\beta_i(\zeta_1,\zeta_2) = \alpha\wedge \omega(\partial_\theta,\zeta_1,\zeta_2)=\alpha\wedge d\alpha(\partial_\theta,\zeta_1,\zeta_2)<0,$$
	so $\alpha(\partial_\theta)<0$. As a consequence, when $\alpha$ is the contact form induced on the concave boundary of $N$, $t_i<0$ for each $i$.
	
	Finally, we relate the vector $t=(t_1,\dots, t_k)$ to the class $z$. The vector $t$ is \emph{not} an invariant of the relative cohomology class $[(\omega,\alpha)]$. Also note that $\alpha$ is not defined over the interior of $N$, and it generally does not extend. To identify $z$ in terms of integrals of forms which represent the class $[(\omega,\alpha)]$ we write
	$$z_i=\int_{T_i}\omega - \int_{\partial T_i}\alpha.$$
	Note that this quantity can be checked to be independent of the choice of reprentative of the relative class $[(\omega,\alpha)]$ using Stokes' theorem.
	
	Depending on the size of the neighborhood we take, the symplectic area of $T_i$ varies, but it is always be positive. Therefore if a component of $z_i$ is negative:
	$$z_i = \int_{T_i}\omega - \int_{\partial T_i}\alpha <0$$
	then $t_i>0$. Since we know that a concave neighborhood has all $t_i<0$, all of the $z_i$ components must be strictly positive. Thus the condition that $z\in (\R_{>0})^k$ is necessary. That this condition is sufficient follows from the Gay-Stipsicz construction of the Liouville structure in the concave setting as in~\cite{LiMak}.
\end{proof}

\begin{example}
	If we vary a toric polytope by adjusting the symplectic areas of the divisor components, we may end up with a polytope which is not $\{V_1,\cdots, V_k\}$-centered, even if a different toric polytope with the same inward normal vectors (but different symplectic areas) is centered with respect to the chosen vertices.  In some cases, this can be explained by the fact that, the areas of the divisor in the uncentered case do not allow for a positive solution to the equation $Qz=\mathbf{a}$. For example, consider $(\cpone\times \cpone, \omega_{a,b})$, where the symplectic area of the first factor is $a$ and the symplectic area of the second factor is $b$. We use the moment map projection whose image is a rectangle as in Figure~\ref{fig:centareas}, such that the preimage of the horizontal boundary component on the bottom (or top) is the $\cpone$ of area $a$ and the preimage of the vertical boundary component on the left (or right) is the $\cpone$ of area $b$. 
	
\begin{figure}
	\centering
	\includegraphics[width=8cm]{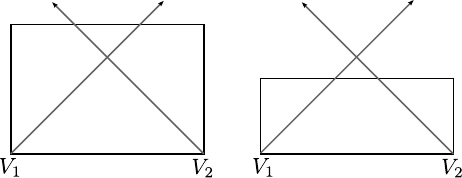}
	\caption{The polytope on the left is $\{V_1,V_2\}$-centered, while the polytope on the right is not.}
	\label{fig:centareas}
\end{figure}
	
	Let's consider smoothing the two bottom vertices $V_1$ and $V_2$. If $b>\frac{1}{2}a$, then the two corresponding rays meet at a point on the interior of the polytope, so the polytope is $\{V_1,V_2\}$-centered. However, if $b\leq \frac{1}{2}a$, the rays meet on the boundary or outside the polytope, making the polytope not centered with respect to $\{V_1,V_2\}$. Let's consider the symplectic divisor that results from smoothing $V_1$ and $V_2$. It has two irreducible components: the preimage of the top edge, and the smoothed union of the three other edges. The two components intersect at two points positively and transversally. The top edge component has self-intersection number $0$, and the other component has self-intersection number $4 = ([\cpone\times\{*\}]+2[\{*\}\times \cpone])^2$, so the intersection form is
	$$Q = \begin{pmatrix} 0&2\\2&4  \end{pmatrix}.$$
	The symplectic area of the first component is $a$, and of the second component is $a+2b$. Therefore a solution $z$ to $Qz = \mathbf a$ corresponds to
	$$\begin{pmatrix} 0&2\\2&4\end{pmatrix}  \begin{pmatrix} z_1\\ z_2 \end{pmatrix} = \begin{pmatrix} a\\ a+2b \end{pmatrix}$$
	which satisfies $z_2= a/2$, $2z_1+2a = a+2b$. Therefore, $z_1 = (2b-a)/2$ and $z_2=a/2$. If both quantities are required to be positive, we get that $2b-a> 0$, so $b>\frac{1}{2}a$. Therefore, in the $\{V_1,V_2\}$-uncentered case where $b\leq \frac{1}{2}a$, we find that the $\{V_1,V_2\}$-smoothing does not admit any concave neighborhood, so its complement cannot support a Weinstein structure.
\end{example}


\section{Weinstein handlebody diagrams and cotangent bundles} \label{s:cotangent}

We now embark on our diagrammatic results. Our goal in this second half of the paper is to produce and analyze Weinstein handlebody diagrams for the complements of partially smoothed total toric divisors and more general Weinstein domains $\mathcal{W}_{F,c}$. Before approaching our general goal, we need the base case where we do not smooth any nodes, where the complement is Weinstein homotopic to $D^*T^2$. More generally for any surface $F$, we prove we have Weinstein handlebody diagrams which are Weinstein homotopic to the canonical Weinstein structure on the disk cotangent bundle $D^*F$. 

The Weinstein handlebody diagrams that we use for $D^*F$ were originally given in~\cite{Gompf} in the orientable case and in~\cite{Ozbagci} in the non-orientable case. They are shown in Figure~\ref{fig:cotorus}. It is well understood using standard smooth handlebody theory and a computation of framing that these diagrams smoothly represent $D^*F$. Although it has been generally accepted that this handle structure is likely Weinstein homotopic to the canonical Weinstein structure on $D^*F$, a proof of this was previously lacking in the literature. An attempt to prove this was given in \cite{Ozbagci}, but then modified in the erratum to the weaker statement that the contact structures on $S^*F$ agree between the handlebody and canonical structures \cite{OzbagciC}. For $D^*T^2$ specifically, \cite{Wendl} shows that there is a unique Stein filling of the boundary $T^3$. This provides an indirect proof that (after completion) the Gompf handlebody is symplectomorphic to the canonical Weinstein filling. In section~\ref{s:cotangentdiag} we give a much more direct proof of a more general result: that for any closed surface $F$, the canonical Weinstein structure on $D^*F$ is Weinstein homotopic to the structure corresponding to the Gompf handlebody diagram on $D^*F$. Our direct proof fills the gap in the literature for a general surface, and we use the Weinstein homotopy constructed in our proof to prove our algorithm in Section~\ref{sec:curves}.

\subsection{Morsification of the canonical Weinstein structure}
\label{s:morsify}

Recall from section~\ref{s:cotangentbackground} the canonical Weinstein structure on the cotangent bundle of a surface. Here we see another Weinstein structure on the cotangent bundle which is Weinstein homotopic to the canonical one.

Let $F$ be a surface. Given a Morse function $f:F\to \R$, one obtains a perturbed Liouville structure on its cotangent bundle as follows. Let $H_f: T^*F \to \R$ be the Hamiltonian function defined by 
$$H_f(q_1,q_2,p_1,p_2) = p_1\frac{\partial f}{\partial q_1} + p_2\frac{\partial f}{\partial q_2}.$$
We get a ``Morsified'' Liouville structure from the canonical one by setting
$$\lambda_f = \lambda_{can}-dH_f.$$
Note that this Liouville structure is connected to the canonical Liouville structure through a family of Liouville structures $\lambda_t = \lambda_{can}-tdH_f$. The key property of this new Liouville form comes from looking at the Liouville vector field along the zero section. Recall that the Liouville vector field for $\lambda_{can}$ vanishes along the zero section. For $\lambda_f$ the corresponding Liouville vector field $Z_f$ looks like the gradient of $f$ along the zero section. To see this, remember that $Z_f$ is defined by the equation $\omega(Z_f,\cdot) = \lambda_f$. In other words,
$$(dp_1\wedge dq_1+dp_2\wedge dq_2)(Z_f,\cdot) = p_1dq_1+p_2dq_2 - dH_{f}.$$
We compute
$$dH_{f} = \frac{\partial f}{\partial q_1} dp_1 + \frac{\partial f}{\partial q_2} dp_2 + \left(p_1\frac{\partial^2 f}{\partial q_1^2}+p_2\frac{\partial^2 f}{\partial q_1\partial q_2}\right)dq_1+\left(p_1\frac{\partial^2 f}{\partial q_1\partial q_2}+p_2\frac{\partial^2 f}{\partial q_2^2}\right)dq_2.$$
Therefore the corresponding Liouville vector field is
$$Z_f= \frac{\partial f}{\partial q_1} \partial_{q_1}+ \frac{\partial f}{\partial q_2}\partial_{q_2}+\left(p_1-p_1\frac{\partial^2 f}{\partial q_1^2}-p_2\frac{\partial^2 f}{\partial q_1\partial q_2} \right)\partial_{p_1} + \left(p_2-p_1\frac{\partial^2 f}{\partial q_1\partial q_2}-p_2\frac{\partial^2 f}{\partial q_2^2}  \right)\partial_{p_2}.$$
Indeed, we see that when $p_1=p_2=0$ the last two components vanish and the first two components are the gradient vector field of $f$. In particular, $Z_f$ is tangent to the $0$-section, so the $0$-section is invariant under this Liouville flow. To get a Weinstein structure, observe that the Liouville vector field $Z_{f}$ is gradient-like for the function
$$\Phi(q_1,q_2,p_1,p_2) = f(q_1,q_2)+\frac{1}{2}(p_1^2+p_2^2).$$

\subsection{Weinstein handlebody diagrams for the canonical cotangent structure} \label{s:cotangentdiag}

Here we prove our main result of this section. The Weinstein handlebody diagrams that we use for $D^*F$ are shown in Figure~\ref{fig:cotorus}. Our goal here is to construct a Weinstein homotopy between the Weinstein structure underlying this diagram and the canonical Weinstein structure on the cotangent bundle.

\begin{figure}
	\centering
	\includegraphics[width=6cm]{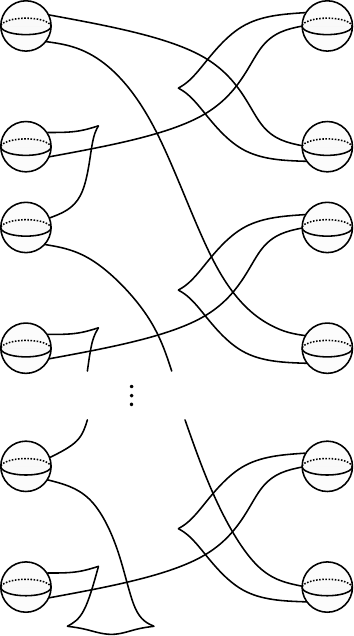}  \hspace{1cm}	\includegraphics[width=6cm]{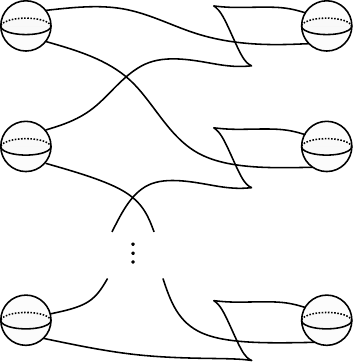}
	
	\caption{On the left see the Gompf handlebody diagram for the cotangent bundle of the orientable genus $g$ surface (the number of $1$-handles is $2g$, so the diagram is repeated $g$ times). On the right see the analogous handlebody for a non-orientable surface (when the surface is $\#_n \rptwo$ there are $n$ $1$-handles).}
	\label{fig:cotorus}
\end{figure}

Typically, one works with Weinstein structures $(W,\omega, Z,\phi)$ which are \emph{self-indexing} in the sense that if $p_1$ and $p_2$ are critical points of $\phi$ and the index of $p_2$ is greater than the index of $p_1$ then $\phi(p_2)>\phi(p_1)$. Then we can choose regular values $c_0$ and $c_1$ so that for any critical point $p$ of index $0$, $\phi(p)<c_0$ and for any critical point $q$ of index $1$, $c_0<\phi(q)<c_1$. The subdomain $W_0 = \{p\in W\mid \phi(p)\leq c_0\}$ gives the union of the $0$-handles of $W$, and $W_1 = \{p\in W\mid \phi(p)\leq c_1\}$ is the union of the $0$- and $1$-handles. The level sets $Y_0:=\partial W_0 = \phi^{-1}(c_0)$ and $Y_1:=\partial W_1 = \phi^{-1}(c_1)$ inherit contact structures by restricting the Liouville form. If we start with a Weinstein handle diagram, we assume that the handles are attached in order of increasing index so the naturally associated Weinstein structure is self-indexing.

Now we recall the definition of the skeleton of a compact Weinstein domain.

\begin{definition} Let $(W,\omega, Z,\phi)$ be a compact Weinstein domain. Then the \emph{skeleton} is
	$$Skel(W,\omega, Z,\phi) = \int_{t>0} Z^{-t}(W).$$
	Equivalently, $Skel(W,\omega, Z,\phi)$ is the union of the stable manifolds (with respect to the flow of $Z$) of the zeros of $Z$ (critical points of $\phi$).
\end{definition}

The skeleton of a Weinstein manifold is a stratified subset, where the strata are the different stable manifolds whose dimension is determined by the index of the critical point. The attaching spheres for handles are seen by intersecting this skeleton with regular level sets of $\phi$.

For the canonical Weinstein structure on $D^*F$, the zeros of $Z_{can}$ are precisely the points in the zero section $F$, and each such point is its own stable manifold. Therefore $Skel(D^*F,\omega_{can},Z_{can},\phi_{can}) = F$.

The last piece of background we need is the notion of a \emph{holonomy homotopy}. Given a Weinstein domain/manifold $(W,\omega, Z,\phi)$, let $c<d$ be two regular values of $\phi$ such that there are no critical values of $\phi$ between $c$ and $d$. Then $Y_c:=\phi^{-1}(c)$ and $Y_d:=\phi^{-1}(d)$ inherit contact structures $\xi_c$ and $\xi_d$ from the restriction of the Liouville form. Because there are no critical points between $c$ and $d$, the flow of $Z$ determines a contactomorphism $\Psi_0:(Y_d,\xi_d)\to (Y_c,\xi_c)$ called the \emph{holonomy}.

\begin{lemma}[{\cite[Lemma 12.5]{CE}}] \label{l:holonomy}
 Let $\Psi_0:(Y_d,\xi_d)\to (Y_c,\xi_c)$ be the holonomy from level $d$ to $c$ of a Weinstein domain as above. Let $\Psi_t$ be a contact isotopy starting at $\Psi_0$. Then there exists a Weinstein homotopy $(W,\omega,Z_t,\phi)$ (which keeps $\omega$ and $\phi$ constant), such that the holonomy from $Y_d$ to $Y_c$ with respect to $Z_t$ is $\Psi_t$. 
\end{lemma} 

Note that any Legendrian isotopy can be realized by a contact isotopy. Thus as a particular case of Lemma~\ref{l:holonomy}, given any Legendrian isotopy of attaching spheres, there exists a Weinstein homotopy which modifies the holonomy to realize that Legendrian isotopy. 

Now we are ready to prove the main theorem of this section. 

\begin{theorem}\label{thm:cot}
	Let $F$ be a closed surface. The Gompf handlebody diagram for $D^*F$ in Figure~\ref{fig:cotorus} corresponds to a Weinstein structure which is Weinstein homotopic to the canonical Weinstein structure on $D^*F$.
\end{theorem}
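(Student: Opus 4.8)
The plan is to realize the Gompf structure explicitly by a Morsification of the canonical structure, so that the Weinstein homotopy is exhibited directly rather than deduced from a filling classification. Fix a self-indexing Morse function $f\colon F\to\R$ adapted to the standard CW structure of the closed surface: one minimum, $2g$ index-$1$ critical points for the orientable genus-$g$ surface (respectively $n$ index-$1$ critical points when $F=\#_n\rptwo$), and one maximum, with $1$-skeleton $\Gamma$ a wedge of circles along which the unique $2$-cell is attached by the relator word $w=\prod_{i=1}^g[a_i,b_i]$ (respectively $\prod_{i=1}^n x_i^2$). After rescaling $f$ to be $C^2$-small, the construction of Section~\ref{s:morsify} gives the Liouville form $\lambda_f=\lambda_{can}-dH_f$ on $T^*F$, whose dual vector field $Z_f$ is outwardly transverse to the boundary of a disk subbundle $D^*F$ (immediate from the explicit formula for $Z_f$ in Section~\ref{s:morsify}, since $\mathrm{Hess}\,f$ is then small), and the path $\lambda_t=\lambda_{can}-t\,dH_f$, $t\in[0,1]$, is a Weinstein homotopy — with the Morse--Bott function $f+\tfrac t2|p|^2$, permitted in our conventions — from the canonical structure to $(D^*F,\omega_{can},Z_f,\Phi_f)$, $\Phi_f=f+\tfrac12|p|^2$. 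Hence it suffices to show that the Weinstein handlebody of $(D^*F,\lambda_f)$ is the Gompf diagram.

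Since $Z_f$ is tangent to the zero section and equals $\nabla f$ there, the critical points of $\Phi_f$ are exactly those of $f$, with the same indices, so $(D^*F,\lambda_f)$ is built from one $0$-handle, $2g$ (resp.\ $n$) Weinstein $1$-handles, and one Weinstein $2$-handle — the combinatorics of Figure~\ref{fig:cotorus}. The subdomain assembled from the $0$- and $1$-handles is a disk subbundle of $T^*(F\setminus D^2)\cong(F\setminus D^2)\times D^2$, with boundary $(\#^{2g}(S^1\times S^2),\xi_{std})$ (resp.\ $(\#^n(S^1\times S^2),\xi_{std})$), which is exactly the $1$-handlebody underlying Gompf's diagram. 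The core of the $2$-handle is the stable manifold of the index-$2$ critical point, which (as the zero section is $Z_f$-invariant) is the closure of the descending cell of the maximum, i.e.\ the zero section itself; so the attaching Legendrian $\Lambda_f$ is a pushoff to a level set $\{\Phi_f=c\}$ of the circle $\{f=c\}$ lying on the zero section, and tracing it through $F\setminus D^2\simeq\Gamma$ it represents the relator word $w$, matching the smooth knot type of Gompf's attaching circle.

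The remaining — and main — point is to verify that $\Lambda_f$, with its Weinstein framing, is Legendrian isotopic to Gompf's curve in standard form. Rather than invoke a classification of Legendrian knots in $\#^k(S^1\times S^2)$, one works in an explicit local model of $\lambda_f$ near each index-$1$ critical point: there $Z_f$ is a standard index-$1$ Weinstein model, and one reads off directly how $\Lambda_f$ traverses the corresponding $1$-handle — the strand ordering forced by the contact planes and the cusps produced by the local passage — recovering precisely the zig-zag through that $1$-handle in Gompf's normal form. Globally $\Lambda_f$ inherits from the zero section the surface framing of $\partial(F\setminus D^2)$, which pins down its Thurston--Bennequin invariant to the value read off from Gompf's writhe-and-cusp count; a controlled Legendrian isotopy then brings $\Lambda_f$ literally into Gompf standard form. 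Since a Weinstein $2$-handle is always attached with framing $tb-1$, the resulting handle attachment agrees with Gompf's, and combined with the first paragraph this proves the theorem.

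The hard part is this last identification: it requires a careful local analysis at every index-$1$ critical point together with the bookkeeping needed to see that Gompf standard form is reached after a prescribed Legendrian isotopy, including checking that the induced contact structure on the $1$-handlebody boundary and the framing data match on the nose. The orientable and non-orientable cases differ only in the local model at an index-$1$ critical point whose stable and unstable separatrices are one-sided (which replaces the commutators in $w$ by squares) but not in the structure of the argument; the torus case is the one relevant to Theorem~\ref{thm:toricWein}, but the method applies uniformly to all surfaces.
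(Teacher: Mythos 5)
Your first half coincides with the paper's proof: the Morsifying homotopy $\lambda_t=\lambda_{can}-t\,dH_f$ from Section~\ref{s:morsify} with a $C^\infty$-small perfect Morse function, the matching of critical points and indices with the handle combinatorics of Figure~\ref{fig:cotorus}, and the identification of the $2$-handle core with the top cell of the zero section are all exactly the steps taken there. Where you diverge is in the locus of the final comparison. You propose to match the attaching Legendrian of the $2$-handle directly in the level set above the $1$-handles, i.e.\ in $\#^k(S^1\times S^2)$, by a local analysis of $Z_f$ near each index-$1$ critical point, recovering Gompf's zig-zags and pinning down the framing via the surface framing of $\partial(F\setminus D^2)$. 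The paper instead compares the traces of the two skeletons in the boundary $Y^0\cong S^3$ of the $0$-handle. This is the decisive simplification: in the Morsified structure that trace is the boundary of an \emph{unknotted Lagrangian disk} $L_0$ (the $0$-cell of $F$ inside the $0$-handle), hence is forced to be the standard maximal-$tb$ Legendrian unknot, with no local model or framing computation needed; on the Gompf side the corresponding curve is shown to be the same standard unknot by the explicit Reidemeister sequences of Figures~\ref{fig:unknot} and~\ref{fig:unknot_nonorientable}; and holonomy homotopies (\cite[Lemma 12.5]{CE}) in the $0$-handle and below $Y^1$ then carry one structure to the other.

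The gap in your version is that this final identification --- which you yourself flag as ``the hard part'' --- is asserted rather than carried out. The claims that the local model at an index-$1$ critical point ``recovers precisely the zig-zag through that $1$-handle in Gompf's normal form,'' that the surface framing ``pins down'' the Thurston--Bennequin invariant to Gompf's writhe-and-cusp count, and that ``a controlled Legendrian isotopy then brings $\Lambda_f$ literally into Gompf standard form'' are exactly the content of the theorem in your formulation, and none of them is verified. Two specific difficulties are hidden there: (i) the Legendrian isotopy class of a curve in $\#^k(S^1\times S^2)$ is only meaningful relative to a chosen contactomorphism of the $1$-handlebody boundary with the standard model in which Gompf's diagrams are drawn, and producing that contactomorphism compatibly with your local models is itself nontrivial; (ii) $tb$ in the presence of $1$-handles is a diagram-dependent convention (see the discussion around Lemma~\ref{tb}), so ``the framing data match on the nose'' cannot be checked by an invariant alone. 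Your strategy could very likely be completed, but as written it defers the entire substance of the proof; if you want a route that avoids this bookkeeping, push the comparison down into the $0$-handle as the paper does, where the only Legendrian you must recognize is the standard unknot.
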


\begin{proof}
	Let $W=D^*F$. Starting with the canonical Weinstein structure $(W,\omega_{can}, Z_{can}, \phi_{can})$, we first perform a Weinstein homotopy to a Morsification as in section~\ref{s:morsify}.
	
	Let $f$ be a perfect Morse function on $F$, i.e. a Morse function with a single index $0$ critical point, a single index $2$ critical point, and $2g$ index $1$ critical points in the orientable case, where $g$ is a genus of $F$ (or $n$ index $1$ critical points if $F$ is diffeomorphic to $\#_n \rptwo$). After globally rescaling, we assume that $f$ is $C^{\infty}$ small (since $F$ is compact).
	
	In local coordinates, the Morsifying Weinstein homotopy $(W,\omega_t, Z_t, \phi_t)$ is specified by setting $\omega_t = \omega_{can}$ for all $t\in[0,1]$,
	$$Z_t = p_1\partial_{p_1}+p_2\partial_{p_2} + t\left(\frac{\partial f}{\partial q_1} \partial_{q_1}+ \frac{\partial f}{\partial q_2}\partial_{q_2}+\left(-p_1\frac{\partial^2 f}{\partial q_1^2}-p_2\frac{\partial^2 f}{\partial q_1\partial q_2} \right)\partial_{p_1} + \left(-p_1\frac{\partial^2 f}{\partial q_1\partial q_2}-p_2\frac{\partial^2 f}{\partial q_2^2}  \right)\partial_{p_2} \right)$$
	and $\phi_t = \phi_{can}+tf$. $Z_t$ is gradient-like for $\phi_t$ by \cite[Lemma 12.8]{CE}, and $Z_t$ is Liouville for $\omega_t$ because the term that contains a factor of $t$ is a Hamiltonian vector field, so $(W,\omega_t,Z_t,\phi_t)$ is a Weinstein homotopy. 	Let $(W_{M}, \omega_{M}, Z_{M}, \phi_{M}) = (W,\omega_1,Z_1,\phi_1)$ be the Morsified cotangent Weinstein domain. Let $(W_{G}, \omega_G, Z_G, \phi_G)$ be the Weinstein domain obtained by attaching Weinstein handles according to the Gompf standard diagram for $D^*F$ as in Figure~\ref{fig:cotorus}.
	
	Observe that $\phi_M$ and $\phi_G$ have the same number of critical points of each index because the Gompf handle diagram has a single $0$-handle implicitly, $2g$ $1$-handles (or $n$ $1$-handles in the non-orientable case), and a single $2$-handle. 	
	Furthermore, both $\phi_M$ and $\phi_G$ are self-indexing. Thus it remains to check that the attaching sphere data is equivalent (Legendrian/isotropically isotopic). 
	
	To do this, in each domain we consider a regular contact type level set, $Y^0_M$ or $Y^0_G$, above the index $0$ critical value and below the index $1$ critical values, and a regular contact level set, $Y^1_M$ or $Y^1_G$, above the index $1$ critical values and below the index $2$ critical value. Let $W_M^0$ (resp. $W_G^0$) denote the subdomains with boundary $Y^0_M$ (resp. $Y^0_G$) which give the $0$-handles.
	
	The diagrams of Figure~\ref{fig:cotorus} contain the information of the Gompf handlebody attaching data of the $1$-handles and the $2$-handle in $Y^0_G$ (the boundary of the $0$-handle). The attaching data of the $2$-handle in $Y^1_G$ can also be understood from these diagrams through the standard convention for $1$-handles.
	
	In the Morsified Weinstein manifold $(W_M,\omega_M,Z_M,\phi_M)$, we understand the skeleton because it agrees with the skeleton before Morsification: it is precisely the zero section $F$ in $T^*F$. This is because the Morsified Liouville vector field $Z_f$ is tangent to the zero section, and every point in the zero section is in the stable manifold of one of the critical points of $f$, and thus one of the zeros of $Z_f$. The intersection of $Skel(W_M,\omega_M,Z_M,\phi_M)$ with $W^0_M$ is the $0$-handle of $F$ with respect to the handle decomposition associated to the Morse function $f$. In particular, $L_0:=Skel(W_M,\omega_M,Z_M,\phi_M)\cap W_M^0$ is a Lagrangian disk in the $4$-ball $W_M^0$. Moreover, the Morse function $\phi_M$ has a unique critical point when restricted to $L_0$ and that critical point has index $0$. It follows that $L_0$ is an unknotted Lagrangian disk, so its boundary $\partial L_0$ in $Y^0_M$ is a maximal Thurston-Bennequin number Legendrian unknot.
	
	Although $Skel(W_M,\omega_M,Z_M,\phi_M)$ is smooth, it is still stratified by the index $0$ point, $2g$ (or $n$) $1$-dimensional open disks limiting to the $0$-dimensional point in both directions, and the remaining $2$-dimensional open subset of the skeleton is the stable manifold of the index $2$ critical point. The index $0$ point lies in the interior of $L_0$. Each $1$-dimensional stratum intersects $Y^0_M$ at two points in $\partial L_0$. The complement of these $4g$ (or $2n$) points in the standard Legendrian unknot $\partial L_0$ comes from the $2$-handle.
	
\begin{figure}
	\centering
	\includegraphics[width=14cm]{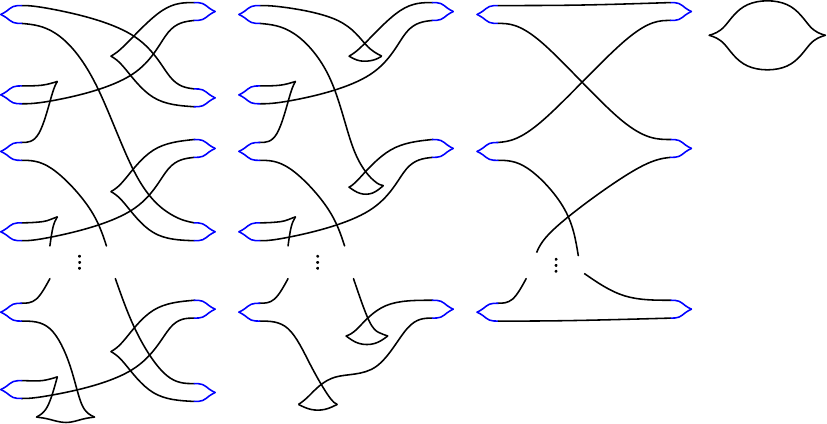}
	\caption{A Legendrian isotopy to the standard unknot via Legendrian Reidemeister moves.}
	\label{fig:unknot}
\end{figure}

\begin{figure}
	\centering
	\includegraphics[width=10cm]{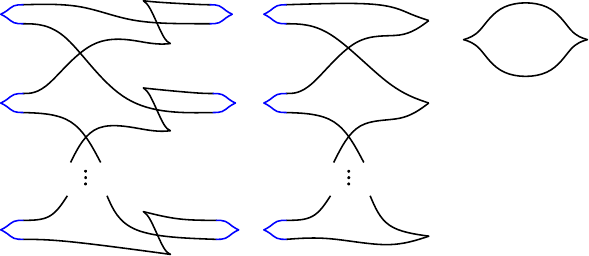}
	\caption{A Legendrian isotopy to the standard unknot via Legendrian Reidemeister moves.}
	\label{fig:unknot_nonorientable}
\end{figure}

	In the case of the Gompf Weinstein structure, the intersection of $Skel(W_G,\omega_G,Z_G,\phi_G)$ with $Y_0^G$ consists of: the portion of the attaching sphere of the $2$-handle shown in Figure~\ref{fig:cotorus}, the $S^0$ centers of the attaching regions of each of the $1$-handles, and the intersection of the stable manifold of the index $2$ critical point inside the attaching regions for the $1$-handles. In each $3$-ball in the attaching region for a $1$-handle, this last portion consists of two arcs meeting at the center attaching sphere point. This is because the $2$-handle passes over each $1$-handle exactly two times (either with opposite or the same orientation depending on whether $F$ is orientable or not). Through a Weinstein homotopy that adjusts the holonomy using~\cite[Lemma 12.5]{CE} in between two regular level sets which lie above all the index $1$ critical points and below $Y_G^1$, we can assume that these two arcs, together with the center of the $3$-ball form a single smoothly embedded unknotted Legendrian arc in each $3$-ball (unknotted because the restriction of Morse function has no critical points). Therefore $Skel(W_G,\omega_G,Z_G,\phi_G)\cap Y_G^0$ has front projection as in the leftmost images of Figure~\ref{fig:unknot} or Figure~\ref{fig:unknot_nonorientable}. The sequence of Legendrian Reidemeister moves shown in Figures~\ref{fig:unknot} and~\ref{fig:unknot_nonorientable} demonstrate that this intersection is Legendrian isotopic to the standard maximal Thurston-Bennequin number unknot. Therefore using~\cite[Lemma 12.5]{CE} again, now in the $0$-handle between two regular level sets which lie above the the index $0$ critical points and below $Y^0$, we obtain a Weinstein homotopy connecting $(W_G,\omega_G,Z_G,\phi_G)$ to $(W_M,\omega_M,Z_M,\phi_M)$. Thus to the canonical Weinstein structure on $D^*F$ (up to attaching a collar piece of the cylindrical completion which also can be achieved through a Weinstein homotopy that gradually grows/removes a collar). 
	
\end{proof}


\section{Lifting co-normals to Kirby diagrams}\label{sec:curves}


Theorem~\ref{thm:toricWein} demonstrated that we are interested in Weinstein manifolds obtained by attaching $2$-handles to $D^*T^2$ along the Legendrian co-normal lifts $\Lambda_{(a,b)}$ of linearly embedded curves $\gamma_{(a,b)}\sse T^2$ in the $(a,b)$-homology class. More generally, we can consider the Weinstein domains $\mathcal{W}_{F,c}$ obtained by attaching $2$-handles along $\Lambda_{\gamma_i}$ to $D^*F$, where $c=\{\gamma_i\}$ is a collection of co-oriented curves on $F$. 

Our goal in this section is to find standard Weinstein handlebody \emph{diagrams} for $\mathcal{W}_{F,c}$.
	
	\begin{theorem}\label{thm:cotangentdiagram}
		Let $F$ be a closed surface and $c=\{\gamma_i\}_{i=1}^k$ a finite unordered collection of co-oriented curves in $F$. Let $\mathcal{W}_{F,c}$ denote the Weinstein domain obtained by attaching $2$-handles to $D^*F$ along the Legendrian co-normal lifts of the $\gamma_i$. Then, the Weinstein handle diagram in the standard form obtained by the procedure of Section~\ref{s:procedure}, represents a Weinstein manifold $\mathcal{W}_2$ that is Weinstein homotopic to $\mathcal{W}_{F,c}$.
	\end{theorem}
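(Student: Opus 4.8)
The plan is to verify that the standard-form diagram produced by steps (\ref{step:isotopy})--(\ref{step:satellite}) encodes a Weinstein $2$-handle attachment to $D^*F$, equipped with the Gompf handlebody structure, along a Legendrian link $\{\Lambda_i'\}$ whose components are Legendrian isotopic in $S^*F$ to the co-oriented co-normal lifts $\Lambda_{\gamma_i}$ of Definition~\ref{def conormal}. Granting this, the theorem follows formally: by Theorem~\ref{thm:cot} the Gompf diagram for $D^*F$ is Weinstein homotopic to the canonical cotangent structure, Legendrian-isotopic attaching data produces Weinstein-homotopic cobordisms, and passing to the cylindrical completion preserves Weinstein homotopy; concatenating these gives a Weinstein homotopy from $\mathcal{W}_2$ to $\mathcal{W}_{F,c}$. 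The case $k=0$ is precisely Theorem~\ref{thm:cot}, which also supplies the structural backbone for the general case.

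First I would treat step (\ref{step:isotopy}). Every modification performed there---a generic ambient isotopy pushing the $\gamma_i$ into the cross-shaped region $A\cup B\cup C$ of Figure~\ref{torussquare}, the bigon removals governed by the rule of Remark~\ref{rk:isotopy}, and the local rearrangement near the first $1$-handle---is an isotopy of the family $\{\gamma_i\}$ through embedded, pairwise transverse co-oriented curves. I would observe that any such isotopy lifts to a Legendrian isotopy of the family $\{\Lambda_{\gamma_i}\}$ in $S^*F$: the co-normal lift depends smoothly on the co-oriented curve, and the lifts of two curves meeting transversally at a point $p$ are disjoint, since they sit over the two \emph{distinct} conormal covectors at $p$. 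The bigon-removal convention and the ``isotope against the co-orientation'' convention (Figures~\ref{bigons} and~\ref{cwccw}) are just a normalization making the later satellite unambiguous; any admissible sequence of moves yields the same link up to Legendrian isotopy. Thus after step (\ref{step:isotopy}) it suffices to match the output of steps (\ref{step:jet})--(\ref{step:satellite}) with $2$-handles attached along the co-normal lifts of the \emph{repositioned} curves.

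The core of the proof is then a local-model identification for steps (\ref{step:jet})--(\ref{step:satellite}). Using the explicit Weinstein homotopy from the proof of Theorem~\ref{thm:cot}, I would identify the contact level set of the Gompf handlebody lying above the $1$-handles and below the $2$-handle with the unit cosphere bundle $S^*F$, in a way carrying the Legendrian $\Lambda_0$ together with its cusped-off version through the $1$-handles (the black-and-blue curve of Figure~\ref{fig:torusdecomp}) to the conormal lift of the $1$-skeleton of $F$---equivalently, the union of the attaching circle of the $2$-cell with the cores of the $1$-cells---since in that proof the skeleton is the zero section $F$ and this curve is exactly its trace on that level set. Over the annulus $A$, a regular neighborhood of the $1$-skeleton (with the disk $E$ removed so that it retracts onto the skeleton), the cosphere bundle is standardly identified with a neighborhood in $J^1(S^1)$, and under the ``unfolding'' diffeomorphism of step (\ref{step:jet}) the conormal lift of a curve $\gamma\subset A\cup B\cup C$ acquires the front in $J^1(S^1)$ drawn from $\gamma$, with the transition data at the $1$-handles recorded by Figure~\ref{curvecorner}. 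Satelliting this $J^1(S^1)$ diagram onto $\Lambda_0\cup(\text{cusp version})$ in the order and with the Reeb-height bookkeeping prescribed by step (\ref{step:satellite}) then reassembles precisely $\Lambda_\gamma\subset S^*F$; performing this for all $\gamma_i$ simultaneously produces the required $\Lambda_i'$, completing the argument.

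The main obstacle, I expect, is exactly the last step: pinning down the contactomorphism between the Gompf level set and $S^*F$ sharply enough to recognize $\Lambda_0\cup(\text{cusp})$ as the conormal lift of the $1$-skeleton, and then checking that the satellite-with-corners operation of Figure~\ref{curvecorner}, with the relative Reeb heights, genuinely reproduces the conormal lift across the $1$-handle transitions---that is, across the vertex region $D$ of the polygonal model, where the affine matching of adjacent polygon edges is encoded. Once the annular local model and the corner rule are in place, globalization over $F$ and the reduction to Theorem~\ref{thm:cot} are routine. The orientable higher-genus case and the non-orientable case $F=\#_n\rptwo$ are handled uniformly, the only difference being whether the $2$-cell runs over each $1$-handle with opposite or equal co-orientations, which is already reflected in the corresponding Gompf diagram.
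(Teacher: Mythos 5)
Your proposal is correct and follows essentially the same route as the paper: step (\ref{step:isotopy}) is justified as a Legendrian isotopy of co-normal lifts (with the bigon rule of Remark~\ref{rk:isotopy}), the $J^1(S^1)$ model is the standard neighborhood of the co-normal lifts $\Gamma_0,\Gamma_1$ of $\partial(A\cup E)$ and $\partial D$, and the Weinstein homotopy of Theorem~\ref{thm:cot} is used to identify these (via Gray stability) with Reeb push-offs of $\Lambda_0$ and its cusped-off version before satelliting. The ``main obstacle'' you flag is exactly what the paper resolves in Lemma~\ref{l:reeb} by computing the Reeb vector field of the Morsified contact form along $L_i$ and showing it points in the inward co-normal direction.
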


In section~\ref{s:procedure} we state the step-by-step procedure to produce the handlebody diagrams. Sections~\ref{section:step1} through~\ref{section:step4} prove that the result of this procedure is a diagram for a Weinstein domain which is Weinstein homotopic to $\mathcal{W}_{F,c}$. Finally in section~\ref{section:1handleslides} we show that if $c$ and $c'$ are related by an orientation preserving homeomorphism of $F$, then $\mathcal{W}_{F,c}$ and $\mathcal{W}_{F,c'}$ are related by a Weinstein homotopy via $1$-handle slides.

\subsection{The procedure to obtain a standard Weinstein handlebody diagram} \label{s:procedure}
We begin by presenting the sequence of steps that produces the Weinstein handlebody diagram, and then justify why these steps produce a diagram representing a Weinstein manifold that is Weinstein homotopic to the complement of the partially smoothed toric divisor (or more generally, a Weinstein manifold obtained by attaching $2$-handles to $D^*F$ along co-normal lifts of co-oriented curves $\gamma_1,\dots, \gamma_k$).

We begin with the ingredients, in this case:
\begin{itemize}
	\item A symplectic toric manifold with $\Delta$, its Delzant polytope, 
	\item $\mathcal{V} = \{V_1, \ldots V_k \}$ a list of centered vertices corresponding to a smoothing of the total toric divisor,
	\item $\mathcal{S}$ the square representing the torus $T^2$ with sides appropriately identified, shown in Figure \ref{torussquare}. For each vertex $V_i$, for $i=1,\ldots, k$, let $(v_i,u_i)$ be the positively oriented basis of primitive inward normal vectors to the edges meeting at $V_i$. Let $s_i=s(V_i)= v_i-u_i$, as in Proposition \ref{prop def of slopes}. Let $n_i=r(V_i)=Js_i,$ where $J=\begin{pmatrix}
	0 & -1 \\
	1 & 0
	\end{pmatrix}$ is a $\pi/2$ rotation. Draw on the square $\mathcal{S}$ the lines of slope $s_i$ with normals $n_i$. 
	\item $G$ the Gompf handlebody diagram of $D^*T^2$, shown in Figure \ref{fig:torusdecomp}.
\end{itemize}

\begin{figure}
	\centering
	\begin{tikzpicture}[scale=0.66]
		\node[inner sep=0] at (0,0) {\includegraphics[width=4 cm]{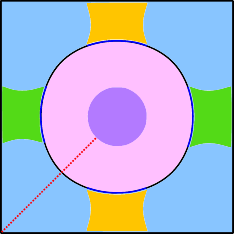}};
		\node at (0,0){$E$};
		\node at (1,1){$A$};
		\node at (0,2.5){$B$};
		\node at (2.5,0){$C$};
		\node at (2,2){$D$};
		\node[scale=0.75] at (2,-1.7){$\partial A\cup E$};
	\end{tikzpicture}
	\caption{A square $\mathcal{S}$ representing the torus with regions corresponding to: $A\cup E$ the 0-handle, $B$ and $C$ the 1-handles, $D$ the 2-handle. Additional curves can be drawn in the region $A\cup B\cup C$. We can remove the region $E$ and cut along the red dotted line to obtain the ``rectangle'' of Figure \ref{JetspaceRegions}.} 
	\label{torussquare}
\end{figure}

\begin{figure}
	\centering
	\begin{tikzpicture}
		\node[inner sep=0] at (0,0) {\includegraphics[width=10 cm]{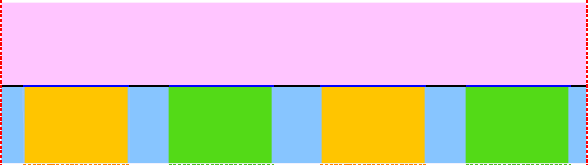}};
		\node at (0,0.75){$A$};
		\node at (1.25,-0.75){$B$};
		\node at (-1.25,-0.75){$C$};
		\node at (0,-0.75){$D$};
		\node at (4,0.25){$\partial A$};
	\end{tikzpicture}
	\caption{Regions in the jetspace corresponding to regions in the torus as seen in Figure \ref{torussquare}. Additional curves can be drawn in the region $A\cup B\cup C$.}
	\label{JetspaceRegions}
\end{figure}

\begin{figure}
	\centering
	\includegraphics[width=7cm]{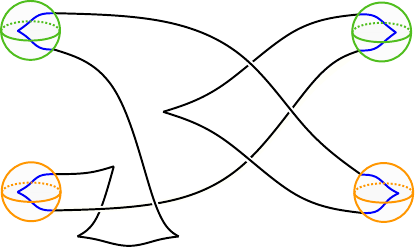}
	\caption{The Legendrian unknot $\partial A$ in the boundary of the $4$-dimensional $0$-handle of the Gompf diagram, indicating the intersection of this boundary with the Lagrangian torus ($0$-section of $D^*T^2$). The black portions coincide with segments of the attaching circle of the $2$-handle, and the blue portions give the attaching arcs of the $2$-dimensional $1$-handles of the torus.}
	\label{fig:torusdecomp}
\end{figure}

The steps are as follows (further details for these steps comprise the remainder of this section):

\begin{enumerate}
	\item \label{step:isotopy} Isotope the curves of slope $s_i$ in $\mathcal{S}$ such that at the end of the isotopy all the curves lie in the cross shaped region $A\cup B\cup C$ of Figure \ref{torussquare}, the curves only intersect in the interior of the annular region $A$, and bigons are removed according to the rule of Remark~\ref{rk:isotopy} and Figure \ref{bigons}. In this paper we choose the convention of always isotoping in the direction opposite the co-orientation, so that the resulting curves traverse the annulus always in the clockwise direction. See Figure \ref{cwccw}. 
	\item \label{step:jet} Cut $\mathcal{S}$ along a line from the bottom left corner of the square to the center (the red dotted line in Figure \ref{torussquare}), remove both the region $E$ and the boundary of the square, and ``unfold'' the annulus $A\cup B\cup C\cup D$ into a rectangle, representing $J^1(S^1)$, the 1-jet space of $S^1$, as in Figure \ref{JetspaceRegions}. Transfer the curves in the annulus $A$ to the rectangle $A$ via the unfolding diffeomorphism. Likewise, transfer curves in the regions $B$ and $C$ such that the bottom orange $1$-handle corresponds to the first orange $1$-handle on the left, etc.
	\item \label{step:satellite} Satellite the $J^1(S^1)$ picture onto the union of $\Lambda_0$ (the attaching sphere of the $2$-handle in the $D^*T^2$-handlebody diagram $G$) with the Legendrian obtained by cusping off instead of passing through the $1$-handles as in Figure~\ref{fig:torusdecomp}. Arrange the satellite such that the left $1$-handle of Figure \ref{JetspaceRegions} corresponds to the bottom-left $1$-handle in Figure \ref{fig:torusdecomp}. When satelliting near the $B$ and $C$ regions where $\Lambda_0$ and its cusped off version differ, follow the convention specified by Figure \ref{curvecorner}, taking care to preserve the relative Reeb heights. The result is a Weinstein handlebody diagram in standard form.
	\item \label{step:moves1} The Weinstein handlebody diagram can now be simplified using Reidemeister moves, Legendrian handle slides and cancellations. Some standard simplifications are illustrated in Figure \ref{fig:standard_simplifications}.
\end{enumerate}	

\begin{remark} 
	For a general surface $F$, the recipe to find the lift of a set of co-oriented curves $c=\{\gamma_{1}, \ldots, \gamma_k\} \sse F$, and obtain a Weinstein handlebody diagram of $\mathcal{W}_{F,c}$, start instead with a polygonal representation for the surface $F$ instead of the square $\mathcal{S}$ together with the curves $\gamma_i$ and their co-normal vectors replacing the lines of slopes $s_i$ and their normal vectors $n_i$ respectively. The polygonal representation similarly decomposes into an annulus $A$ surrounding a central disk $E$, together with $2g$ regions $B_1,C_1,\dots, B_g,C_g$ for the $2g$ handles of an orientable genus $g$ surface (or $g$ corresponding regions for $g$ $1$-handles if $F=\#_g \rptwo$) and a single region $D$ which is a neighborhood of the vertex of the polygonal representation. In step (\ref{step:satellite}), the satellite operation is onto the Legendrian $\Lambda_0^{F}$ in the Gompf diagram for $D^*F$. Otherwise, the conventions and proof are the same.
\end{remark}

\begin{figure}
	\begin{center}
		\includegraphics[width=7cm]{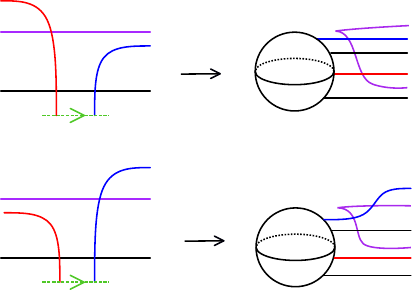}
		\caption{Mapping the red, blue and purple curves from $J^1(S^1)$ to $D^*T^2$. The order of the set of curves is preserved.}
		\label{curvecorner}
	\end{center}
\end{figure}

Our convention is to draw $T^2$ via its polygonal identification as the square with opposite sides identified. A curve $\gamma$ of slope $(a,b)$ is drawn in the standard way, where a horizontal curve oriented left to right is a $(1,0)$ curve, and a vertical curve oriented upward is a $(0,1)$ curve, see Figure \ref{vhcurves}. We assume the integers $(a,b)$ are relatively prime, but we allow any combination of signs. The signs determine an orientation on the curve $\gamma_{(a,b)}$, which in turn determines a co-orientation as follows.

We use the standard convention that the normal vector followed by the positive tangent vector should give the positive orientation on $T^2$ (we orient $T^2$, viewed as the square or equivalently $\R^2/\Z^2$ using the standard orientation on $\R^2$). Let $\Lambda_{(a,b)}$ be the corresponding Legendrian co-oriented co-normal lift of $\gamma_{(a,b)}$. For an orientable surface $F$ distinct from the torus we also assume that the normal vector followed by the positive tangent vector gives the positive orientation on $F$ inherited from the standard orientation on $\R^2$. For a non-orientable surface $F$, we restrict to co-oriented closed curves in $F$.

The square $\mathcal{S}$ with curves $s_i$ gives a description of a Weinstein handlebody given by the union of $D^*T^2$ with Weinstein $2$-handles attached along the co-oriented co-normal lifts of the curves of slope $s_1=s(V_1),\dots, s_k=s(V_k)$. By Theorem~\ref{thm:toricWein}, this is equivalent to the Weinstein complement of the $\{V_1,\dots, V_k \}$-smoothed toric divisor in our given $\{V_1,\dots, V_k \}$-centered toric manifold. Moreover, it is well known that the Stein handle calculus moves of step (\ref{step:moves1}) correspond to Weinstein homotopies. Thus, in order to prove Theorem \ref{thm:cotangentdiagram}, it suffices to show that the procedure of steps (\ref{step:isotopy})-(\ref{step:satellite}) yields a diagram corresponding to a Weinstein homotopic manifold. The proof of this theorem spans the next four subsections.

\subsection{Step (\ref{step:isotopy}): Legendrian isotopy in the co-sphere bundle} \label{section:step1}

First we note that an isotopy of a curve $\gamma \sse F$ gives a Legendrian isotopy of the co-normal lift $\Lambda_{\gamma}$. Similarly, an ambient isotopy in $F$ of a collection of curves in $F$ gives a Legendrian isotopy of the Legendrian link formed by their co-normal lifts. 

Recall that by Lemma~\ref{l:holonomy}, if two Weinstein manifolds differ by a Legendrian isotopy of the attaching link for the handles in a regular level set, then the two Weinstein manifolds are Weinstein homotopic.

We note that some Legendrian isotopies of links described as co-normal lifts can change the curves $\gamma_1,\cdots, \gamma_k$ in $F$ by more than just an ambient isotopy of $F$. Along with localized standard Legendrian Reidemeister moves, we can also include certain Reidemeister II moves which would not have been allowed in the usual front projection of $(\R^3, \xi= \ker(dz-ydx))$ to the $(x,z)$ plane. These Reidemeister II moves are allowed if the co-orientations of the two strands passing by each other are opposite as described in the following remark.
	
\begin{remark} \label{rk:isotopy} Pairs of co-oriented curves forming a bigon can be isotoped past each other if they have opposite co-normal orientations as in Figure~\ref{bigons} since they lift to disjoint co-normal lifts in $S^*F$ at every point of the isotopy. However, if the curves have the same co-normal orientation we cannot isotope them past each other because their lifts forms a clasp. 
\end{remark}

\begin{figure}
	\begin{center}
		\includegraphics[width=5cm]{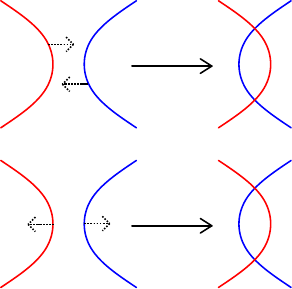}
		\caption{Curves with opposite co-normal orientations can be isotoped past each other without clasps forming in their lifts.}
		\label{bigons}
	\end{center}
\end{figure}

Note that the boundary of $A\cup E$ is a circle in $F$, which we can think of as the boundary of the $0$-handle in the handle decomposition of $F$. Let $\Gamma_0$ be the connected co-normal lift of $\partial(A\cup E)$ defined by the \emph{inward} normal co-orientation. Similarly, the boundary of $D$ is another circle in $F$, which we can think of as the attaching circle of the $2$-handle in the handle decomposition of $F$. Let $\Gamma_1$ be the connected co-normal lift of $\partial(D)$ co-oriented in the direction pointing outwards from $D$ (and towards $A\cup B\cup C\cup E$).

Our goal in step (\ref{step:isotopy}) is to perform a Legendrian isotopy of the link $\Lambda:=\Lambda_{\gamma_1}\cup\cdots\cup \Lambda_{\gamma_k}$ so that $\Lambda$ lies in a $C^0$-small neighborhood of $\Gamma_0\cup \Gamma_1$. Note that two Legendrian co-normal lifts in $S^*F$ are $C^k$-close if their images in $F$ are $C^{k+1}$ close. Therefore we want to isotope the curves $\gamma_1,\dots, \gamma_k$ so that they are contained in a $C^1$-neighbourhood of $(\partial (A\cup E))\cup(\partial D)$, \emph{with matching co-orientations}. An example of such an isotopy is pictured in Figure \ref{fig:removing_bigon}. For simplicity, we also ask that all crossings remain in $A$ in the portion that is $C^1$ close to $\partial(A\cup E)$, and that as many crossings as possible are removed. To achieve this, we use planar isotopies of the curves $\gamma_{1}\cup\cdots \cup \gamma_{k}$ in $F$ as well as the allowable co-normal bigon moves shown in Figure~\ref{bigons}. Note that in order for the co-orientations to match up with the inward normal of $\partial(A\cup E)$, we choose to isotope curves across $E$ so that they end up oriented clockwise in the final picture. The opposite convention could have been achieved by a relabelling of the 1-handles, see Figure \ref{cwccw}, though we find our convention is natural in the steps that follow.

\begin{figure}
	\begin{center}
		\includegraphics[width=13cm]{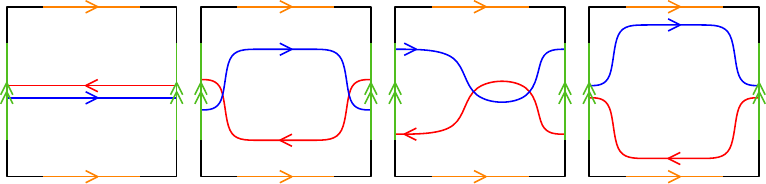}
		\caption{An example of an isotopy of two curves so that they move to a $C^1$-neighbourhood of $(\partial (A\cup E))\cup(\partial D)$ with matching co-orientations. This isotopy involves the removal of a bigon.}
		\label{fig:removing_bigon}
	\end{center}
\end{figure}

\begin{figure}
	\centering
	\includegraphics[width=10cm]{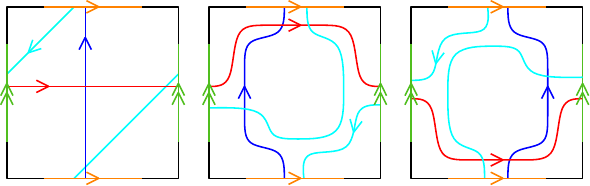}
	\caption{A set of curves in $T^2$, before and after isotoping to lie in $A\cup B\cup C$. In the second picture, curves are isotoped in the opposite direction of the co-orientation so that they are oriented clockwise, which is the convention used in this paper. In the third picture, curves are isotoped in the direction of the co-orientation. The second and third pictures yield the same diagram up to a 180 degree rotation, except the orientations of the curves are opposite.}
	\label{cwccw}
\end{figure}

\subsection{Step (\ref{step:jet}): Standard Legendrian neighborhoods}

Any Legendrian circle has a standard neighborhood that is contactomorphic to a neighborhood of the $0$-section in the $1$-jet space $J^1(S^1)=(S^1_{\theta}\times \R_y\times\R_z, \ker(dz-yd\theta))$. {This contactomorphism takes the Legendrian circle to the $0$-section and takes the Reeb vector field along the Legendrian to $\partial_z$, the Reeb vector field of $J^1(S^1)$.} Since we isotoped $\Lambda$ to be $C^0$ close to $\Gamma_0\cup \Gamma_1$, we can draw part of $\Lambda$ inside a $J^1(S^1)$ model representing a standard neighborhood of $\Gamma_0$, which we denote by $N(\Gamma_0)$. Moreover, a part of $\Lambda$ can be drawn inside a $J^1(S^1)$ model representing a standard neighborhood of $\Gamma_1$, which we denote by $N(\Gamma_1)$. Note that $\Gamma_0$ and $\Gamma_1$ coincide away from the preimages of the $B$ and $C$ regions. Furthermore, we have set our conventions so that all of the ``interesting'' parts of $\Lambda$ (the crossings in the projection) occur in $N(\Gamma_0)$. Our isotopy ensures that the portions of $\Lambda$ which are in $N(\Gamma_1)$, but not $N(\Gamma_0)$, are parallel positive Reeb push-offs of $\Gamma_1$. Therefore, it suffices to draw the portion of $\Lambda$ in $N(\Gamma_0)$, since this completely determines $\Lambda \cap N(\Gamma_1)$.

The front projection on $J^1(S^1)$ is to the $(\theta ,z)$ coordinates. We draw this projection as a rectangle, where $ \theta \in S^1$ is the horizontal coordinate, and where the left and right sides of the rectangle are identified. The unfolding of the polygon to a rectangle provides the front projection of $\Lambda$ in $N(\Gamma_0)$. Here it is important that $\Gamma_0$ is co-oriented by the inward normal direction, which gets sent by the unfolding to the upward $\partial_z$ direction. This corresponds to the fact that the Reeb push-off of $\Gamma_0$ is the co-normal lift of an inward normal push-off of $\partial(A\cup F)$, and the Reeb pushoff in the $J^1(S^1)$ picture is an upward $\partial_z$ push-off.

\subsection{Transitioning from co-normal diagrams to the Gompf diagram}

Our next goal is to relate the standard neighborhoods $N(\Gamma_0)$ and $N(\Gamma_1)$ to Legendrian curves in the Gompf diagram. Note that so far, we have been working with Legendrians in $S^*F$ with the canonical contact structure induced by the canonical Liouville form on $D^*F$. To go to the Gompf diagram, we need to perform a Weinstein homotopy of this canonical structure as in Theorem~\ref{thm:cot}, which induces a contact isotopy on the boundary $S^*F$ where we attach the $2$-handles. 

Recall from Section~\ref{s:cotangent} that the Weinstein structure compatible with the Gompf handlebody on $D^*F$ differs from the canonical structure on $D^*F$ by two steps. The first step is a Morsification of the canonical structure, as described in section~\ref{s:morsify}, using a perfect Morse function $f$ on $F$ such that the index $0$ critical point occurs in $E$, the index $1$ critical points occur in the $B$ and $C$ regions along the edges of the polygonal decomposition, and the index $2$ critical point occurs in $D$ at the vertex of the polygonal decomposition. The second step is a holonomy homotopy supported near the index $0$ and index $1$ critical points (which accounts for the difference between the smooth skeleton in the Morsification and the singular skeleton from the Gompf handle structure). Since this second step does not affect the Weinstein structure near the boundary of $D^*F$ where we attach the $2$-handles, we use the Morsified Liouville form $\lambda_f$ and its induced contact structure to relate the Legendrians $\Lambda$ in $S^*F$ with the canonical contact structure to their images under the induced contact isotopy in the Gompf Weinstein structure.

If our Morse function $f$ is sufficiently $C^\infty$-small (which can be achieved by a global rescaling by a small positive number), then the Morsified Liouville vector field is $C^\infty$-close to the original canonical Liouville vector field. There is a family of Liouville structures $\lambda_t = \lambda_{can}+ tdH_f$ for $t\in[0,1]$ interpolating between the canonical form $\lambda_{can}$ and the Morsified one $\lambda_f$. There are corresponding Liouville vector fields $Z_t$ interpolating linearly between $Z_{can}$ and $Z_f$. (For $t>0$, $Z_t$ looks qualitatively like $Z_f$ since $Z_t = Z_{tf}$.) If we fix a compact subdomain $W$ of the cotangent bundle such that $Z_t$ is outwardly transverse to the boundary for all $t\in[0,1]$, then we obtain a family of contact structures $\alpha_t$ induced on the boundary. By Gray's stability theorem, there is a corresponding contact isotopy $\Psi_t: \partial W \to \partial W$ relating $\xi_t = \ker(\alpha_t)$ to $\xi_0$ by $(\Psi_t)_*(\xi_0)=\xi_t$. If $f$ is $C^\infty$ small, this contact isotopy is $C^\infty$ small. In particular, the image of a Legendrian in $(\partial W, \xi_0)$ under $\Psi_1$ is a Legendrian in $(\partial W,\xi_1)$ which is $C^{\infty}$ close to the original curve. We  denote the contactomorphism at the end of this isotopy by $\Psi_f:=\Psi_1$.

In the Morsified structure, the boundary of the $0$-handle intersected with the $0$-section is a Legendrian $L_0$. Similarly, the boundary of the union of the $0$- and $1$-handles intersected with the $0$-section is a Legendrian $L_1$. From our diagrammatic conventions, $L_0 = \partial (A\cup F)$ and $L_1 = \partial D$, where we think of $F$ as the $0$-section of $D^*F$. In the Gompf diagram, the Legendrian knots $L_0$ and $L_1$ are well-understood. $L_1$ is simply the attaching sphere $\Lambda_0$ of the unique $2$-handle in the Gompf diagram. $L_0$ agrees with $L_1$ except in the attaching regions of the $1$-handles, where it gets cusped off to form a closed Legendrian unknot in $S^3$ (the boundary of the $0$-handle), as in Figure~\ref{fig:cotorus}.

Our next goal is to relate $L_0$ and $L_1$ (which we understand in the Gompf diagram) with $\Psi_f(\Gamma_0)$ and $\Psi_f(\Gamma_1)$ (which carry the information of the attaching data $\Lambda$ through their standard neighborhoods $N(\Gamma_0)$ and $N(\Gamma_1)$). Here $L_0$ is viewed as a Legendrian in the boundary of the $0$-handle for $D^*F$ and $L_1$ is a Legendrian in the boundary of the union of the $0$- and $1$-handles. Although we cannot directly compare $L_0$ and $L_1$ in the Morsified Liouville structure with analogues in the canonical Liouville structure, because the canonical Liouville form fails to be contact along $L_0$ and $L_1$, we can study Legendrian push-offs in both structures.

\begin{lemma}\label{l:reeb}
	Let $\phi$ be a sufficiently small Morse function as described above.
	Let $W_0$ denote a fixed $0$-handle in $D^*F$, and let $W_1$ denote the union of the fixed $0$- and $1$-handles. Let $\xi_0$ and $\xi_1$ denote the contact structures on $\partial W_0$ and $\partial W_1$ induced by the restriction of $\lambda_\phi$. Let $\Psi_i$ denote the end of the contact isotopy on $\partial W_i$ induced by $\lambda_t$ as described above. Then, $\Psi_i(\Gamma_i)$ is a Reeb push-off of $L_i$ for $i=0,1$ in $\partial(W_i,\xi_i)$ up to a $C^\infty$ small Legendrian isotopy.
\end{lemma}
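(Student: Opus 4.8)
The plan is to identify $L_i$ as the image of the zero section piece $\partial W_i \cap F$, and $\Gamma_i$ as the co-normal lift (co-oriented inward/outward) of the corresponding circle $\partial(A\cup E)$ or $\partial D$ in the \emph{canonical} picture, and then track both objects through the contact isotopy $\Psi_t$ induced by the family $\lambda_t$. First I would set up the local picture near a point $q$ of the circle $\partial(A\cup E)$ (resp. $\partial D$) in $F$: choose Darboux coordinates $(q_1,q_2,p_1,p_2)$ on $D^*F$ so that $F=\{p_1=p_2=0\}$, the circle is $\{q_2=0\}$, and the boundary $\partial W_i$ is a level set $\{p_1^2+p_2^2 = R^2\}$ (smoothed near the zero section). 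In these coordinates the canonical co-normal lift $\Gamma_i$ is $\{q_2=0,\ p_1 = 0,\ p_2 = \pm R\}$ (sign given by the chosen co-orientation), while $L_i$ in the Morsified structure is the piece of the zero section $\{q_2=0,\ p_1=p_2=0\}$ intersected with $\partial W_i$ — but this only makes sense once the Liouville vector field is the Morsified one, since $Z_{can}$ vanishes there. The key computational input is the formula for $Z_\phi$ (equivalently $Z_t = Z_{t\phi}$) from Section~\ref{s:morsify}: along the zero section $Z_\phi$ restricts to $\nabla\phi$, so the Morsified skeleton near $\partial W_i$ is exactly $\{q_2=0,p_1=p_2=0\}$ and its intersection with $\partial W_i$ is $L_i$, a genuine Legendrian for $\xi_i = \ker(\lambda_\phi|_{\partial W_i})$.

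Next I would compare the Reeb push-off of $L_i$ with $\Psi_i(\Gamma_i)$. The Reeb vector field of $\lambda_{can}|_{\partial W_i}$ (on the part where it is contact) is, up to positive rescaling, the Hamiltonian vector field of the function cutting out $\partial W_i$; in the model it points in the $\partial_{q_j}$ directions dual to the normal, i.e. the Reeb flow moves a point of the zero section in the direction which, lifted, is exactly the inward co-normal direction — this is the content of the remark in Step (\ref{step:jet}) that ``the Reeb push-off of $\Gamma_0$ is the co-normal lift of an inward normal push-off of $\partial(A\cup E)$''. So in the canonical picture (restricted to where things are contact) a small Reeb push-off of $L_i$ \emph{is} $\Gamma_i$ up to $C^\infty$-small Legendrian isotopy, by the standard neighborhood theorem for Legendrians: both are Legendrian circles in the same $C^0$-small neighborhood with the same framing data (the co-normal lift is $C^1$-close to the zero section circle iff the curves are $C^2$-close, as noted in Step (\ref{step:jet})). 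Then I would invoke Gray stability: since $\lambda_t$ is a path of Liouville forms all outwardly transverse to $\partial W_i$ (true for $\phi$ sufficiently $C^\infty$-small), it induces a $C^\infty$-small contact isotopy $\Psi_t$ with $(\Psi_t)_*\xi_0 = \xi_t$, and a Reeb push-off is carried to a Reeb push-off up to $C^\infty$-small isotopy; moreover $\Psi_t$ fixes (up to $C^\infty$-small isotopy) the zero section circles $L_i$ because the path $\lambda_t$ is stationary away from a neighborhood of the critical points and the isotopy can be taken to be supported in a small neighborhood of the zero section. Combining: $\Psi_i(\Gamma_i)$ is $C^\infty$-close to $\Psi_i$ applied to a canonical Reeb push-off of $L_i$, which is a $\xi_i$-Reeb push-off of $L_i$ up to $C^\infty$-small Legendrian isotopy.

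The one subtlety — and the step I expect to be the main obstacle — is that the comparison ``canonical Reeb push-off of $L_i$ equals $\Gamma_i$'' happens in a regime where the canonical Liouville form is \emph{not} contact along $L_i$ itself (it is only contact on a punctured neighborhood). So I cannot literally say $L_i$ is Legendrian for $\xi_{can}|_{\partial W_i}$; I have to phrase everything in terms of the push-offs, which do lie in the contact region, and carefully check that the two natural push-offs (the co-normal lift $\Gamma_i$, defined purely in $S^*F$, versus the Reeb push-off of $L_i$, defined using $\lambda_\phi$ or $\lambda_t$) agree. The cleanest way to handle this is to do the entire identification at the level of the Morsified structure $\lambda_\phi$ from the start — there $L_i$ is genuinely Legendrian, its Reeb push-off is well-defined, and a local Darboux-type computation shows this push-off is $C^\infty$-close to $\Psi_\phi(\Gamma_i)$ — and only use the $\lambda_t$-family to transport the \emph{attaching data} $\Lambda$ (which lives near $\Gamma_0\cup\Gamma_1$) along, invoking the holonomy-homotopy principle (\cite[Lemma 12.5]{CE}) rather than trying to compare non-contact restrictions directly. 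I would also need to check the relative Reeb heights are preserved, which follows since $\Psi_t$ is $C^\infty$-small and in particular $C^1$-small, so it distorts the $\partial_z$-coordinate by a small amount; this is exactly what is needed for the satelliting convention of Step (\ref{step:satellite}) and Figure~\ref{curvecorner} to be well-defined.
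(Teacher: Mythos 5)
Your overall strategy---work at the level of the Morsified form $\lambda_\phi$, compute the Reeb push-off of $L_i$ there, and use the $C^\infty$-smallness of the Gray isotopy to compare with $\Gamma_i$---is the strategy of the paper, and you correctly isolate the central subtlety that $\lambda_{can}$ fails to be contact along $L_i$. However, the single computation that carries the content of the lemma is deferred (``a local Darboux-type computation shows\dots''), and the heuristic you offer in its place points in the wrong direction. The relevant Reeb direction along $L_i$ is the characteristic direction of $\partial W_i$ for $\omega=d\lambda_\phi$, where $\partial W_i$ is a level set of $\phi+\tfrac12(p_1^2+p_2^2)$ (not of $p_1^2+p_2^2$, which would miss the zero section entirely); at points of the zero section the $\partial_{q_j}$-component of that characteristic vector field is proportional to $p$ and hence vanishes, so the Reeb field there is a multiple of the \emph{fiber}-direction vector $\tilde R=\sum_j\frac{\partial\phi}{\partial q_j}\partial_{p_j}$, not a $\partial_{q_j}$-direction ``dual to the normal.'' The paper's proof consists precisely of exhibiting $\tilde R\in\ker d\alpha_\phi^i$ along $L_i$ and computing $\lambda_\phi(\tilde R)=-\bigl(\tfrac{\partial\phi}{\partial q_1}\bigr)^2-\bigl(\tfrac{\partial\phi}{\partial q_2}\bigr)^2<0$; that sign forces the Reeb field to be the \emph{negative} multiple of $\tilde R$, so the push-off $L_i^+$ sits over the same base curve $\phi^{-1}(c)$ at the covector position co-oriented by $-\nabla\phi$, i.e.\ the inward normal. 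This sign check is what selects $\Gamma_i$ rather than the oppositely co-oriented lift, and without it the satelliting conventions downstream would be ambiguous. Note also that what you invoke from Step (6)---the Reeb push-off of $\Gamma_0$ being a co-normal lift over an inwardly pushed-off base curve---concerns the geodesic-flow Reeb field of the canonical structure acting on $\Gamma_0$, which is a different vector field acting on a different Legendrian and does not substitute for the computation at $L_i$.

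Two auxiliary claims in your middle paragraph are also false as stated: a contact isotopy does not in general carry Reeb push-offs to Reeb push-offs (the Reeb field is contact-\emph{form} data, not contact-structure data), and $\lambda_t=\lambda_{can}-t\,dH_\phi$ is not stationary away from the critical points of $\phi$. Neither is needed. The paper closes the argument more economically than your appeal to a standard neighborhood theorem: it pulls $L_i^+$ back by $\Psi_\phi^{-1}$, uses that any Legendrian of $(\partial W_i\setminus F,\xi_{can})$ is automatically the co-normal lift of its projection to $F$, and concludes that $\Psi_\phi^{-1}(L_i^+)$ is the co-normal lift of a curve $C^\infty$-close to $\phi^{-1}(c)$ with matching co-orientation, hence Legendrian isotopic and $C^\infty$-close to $\Gamma_i$. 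Supplying the Reeb computation with its sign, and replacing the canonical-Reeb heuristic by it, would turn your plan into the paper's proof.
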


\begin{proof}
Let $\alpha_f^i$ denote the restriction of $\lambda_f$ to $\partial W_i$.	
The Liouville condition implies that $d\lambda_f = \omega = dp_1\wedge dq_1+dp_2\wedge dq_2$, and therefore $d\alpha_f^i$ is the restriction of $\omega$ to the boundary. We can assume that $L_i =f^{-1}(c_i)$, and that the tangent space to $W_i$ along $L_i$ agrees with the tangent space to the set
$$\{(q_1,q_2,p_1,p_2) \mid f(q_1,q_2)\leq c, \textrm{ and } p_1^2+p_2^2\leq \varepsilon  \}.$$
Therefore along $L_i$ the vector field given by
$$\tilde R = \frac{\partial f}{\partial q_1}\partial_{p_1}+\frac{\partial f}{\partial q_2}\partial_{p_2}$$
is in the kernel of $d\alpha_f^i$ because
$$d\alpha_{f}(\tilde R, \cdot) = \frac{\partial f}{\partial q_1} dq_1 + \frac{\partial f}{\partial q_2}dq_2=df $$
and all vectors in the tangent space to $\{ f(q_1,q_2)=c \}$ are in the kernel of this 1-form.
To determine the positive direction, we check the sign of $\alpha_f(\tilde R) = \lambda_f(\tilde R)$
$$\lambda_f(\tilde R) = -\left(\frac{\partial f}{\partial q_1}\right)^2-\left(\frac{\partial f}{\partial q_1}\right)^2,$$
and find that it is negative. This shows that the \emph{actual Reeb vector field points in the co-normal direction co-oriented by the negative gradient of $f$}.
$$R = -\left(\frac{\partial f/\partial q_1}{(\partial f/\partial q_1)^2+(\partial f/\partial q_2)^2}  \right)\partial_{p_1}-\left(\frac{\partial f/\partial q_2}{(\partial f/\partial q_1)^2+(\partial f/\partial q_2)^2}  \right)\partial_{p_2}.$$
Therefore, the Reeb push-off $L_i^+$ of $L_i$ has the property that it projects to $f^{-1}(c)$ in $F$ under the standard co-normal projection $\pi:D^*F \to F$. Moreover, $L_i^+$ lies in the position specified by the co-orientation direction given by $-\nabla f$, which is the \emph{inward} normal direction to $A\cup E$. Since our initial choice of $f$ was made sufficiently $C^{\infty}$-small, $\Psi_f^{-1}(L_i^+)$ is $C^{\infty}$ close to $L_i^+$. Therefore the co-normal projections $\pi(\Psi_f^{-1}(L_i^+))$ and $\pi(L_i^+)=f^{-1}(c)\subset T^2$ are $C^{\infty}$ close to each other. Since $\Psi_f^{-1}(L_i^+)$ is Legendrian with respect to the canonical contact structure on $S^*F$, it must be the co-normal lift of its image $\pi(\Psi_f^{-1}(L_i^+))$. Because $\pi(L_i^+)$ and $\pi(\Psi_f^{-1}(L_i^+))$ are $C^{\infty}$-close, their co-normal lifts are Legendrian isotopic and $C^{\infty}$-close to each other. Therefore $\Psi_f^{-1}(L_i^+)$ must be Legendrian isotopic and $C^{\infty}$ close to the co-normal lift of $f^{-1}(c)$, co-oriented by $-\nabla f$, which is precisely $\Gamma_i$.
\end{proof}

\subsection{Step (\ref{step:satellite}): Satellite procedure}

Recall again that in the Gompf diagram, we can see $L_1$ as the attaching sphere of the unique $2$-handle. $L_0$ agrees with $L_1$ except in the attaching regions of the $1$-handles, where it gets cusped off to form a closed Legendrian unknot in $S^3$ (the boundary of the $0$-handle), as in Figure~\ref{fig:cotorus}. Since we can draw $L_0$ and $L_1$ in the Gompf handle diagram, we can similarly draw their positive Reeb push-offs in the Gompf diagram by a vertical push-off. 

Our final step is to satellite the $J^1(S^1)$ picture of $\Lambda$ into the Gompf diagram by identifying the core $0$-section $S^1$ in $J^1(S^1)$ with $\Gamma_0$ as the Reeb push-off of $L_0$ by Lemma~\ref{l:reeb}. We can similarly satellite a corresponding $J^1(S^1)$ model for the portion of $\Lambda$ in the standard neighborhood $N(\Gamma_1)$, aligning these two neighborhoods $N(\Gamma_0)$ and $N(\Gamma_1)$ on their overlap. Diagrammatically, this produces a standard Reeb-framed satellite, where some care is taken near the $1$-handles to account for the parts of $\Lambda$ in $N(\Gamma_1)$ that depart from $\Gamma_0$. The diagrammatic conventions for the satellite procedure here near the $1$-handles is shown in Figure~\ref{curvecorner}, which shows an example of three curves near a $1$ handle, and the corresponding front projections in the Gompf diagram. 

This completes the proof of Theorem~\ref{thm:cotangentdiagram}. See Figures~\ref{vhcurves} and~\ref{fig:basics} for basic examples applying the algorithm

\begin{figure}
	\centering
	\includegraphics[width=15cm]{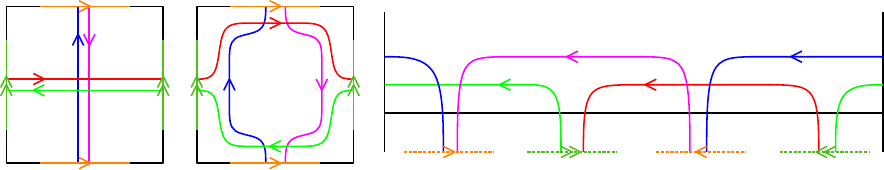}
	\caption{Pictures depicting the intermediate steps in obtaining the Legendrian lifts of the curves $(\pm1,0)$ and $(0,\pm1)$ in $T^2$. Here they are pictured before and after a planar isotopy to lie in $A\cup B\cup C$, and then redrawn in $J^1(S^1)$.}
	\label{vhcurves}
\end{figure}

\begin{figure}
	\centering
	\subfigure[$\mathcal{W}_{T^2, \{(1,0)\}}$]
	{\includegraphics[width=7cm]{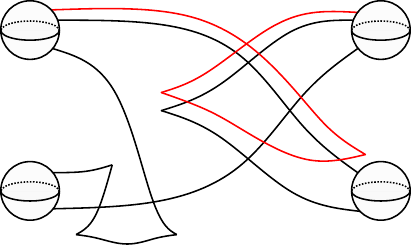}}
	\hspace{1cm}
	\subfigure[$\mathcal{W}_{T^2, \{(-1,0)\}}$]
	{\includegraphics[width=7cm]{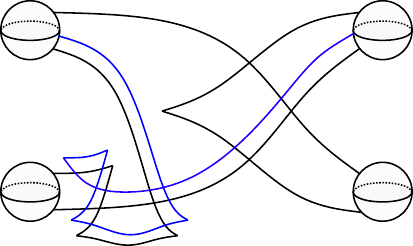}}
	
	\vspace{1cm}
	\subfigure[$\mathcal{W}_{T^2, \{(0,1)\}}$]
	{\includegraphics[width=7cm]{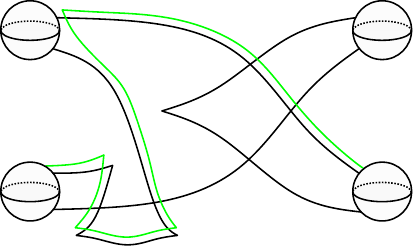}}
	\hspace{1cm}
	\subfigure[$\mathcal{W}_{T^2, \{(0,-1)\}}$]
	{\includegraphics[width=7cm]{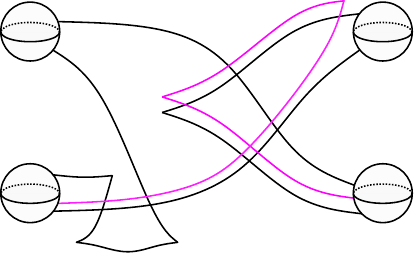}}
	\caption{The Legendrian lifts of the curves $(\pm1,0)$ and $(0,\pm1)\subset T^2$, which allow us to find Weinstein handlebody diagrams of $\mathcal{W}_{T^2, (\pm1,0) }$ and $\mathcal{W}_{T^2, (0, \pm1)}.$}
	\label{fig:basics}
\end{figure}

\subsection{Step (\ref{step:moves1}): Simplifying the Gompf diagram} \label{section:step4}

Having drawn additional curves in the Gompf diagram, we can now simplify using a series of Reidemeister moves, handles slides, and handle cancellations, as described in Section \ref{s:kirby}. Figure \ref{fig:standard_simplifications} shows some simplifications that we use in this paper and that can often be performed in the diagrams obtained through this algorithm. Although this figure shows the moves being performed on a single strand, the same moves can be performed with many parallel Reeb push-off copies of the same strand, a scenario that occurs with complicated slopes or multiple slopes. We also perform handle cancellation whenever possible, because diagrams without $1$-handles can be simpler to use for the calculation of invariants and for identifying Lagrangian submanifolds.

\begin{figure}
	\begin{center}
		\includegraphics[width=15cm]{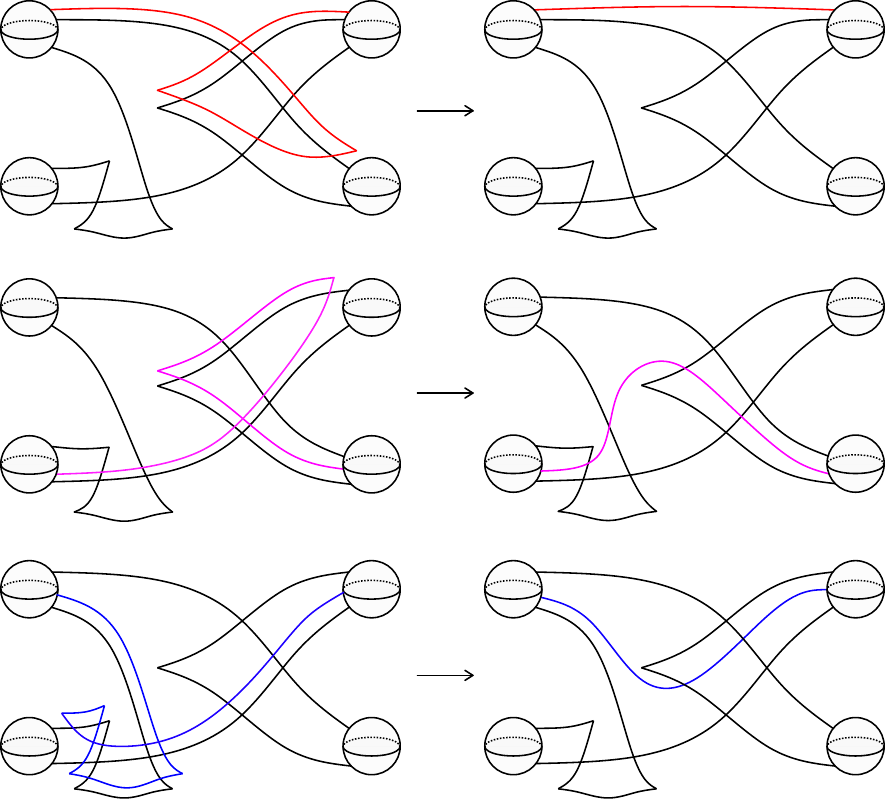}
		\caption{Some standard simplifications of curves that can be performed in step~\ref{step:moves1}. Note that the blue curve is simplified using a Gompf move 6.}
		\label{fig:standard_simplifications}
	\end{center}
\end{figure}

\subsection{Legendrian $1$-handle slides} \label{section:1handleslides}

With this algorithm in hand we can now translate the action of Weinstein $1$-handle slides in the output diagram $\mathcal{W}_2$ in terms of the attaching curves and disk cotangent bundle structure of $\mathcal{W}_{F,c}$. See~\cite{GompfStipsicz,ACGMMSW1} for an introduction to $1$-handle slides for Weinstein $4$-manifolds.

\begin{prop}\label{prop:1handles}
	Let $F$ be a closed orientable surface. The group of orientation preserving homeomorphisms of $F$ correspond to $1$-handle slides in the Gompf handlebody of $D^*F$, and to $1$-handle slides in the Weinstein handlebody diagram of $\mathcal{W}_{F,c}$, where $c$ is a set of co-oriented curves on $F$.
\end{prop}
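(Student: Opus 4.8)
The plan is to reduce the statement to the well-known fact that the mapping class group of an orientable surface $F$ is generated by Dehn twists, and then to verify the claim for a single Dehn twist. First I would recall the structure of the Gompf handlebody for $T^*F$: it has one $0$-handle, $2g$ $1$-handles (for $F$ of genus $g$), and a single $2$-handle whose attaching curve $\Lambda_0^F$ passes over each $1$-handle exactly twice, encoding the boundary relation $\prod [a_i,b_i]$ of the polygonal model of $F$. The generators $a_1,b_1,\dots,a_g,b_g$ of $\pi_1(F)$ correspond to the $1$-handles, and a Dehn twist along a simple closed curve acts on these generators by a known automorphism. The key observation is that the induced action on the polygonal decomposition (and hence on the handle decomposition) can be realized by sliding one $1$-handle over another — this is because a $1$-handle slide changes the attaching data exactly by the algebraic operation of replacing one generator by a product of that generator with (a conjugate of) another, which is precisely how Dehn twists act on free generators of $\pi_1$.

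The main steps in order: (1) State precisely the dictionary between $1$-handles in the Gompf diagram of $T^*F$ and a symplectic basis of $H_1(F)$, and between $1$-handle slides and the corresponding Nielsen transformations on this basis; this uses the standard description of $1$-handle slides in \cite{GompfStipsicz, ACGMMSW1}. (2) Verify that each Dehn twist generator of $\mathrm{MCG}(F)$ is realized by a sequence of $1$-handle slides, by checking the action on generators matches. Here I would use the Lickorish generators (Dehn twists along $3g-1$ explicit curves) and show each acts by a composition of handle slides; alternatively, reduce to the two cases of a twist along a non-separating curve isotopic to a standard generator and a twist along a curve meeting two standard generators, since these suffice to generate. (3) Since the algorithm of Section~\ref{s:procedure} builds $\mathcal{W}_{F,c}$ by the satellite construction on top of the Gompf diagram of $T^*F$, and the $2$-handles attached along the co-normal lifts $\Lambda_{\gamma_i}$ are satellited onto the pushed-off Legendrians $\Gamma_0, \Gamma_1$ which are determined by the $1$-handle structure, a $1$-handle slide in the $T^*F$ part of the diagram carries the satellited curves along with it. So the same orientation-preserving homeomorphism of $F$, applied simultaneously to the polygonal model and to the curves $c$, corresponds to performing these $1$-handle slides in the full diagram of $\mathcal{W}_{F,c}$, and the effect on $c$ is precisely to replace $c$ by its image under the homeomorphism.

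The hard part will be Step (2): making the bookkeeping between geometric $1$-handle slides and the algebraic action of Dehn twists fully precise, especially tracking co-orientations and the "doubled'' nature of the attaching curve $\Lambda_0^F$ (each $1$-handle is traversed twice, so a single handle slide affects two strands of $\Lambda_0^F$ and one must confirm the net effect is the expected automorphism rather than something more complicated). One should also be careful that a $1$-handle slide is only an allowed Weinstein/Legendrian move when performed as a genuine Legendrian isotopy pushing an attaching sphere through a belt sphere; I would invoke the general framework of \cite{ACGMMSW1} for Legendrian $1$-handle slides to ensure each such move is legitimate. I expect the cleanest route is to observe that since $T^*F$ is determined up to Weinstein homotopy by $F$ (Theorem~\ref{thm:cot}), any orientation-preserving diffeomorphism of $F$ induces a Weinstein homotopy of $T^*F$, which at the level of handlebody diagrams is realized by some sequence of Kirby moves; one then argues — again using the satellite structure of the algorithm — that these moves can be taken to be $1$-handle slides (no $2$-handle moves needed, since there is only one $2$-handle whose attaching data is rigidly determined by the polygonal word), and that the induced transformation on the satellited $2$-handles of $\mathcal{W}_{F,c}$ is the expected one. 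This last point — that the homotopy can be chosen to avoid moves involving the $2$-handle of $T^*F$ in a way that disrupts the satellite — is the subtle technical issue I would expect to spend the most care on.
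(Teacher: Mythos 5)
Your outline is correct and shares the paper's core idea -- orientation-preserving homeomorphisms are generated by Dehn twists, and Dehn twists are realized by $1$-handle slides -- but the way you propose to justify the key step differs from the paper, and the comparison is instructive. You plan to verify the Dehn twist generators directly in the $4$-dimensional front projection, tracking Nielsen transformations on the attaching word of $\Lambda_0^F$ and worrying (rightly) about the fact that the attaching curve traverses each $1$-handle twice. The paper sidesteps all of that bookkeeping by descending to the surface itself: a Dehn twist is a $1$-handle slide for the $2$-dimensional handle decomposition of $F$ coming from the polygonal model, and by Theorem~\ref{thm:cot} the Gompf Weinstein structure on $T^*F$ is Weinstein homotopic to the Morsified cotangent structure whose skeleton is the smooth zero section $F$ equipped with exactly that Morse function. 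A handle slide of the Morse function on $F$ therefore lifts canonically to a $1$-handle slide of the Weinstein structure on $T^*F$ (there is no ``doubling'' downstairs), and the holonomy homotopy of Theorem~\ref{thm:cot} transports the result back to Gompf standard form; the single $2$-handle and any satellited attaching spheres of $\mathcal{W}_{F,c}$ are simply dragged along, which also resolves your final worry about moves ``disrupting the satellite.'' Your explicit Lickorish-generator computation would work but costs substantially more case-checking; the paper's route buys a uniform, coordinate-free argument at the price of relying on the Weinstein homotopy established in Theorem~\ref{thm:cot}. Your alternative ``cleanest route'' (uniqueness of the Weinstein structure on $T^*F$ plus an unspecified sequence of Kirby moves) is weaker than either: the existence of \emph{some} sequence of Kirby moves realizing a Weinstein homotopy does not by itself let you conclude that the moves can be taken to be $1$-handle slides only, so that version would leave a genuine gap if you pursued it instead of step (2).
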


\begin{proof}
	First recall that the group of orientation preserving homeomorphisms of $T^2$ is $SL(2,\mathbb{Z})$ and is generated by Dehn twists about the meridian and longitude:
	$$D_{a}=\begin{pmatrix}
		1&0\\
		1&1
	\end{pmatrix}~~ \text{and} ~~D_b=\begin{pmatrix}1&-1\\0&1\end{pmatrix}.$$
	More generally, for an orientable surface $F$, the group of orientation preserving homeomorphisms of $F$ is generated by Dehn twists which can be interpreted as $1$-handle slides of $F$ for the handle decomposition of $F$ constructed from a polygon (where sides are appropiately identified). \\
	In section~\ref{s:cotangent} we construct a Weinstein structure on $T^*F$ homotopic to the canonical Gompf Weinstein structure on $T^*F$ for a surface $F$. In particular we construct a Weinstein homotopy from the canonical Stein structure on $T^*F$ where we have a smooth Lagrangian skeleton $F$ to the Gompf handlebody of $T^*F$ where the Lagrangian skeleton is now singular. In between, we saw a Morsified Weinstein structure where the Lagrangian skeleton is the smooth zero section $F$, and the Weinstein structure restricts to a standard Morse function on $F$. The Weinstein structure with the singular skeleton differs from this one only by a holonomy Weinstein homotopy (which preserves the critical points and values). A $1$-handle slide on the standard Morse function of $F$ corresponds to an orientation preserving homeomorphism of $F$. By our construction of $D^*F$, $1$-handle slides of $F$ correspond to $1$-handle slides of $D^*F$. After performing the $1$-handle slides on the Morse Weinstein structure with smooth skeleton, we can perform the same holonomy Weinstein homotopy to align with the Morsified Weinstein structure with the Gompf Weinstein structure. 
\end{proof}


\section{Invariants}\label{sec:applications}

Certain invariants become algorithmic to compute when given an explicit Weinstein diagram such as the usual homology and the Chekanov Eliashberg dga of the attaching spheres (which in turn is useful for understanding the symplectic homology or wrapped Fukaya category of the Weinstein manifold). In this section, we explain how to compute such invariants of the completions of the Weinstein domains $\mathcal{W}_{F,c}$ utilizing the Weinstein handlebodies that we produced in section~\ref{sec:curves}. We give formulas to compute the standard homology in section~\ref{s:homology}, and give a result about the symplectic homology in section~\ref{s:SH}. In section~\ref{section examples}, we use the invariants to show in examples when the completions of $\mathcal{W}_{F,c}$ and $\mathcal{W}_{F,c'}$ are not symplectomorphic for different choices of curves $c$ and $c'$.

\subsection{Topological invariants}\label{s:homology}

First we discuss how Weinstein handle diagrams can be used compute the homology and fundamental group of $\mathcal{W}_{F,c}$, as well as the homology of its boundary. Following~\cite{OzbagciStipsicz}, consider a Weinstein $4$-manifold $X$ with boundary $Y$ with a Weinstein handle diagram with one $0$-handle, $1$-handles $h_1, \ldots, h_s$, and $2$-handles attached along Legendrian knots $\Lambda_1, \ldots, \Lambda_r$ with framing $tb(\Lambda_i)-1$. Each $1$-handle is a generator in $\pi_1(X)$, while each $2$-handle gives a relation determined by the signed intersection of its attaching sphere with the belt spheres of the $1$-handles. Furthermore, if one considers the matrix $\phi$ given by the signed intersection of $2$-handles with $1$-handles, one finds that $H_1(X;\Z)\cong \coker \phi$ and $H_2(X;\Z)\cong \ker \phi$. Since there is only one $0$-handle and no $3$ or $4$-handles, we know that $H_0(X;\Z)=\Z$ and $H_i(X;\Z)=0$ for $i>2$. If $X$ has a Weinstein handle diagram with no $1$-handles, then the intersection form is the linking matrix of the attaching link of the $2$-handles. The $(i,j)$th entry of the linking matrix is defined as the signed linking number of $\Lambda_i$ and $\Lambda_j$, and the $(i,i)$th entry is the framing of $\Lambda_i$, ($tb(\Lambda_i)-1$).

In order to compute the homology of the boundary $Y$ we replace each $1$-handle with a $0$ framed $2$-handle attached along an unknot and connect the ends of the attaching spheres passing through the $1$-handles in a canonical closure as in Figure~\ref{fig:handleswap}. We now have a new $4$-manifold $Z$ which is simply connected and has boundary $Y$. Then, $H_2(Z;\Z)$ is generated by surfaces given by attaching the Seifert surfaces $\Sigma_i$ of each $\Lambda_i$ with the core of the corresponding $2$-handle for $i=1, \ldots, r$. Use the following long exact sequence to compute $H_*(Y;\Z)$:
\begin{equation}\label{eqn:hom_les}
0\rightarrow H_2(Y;\Z) \rightarrow H_2(Z;\Z)\xrightarrow{\phi_1} H_2(Z,Y;\Z)\xrightarrow{\phi_2} H_1(Y;\Z)\rightarrow 0
\end{equation}
where $\phi_1([\Sigma_i])=n_i [D_i]+\sum_{j\neq i} lk(\Lambda_i, \Lambda_j)[D_j]$, $\phi_2([D_i])=[\partial D_i]$, $D_i$ denotes the meridional disk of $\Lambda_i$ that intersects $\Lambda_i$ positively, and $n_i$ is the framing of the $2$-handle attached along $\Lambda_i$.

\begin{figure}
	\begin{center}
		\includegraphics[width=12cm]{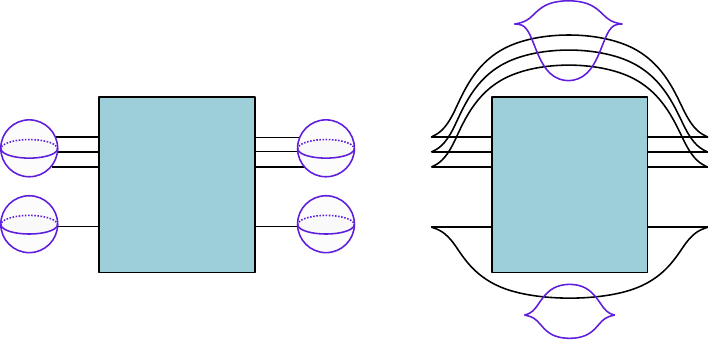}
		\caption{Diagrams whose boundaries correspond to the same contact 3-manifold \cite{DingGeiges}, we swap the purple 1-handles in the diagram on the left for purple $0$ framed $2$-handles on the right. The blue box denotes some arbitrary Legendrian tangle.}
		\label{fig:handleswap}
	\end{center}
\end{figure}

In the case of the complements of smoothed toric divisors and any Weinstein manifold $\mathcal{W}_{T^2,c}$ the homology computations can be obtained from their Weinstein diagrams.

\begin{lemma} \label{slopehoms} Suppose that $c=\{(a_1, b_1), \ldots, (a_n, b_n)\} \subset T^2$ is a set of co-oriented curves of the given slopes. Let $\phi=\begin{pmatrix}
  0 & a_1 & \cdots & a_n\\
    0 & b_1 & \cdots & b_n
  \end{pmatrix}$, then $H_1(\mathcal{W}_{T^2, c};\Z)=\coker \phi$
 and $H_2(\mathcal{W}_{T^2, c};\Z)= \ker \phi$.
\end{lemma}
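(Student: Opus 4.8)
Looking at this statement, I need to compute the homology of $\mathcal{W}_{T^2, c}$, the Weinstein manifold obtained by attaching $2$-handles to $D^*T^2$ along co-normal lifts of curves of slopes $(a_i, b_i)$, then completing. The key is that the earlier excerpt already gives a Weinstein handlebody description and a general homology recipe from Ozbagci-Stipsicz.

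My plan: First I would recall the Gompf handlebody diagram for $D^*T^2$: it has one $0$-handle, two $1$-handles $h_1, h_2$ (corresponding to the two generators of $\pi_1(T^2)$), and one $2$-handle. The attaching sphere of this single $2$-handle passes over each of the two $1$-handles algebraically zero times — this is the standard fact that the zero section torus has trivial intersection with itself, or more concretely that in the Gompf diagram of $T^*T^2$ the $2$-handle is attached along a Legendrian that passes over each $1$-handle once in each direction (geometrically twice, algebraically zero). This gives the first column of zeros in $\phi$: the signed intersection of the original cotangent $2$-handle with the belt spheres of $h_1$ and $h_2$ is $(0,0)^T$. Then I would observe that the new $2$-handle attached along the co-normal lift $\Lambda_{(a_i,b_i)}$ of a curve $\gamma_{(a_i,b_i)}$ of slope $(a_i,b_i)$ passes over the two $1$-handles with signed count exactly $(a_i, b_i)$ — this is essentially the content of the satellite construction in Section~\ref{s:procedure}, since a curve of slope $(a_i,b_i)$ in $T^2$, viewed in the square representation, crosses the two edges (which become the attaching regions of the two $1$-handles) with signed multiplicities $a_i$ and $b_i$ respectively. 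The co-normal lift, and its satellite into the Gompf diagram, inherits this homological data; the co-orientation choice fixes the signs consistently.

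Next, with the matrix $\phi = \begin{pmatrix} 0 & a_1 & \cdots & a_n \\ 0 & b_1 & \cdots & b_n\end{pmatrix}$ describing the signed intersections of all $2$-handles (the cotangent one plus the $n$ new ones) with the two $1$-handle belt spheres, I apply the Ozbagci-Stipsicz formulas quoted earlier in Section~\ref{sec:applications}: $H_1(X;\Z) \cong \coker\phi$ and $H_2(X;\Z) \cong \ker\phi$, while $H_0 = \Z$ and $H_i = 0$ for $i > 2$. Here $\phi: \Z^{n+1} \to \Z^2$ is the map whose matrix is $\phi$ (columns indexed by $2$-handles, rows by $1$-handles). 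The completion by a cylindrical end does not change the homotopy type, so the homology of $\mathcal{W}_{T^2,c}$ equals that of the compact Weinstein domain. This directly yields the claimed result.

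The main obstacle I anticipate is justifying rigorously that the co-normal lift of a slope-$(a_i,b_i)$ curve, after the satellite operation of Section~\ref{s:procedure}, has signed intersection exactly $(a_i,b_i)$ with the belt spheres of the two $1$-handles of the Gompf diagram — i.e., tracking the homology class through the isotopy (step~\ref{step:isotopy}), the unfolding to $J^1(S^1)$ (step~\ref{step:jet}), and the satellite into the Gompf diagram (step~\ref{step:satellite}). One clean way: note that the belt sphere of the $1$-handle $h_j$ of $T^*T^2$, capped by the core of $h_j$, represents (the Poincaré dual of) one of the standard generators of $H^1(T^2)$ pulled back to $D^*T^2$; the $2$-handle $h_{\Lambda_i}$ has its attaching Legendrian homologous in $\partial(\text{1-skeleton})$ to $\gamma_{(a_i,b_i)}$, whose class in $H_1(T^2) \cong \Z^2$ is exactly $(a_i,b_i)$. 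Hence the intersection numbers are $(a_i, b_i)$. The cotangent $2$-handle's attaching sphere bounds a disk (the $2$-cell of $T^2$) so is nullhomologous in the $1$-skeleton, giving the zero column. I would also remark that the ordering of the $1$-handles versus the basis $(\text{longitude}, \text{meridian})$ must be fixed consistently with the conventions of Section~\ref{s:procedure}, but this only permutes/signs rows and does not affect $\coker$ or $\ker$ up to isomorphism. Finally, I'd note the analogous computation of $H_*(\partial \mathcal{W}_{T^2,c})$ could be carried out via the long exact sequence~\eqref{eqn:hom_les}, though the statement as given only asks for $H_1$ and $H_2$ of $\mathcal{W}_{T^2,c}$ itself.
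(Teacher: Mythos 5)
Your proposal is correct and follows essentially the same route as the paper: the paper's proof likewise reads off the matrix $\phi$ from the Gompf diagram (the cotangent $2$-handle contributing the zero column since it passes over each $1$-handle algebraically zero times, and each $\Lambda_{(a_i,b_i)}$ contributing the column $(a_i,b_i)^T$) and then applies the Ozbagci--Stipsicz formulas $H_1\cong\coker\phi$, $H_2\cong\ker\phi$. Your extra homological justification of the signed intersection numbers via $H_1(T^2)$ is a welcome elaboration of a step the paper leaves implicit, but it is not a different argument.
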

\begin{proof}
By Theorem~\ref{thm:cotangentdiagram}, the co-normal lift of a curves $(a_i,b_i)\subset T^2$, $\Lambda_{(a_i,b_i)}\subset S^*T^2$ intersects the two one handles $a_i$ and $b_i$ times. Furthermore, there is one $2$-handle in the Weinstein diagram of $\mathcal{W}_{T^2,c}$ with signed intersection $0$ for each of the $1$-handles. 
\end{proof}

The homology of the boundary of $\mathcal{W}_{T^2, c}$ is computed using the  Thurston-Bennequin number of each Legendrian $\Lambda_{(a_i,b_i)}$ and their linking numbers in order to compute. Note that because these curves are not null-homologous in $S^*T^2$, they do not have a canonical $0$-framing abstractly, so the Thurston-Bennequin number is not strictly well-defined. However, once the Legendrian is drawn in a particular handlebody diagram in Gompf standard form, there is a convention provided in~\cite{Gompf} for assigning a Thurston-Bennequin number to Legendrians in the diagram. Note that under Gompf move $6$ the Thurston-Bennequin number of the link can change so these quantities are dependent on the diagram itself, although the canonical framing is preserved by contactomorphism (what changes is what is considered the $0$-framing). Here, we calculate these values for the curves arising in diagrams obtained as the output of the algorithm of Section~\ref{s:procedure}.

 \begin{lemma}\label{tb}
Let $(a,b)$ be coprime integers and $\Lambda_{(a,b)}$ be the image of the co-normal lift of $\gamma_{(a,b)}\subset T^2$ in the standard Gompf diagram for $S^*T^2$, where if $a<0$ we have also performed Legendrian isotopy and Gompf move $6$ {as shown in Figure~\ref{fig:tb_2} for the case $a<0,~b\geq 0,$ and $a>b$}. Then, the Thurston-Bennequin number of $\Lambda_{(a,b)}$, with respect to such diagrams, is given in the following table.

\begin{center}
\begin{tabular}{ |p{3cm}|p{3cm}|p{3cm}| }
 \hline
 $(a,b)$ & $tb(\Lambda_{(a,b)})$\\
 \hline
 $b\geq 0$ & $-2b^2-|ab|$ \\
 \hline
 $b < 0$   & $ab$ \\
 \hline
\end{tabular}
\end{center}
\end{lemma}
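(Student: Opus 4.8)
The plan is to read $tb(\Lambda_{(a,b)})$ off the front projection produced by the procedure of Section~\ref{s:procedure}, using the combinatorial formula $tb(L) = w(\Pi(L)) - c_R(\Pi(L))$ recorded in Section~\ref{s:kirby}, where $w$ is the writhe and $c_R$ the number of right cusps of the front in Gompf standard form.

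First I would make the output front explicit as a function of $(a,b)$. After the isotopy of Step~(\ref{step:isotopy}), $\gamma_{(a,b)}$ lies in the cross-shaped region $A\cup B\cup C$, traversing the annulus $A$ clockwise (the co-orientation convention of that step), and by Lemma~\ref{slopehoms} it runs $|a|$ times through one $1$-handle region and $|b|$ times through the other. Transferring to $J^1(S^1)$ (Step~(\ref{step:jet})) and satelliting onto the companion built from $\Lambda_0$ together with its cusped-off version (Step~(\ref{step:satellite}), Figures~\ref{fig:torusdecomp} and~\ref{curvecorner}), the resulting front consists of a block of $|b|$ nearly parallel strands and a block of $|a|$ nearly parallel strands, together with the crossings forced among them inside $A$ and in the corners of Figure~\ref{curvecorner}, and the cusps of the pattern. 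I would then organize the computation into four terms: the signed self-crossings within the $b$-block, the signed self-crossings within the $a$-block, the signed crossings between the two blocks, and the right cusps, each read from the standard local models of Figures~\ref{vhcurves}, \ref{curvecorner} and~\ref{fig:basics}.

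The formula would then follow by induction on $|a|+|b|$, the base cases $(\pm 1,0)$ and $(0,\pm 1)$ being read directly from Figures~\ref{vhcurves} and~\ref{fig:basics} ($tb=0$ for $(\pm1,0)$ and $(0,-1)$, and $tb=-2$ for $(0,1)$, consistent with the table), and the inductive step amounting to adding one more parallel strand through a $1$-handle: this introduces a fixed number of new crossings with each strand already present and a fixed number of new cusps, and one checks that these increments agree with the differences of the claimed closed form. The split into $b\ge 0$ and $b<0$ is precisely where the orientation convention of Step~(\ref{step:isotopy}) enters: for $b\ge 0$ the $b$-strands are pushed across $E$ against their co-orientation, are forced to wrap, and contribute the negative self-crossings responsible for the quadratic term $-2b^2$, whereas for $b<0$ no such wrapping occurs and only the linear cross-term survives.

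Finally, for $a<0$ the table refers to the diagram \emph{after} the Legendrian isotopy and Gompf move~6 of Figures~\ref{fig:Gompf6A}--\ref{fig:Gompf6B}; since Gompf move~6 does not preserve the framing, I would compute its effect on $w$ and $c_R$ separately and add the resulting correction. I expect the main obstacle to be the signed bookkeeping of the self-crossings of the multi-strand pattern --- in particular keeping track of every crossing sign as the strands reorganize through the $1$-handles and under Gompf move~6 --- since that is exactly where the asymmetry between the $b\ge 0$ and $b<0$ cases and the quadratic term are generated; the cusp count itself is routine once the front is set up.
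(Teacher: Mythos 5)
Your overall strategy coincides with the paper's: both read $tb(\Lambda_{(a,b)})$ off the explicit front produced by the procedure of Section~\ref{s:procedure} via $tb = \mathrm{writhe} - \#(\text{right cusps})$, and both treat the $a<0$ case by tracking the change in the diagrammatic $tb$ under the Gompf move~6 of Figures~\ref{fig:Gompf6A}--\ref{fig:Gompf6B}. The difference is organizational. The paper computes the closed form directly, case by case on the signs and the relative size of $a$ and $b$, after putting the front into a normal form in which it decomposes into twist regions of parallel bands (each half-twist of an $n$-strand band contributing $-\tfrac{n(n-1)}{2}$ to the writhe); note that in the paper's normal form the bands have $|a|$ and $r=\bigl||b|-|a|\bigr|$ strands rather than $|a|$ and $|b|$, so your four-term decomposition would need to be adjusted to match the actual strand structure, though this only affects the bookkeeping. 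Your induction on $|a|+|b|$ is workable but has one concrete obstruction you should address: unit increments do not preserve coprimality, and some coprime targets have \emph{no} coprime predecessor --- e.g.\ $(4,9)$ can only be reached from $(3,9)$ or $(4,8)$, neither of which is the slope of an embedded curve. So the induction cannot be run within the class of co-normal lifts $\Lambda_{(a,b)}$; you must instead define the signed crossing count and cusp count combinatorially for the $(m,n)$-strand pattern for arbitrary $m,n\geq 0$ (a multi-component link when $\gcd(m,n)>1$, where you count \emph{all} crossings of the diagram, not just self-crossings of one component), prove the closed form for that quantity by induction, and only at the end observe that for coprime $(a,b)$ it equals $tb(\Lambda_{(a,b)})$. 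With that repair, and with the Gompf~6 correction computed as you propose, your argument goes through and yields the same table.
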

\begin{proof}

There are eight cases depending on the relative values of $a$ and $b$ to consider, each of which yields a co-normal lift with distinct Lagrangians: either $|a|<|b|$ or $|b|<|a|$, $a$ is either positive or negative, and $b$ is either non-negative or negative.. Here, we only discuss two cases in detail since the other cases follow by analogous arguments. 

\emph{Case $1$: consider a curve $\gamma_{(a,b)}$ such that $a,~b\geq 0$ and $a<b$.} Let $r=b-a$. Using the algorithm of section~\ref{s:procedure} and Legendrian isotopy,  $\mathcal{W}_{T^2, (a,b)}$ has a Weinstein diagram given by Figure~\ref{fig:tb_1}. Recall that the Thurston-Bennequin number of an oriented Legendrian knot can be directly computed from the front projection of a Legendrian: $tb(\Lambda)=\text{writhe}(\Pi(\Lambda))-(\text{the number of right cusps in }\Pi(\Lambda))$. {Note that in a front projection of a Legendrian a half twist in a band of $n$ parallel strands contributes $-\frac{n(n-1)}{2}$ to the writhe, see Figure~\ref{tb_contribution}. Then using Figure~\ref{fig:tb_1_b}, note that there are $4$ half twists of bands with $a+r=b$ strands, an additional $ab$ negative crossings, and $2(r+a)$ right cusps. Therefore,}
$$tb(\Lambda_{(a,b)})= \left(-4\frac{b(b-1)}{2}-ab\right)-(2a+2r)=-2(b^2-b)-ab-2b=-2b^2-ab.$$

\emph{Case $2$: let $\gamma_{(a,b)}$ be a curve such that $a<0,~b\geq 0,~a>b$ and $b-|a|=r$.} Again using the algorithm described in the previous section,  and Legendrian isotopy we obtain the front projection in  Figure~\ref{fig:tb_2_a_1} from which we {note that there are $4$ half twists of bands with $|a|+r=b$ strands, $4$ half twists of bands with $|a|$ strands, an additional $5|a|b$ negative crossings, $2|a|^2+2|a|b$ positive crossings, and $2|a|+2b$ right cusps.} Therefore,
$$tb((\Lambda_{(a,b)})= \left(-4\frac{b(b-1)}{2}-4\frac{|a|(|a|-1)}{2}-5|a|b+2|a|^2+2|a|b\right)-(2|a|+2b)=-2b^2-3b|a|.$$

If we also perform Legendrian Reidemeister moves and a single Gompf move $6$ we find that $\mathcal{W}_{T^2, (a,b)}$ has a Weinstein handlebody diagram as shown in Figure~\ref{fig:tb_2_b}. {For this Legendrian, there are $4$ half twists with $b$ strands, $|a|b$ negative crossings and $2b$ right cusps.} Therefore,
$$tb(\Lambda_{(a,b)})=\left(-4\frac{b(b-1)}{2}-|ab|\right)-(2b)=-2b^2-|ab|.$$

If we have a general link $\Lambda$, with a sublink $\Lambda_{(a_i,b_i)}$ such that $a_i<0$ then we can isotope the curves on the torus as discussed in Section~\ref{sec:curves} on each of the $\Lambda_{(a_i,b_i)}$. The Thurston-Bennequin values in the statement are given for the Legendrians $\Lambda_{(a,b)}$ after performing a sequence of Legendrian isotopies and a Gompf move $6$ if $a<0$.
\end{proof}

\begin{figure}
	\begin{center}
		\includegraphics[width=3 cm]{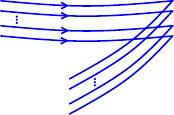}
		\caption{This image shows a half twist with $n$ strands where all of the strands are oriented in the same manner. There are $\frac{n(n-1)}{2}$ negative crossings and either $0$ or $n$ right cusps depending on the half twist is towards the left or right.}
		\label{tb_contribution}
	\end{center}
\end{figure}

\begin{figure}
	\centering
	\subfigure[$\gamma_{(a,b)}\subset T^2$]{\begin{tikzpicture}\node[inner sep=0] at (0,0) {\includegraphics[width=6 cm]{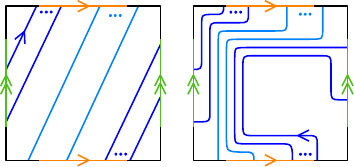}};
			\node at (-2.6,0.4){$a$};
			\node at (-1.6,0){$r$};
	\end{tikzpicture}} \hspace{1cm}
	\subfigure[$\gamma_{(a,b)}\subset \mathcal{J}^1(S^1) $]{\includegraphics[width =7cm]{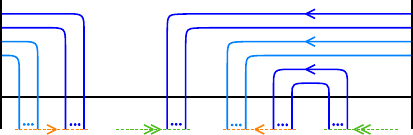}}  
	
	\vspace{1cm}
	
	\subfigure[$\Pi(\Lambda_{(a,b)})$]{\includegraphics[width=7 cm]{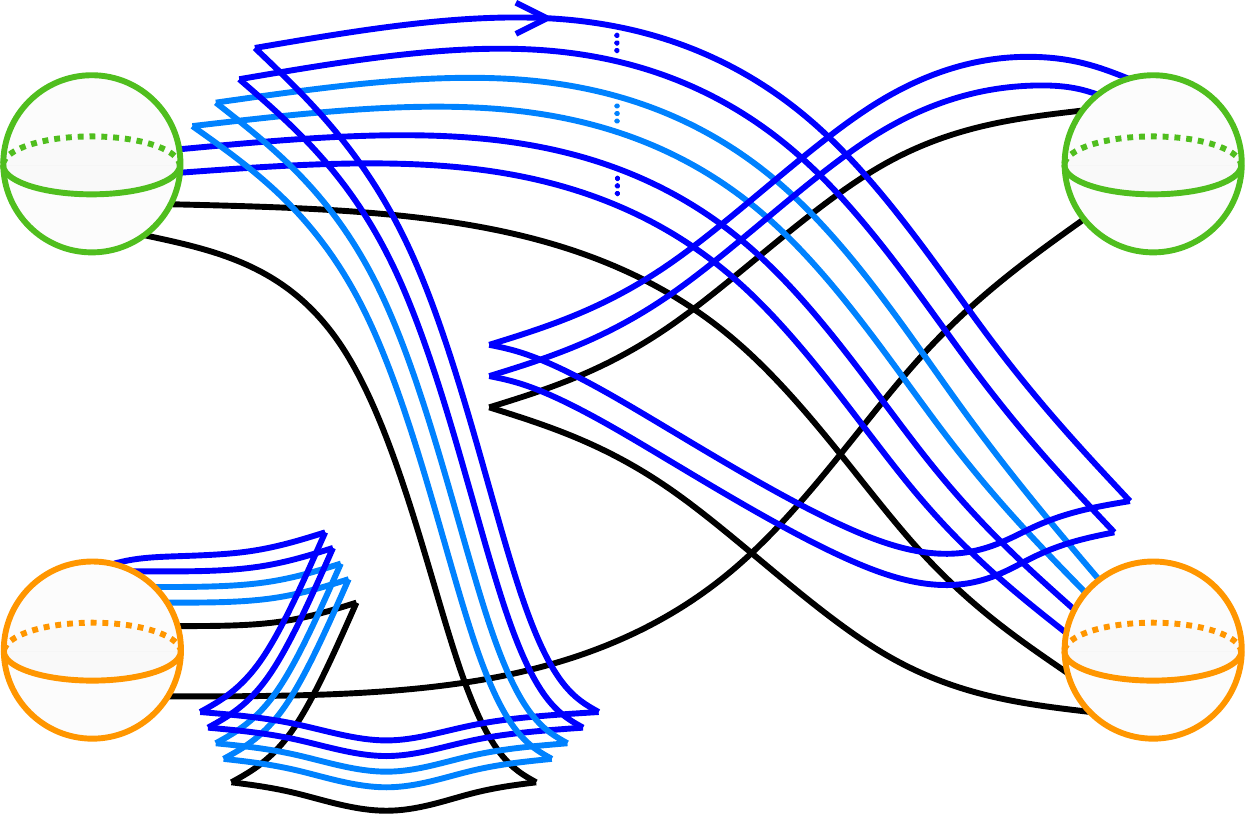}\label{tb_1_a} }
	\hspace{1cm}
	\subfigure[$\Pi(\Lambda_{(a,b)})$ after Legendrian isotopy]{\includegraphics[width=7cm]{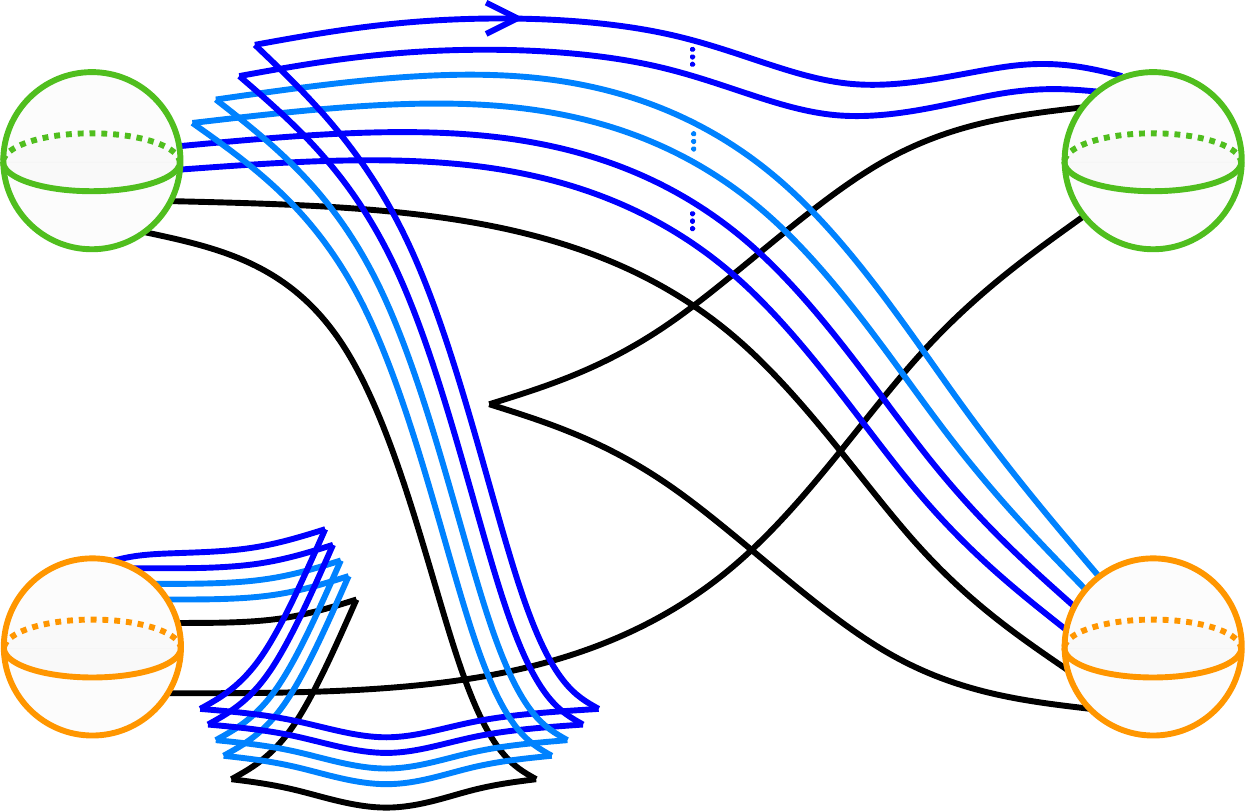}\label{fig:tb_1_b}}
	\caption{The steps to obtain front projection of $\Lambda_{(a,b)}$ for $a,b>0$ and $a<b$, where the darker strands represent $a$ parallel strands and the lighter strands represent $r=b-a$ parallel strands.}
	\label{fig:tb_1}
\end{figure}

\begin{figure}
	\centering
	\subfigure[$\gamma_{(a,b)}\subset T^2$]{\begin{tikzpicture}\node[inner sep=0] at (0,0) {\includegraphics[width=6 cm]{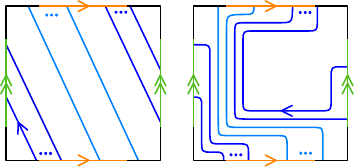}};
			\node at (-2.55,-0.5){$a$};
			\node at (-1.6,0){$r$};
	\end{tikzpicture}} 
	\hspace{2cm}
	\subfigure[$\gamma_{(a,b)}\subset \mathcal{J}^1(S^1)$]{\includegraphics[width=7 cm]{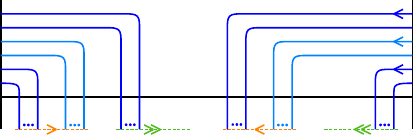}}  
	
	\vspace{1cm}
	\subfigure[$\Pi(\Lambda_{(a,b)})$]{\includegraphics[width=7cm]{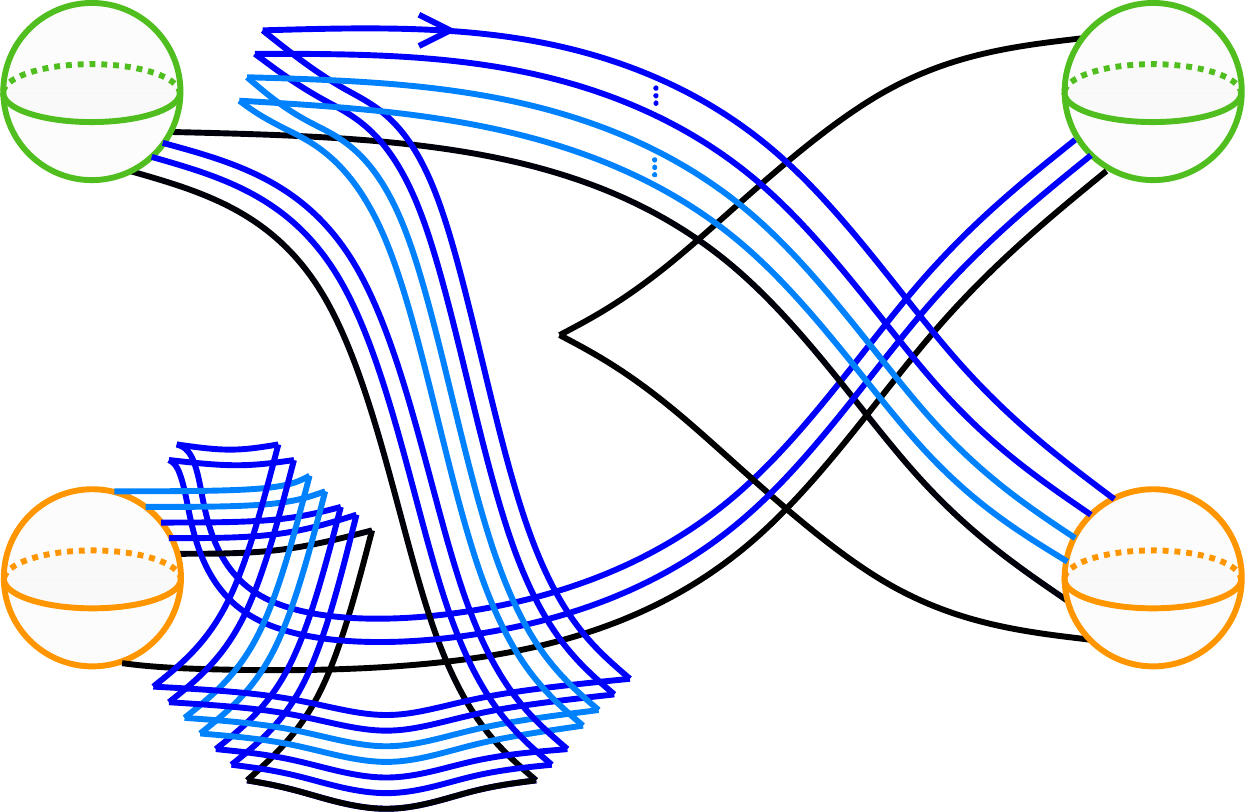} \label{fig:tb_2_a_1}}
	\hspace{1cm}
	\subfigure[$\Pi(\Lambda_{(a,b)})$ after Legendrian isotopy and Gompf move $6$]{\includegraphics[width=7cm]{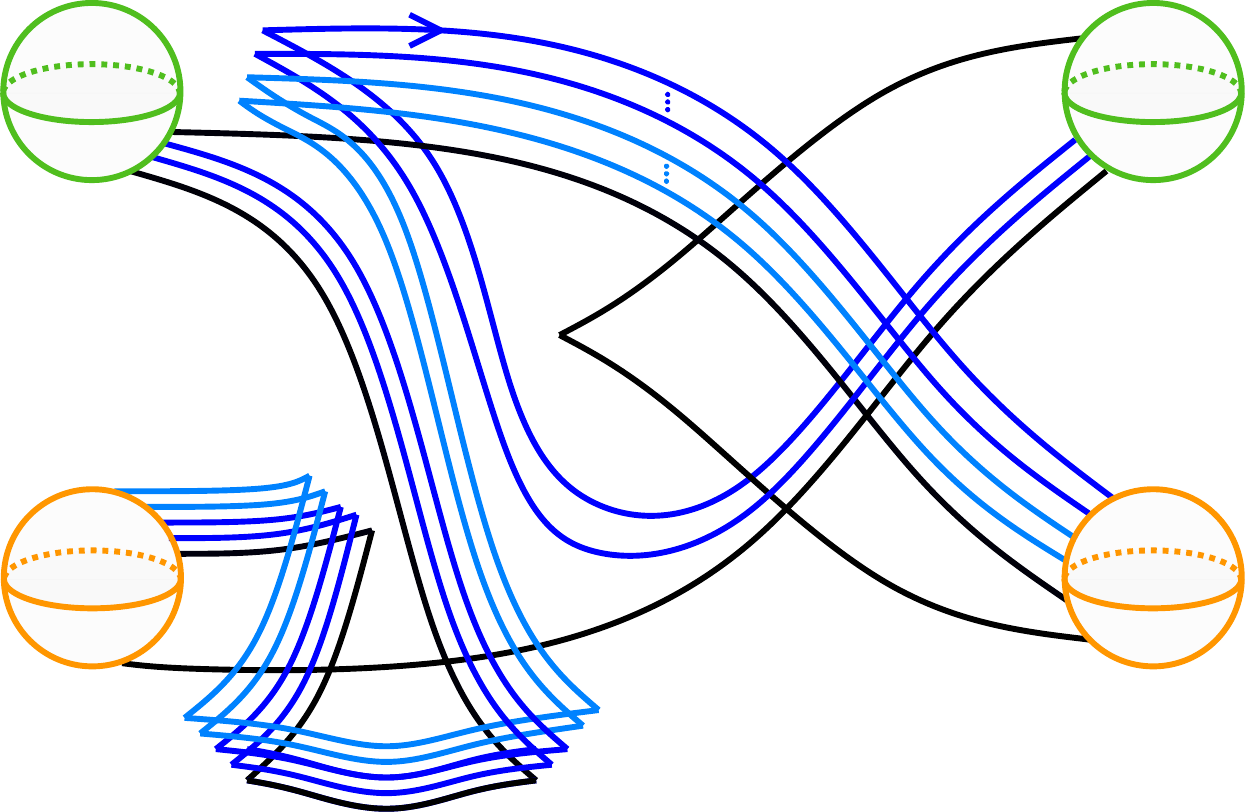}\label{fig:tb_2_b}}
	\caption{The steps to obtain the front projection of $\Lambda_{(a,b)}$ for $a<0$, $b>0$ and $a<b$, where the darker strands represent $a$ parallel strands and the lighter strands represent $r=b-a$ parallel strands.}
	\label{fig:tb_2}
\end{figure}

As an example, we can now compute the homology of the boundary of $\mathcal{W}_{T^2, c}$ for when $c$ is $\{(1,0), (a,b)\}$ or $\{(1,0), (a_1,b_1), (a_2, b_2)\}$. Note that for the case of complements of smoothed toric divisors we can always find an $SL(2, \Z)$ transformation taking one node to the node with slope $(1,0)$. The following two calculations follow directly from Lemma~\ref{tb} and the long exact sequence given in ~\ref{eqn:hom_les}.
\begin{prop}
 The homology of the boundary of the Weinstein $4$-manifold $\mathcal{W}_{T^2, \{(1,0), (a,b)\}}$ is given by: $$H_1(\partial \mathcal{W}_{T^2,\{(1,0),(a,b)\}}; \Z)=\coker(\phi)~\text{ and}$$ 
$$H_2(\partial \mathcal{W}_{T^2,\{(1,0),(a,b)}\}; \Z)=\ker(\phi) ~\text{where}$$ 
$$\phi=\begin{pmatrix}
0&0&0&1&a\\
0&0&0&0&b\\
0&0&0&0&-b\\
1&0&0&-1&-a\\
a&b&-b&-a&tb(\Lambda_{(a,b})-1\\

\end{pmatrix}.$$

\end{prop}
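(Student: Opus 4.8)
The plan is to assemble the matrix $\phi$ directly from the Weinstein handlebody diagram produced by the algorithm of Section~\ref{s:procedure} applied to the collection of curves $c = \{(1,0),(a,b)\}$, and then invoke the general homology computation recalled in equation~(\ref{eqn:hom_les}). First I would recall that the Gompf diagram for $D^*T^2$ has two $1$-handles (call them $h_1,h_2$, corresponding to the $B$ and $C$ regions) and one $2$-handle whose attaching sphere $\Lambda_0$ has signed intersection $0$ with each of $h_1$ and $h_2$. Attaching a $2$-handle along the co-normal lift $\Lambda_{(1,0)}$ contributes a $2$-handle whose attaching sphere passes through $h_1$ algebraically once and through $h_2$ zero times; attaching along $\Lambda_{(a,b)}$ contributes one passing through $h_1$ algebraically $a$ times and through $h_2$ algebraically $b$ times (this is exactly the count established in the proof of Lemma~\ref{slopehoms}).

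To get the homology of the \emph{boundary} rather than the manifold itself, I follow the recipe stated after equation~(\ref{eqn:hom_les}): replace each $1$-handle by a $0$-framed $2$-handle attached along an unknot (closing up the attaching arcs as in Figure~\ref{fig:handleswap}), obtaining a simply connected $4$-manifold $Z$ with $\partial Z = \partial\mathcal{W}_{T^2,\{(1,0),(a,b)\}}$ whose $H_2$ is free on the classes $[\Sigma_i]$ glued to the cores. The map $\phi_1 : H_2(Z)\to H_2(Z,\partial Z)$ is then the linking matrix of the resulting $2$-handle link, where the $(i,i)$ entry is the framing and the $(i,j)$ entry for $i\neq j$ is the linking number. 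The five $2$-handles in $Z$ are: the two "new" ones replacing $h_1,h_2$ (with framing $0$), the original $\Lambda_0$ of $D^*T^2$ (framing $0$, since its $tb$ is $1$ in the Gompf diagram), and the handles along $\Lambda_{(1,0)}$ and $\Lambda_{(a,b)}$. The off-diagonal entries are the signed counts of how the attaching spheres of the $\Lambda$-handles thread the meridians of the $1$-handles-turned-$2$-handles: $\Lambda_{(1,0)}$ links the $h_1$-meridian once and the $h_2$-meridian zero times; $\Lambda_{(a,b)}$ links them $a$ and $b$ times respectively; and $\Lambda_0$ links both zero times. The linking number of $\Lambda_{(1,0)}$ with $\Lambda_{(a,b)}$ is $-a$ (from the satellite picture, accounting for orientations and the sign conventions for crossings in the Gompf front), the linking of each with $\Lambda_0$ is the corresponding $\pm a$, $\pm b$, and the diagonal entry for $\Lambda_{(1,0)}$ is $tb(\Lambda_{(1,0)})-1 = -1$ by Lemma~\ref{tb} with $(a,b)=(1,0)$, while the diagonal entry for $\Lambda_{(a,b)}$ is $tb(\Lambda_{(a,b)})-1$. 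Reading these into the $5\times 5$ matrix (ordering the generators: $h_1$-meridian class, two copies tied to the $\Lambda_0$ structure, $\Lambda_{(1,0)}$, $\Lambda_{(a,b)}$) reproduces exactly the displayed $\phi$. Then equation~(\ref{eqn:hom_les}) gives $H_1(\partial\mathcal{W};\Z)=\operatorname{coker}\phi_2 = \operatorname{coker}\phi$ (since $\phi_2$ is the cokernel projection and $H_2(Z,\partial Z)\cong \coker\phi_1$ up to identification) and $H_2(\partial\mathcal{W};\Z) = \ker\phi_1 = \ker\phi$.

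The main obstacle I anticipate is purely bookkeeping: getting all the \emph{signs} and the \emph{ordering} of the five generators right so that the linking matrix comes out in precisely the displayed form, including the apparently redundant-looking rows/columns (the third row being $-1$ times pieces of the second, reflecting the way the $B$ and $C$ regions and the doubled $\Lambda_0$ structure interact). This requires carefully tracking orientations through the satellite construction of Step~(\ref{step:satellite}), the effect of Gompf move $6$ when $a<0$, and the conventions of Figure~\ref{curvecorner}; I would do this by drawing the explicit front projection for $\mathcal{W}_{T^2,\{(1,0),(a,b)\}}$ (as in Figure~\ref{fig:tb_1} or~\ref{fig:tb_2}), labelling each crossing, and computing each linking number as a signed crossing count, then verifying the diagonal entries against Lemma~\ref{tb}. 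The rest — plugging into the long exact sequence — is immediate from the general framework already set up in this section.
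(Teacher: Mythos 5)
Your proposal follows exactly the route the paper intends: the paper gives no separate argument for this proposition beyond asserting that it "follows directly from Lemma~\ref{tb} and the long exact sequence given in~(\ref{eqn:hom_les})," and your plan — replace the two $1$-handles by $0$-framed unknotted $2$-handles, read off the $5\times5$ linking matrix of the resulting link (with diagonal framings $0,0,0,-1,tb(\Lambda_{(a,b)})-1$ and off-diagonal entries from the algebraic passes through the handles and the pairwise linking numbers), and feed it into the exact sequence — is precisely that computation, correctly set up. The only caveat is the sign/ordering bookkeeping you already flag, which is indeed where all the remaining work lies.
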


\begin{prop}\label{homology_boundary}
The homology of the boundary of the Weinstein $4$-manifold $\mathcal{W}_{T^2,\{(1,0),(a_1,b_1),(a_2,b_2)\}}$ is given by: 
 $$H_1(\partial \mathcal{W}_{T^2,\{(1,0),(a_1,b_1),(a_2,b_2)\}}; \Z)=\coker(\phi), ~\text{and}$$ $$H_2(\partial \mathcal{W}_{T^2,\{(1,0),(a_1,b_1),(a_2,b_2)\}}; \Z)=\ker(\phi), ~\text{ where}$$ 
$$\phi=\begin{pmatrix}
0&0&0&1&a_1&a_2\\
0&0&0&0&b_1&b_2\\
0&0&0&0&-b_1&-b_2\\
1&0&0&-1&-a_1&-a_2\\
a_1&b_1&-b_1&-a_1&tb(\Lambda_{(a_1,b_1})-1&lk(\Lambda_{(a_1,b_1)},\Lambda_{(a_2,b_2)})\\
a_2&b_2&-b_2&-a_2&lk(\Lambda_{(a_1,b_1)},\Lambda_{(a_2,b_2)})&tb(\Lambda_{(a_2,b_2})-1\\
\end{pmatrix}.$$
\end{prop}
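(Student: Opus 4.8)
\textbf{Proof proposal for Proposition~\ref{homology_boundary}.}

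The plan is to follow the general recipe for computing the homology of the boundary of a Weinstein $4$-manifold from a handlebody diagram, as set up in the preceding discussion (the long exact sequence~\eqref{eqn:hom_les} and the ``$1$-handle swap'' construction of Figure~\ref{fig:handleswap}), specialized to the diagram produced by the algorithm of Section~\ref{s:procedure} applied to the three curves $(1,0), (a_1,b_1), (a_2,b_2)$ on $T^2$. First I would record the handle structure of $\mathcal{W}_{T^2,\{(1,0),(a_1,b_1),(a_2,b_2)\}}$: one $0$-handle, two $1$-handles coming from $D^*T^2$ (call their cores $h_a$ and $h_b$), one $2$-handle $\Lambda_0$ from the Gompf diagram of $D^*T^2$ with signed intersection $0$ against each $1$-handle, and three further $2$-handles $\Lambda_{(1,0)}, \Lambda_{(a_1,b_1)}, \Lambda_{(a_2,b_2)}$ attached along the co-normal lifts. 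By the construction in Section~\ref{s:procedure} (as used in the proof of Lemma~\ref{slopehoms}), $\Lambda_{(a_i,b_i)}$ intersects $h_a$ algebraically $a_i$ times and $h_b$ algebraically $b_i$ times, and $\Lambda_{(1,0)}$ intersects $h_a$ once and $h_b$ zero times.

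Next I would form the closed-up $4$-manifold $Z$ by replacing each of the two $1$-handles with a $0$-framed $2$-handle along an unknot and closing off the strands of the attaching link through that $1$-handle canonically, as in Figure~\ref{fig:handleswap}. Then $Z$ is simply connected with $\partial Z = \partial \mathcal{W}$, and $H_2(Z;\Z)$ is free on the six generators $[\Sigma_{u_a}], [\Sigma_{u_b}], [\Sigma_0], [\Sigma_{(1,0)}], [\Sigma_{(a_1,b_1)}], [\Sigma_{(a_2,b_2)}]$ obtained by capping Seifert surfaces with cores. The map $\phi_1\colon H_2(Z)\to H_2(Z,\partial Z)$ in~\eqref{eqn:hom_les} is given in these bases by the $6\times 6$ matrix whose $(i,i)$ entry is the framing $n_i$ of $\Lambda_i$ and whose $(i,j)$ entry for $i\neq j$ is $\mathrm{lk}(\Lambda_i,\Lambda_j)$. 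For the two new unknots the framing is $0$ and their linking with everything else is exactly the intersection data above; for $\Lambda_0$ the framing is $tb(\Lambda_0)-1$, but in the standard Gompf diagram for $D^*T^2$ one has $tb(\Lambda_0)=1$ so $\Lambda_0$ contributes a $-1$ on the diagonal, and $\mathrm{lk}(\Lambda_0,\Lambda_{(a_i,b_i)})=0$ since $\Lambda_0$ bounds in the complement of the co-normal lifts away from the $1$-handles; for $\Lambda_{(1,0)}$ the framing is $tb(\Lambda_{(1,0)})-1 = -1$ (using Lemma~\ref{tb} with $(a,b)=(1,0)$, giving $tb=0$) and $\mathrm{lk}(\Lambda_{(1,0)},\Lambda_{(a_i,b_i)}) = $ the appropriate intersection/linking count, which one reads off from the satellite picture; and for $\Lambda_{(a_i,b_i)}$ the diagonal entry is $tb(\Lambda_{(a_i,b_i)})-1$ from Lemma~\ref{tb} and the off-diagonal $\Lambda_{(a_1,b_1)}$--$\Lambda_{(a_2,b_2)}$ entry is their mutual linking number in the diagram. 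Assembling these entries and ordering the basis as $(u_a, u_b, \Lambda_0, \Lambda_{(1,0)}, \Lambda_{(a_1,b_1)}, \Lambda_{(a_2,b_2)})$ reproduces exactly the stated matrix $\phi$. Finally, since $\phi_2$ in~\eqref{eqn:hom_les} sends $[D_i]\mapsto[\partial D_i]$ and the sequence is exact, $H_2(\partial\mathcal{W}) = \ker\phi_1 = \ker\phi$ and $H_1(\partial\mathcal{W}) = \mathrm{coker}\,\phi_1 = \mathrm{coker}\,\phi$, as claimed.

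The main obstacle I anticipate is pinning down the off-diagonal linking-number entries precisely — specifically $\mathrm{lk}(\Lambda_{(1,0)},\Lambda_{(a_i,b_i)})$ and $\mathrm{lk}(\Lambda_{(a_1,b_1)},\Lambda_{(a_2,b_2)})$ — and verifying that the first four rows/columns (the ones involving the two $1$-handle unknots and $\Lambda_0$) really are $0,0,0,1,a_i$ type data with the signs as written. This requires a careful bookkeeping of the satellite operation of step~\eqref{step:satellite} near the two $1$-handles, tracking how the $a_i$ and $b_i$ parallel strands of $\Lambda_{(a_i,b_i)}$ thread the $1$-handles and how $\Lambda_{(1,0)}$, which only passes through one $1$-handle, clasps the others; the conventions of Figure~\ref{curvecorner} and the isotopy conventions fixed in step~\eqref{step:isotopy} must be invoked to make the signs unambiguous. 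Once one trusts that the diagram is exactly the one the algorithm outputs, and that $tb(\Lambda_0)=1$ in the standard Gompf presentation of $D^*T^2$, the rest is the routine bilinear-algebra assembly of the linking matrix and a direct appeal to~\eqref{eqn:hom_les}. I would present the proof by first displaying the handle data, then the matrix, then citing Lemma~\ref{tb} for the diagonal and~\eqref{eqn:hom_les} for the conclusion, leaving the linking-number verification as the one computation carried out in detail.
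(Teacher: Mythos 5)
Your overall strategy is the one the paper intends: the paper gives no argument beyond the remark that the computation ``follows directly from Lemma~\ref{tb} and the long exact sequence \eqref{eqn:hom_les}'', and your scaffolding --- swap the two $1$-handles for $0$-framed unknots as in Figure~\ref{fig:handleswap}, assemble the $6\times 6$ framing/linking matrix in the basis $(u_a,u_b,\Lambda_0,\Lambda_{(1,0)},\Lambda_{(a_1,b_1)},\Lambda_{(a_2,b_2)})$, and extract $H_1=\coker\phi$ and $H_2=\ker\phi$ from exactness --- is exactly right. The problem is in the entries. First, you assert $\mathrm{lk}(\Lambda_0,\Lambda_{(a_i,b_i)})=0$ on the grounds that $\Lambda_0$ ``bounds in the complement of the co-normal lifts away from the $1$-handles''; but the third row and column of the stated matrix record this linking number as $-b_i$, which is nonzero whenever $b_i\neq 0$. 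The linking between $\Lambda_0$ and the satellited conormal lifts is concentrated precisely in the regions near the $1$-handles where $\Lambda_0$ and its cusped-off companion $L_0$ differ, and the strands of $\Lambda_{(a_i,b_i)}$ threading the second $1$-handle each contribute there; your heuristic discards exactly the part of the diagram that produces the answer. Second, you state $tb(\Lambda_0)=1$ and then conclude that $\Lambda_0$ ``contributes a $-1$ on the diagonal''; with $tb(\Lambda_0)=1$ the framing is $tb-1=0$, which is what the $(3,3)$ entry of $\phi$ says, so your arithmetic contradicts your own input. As written, your matrix would therefore differ from the one in the statement in the entire third row and column.

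Beyond these two errors, the entries $\mathrm{lk}(\Lambda_{(1,0)},\Lambda_{(a_i,b_i)})=-a_i$ in the fourth row are the substantive diagrammatic computation, and you explicitly defer them (``which one reads off from the satellite picture''). Since every other ingredient --- the intersection numbers $a_i$, $b_i$ with the belt spheres (Lemma~\ref{slopehoms}), the diagonal entries $tb(\Lambda_{(a_i,b_i)})-1$ (Lemma~\ref{tb}), and the exact sequence \eqref{eqn:hom_les} --- is quoted rather than computed, the off-diagonal linking numbers involving $\Lambda_0$ and $\Lambda_{(1,0)}$ are the only content of the proposition that is not a formal consequence of earlier results, and a complete proof must carry out that bookkeeping: track the parallel strands through each $1$-handle using the conventions of Figure~\ref{curvecorner} and the clockwise isotopy convention of step~(\ref{step:isotopy}) to pin down the signs. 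So: right method, but the one nontrivial verification is either skipped or carried out incorrectly.
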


\subsection{Symplectic invariants} \label{s:SH}
Having an explicit Weinstein diagram of a Weinstein $4$-manifold $X$ is also useful because one can compute invariants of the Weinstein manifold from the Legendrian attaching links $\Lambda$ of $X$. One such invariant is the symplectic homology of $X$, denoted by $S\mathbf{H}(X)$. {Symplectic homology is obtained from a chain complex whose generators are critical points of a Morse function on $X$ that is increasing towards the boundary $\partial(X)$ or positively parametrized closed Reeb orbits of $\partial X$, and the differential consists of certain pseudoholomorphic cylinders.} Symplectic homology is an important invariant with applications to the Weinstein conjecture, and homological mirror symmetry. The symplectic homology of log Calabi Yau surfaces, and of complements of symplectic divisors has been studied in~\cite{nguyen2015complement, ganatra2018log,Pascaleff_2019, diogo_lisi, ganatra_pomerleano_2020}. It is has been conjectured that the symplectic homology for complements of smoothed total toric divisors of toric $4$-manifolds is non-vanishing. In particular, Gross, Hacking, and Keel conjecture that for a log Calabi Yau manifold $U$ with mirror variety $U^{v}$, the ring of regular functions of $U$ is contained in the symplectic cohomology of $U$~\cite{GrossHackingKeel}. In Proposition~\ref{prop:vanishingSH} we prove that symplectic homology is non-vanishing on the complements of smoothed total toric divisors in toric $4$-manifolds. 

We now provide the relevant definitions and background on the Chekanov Eliashberg dga and graded normal rulings of Legendrians, as they are tools that we use in the proof of Proposition~\ref{prop:vanishingSH}. Let $\Lambda$ denote a Legendrian link of $l$ components in $\#^k(S^1\times S^2)$. The \emph{Chekanov Eliashberg dga}, denoted by $(\mathcal{A}(\Lambda), \partial_{\Lambda})$, is a Legendrian link invariant and can be defined over $\Z [H(\Lambda)]=\Z[t_1^{\pm1}, \ldots, t_l^{\pm1}]$, see~\cite{EkholmNg} for more details. The generators of the Chekanov Eliashberg dga are Reeb chords, and the differential counts pseudoholomorphic disks asymptotic to the Reeb chords in the symplectization of $\#^k(S^1\times S^2)$. A \emph{graded augmentation} $\epsilon$ of $(\mathcal{A}(\Lambda), \partial_{\Lambda})$ is a dga map $\epsilon: \mathcal{A}(\Lambda) \rightarrow \Z$ such that, $\epsilon \circ \partial_{\Lambda}=0$ and $\epsilon(t_1\cdots t_l)=(-1)^{l}$.  Leverson in Theorem $1.3$ of~\cite{Leverson} proved that the existence of a graded augmentation of a Legendrian link $\Lambda \sse (\#^k(S^1\times S^2), \xi_{std})$ corresponds to the existence of a \emph{graded normal ruling} of the front projection of $\Lambda$, providing an easier method to check for the existence of a graded augmentation.

\begin{definition}
Let $\Lambda \subset  (\#^k(S^1\times S^2), \xi_{std})$ be a Legendrian link. A \emph{ruling} of $\Lambda$ is a decomposition of the front projection of $\Lambda$ into pairs of paths such that any given pair must satisfy the following conditions. Any two paired paths must start at a common left cusp, or common $1$-handle, and end at a common right cups or common $1$-handle; the two paths can have no other intersections, and must bound a topological disk whose boundary is smooth except for the points where the pairs of paths meet at cusps or $1$-handles, or at crossings where the two paths can switch (such crossings are then reffered to as switches). A \emph{normal ruling} is a ruling such that near a switch the rulings are as shown in Figure~\ref{fig:normal_ruling_front}. A \emph{graded ruling} is one such that all switches occur at crossings where the two strands have the same Maslov potential. The \emph{Maslov potential} of a Legendrian link is a locally constant map that assigns to each strand in the front of $\Lambda$ an integer such that near a cusp the Maslov potential increases or decreases by $1$.
\end{definition}

\begin{figure}
	\centering
	\includegraphics[width=8cm]{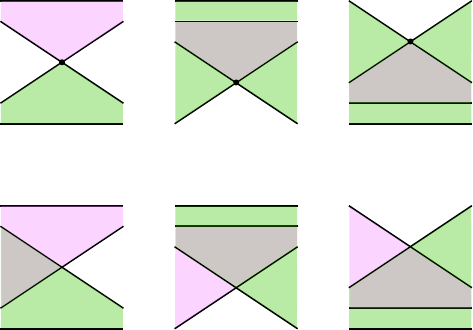}
	\caption{All possible configurations of a normal ruling near a crossing are given by $(a)-(f)$ and the vertical reflections of $(d)-(f)$.}
	\label{fig:normal_ruling_front}
\end{figure}     

\begin{prop}\label{prop:vanishingSH}
Let $F$ be an orientable surface and let $X$ be any Weinstein $4$-manifold constructed by attaching $1$ or $2$-handles to $D^*F$ and taking its cylindrical completion. Then $X$ has nonvanishing symplectic homology.
\end{prop}

\begin{proof}
Let $X_{\Lambda}$ be a Weinstein $4$-manifold constructed by attaching $2$-handles along a Legendrian link $\Lambda$ to the subcritical Weinstein domain with boundary $\#^k(S^1\times S^2)$ and taking its cylindrical completion. The Legendrian surgery formula states that $S\mathbf{H}(X_{\Lambda})=LC\mathbf{H}^{H_0}(\Lambda)$ (Corollary $5.7$ in~\cite{BEE}) where $LC\mathbf{H}^{H_0}(\Lambda)$ is the homology of the Hochschild complex generated by cyclically composable non-empty words of Reeb chords. See Section $4$ of~\cite{BEE} for the definition of $LC\mathbf{H}^{H_0}(\Lambda)$, and sections $5-6$ of \cite{BEE} as well as the proofs of Theorems $1.2$ and $1.3$ of~\cite{Ekholm} for the proof of this surgery formula. By Theorem $1.6$ of~\cite{orsola_thesis} (a generalization of Corollary $1.4$ in~\cite{Leverson}), if any sublink of $\Lambda$ has a graded augmentation then the complex $LCC^{H_0}(\Lambda)$ has at least one cycle that is not in the image of the differential. Therefore, the symplectic homology of $X_{\Lambda}$ is non-vanishing. 

Let $X$ denote any Weinstein $4$-manifold $X$ constructed by attaching $1$ or $2$-handles to $D^*F$ and taking its cylindrical completion. The Gompf handlebody of $D^*F$ has a single $2$-handle attached along a Legendrian that we denote by $\Lambda_0$. Note that by construction $\Lambda_0$ is a sublink of the Weinstein handlebody diagram of $X$. Figure~\ref{fig:normal_ruling} illustrates a graded normal ruling on the front of $\Lambda_0$ which pairs all strands that go through the same $1$-handle. Therefore, $\Lambda_0$ has a graded augmentation by~\cite{Leverson}. Then, by Theorem $1.6$ of~\cite{orsola_thesis}, we know that the symplectic homology of $X$ is non-vanishing.
\end{proof}

\begin{figure}
	\centering
	\includegraphics[width=6cm]{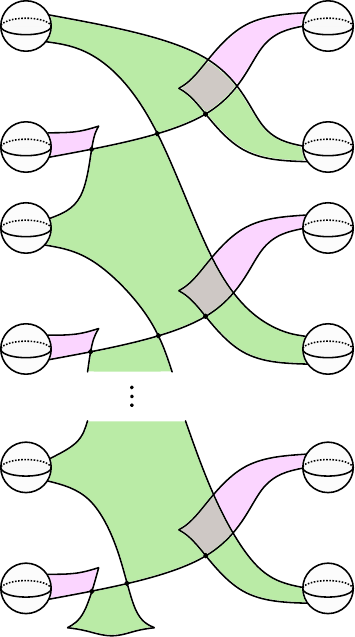}
	\caption{A graded normal ruling on a front projection of the Weinstein handlebody diagram of $D^*F$ where $F$ is an orientable surface of genus $g$.}
	\label{fig:normal_ruling}
\end{figure} 

A flexible Weinstein manifold has vanishing symplectic homology by~\cite{Cielibak} so we obtain the following corollary.

\begin{cor}\label{cor:flexible}
Any Weinstein $4$-manifold $X$ constructed by attaching $1$- or $2$-handles to $T^*F$ for $i=1,\ldots, k$ for any orientable closed surface $F$, is not a flexible Weinstein manifold.
\end{cor}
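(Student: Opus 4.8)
The statement to prove is Corollary~\ref{cor:flexible}: any Weinstein $4$-manifold constructed by attaching $1$- or $2$-handles to $D^*F$ (for $F$ an orientable surface) is not a flexible Weinstein manifold.

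The plan is to deduce this immediately from Proposition~\ref{prop:vanishingSH} together with the standard fact that flexible Weinstein manifolds have vanishing symplectic homology. First I would recall the key input: by Proposition~\ref{prop:vanishingSH}, any Weinstein $4$-manifold $X$ obtained by attaching $1$- or $2$-handles to $T^*F$ for an orientable surface $F$ has nonvanishing symplectic homology, $SH_*(X) \neq 0$. (Note $D^*F$ and $T^*F$ are used interchangeably here, the disk bundle being the compact Weinstein domain whose completion is $T^*F$.) Second, I would invoke the theorem of Bourgeois--Ekholm--Eliashberg (or the formulation in Cieliebak--Eliashberg's book) that a flexible Weinstein domain has vanishing symplectic homology: $SH_*(W) = 0$ whenever $W$ is flexible. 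This is a consequence of the fact that the Chekanov--Eliashberg DGA of a loose Legendrian (or more precisely the attaching link of a flexible Weinstein structure) has trivial homology after the surgery formula, forcing $SH_*$ to vanish.

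The proof is then a one-line contrapositive: if $X$ were flexible, then $SH_*(X) = 0$, contradicting Proposition~\ref{prop:vanishingSH}. Hence $X$ is not flexible. I would phrase it cleanly as: ``Suppose toward a contradiction that such an $X$ admits a flexible Weinstein structure. By \cite[...]{CieliebakEliashberg} (or \cite{BEE}), $SH_*(X)=0$. But Proposition~\ref{prop:vanishingSH} asserts $SH_*(X) \neq 0$, a contradiction. Therefore $X$ is not a flexible Weinstein manifold.''

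Honestly there is no real obstacle here — the corollary is a formal consequence of the preceding proposition, and the only ``work'' is citing the correct reference for the vanishing of symplectic homology on flexible domains (and being careful that ``flexible'' is used in the sense where all index-$n$ attaching spheres are loose, which is the setting where the vanishing theorem applies). If one wanted to be slightly more self-contained, one could note that flexibility would force the sublink $\Lambda_0$ (the attaching sphere of the $2$-handle of $T^*F$ in the Gompf diagram, which is a genuine sublink of the attaching link of $X$) to be loose, hence to have an acyclic Chekanov--Eliashberg DGA with no augmentations — but this contradicts the explicit graded normal ruling exhibited in the proof of Proposition~\ref{prop:vanishingSH} (Figure~\ref{fig:normal_ruling}), which produces an augmentation of $\Lambda_0$. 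This gives an alternative, more hands-on contradiction at the level of Legendrian invariants rather than symplectic homology, but the symplectic-homology route is shorter and is the one I would write up.
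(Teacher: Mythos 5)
Your proof is correct and is exactly the paper's argument: the paper derives the corollary in one line from Proposition~\ref{prop:vanishingSH} together with the fact that flexible Weinstein manifolds have vanishing symplectic homology. The extra remarks about citing the right reference and the alternative argument via looseness of $\Lambda_0$ are fine but not needed.
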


\section{Examples from the Algorithm}\label{section examples}

We now apply the algorithm to obtain the Weinstein handlebody diagrams of $\mathcal{W}_{F,c}$ for different sets of co-oriented curves $c$ on surfaces $F$. The majority of the Weinstein $4$-manifolds we consider are Weinstein homotopic to complements of smoothed toric divisors.

\subsection{Smoothing one node}

We begin with the simplest smoothing. Consider the case where we smooth the total toric divisor of any toric 4-manifold in one node. A detailed treatment of this example is found in section 4 of \cite{ACGMMSW1}. By Theorem \ref{thm:toricWein} the complement of the smoothed total toric divisor is Weinstein homotopic to $\mathcal{W}_{T^2, (a,b)}$. By Propositions~\ref{propknodes} or \ref{prop:1handles}, the symplectomorphism type of the completion and Weinstein homotopy type of $\mathcal{W}_{T^2,(a,b)}$ is independent of the choice of slope $(a,b)$. Therefore, it suffices to consider $\mathcal{W}_{(1,0)}$. Following the algorithm of section~\ref{sec:curves}, its Weinstein handle diagram is shown in Figure \ref{1CP2start}. Through a sequence of Reidemeister moves and handle slides we obtain the simplified diagram in Figure \ref{1CP2simple}. {The Weinstein handle diagram of $\mathcal{W}_{T^2, (1,0)}$ shown in Figure~\ref{1CP2simple} shows that this is the self-plumbing of $D^*S^2$.} It is also a handlebody diagram of the affine variety $X=\{ (x,y,z)\in \C^3~ |~ x(xy^2-1)+z^2=0\}$ according to~\cite[Figure 56]{CM}. The homology of $\mathcal{W}_{T^2,(1,0)}$ can be computed from the Weinstein handle diagram or the slopes of the attaching spheres, as in Lemma \ref{slopehoms}. In particular,
\begin{align*}
\pi_1(\mathcal{W}_{T^2,(1,0)}); \Z) &= \Z,\\
H_1(\mathcal{W}_{T^2,(1,0)}); \Z) &= \Z, ~\text{and}\\
H_2(\mathcal{W}_{T^2,(1,0)}); \Z) &= \Z.
\end{align*}

\begin{figure}
	\begin{center}
		\includegraphics[width=7cm]{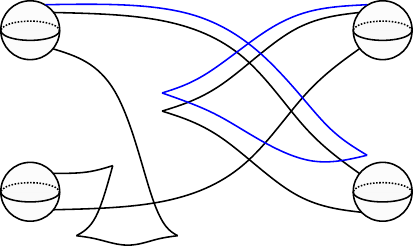}
		\caption{The Weinstein handlebody diagram of the complement of the total toric divisor smoothed in one node of any  toric manifold, that is, $\mathcal{W}_{T^2,(1,0)}$.}
		\label{1CP2start}
	\end{center}
\end{figure}

\begin{figure}
	\begin{center}
		\includegraphics[width=7 cm]{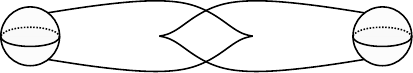}
		\caption{A simplified diagram of the complement of the toric divisor smoothed in one node of any  toric manifold.}
		\label{1CP2simple}
	\end{center}
\end{figure}

\subsection{Smoothing adjacent nodes in a blow up}

\begin{theorem}\label{thm:blowup_example}
  The Weinstein handlebody diagram of the complement of the total toric divisor smoothed in adjacent nodes of a blow up of any toric manifold is a standard max-tb positive Legendrian trefoil.
\end{theorem}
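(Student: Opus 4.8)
The plan is to reduce the statement to the case of two curves in $T^2$ of slopes $(1,0)$ and $(0,1)$, run the algorithm of Section~\ref{s:procedure} on them, and then simplify the output by Legendrian Kirby calculus.

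First I would pin down the combinatorics. By ``adjacent nodes of a blow up'' we mean the two vertices $V_1, V_2$ at the ends of the new edge created by the blow up. After possibly shrinking the size of the blow up (which alters only an arbitrarily small neighborhood of the toric divisor, hence not the complement of such a neighborhood), Proposition~\ref{prop centered blow} shows the polytope is $\{V_1,V_2\}$-centered, and an $SL(2,\Z)$ transformation brings the blown-up corner to the standard one. Computing $s(V_i)$ from the inward normals of the three relevant edges --- the two edges with normals $u,v$ of the original corner and the new edge, whose normal is $u+v$ --- gives $s(V_1) = \pm v$ and $s(V_2) = \pm u$, so $\{s(V_1),s(V_2)\}$ is, up to reordering and sign, the standard basis $\{(1,0),(0,1)\}$.

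Next, by Theorem~\ref{thm:toricWein} the complement of the $\{V_1,V_2\}$-smoothed divisor is Weinstein homotopic to $\mathcal{W}_{T^2,\{s(V_1),s(V_2)\}}$, and by Proposition~\ref{prop:1handles} the $SL(2,\Z)$ change of basis relating $\{s(V_1),s(V_2)\}$ to $\{(1,0),(0,1)\}$ is realized by $1$-handle slides; hence $\mathcal{W}_{T^2,\{s(V_1),s(V_2)\}}$ is Weinstein homotopic to $\mathcal{W}_{T^2,\{(1,0),(0,1)\}}$ (Corollary~\ref{corollary det=1} gives the same conclusion at the level of completions, since the determinant is $\pm1$). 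I would then run steps~(\ref{step:inwards})--(\ref{step:satellite}) of the algorithm on $\gamma_{(1,0)}$ and $\gamma_{(0,1)}$: these two curves need only a trivial isotopy to sit in the cross-region $A\cup B\cup C$ of Figure~\ref{torussquare} with the clockwise co-orientation convention, and unfolding to $J^1(S^1)$ and satelliting onto the Gompf diagram $G$ of $T^*T^2$ (as in Figures~\ref{vhcurves} and~\ref{fig:basics}) produces the Weinstein handlebody diagram on the left of Figure~\ref{alg_blowup}.

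Finally I would carry out step~(\ref{step:moves1}). Since $\gamma_{(1,0)}$ (resp.\ $\gamma_{(0,1)}$) runs over exactly one of the two $1$-handles of $D^*T^2$ exactly once, while $\Lambda_0$ runs over each $1$-handle twice (cf.\ the proof of Lemma~\ref{slopehoms} and of Theorem~\ref{thm:cot}), one slides $\Lambda_0$ over the $2$-handle along $\Lambda_{(1,0)}$ to remove its passages over the first $1$-handle and cancels that $1$-handle/$2$-handle pair, then does the same at the second $1$-handle using $\Lambda_{(0,1)}$. The result is a diagram with a single $0$-handle and a single $2$-handle attached along a Legendrian knot in $S^3$; a short sequence of Legendrian Reidemeister moves identifies it with the standard (maximal Thurston--Bennequin, $tb = 1$) Legendrian trefoil, attached with framing $tb-1 = 0$, which is Figure~\ref{2CP2bend}. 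The main obstacle is precisely this last bookkeeping: the satellite construction produces a moderately complicated front, and one must track carefully how the strands of $\Lambda_0$ are rerouted by the two handle slides and cancellations so that the final knot is exactly the standard Legendrian trefoil, and not merely some Legendrian representative of the trefoil knot type. Everything else is the formal assembly of results already established in the paper.
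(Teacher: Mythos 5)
Your proposal is correct and follows essentially the same route as the paper: identify the slopes at the two new vertices of the blown-up corner (the paper computes these as $\{(1,0),(0,-1)\}$, which agrees with your normalization to $\{(1,0),(0,1)\}$ up to the $1$-handle slides of Proposition~\ref{prop:1handles}), invoke Theorem~\ref{thm:toricWein} for a sufficiently small blow up so that Proposition~\ref{prop centered blow} gives centeredness, run the algorithm, and simplify by handle slides and cancellations to the standard maximal $tb$ Legendrian trefoil. The only cosmetic difference is that the paper applies the algorithm directly to the slopes $\{(1,0),(0,-1)\}$ rather than first normalizing them by an $SL(2,\Z)$ transformation.
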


\begin{proof}
As explained in Section \ref{section toric} any corner in a Delzant polytope is $SL(2,\mathbb{Z})$-equivalent to the standard cone (with inward normals $(0,1)$ and $(1,0)$). If we blow up any toric manifold at the fixed point that maps to the vertex of that cone, then we obtain two new fixed points and in the moment map image we obtain two new vertices $V_1$ and $V_2$. The two new corners have inward normals $(0,1),(1,1)$ and $(1,0)$. The size of the blow up does not change the inward normals. 
However, the size of the blow up is relevant in order to obtain $\{V_1,V_2\}$-centered polytope (see Proposition \ref{prop centered blow}). When the size of the blow up is small enough we can take the complement of the total toric divisor of the blown up manifold smoothed in these two  nodes and apply Theorem~\ref{thm:toricWein} to show that this complement is Weinstein homotopic to $\mathcal{W}_{T^2,\{(1,0), (0,-1)\}}$. We now use Theorem \ref{thm:cotangentdiagram} and apply our algorithm to obtain the Weinstein handlebody diagram shown in Figure \ref{2CP2bstart}. The first step is to isotope the curves with slopes $(1,0)$ and $(0,-1)$ in $T^2$ as shown in Figure~\ref{fig:blowup_torus} so that they lie in the region $A\cup B\cup C$ of Figure~\ref{torussquare}. Next, we map this annulus to the front projection of $J^1(S^1)$ and the corresponding front projections of the Legendrian curves $\Lambda_{(1,0)}$ and $\Lambda_{(0,-1)}$ are shown in Figure~\ref{blowupjet}. We satellite the $J^1(S^1)$ picture as described in Step $3$ of the algorithm in Section \ref{sec:curves}, and obtain the Weinstein handlebody shown in Figure~\ref{2CP2bstart}. After a sequence of Legendrian isotopies, handle slides, and cancellations shown in Figure~\ref{blowupkcalc}, we obtain the right handed trefoil as a Weinstein handle diagram of the complement of the smoothed total toric divisor.
\end{proof}

Note that the simplified Weinstein handlebody diagram of $\mathcal{W}_{T^2,\{(1,0), (0,-1)\}}$ coincides the simplified Weinstein handle diagram found to represent the affine variety $\{(x,y,z)\in \C^3~|~ xyz+x+z=1\}$ in~\cite[Section 4.1, Figure 36]{CM}. The homology of $\mathcal{W}_{T^2,\{(1,0), (0,-1)\}}$ can be computed from the Weinstein handle diagram. In particular,
\begin{align*}
\pi_1(\mathcal{W}_{T^2,\{(1,0), (0,-1)\}}; \Z) &= 0, \\
H_1(\mathcal{W}_{T^2,\{(1,0), (0,-1)\}}; \Z) &= 0,~\text{and} \\
H_2(\mathcal{W}_{T^2,\{(1,0), (0,-1)\}}; \Z) &= \Z \text{ with intersection form } [0].
\end{align*}

\begin{figure}
	\begin{center}
		\includegraphics[width=7 cm]{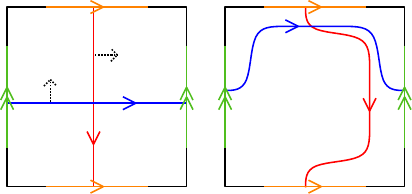}
		\caption{The curves with slopes $(1,0)$ and $(0,-1)$ in $T^2$ before and after smooth isotopy.}
		\label{fig:blowup_torus}
	\end{center}
\end{figure}

\begin{figure}
	\begin{center}
		\includegraphics[width=10cm]{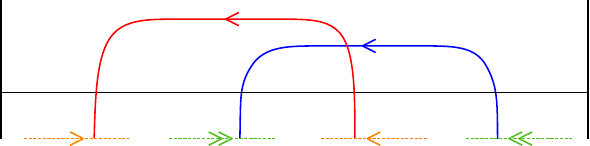}
		\caption{The Legendrian curves $\Lambda_{(1,0)}$ and $\Lambda_{(0,-1)}$ in $J^1(S^1)$.}
		\label{blowupjet}
	\end{center}
\end{figure}

\begin{figure}
	\begin{center}
		\includegraphics[width=7 cm]{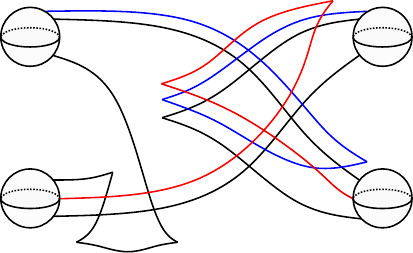}
		\caption{The Weinstein handlebody diagram of the complement of the total toric divisor smoothed in the two nodes that correspond to a blow up of any toric manifold, that is $\mathcal{W}_{T^2,\{(1,0),(0,-1)\}}$.}
		\label{2CP2bstart}
	\end{center}
\end{figure}

\begin{figure}
	\begin{center}
		\includegraphics[width=13 cm]{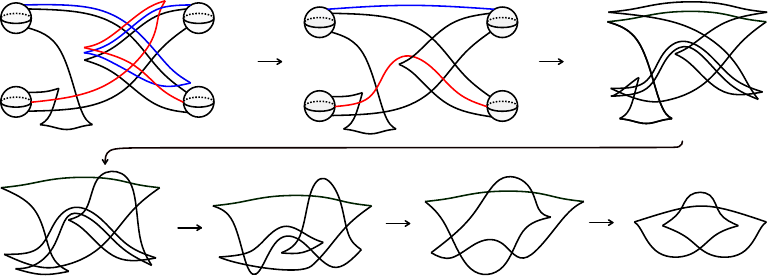}
		\caption{A series of Reidemeister moves, handle slides, and handle cancellations simplifying the Weinstein handle diagram of the complement of the total toric divisor of smoothed in the two nodes that correspond to a blow up of any toric manifold.}
		\label{blowupkcalc}
	\end{center}
\end{figure}

\subsection{Smoothing nodes in $\cpone\times \cpone$}

\begin{theorem}\label{thm:cp1_example}
  The complement of the total toric divisor in $(\cpone\times\cpone,\omega_{a,a})$ smoothed in opposite nodes is Weinstein homotopic to the cyclic plumbing of two disk cotangent bundles of spheres, see Figure \ref{CP1xCP1simple}.
\end{theorem}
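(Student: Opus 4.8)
The plan is to combine Theorem~\ref{thm:toricWein}, which turns the divisor complement into an explicit handle attachment on $D^*T^2$, with the diagrammatic algorithm of Theorem~\ref{thm:cotangentdiagram}, and then to simplify the resulting Gompf diagram by Legendrian Kirby calculus until it is recognizably the standard handlebody of the cyclic plumbing.

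First I would record the combinatorics. The moment polytope of $(\cpone\times\cpone,\omega_{a,a})$ is the square, which is monotone, so by Proposition~\ref{monotone=centered} it is centered with respect to all of its vertices; in particular it is $\{V,V'\}$-centered for any pair of opposite vertices, since the two associated rays both pass through the center of the square (this is exactly the observation of Example~\ref{example rectangle}, and is where the hypothesis $\omega_{a,a}$ rather than $\omega_{a,b}$ enters). Writing the square with inward edge normals $(\pm1,0),(0,\pm1)$, Lemma~\ref{lemma slope} gives $s(V)=(1,-1)$ and $s(V')=(-1,1)$ for a pair of opposite vertices. Hence, by Theorem~\ref{thm:toricWein}, the complement of the toric divisor smoothed at $\{V,V'\}$ is Weinstein homotopic to $\mathcal{W}_{T^2,\{(1,-1),(-1,1)\}}$; applying the $SL(2,\mathbb{Z})$ transformation $\begin{pmatrix}1&0\\1&1\end{pmatrix}$ together with Proposition~\ref{propknodes} (or, more strongly, Proposition~\ref{prop:1handles}) identifies this, up to Weinstein homotopy, with $\mathcal{W}_{T^2,\{(1,0),(-1,0)\}}$: the domain $D^*T^2$ with two $2$-handles attached along the co-normal lifts of a parallel but oppositely co-oriented pair of $(1,0)$-curves.

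Next I would run the procedure of Section~\ref{s:procedure} on this pair of curves. The curves are essentially already in the standard position of Figure~\ref{fig:basics}: isotoping them into the cross-shaped region and removing the co-normal bigon permitted by Remark~\ref{rk:isotopy} (the co-orientations are opposite) yields a clean $J^1(S^1)$ picture, which is satellited onto $\Lambda_0$ and its cusped-off partner in the Gompf diagram for $T^*T^2$ to give a Weinstein handlebody diagram in standard form. I would then simplify with Legendrian Reidemeister moves, Gompf moves, handle slides, and $1$-handle/$2$-handle cancellations (as in Figure~\ref{fig:standard_simplifications}); the expected endpoint is the diagram of Figure~\ref{CP1xCP1simple}, consisting of a single $1$-handle together with two $2$-handles along maximal-$tb$ Legendrian unknots linked so as to realize a double plumbing.

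Finally I would match this diagram with the cyclic plumbing of two copies of $D^*S^2$. A clean way is on the level of skeleta: the skeleton of $\mathcal{W}_{T^2,\{(1,0),(-1,0)\}}$ is the zero section $T^2$ together with the two Lagrangian core disks, and cutting $T^2$ along the two parallel curves produces two cylinders, each of which is capped off by the two core disks; this exhibits two Lagrangian spheres meeting transversally in two points, which is exactly the Weinstein skeleton of the cyclic plumbing. As a consistency check I would use Lemma~\ref{slopehoms} to compute $H_1(\mathcal{W}_{T^2,\{(1,0),(-1,0)\}};\mathbb{Z})\cong\mathbb{Z}$ and $H_2\cong\mathbb{Z}^2$ (the $\mathbb{Z}$ in $H_1$ being the loop around the cycle), agreeing with the cyclic plumbing, and compare the intersection form read off from the final diagram. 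The main obstacle is the middle step: carrying out the Kirby-calculus reduction of the algorithm's output and fixing a definite ``standard'' handlebody presentation of the cyclic plumbing so that the two diagrams can be matched move by move; the surrounding steps are bookkeeping with results already established in the paper.
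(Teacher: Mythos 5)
Your proposal follows essentially the same route as the paper's proof: use Theorem~\ref{thm:toricWein} (with the $a=b$ hypothesis supplying the centeredness of the opposite rays) to reduce to $\mathcal{W}_{T^2,\{(1,0),(-1,0)\}}$, run the procedure of Section~\ref{s:procedure}, and simplify by Legendrian Kirby calculus to the diagram of Figure~\ref{CP1xCP1simple}, from which the cyclic plumbing is read off via the linking of the two unknotted attaching spheres and the position of the $1$-handle. One small caution on your skeleton aside: the two spheres $A_i\cup D_1\cup D_2$ built from the cut-open torus share the two core disks rather than meeting transversally at two points, so that heuristic does not by itself replace the diagrammatic identification.
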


\begin{proof}
Consider $(\cpone \times \cpone, \omega_{a,b})$ with the toric action described in Example \ref{example cp1xcp1}. As explained in Example \ref{example rectangle}
we need the monotonicity condition $a=b$ in order to obtain a rectangle that is centered with respect to opposite nodes.
Using the $SL(2,\mathbb{Z})$-transformation $\begin{pmatrix}
   1 & 1 \\
    0 & 1
  \end{pmatrix}$, the moment polytope of $(\cpone \times \cpone,\omega_{a,a})$ is mapped to the polytope in Figure \ref{figure4}. 
Let us smooth the total toric divisor in the singular nodes that map under the moment map to the opposite vertices $V_1$ and $V_2$ depicted in Figure \ref{figure4}. The slopes of the vertices are $s(V_1)=(1,0)$ and $s(V_2)=(-1,0)$, being the difference of the two adjacent inward normals. Thus, by Theorem \ref{thm:toricWein}, the complement of the smoothed divisor is Weinstein homotopic to $\mathcal{W}_{T^2,\{(1,0),(-1,0)\}}$. We now use Theorem \ref{thm:cotangentdiagram} and apply our algorithm. We begin with the curves of slope $(1,0)$ and $(-1,0)$ in $T^2$ shown in Figure \ref{CP1xCP1leg} and isotoped so that we can then draw the corresponding Legendrians $\Lambda_{(1,0)}$ and $\Lambda_{(-1,0)}$ in $J^1(S^1)$ which are shown in Figure~\ref{CP1xCP1jet}. Them by satelling the $J^1(S^1)$ picture around as described in step $3$ of the algorithm, we obtain the Weinstein handlebody diagram shown in Figure \ref{CP1xCP1start}. We perform a series of Reidemeister moves, handle slides and, handle cancellations, as shown in Figure \ref{CP1xCP1moves}, and simplify the Weinstein handle diagram to Figure \ref{CP1xCP1simple}. The final diagram shows that the manifold we obtain by taking the complement of this total toric divisor is the cyclic plumbing of two disk cotangent bundles of spheres. The core Lagrangian spheres are formed from the unique Lagrangian disk filling of each Legendrian unknot attaching sphere together with the core of the corresponding $2$-handle. That this is the cyclic plumbing can be seen from the linking of the two attaching spheres of the $2$-handles, together with the position of the $1$-handle which is needed when the plumbing graph contains a cycle.
\end{proof}

\begin{figure}
	\begin{overpic}[width=12 cm]{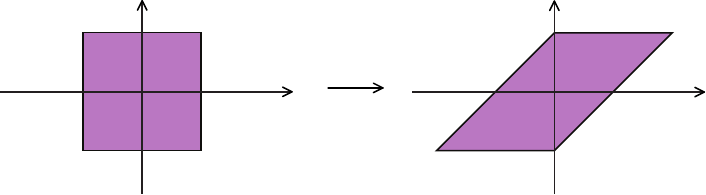}
		\put(45, 20) {$\begin{pmatrix}
				1 & 1 \\
				0 & 1
			\end{pmatrix}$}
		\put(74, 24){$V_2$}
		\put(80, 4){$V_1$} 
	\end{overpic}
	\caption{$SL(2,\mathbb{Z})$-transformation of the moment polytope of $(\cpone \times \cpone, \omega_{a,a}).$}
	\label{figure4}
\end{figure}

\begin{figure}
	\begin{center}
		\includegraphics[width=7 cm]{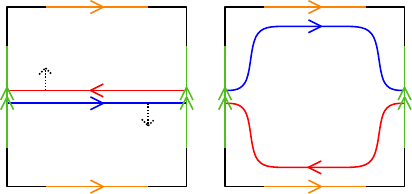}
		\caption{The curves of slope $(1,0)$ and $(-1,0)$ in $T^2$ before and after a smooth isotopy that corresponds to a Legendrian isotopy of their co-normal lifts.}
		\label{CP1xCP1leg}
	\end{center}
\end{figure}

\begin{figure}
	\begin{center}
		\includegraphics[width=10cm]{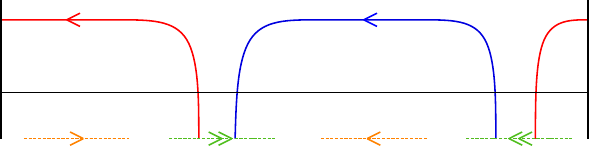}
		\caption{The correponding curves of $\Lambda_{(1,0)}$ and $\Lambda_{(-1,0)}$ in $J^1(S^1)$.}
		\label{CP1xCP1jet}
	\end{center}
\end{figure}

\begin{figure}
	\begin{center}
		\includegraphics[width=7cm]{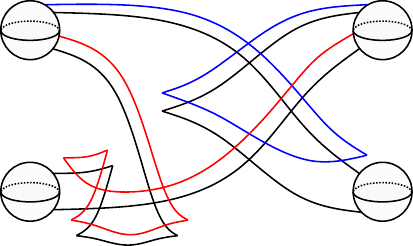}
		\caption{The Weinstein handle diagram of the complement of the total toric divisor of $(\C\P^1\times\C\P^1,\omega_{a,a})$ smoothed in two opposite nodes, $\mathcal{W}_{T^2,\{(1,0),(-1,0)\}}$.}
		\label{CP1xCP1start}
	\end{center}
\end{figure}

\begin{figure}
	\begin{center}
		\includegraphics[width=13 cm]{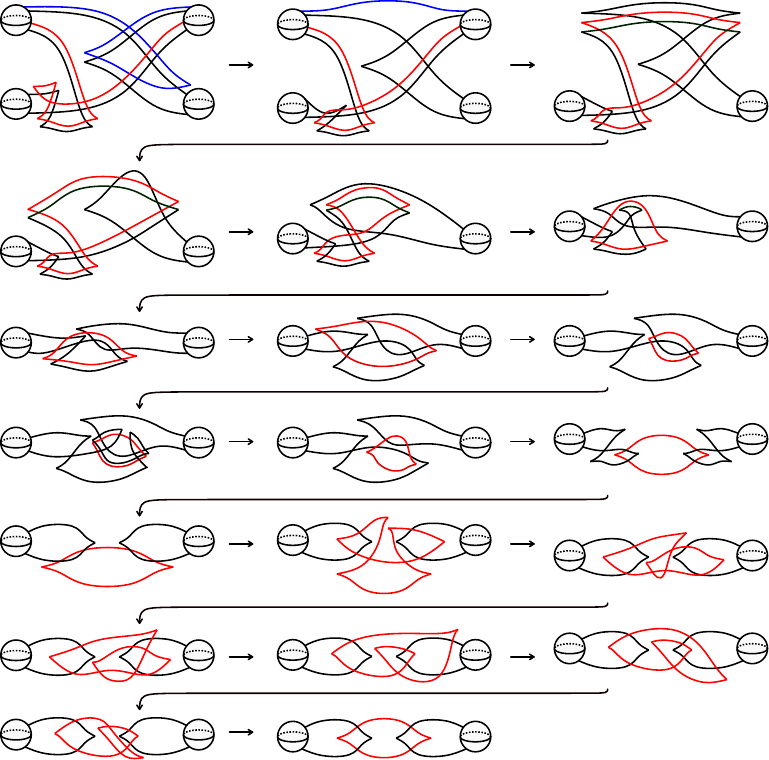}
		\caption{A series of Reidemeister moves and handle slides simplifying the Weinstein handle diagram of the complement of the total toric divisor of $(\C\P^1\times\C\P^1,\omega_{a,a})$ smoothed in two opposite nodes.}
		\label{CP1xCP1moves}
	\end{center}
\end{figure}

\begin{figure}
	\begin{center}
		\includegraphics[width=7cm]{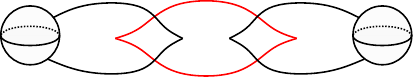}
		\caption{A simplified diagram of the complement of the total toric divisor of $(\C\P^1\times \C\P^1,\omega_{a,a})$ smoothed in two opposite nodes.}
		\label{CP1xCP1simple}
	\end{center}
\end{figure}

The homology of $\mathcal{W}_{T^2, \{(1,0), (-1,0)\}}$ which is Weinstein homotopic to the complement of the toric divisor in $\cpone \times \cpone$ smoothed in two opposite nodes 
can be computed from the Weinstein handle diagram or the slopes of the attaching spheres, as in Lemma \ref{slopehoms}. In particular,
\begin{align*}
\pi_1(\mathcal{W}_{T^2, \{(1,0), (-1,0)\}}); \Z) &=\Z, \\
H_1(\mathcal{W}_{T^2, \{(1,0), (-1,0)\}}); \Z) &=\Z, ~\text{and}\\
H_2(\mathcal{W}_{T^2, \{(1,0), (-1,0)\}}); \Z) &=\Z\oplus \Z.
\end{align*}

It is also possible to compute the Chekanov Eliashberg dga of the attaching Legendrian link in the Legendrian handle. Lekili and Etg\"u provide mirror symmetry results for this manifold. Specifically, they consider general plumbings of genus $g$ surfaces, of which this manifold is a particular case~\cite{EtguLekili}.

One application of having explicit Weinstein handlebody diagrams of a Weinstein $4$-manifold $X$ is easily finding closed exact Lagrangian surfaces in $X$ by considering the union of the Lagrangian core of a critical handle and the exact Lagrangian filling of the attaching Legendrian. Since the complement of the total toric divisor of $(\cpone \times \cpone,\omega_{a,a})$ smoothed in two opposite nodes is Weinstein homotopic $\mathcal{W}_{T^2, \{(1,0), (-1,0)\}}$, we see it contains two Lagrangian spheres $L_1$ and $L_2$ in different homology classes in $\mathcal{W}_{T^2, \{(1,0), (-1,0)\}}$ which therefore cannot be Hamiltonian isotopic in $\mathcal{W}_{T^2, \{(1,0), (-1,0)\}}$. By contrast, a result of Hind~\cite{Hind} shows that there is only one exact Lagrangian sphere up to Hamiltonian isotopy in $\cpone \times \cpone$. Thus, we see that the Hamiltonian isotopy taking $L_1$ to $L_2$ in $\cpone \times \cpone$ must pass through the smoothed total toric divisor.

\begin{remark} \label{remark two nodes}
According to Example \ref{example rectangle}, smoothing the total toric divisor of $(\cpone \times \cpone,\omega_{a,a})$ in any two opposite nodes produces symplectomorphic Weinstein manifolds. Similarly, smoothing the total toric divisor of $(\cpone \times \cpone,\omega_{a,b}),$ $b/2<a<2b,$ in any two adjacent nodes also produces symplectomorphic Weinstein manifolds. We now compare these two Weinstein manifolds. The slopes of the vertices of the moment polytope of $\cpone\times \cpone$ given on the right in Figure \ref{figure4} are $(1,0)$, $(-1,0)$, $(1,-2)$, and $(-1,2).$ By Theorem~\ref{thm:toricWein}, we are comparing the Weinstein manifolds $\mathcal{W}_{T^2, \{(1,0),(-1,0)\}}$ and $\mathcal{W}_{T^2, \{(1,0), (1,-2)\}}$. 
It is enough to compare their first homology. According to Lemma \ref{slopehoms} we obtain $H_1(
\mathcal{W}_{T^2, \{(1,0),(-1,0)\}} ; \Z) =\Z$ and $H_1(\mathcal{W}_{T^2, \{(1,0), (1,-2)\}} ; \Z) =\Z/2\Z.$ Thus, these Weinstein manifolds are not even homeomorphic. 
\end{remark}

\begin{theorem}\label{thm:cp1_example2}
  The Weinstein handlebody diagram of the complement of the total toric divisor of $(\cpone\times\cpone,\omega_{a,a})$ smoothed in 3 nodes is shown in Figure \ref{P1P13simp}.
\end{theorem}

\begin{proof}
Let us now smooth the total toric divisor of $(\cpone\times \cpone,\omega_{a,a})$ in three nodes. According to Example \ref{example rectangle}, we can choose any three nodes and obtain symplectomorphic Weinsten manifolds. By Theorem \ref{thm:toricWein}, the complement of the smoothed divisor is Weinstein homotopic to $\mathcal{W}_{T^2, c_1}$ where $c_1=\{(1,0), (-1,0), (1,-2)\}$. We now use Theorem \ref{thm:cotangentdiagram} and apply our algorithm. The resulting Weinstein handle diagram is shown in Figure \ref{P1P13}. Through a series of Reidemeister moves, handle slides and handle cancellations, we can simplify the diagram as shown in Figure \ref{P1P13simp}. 
\end{proof}

The homology of $\mathcal{W}_{T^2, c_1}$, where  $c_1=\{(1,0), (-1,0), (1,-2)\}$ can be computed from the Weinstein handle diagram or the slopes of the attaching spheres, as in Lemma \ref{slopehoms}. In particular,
\begin{align*}
\pi_1(\mathcal{W}_{T^2, c_1}; \Z) &=\Z/2\Z, \\
H_1(\mathcal{W}_{T^2, c_1}; \Z) &=\Z/2\Z,~\text{and} \\
H_2(\mathcal{W}_{T^2, c_1}; \Z) &=\Z\oplus\Z.
\end{align*}

\begin{figure}
	\begin{center}
		\includegraphics[width=7cm]{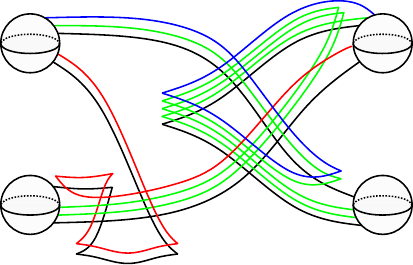}
		\caption{A Weinstein handlebody diagram of the complement of the total toric divisor of $(\C\P^1\times\C\P^1,\omega_{a,a})$ smoothed in three nodes,$\mathcal{W}_{T^2,(1,0),(-1,0),(1,-2)}$.}
		\label{P1P13}
	\end{center}
\end{figure}

\begin{figure}
	\begin{center}
		\includegraphics[width=7cm]{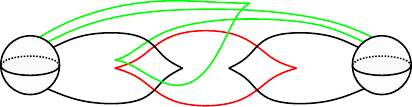}
		\caption{A simplified Weinstein handlebody diagram of the complement of the total toric divisor of  $(\C\P^1\times\C\P^1,\omega_{a,a})$ smoothed in three nodes.}
		\label{P1P13simp}
	\end{center}
\end{figure}

\begin{theorem}\label{thm:cp1_example3}
 The Weinstein handlebody diagram of the complement of the total toric divisor of  $(\cpone\times\cpone,\omega_{a,a})$ smoothed in all 4 nodes is shown in Figure \ref{P1P14}.
\end{theorem}

\begin{proof}
Smoothing all four nodes of the total toric divisor of $(\C\P^1\times\C\P^1,\omega_{a,a})$, the smoothed divisor becomes the torus $T^2$ and the complement of the smoothed divisor is Weinstein homotopic to $\mathcal{W}_{T^2, c_2}$ where $c_2=\{(1,0), (-1,0), (1,-2), (-1,2)\}$ by Theorem \ref{thm:toricWein}. We now use Theorem \ref{thm:cotangentdiagram} and apply our algorithm. The resulting Weinstein handlebody diagram is shown in Figure \ref{P1P14}. 
\end{proof}

The homology of $\mathcal{W}_{T^2, c_2}$ where $c_2=\{(1,0), (-1,0), (1,-2), (-1,2)\}$ can be computed from the Weinstein handle diagram or the slopes of the attaching spheres, as in Lemma \ref{slopehoms}. In particular,
\begin{align*}
\pi_1(\mathcal{W}_{T^2, c_2}; \Z) &=\Z/2\Z,\\
H_1(\mathcal{W}_{T^2, c_2}; \Z) &=\Z/2\Z, ~\text{and}\\
H_2(\mathcal{W}_{T^2, c_2}; \Z) &=\Z\oplus\Z\oplus\Z.
\end{align*}

\begin{figure}
	\begin{center}
		\includegraphics[width=7 cm]{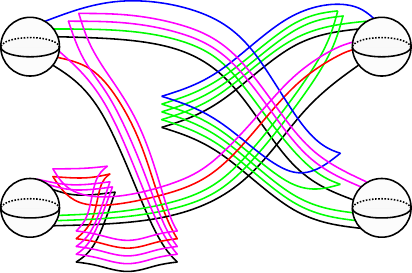}
		\caption{A diagram of the complement of the total toric divisor of monotone $\C\P^1\times\C\P^1$ smoothed in four nodes, $\mathcal{W}_{T^2,\{(1,0),(-1,0),(1,-2),(-1,2)\}}$.}
		\label{P1P14}
	\end{center}
\end{figure}

\subsection{Weinstein homotopic manifolds}

We now use our algorithm to find a few examples of unexpected Weinstein homotopies. For example we can find two manifolds $\mathcal{W}_{T^2,c}$ and $\mathcal{W}_{T^2,c'}$ such that $c$ can be realized as the slopes of vertices $\{V_1,V_2\}$ in a $\{V_1,V_2\}$-centered toric manifold, but $c'$ cannot.

\begin{example} \label{remark counterexample}
Recall from Proposition~\ref{prop:notexact}, that there is no $\{V_1,V_2\}$-centered toric manifold where $s(V_1)=s(V_2)=(1,0)$. However, we can still apply our algorithm to find a Weinstein handlebody for $\mathcal{W}_{(1,0),(1,0)}$. Surprisingly, this Weinstein domain is Weinstein homotopic to $\mathcal{W}_{T^2,\{(1,0), (-1,0)\}}$, which can be realized as a complement of a smoothed total toric divisor of $\cpone \times \cpone$ as we saw in the proof of Theorem~\ref{thm:cp1_example}.

The Weinstein handle diagram obtained from our algorithm of $\mathcal{W}_{T^2,\{(1,0), (1,0)\}}$ is shown on the top left of Figure \ref{1010}. After the series of Reidemeister moves and handle slides shown in Figure~\ref{1010}, it can be simplified to the same diagram as the one we obtained in Figure~\ref{CP1xCP1simple} after simplifying the diagram for $\mathcal{W}_{T^2,\{(1,0), (-1,0)\}}$ in Figure~\ref{CP1xCP1moves}. Thus, these Weinstein manifolds are Weinstein homotopic.

\begin{figure}
	\centering
	\includegraphics[width=16cm]{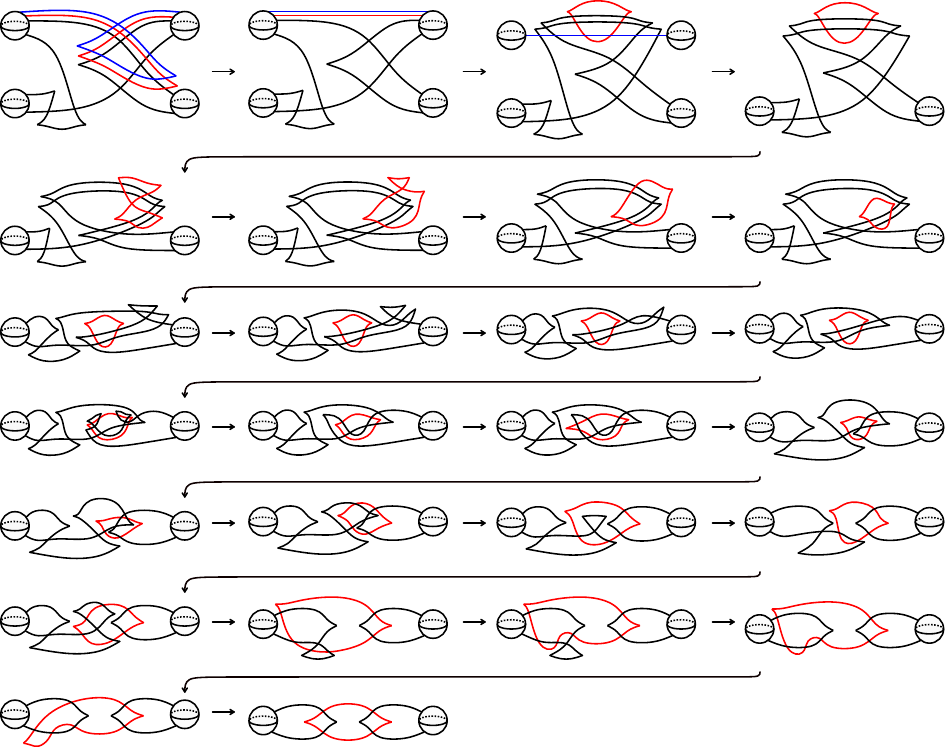}
	\caption{A series of Reidemeister moves and handle slides simplifying the Weinstein handle diagram of $\mathcal{W}_{T^2,\{(1,0), (1,0)\}}.$ }
	\label{1010}
\end{figure}    

In fact, if we consider these Weinstein manifolds as the complement of almost toric divisors in almost toric manifolds, their equivalence can be explained through a mutation.
The Weinstein manifold $\mathcal{W}_{T^2,\{(1,0), (1,0)\}}$ can be realized as the complement of an almost toric divisor in $(\mathbb{CP}^1\times\mathbb{CP}^1,\omega_{a,a})$ with a particular almost toric structure.
To find this almost toric structure, we start with the standard toric moment polytope for $\mathbb{CP}^1\times\mathbb{CP}^1$ shown on the right of Figure~\ref{figure4}. We perform a nodal trade at each of the vertices $V_1$ and $V_2$ to get the almost toric base diagram shown on the left of Figure~\ref{ATfigure-CP1xCP1-parallel_slopes}. Next, perform a mutation along the $(0,-1)$-eigenray. This maps the marked point and corresponding node lying on the $(0,-1)$-eigenray emanating from $V_2$ to a node and marked point on the $(0,1)$ eigenray emanating from $V_1$, so that both nodes lie on the same eigenray.  Nothing happens to the right half of the polytope (shaded in the picture) while the left half is transformed by applying the clockwise monodromy through the $(0,-1)$-eigenray. The clockwise monodromy matrix is $\begin{pmatrix}
   1 & 0 \\
   -1 & 1
  \end{pmatrix}$ and it maps the $(-1,0)$-edge of the polytope  to an $(-1,1)$-edge, and the $(-1,-1)$-edge to the $(-1,0)$-edge
so that we get the almost toric base polytope on the right of Figure~\ref{ATfigure-CP1xCP1-parallel_slopes}. Now the polytope is centered with the marked points on the same $(0,1)$-eigenray. The complement of the almost toric divisor is symplectomorphic to $\mathcal{W}_{T^2, \{(1,0), (1,0)\}}$ while the complement of the almost toric divisor for the polytope on the left of Figure~\ref{ATfigure-CP1xCP1-parallel_slopes} is symplectomorphic to $\mathcal{W}_{T^2, \{(1,0), (-1,0)\}}$. 

  \end{example}
\begin{figure}
	\centering
	\includegraphics[width=4cm]{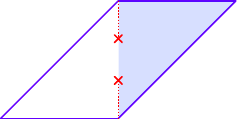}
	\hspace{1.5cm}
	\includegraphics[width=4cm]{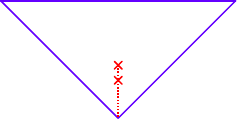}
	\caption{The two Weinstein manifolds $\mathcal{W}_{T^2,\{(1,0), (-1,0)\}}$ and $\mathcal{W}_{T^2,\{(1,0), (1,0)\}}$ are the complements of two almost toric divisors in $\mathbb{CP}^1\times\mathbb{CP}^1$ that are related by almost toric mutation.}
	\label{ATfigure-CP1xCP1-parallel_slopes}
\end{figure}

\begin{figure}
	\begin{center}
		\includegraphics[width=7 cm]{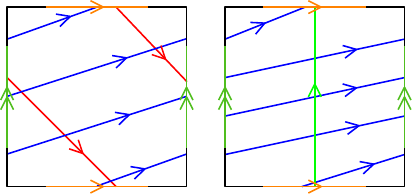}
		\caption{Curves with slopes $(1,-1)$, and $(3,1)$ on $T^2$ are shown on the left. Curves with slopes $(0,1),$ and $(4,1)$ on $T^2$ are shown on the right.}
		\label{fig:sl2q_slopes}
	\end{center}
\end{figure}

\begin{theorem} \label{remark:sl2q} The existence of an $SL(2, \Z)$ transformation between sets of slopes $c$ and $c'$ is a sufficient condition to guarantee that $\mathcal{W}_{T^2, c}$ and $\mathcal{W}_{T^2, c'}$ are Weinstein homotopic, but it is not necessary (even when $\mathcal{W}_{T^2,c}$ and $\mathcal{W}_{T^2,c'}$ are both realizable as complements of partially smoothed total toric divisors).
\end{theorem}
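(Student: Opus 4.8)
The plan is to exhibit an explicit pair of slope sets $c=\{(1,-1),(3,1)\}$ and $c'=\{(0,1),(4,1)\}$ for which $\mathcal{W}_{T^2,c}$ and $\mathcal{W}_{T^2,c'}$ are Weinstein homotopic even though no $SL(2,\Z)$ matrix carries one set to the other, and then verify both claims. As observed in Remark~\ref{remark counterexample0}, the matrix relating the two slope sets lies in $SL(2,\Q)\setminus SL(2,\Z)$, since the transformation is $\begin{pmatrix} 1 & -3 \\ \tfrac12 & -\tfrac12 \end{pmatrix}$; because $SL(2,\Z)$ acts on itself transitively, this already shows no integral transformation relates the sets, giving the ``not necessary'' half. (Both sets span $\Z^2$-bases, so by Proposition~\ref{proposition 2 good slopes} each is realizable as the slopes of two vertices of a $\{V_1,V_2\}$-centered Delzant polytope, establishing the parenthetical ``both realizable as complements of partially smoothed toric divisors''; see Figure~\ref{two possible}.) The ``sufficient'' half is exactly Proposition~\ref{propknodes} (or more strongly Proposition~\ref{prop:1handles}), which is already proved, so nothing new is needed there.

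\textbf{Key steps.} First I would apply the algorithm of Section~\ref{s:procedure}, via Theorem~\ref{thm:cotangentdiagram}, to produce the Gompf standard Weinstein handlebody diagrams for $\mathcal{W}_{T^2,\{(1,-1),(3,1)\}}$ and $\mathcal{W}_{T^2,\{(0,1),(4,1)\}}$ respectively. Concretely: isotope the co-oriented curves of these slopes into the cross-shaped region $A\cup B\cup C$ of the square $\mathcal S$ (Figure~\ref{fig:sl2q_slopes}), unfold to the $J^1(S^1)$ rectangle, satellite onto the Legendrian $\Lambda_0$ of the Gompf diagram for $T^*T^2$ (Figure~\ref{fig:torusdecomp}), keeping track of Reeb heights as in Figure~\ref{curvecorner}. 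Second, I would perform a sequence of Legendrian Reidemeister moves, Gompf moves $4$--$6$, Legendrian handle slides, and $1$-/$2$-handle cancellations (Section~\ref{s:kirby}, Figure~\ref{fig:standard_simplifications}) on each of the two diagrams and show that both simplify to the \emph{same} diagram in Gompf standard form. Since all of these moves correspond to Weinstein homotopies, this proves $\mathcal{W}_{T^2,c}$ and $\mathcal{W}_{T^2,c'}$ are Weinstein homotopic, while the $SL(2,\Z)$ obstruction from the first paragraph shows the condition was not needed. It may help to record intermediate invariants (e.g.\ $H_1$ and the intersection form from Lemma~\ref{slopehoms}, Thurston--Bennequin numbers from Lemma~\ref{tb}) to confirm at each stage that the diagrams genuinely match.

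\textbf{Main obstacle.} The hard part will be the Kirby calculus bookkeeping: finding the actual sequence of moves that brings both diagrams to a common normal form. The curves $(3,1)$ and $(4,1)$ wrap several times around the $1$-handles, so the satellited diagrams have many parallel strands, and simplifying them requires carefully ordered handle slides together with a Gompf move $6$ near the first $1$-handle (as set up by the isotopy convention of Figure~\ref{fig:Gompf6A}). One must be vigilant that each move preserves the relative Reeb heights and the contact framing data, and that no spurious crossings or clasps are introduced (cf.\ Remark~\ref{rk:isotopy}). There is no conceptual difficulty beyond this — the statement follows once the two explicit simplification sequences are carried out and seen to terminate at the same diagram — but it is the step where the work genuinely lies, and it is best presented with a figure exhibiting the two reductions converging, analogous to Figures~\ref{1010} and~\ref{CP1xCP1moves}.
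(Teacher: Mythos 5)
Your proposal matches the paper's own proof essentially verbatim: the sufficiency half is delegated to Proposition~\ref{prop:1handles}, and the non-necessity half uses exactly the same pair of slope sets $\{(1,-1),(3,1)\}$ and $\{(0,1),(4,1)\}$, realizable by $\{V_1,V_2\}$-centered polytopes but related only by an $SL(2,\Q)$ matrix, with the algorithm of Section~\ref{s:procedure} producing two handlebody diagrams that are then reduced by Kirby moves to a common normal form. The only remaining work, as you correctly flag, is the explicit diagrammatic simplification, which the paper carries out in Figures~\ref{fig:sl2q_1} and~\ref{fig:sl2q_2}.
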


\begin{proof}
Proposition~\ref{prop:1handles} shows that an $SL(2, \Z)$ transformation between sets of slopes is a sufficient condition for Weinstein homotopy. To show that this condition is not necessary we provide the following example.
Consider the set of slopes $\{(1,-1),(3,1)\}$ and $\{(4,1),(0,1)\}$ {shown in Figure~\ref{fig:sl2q_slopes} on $T^2$}, which are related by an $SL(2,\mathbb{Q})$ transformation and not an $SL(2,\mathbb{Z})$ transformation (see Remark \ref{remark counterexample0}). We want to consider the manifolds  $\mathcal{W}_{T^2,\{(1,-1),(3,1)\}}$ and  $\mathcal{W}_{T^2,\{(0,1),(4,1)\}}$.
Using Theorem \ref{thm:toricWein}, both Weinstein manifolds can be obtained by our procedure of smoothing the nodes of a total toric divisor, since there are $\{V_1,V_2\}$-centered Delzant polytopes with these slopes
(see Figure \ref{two possible}). However,
we cannot compare them as in Proposition~\ref{propknodes}  since the sets of slopes are not related by an $SL(2,\mathbb{Z})$ transformation. We now apply the algorithm to both of these sets of slopes and obtain the Weinstein handlebody diagrams pictured in the top left of Figures \ref{fig:sl2q_1} and \ref{fig:sl2q_2}. After a series of simplifications via Reidemeister moves, handle slides and handle cancellations to both handlebody diagrams, we obtain the same simplified diagram, seen in the bottom right of Figures \ref{fig:sl2q_1} and \ref{fig:sl2q_2}. Thus these manifolds are Weinstein homotopic.
\end{proof}


\begin{figure}
	\begin{center}
		\includegraphics[width=12 cm]{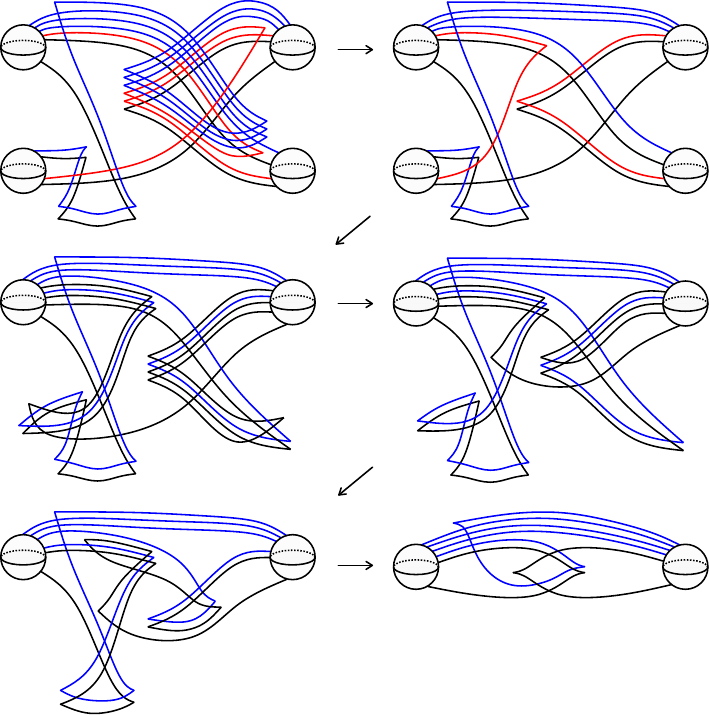}
		\caption{The algorithm generated Weinstein diagram of $\mathcal{W}_{T^2,\{(1,-1),(3,1)\}}$ and a series of simplifications.}
		\label{fig:sl2q_1}
	\end{center}
\end{figure}

\begin{figure}
	\begin{center}
		\includegraphics[width=12 cm]{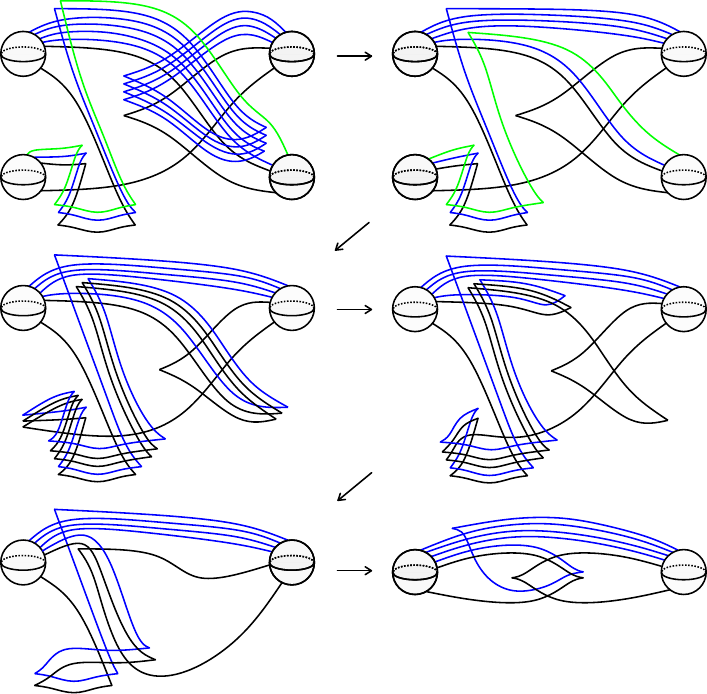}
		\caption{The algorithm generated Weinstein diagram of $\mathcal{W}_{T^2,\{(0,1),(4,1)\}}$ and a series of simplifications.}
		\label{fig:sl2q_2}
	\end{center}
\end{figure}

\subsection{Smoothing multiple nodes in $\cptwo$}

Consider the toric symplectic 4-manifold $(\mathbb{CP}^2, \omega_{FS})$. The standard toric structure has moment map image a right triangle as shown on the left of Figure~\ref{figure5}. The total toric divisor consists of three complex projective lines intersecting at three nodes. Smoothing one of these nodes joins two lines into a smooth conic which then meets the remaining line at two nodes. Smoothing two of the nodes joins all three reducible components into a single component of degree three (a cubic) which has a single remaining node. The nodal cubic in $\cptwo$ is known to have a unique symplectic isotopy class~\cite{Shev}, but it is not obvious how to present the Weinstein handle structure on the complement without our techniques. Similarly, smoothing all three nodes results in a single smooth cubic (again there is a unique symplectic isotopy class of such curves~\cite{Sikorav}). By embedding $\cptwo$ into $\C\P^N$ such that the intersection with $\C\P^{N-1}$ is the algebraic curve, and thus realizing the complement as an affine variety in $\C^N$ we see that the complement of these algebraic curves must have a Stein structure, and thus a Weinstein structure. However there was not previously a strategy to present a handle diagram encoding this Weinstein manifold.

We can smooth all singularities of the total toric divisor of $\mathbb{CP}^2$ and obtain a complement with a Weinstein structure because $\cptwo$ is a monotone toric manifold. We modify the toric structure from the standard one using the $SL(2,\mathbb{Z})$-transformation $\begin{pmatrix}
   0 & 1 \\
    -1 & 1
  \end{pmatrix}$ so that the moment polytope of $\mathbb{CP}^2$ is mapped to the polytope on the right in Figure \ref{figure5}. 
The difference of the three adjacent inward normal vectors, i.e. the slopes of the vertices, are $(0,-1)$, $(3,-1)$ and $(-3,2)$.
By Theorem \ref{thm:toricWein}, the complement of the total toric divisor smoothed in any subset of these three singularities is $\mathcal{W}_{T^2,c}$ where $c\subseteq\{(0,-1),(3,-1), (-3,2)\}$. {Since we already discussed the case of a toric manifold with a single node smoothed, we begin with the case where two nodes are smoothed and one node remains, so the divisor is an irreducible cubic curve with a single node.}

\begin{figure}
	\begin{overpic}[width=12 cm]{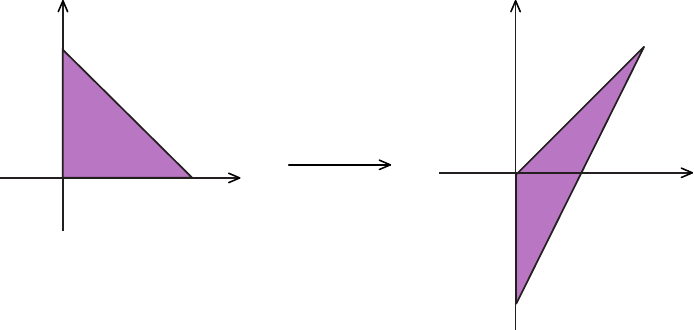}
		\put(43, 29){$\begin{pmatrix}
				0 & 1 \\
				-1 & 1
			\end{pmatrix}$}
	\end{overpic}
	\caption{An $SL(2,\mathbb{Z})$-transformation of the moment polytope of $(\mathbb{CP}^2, \omega_{FS}).$}
	\label{figure5}
\end{figure}

\begin{figure}
	\begin{center}
		\includegraphics[width=7 cm]{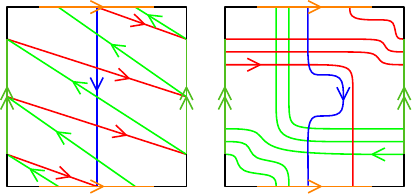}
		\caption{The curves $(0,-1)$, $(3,-1)$ and $(-3,2)$ in $T^2$ and their resulting isotopies which are push-offs of the boundary of the square in the positive Reeb direction.}
		\label{CP2leg}
	\end{center}
\end{figure}

\begin{figure}
	\begin{center}
		\includegraphics[width=10 cm]{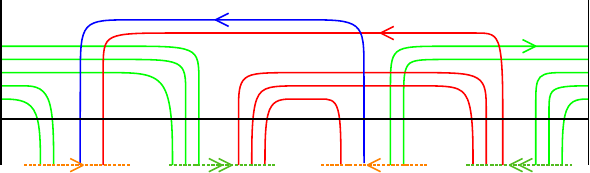}
		\caption{The curves in $J^1(S^1)$ which are identified with the curves $(0,-1)$, $(3,-1)$ and $(-3,2)$ on $T^2$, as in Figure \ref{CP2leg}.}
		\label{CP2jet}
	\end{center}
\end{figure}

\begin{theorem}\label{thm:nodalcubic}
  The Weinstein handlebody diagram of the complement of a nodal cubic is shown in Figure \ref{CP22final}.
\end{theorem}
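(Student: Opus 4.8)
The plan is to apply the general machinery already developed to the specific case of the nodal cubic. First I would recall that the nodal cubic in $\cptwo$ arises as the toric divisor of $(\cptwo, \omega_{FS})$ smoothed at exactly two of its three nodes. Using the $SL(2,\Z)$-transformed moment polytope of Figure~\ref{figure5}, the three vertices have slopes $s(V_1)=(0,-1)$, $s(V_2)=(3,-1)$, and $s(V_3)=(-3,2)$, which are the differences of adjacent inward normal vectors as computed by Lemma~\ref{lemma slope}. Since $\cptwo$ is monotone, by Proposition~\ref{monotone=centered} it is centered with respect to all of its vertices, so in particular it is $\{V_1,V_2\}$-centered (and also centered with respect to any two of these three vertices). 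By Theorem~\ref{thm:toricWein}, the complement of a sufficiently small neighborhood of the nodal cubic is therefore Weinstein homotopic to $\mathcal{W}_{T^2,\{s(V_i),s(V_j)\}}$ for the appropriate pair; by Proposition~\ref{propknodes} (the $SL(2,\Z)$-action on $\cptwo$ permuting its vertices is realized, e.g.\ by rotations as in Example~\ref{example CP2}), the choice of which two nodes to smooth does not affect the resulting Weinstein manifold, so we may work with $c=\{(0,-1),(3,-1)\}$ or any convenient representative.

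Next I would run the algorithm of Section~\ref{s:procedure} on these two curves, invoking Theorem~\ref{thm:cotangentdiagram} to guarantee that the output Weinstein handlebody diagram in Gompf standard form represents a manifold Weinstein homotopic to $\mathcal{W}_{T^2,c}$. Concretely: draw the lines of slopes $(0,-1)$, $(3,-1)$ (and/or $(-3,2)$) with their co-orientations on the square $\mathcal{S}$ as in Figure~\ref{CP2leg}, isotope them into the cross-shaped region $A\cup B\cup C$ following the clockwise convention and the bigon-removal rules of Remark~\ref{rk:isotopy}, unfold into the $J^1(S^1)$ rectangle as in Figure~\ref{CP2jet}, and satellite the result onto the Legendrian $\Lambda_0$ (together with its cusped-off version) in the Gompf diagram of $D^*T^2$, following the conventions of Figure~\ref{curvecorner}. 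This produces the initial (unsimplified) Weinstein handlebody diagram. Then I would apply a sequence of Legendrian Reidemeister moves, handle slides, handle cancellations, and Gompf move~6 (step~(\ref{step:moves1})) to reduce the diagram to its simplest form, which is the diagram asserted in Figure~\ref{CP22final}.

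The main obstacle here is not conceptual but bookkeeping: carrying out the satellite construction for curves of slope involving a $3$ (so bands of three parallel strands) near the $1$-handles without introducing errors, keeping careful track of the relative Reeb heights when $\Lambda_0$ and its cusped-off copy diverge, and then executing a correct and verifiably minimal sequence of Kirby calculus moves. One should double-check the final diagram against the invariants computed by Lemma~\ref{slopehoms} and the Thurston--Bennequin formula of Lemma~\ref{tb} (and, if desired, the boundary homology via Proposition~\ref{homology_boundary}) to confirm the simplification is valid. As a consistency check, one notes that smoothing the third node as well adds one more $2$-handle and, after simplification, should yield the smooth cubic complement of Theorem~\ref{thm:cubic}, while deleting one of the two $2$-handles should recover the one-node smoothing $\mathcal{W}_{T^2,(1,0)}$ of Figure~\ref{1CP2simple}; compatibility with these neighboring cases provides a useful sanity check on Figure~\ref{CP22final}.
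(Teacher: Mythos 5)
Your proposal is correct and follows essentially the same route as the paper: identify the slopes $(0,-1)$, $(3,-1)$, $(-3,2)$ from the $SL(2,\Z)$-transformed polytope of Figure~\ref{figure5}, invoke monotonicity to get the centered condition, apply Theorem~\ref{thm:toricWein} to identify the complement with $\mathcal{W}_{T^2,c}$ for a pair of these slopes, and then run the algorithm of Theorem~\ref{thm:cotangentdiagram} followed by Kirby calculus simplification. The only cosmetic difference is that the paper fixes the pair $\{(0,-1),(-3,2)\}$ outright, whereas you appeal to Proposition~\ref{propknodes} and Example~\ref{example CP2} to justify that any pair works; your added consistency checks via Lemma~\ref{slopehoms} and Lemma~\ref{tb} are sensible but not part of the paper's argument.
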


\begin{proof}
Consider the case where we smooth the total toric divisor in two of these singularities. By Theorem \ref{thm:toricWein} the complement of the smoothed total toric divisor is Weinstein homotopic to $\mathcal{W}_{T^2, \{(0,-1), (-3,2)\}}$. We now use Theorem \ref{thm:cotangentdiagram} and apply our algorithm. The Weinstein handle diagram of $\mathcal{W}_{T^2, \{(0,-1), (-3,2)\}}$ is shown in Figure \ref{CP22start}. Through a sequence of Reidemeister moves, handle and handles lides we obtain the simplified diagram in Figure \ref{CP22final}. 
\end{proof}

The homology of $\mathcal{W}_{T^2, \{(0,-1), (-3,2)\}}$, can be computed from the Weinstein handle diagram or the slopes of the attaching spheres, as in Lemma \ref{slopehoms}. In particular,
\begin{align*}
\pi_1(\mathcal{W}_{T^2, \{(0,-1), (-3,2)\}}); \Z) &=\Z/3\Z,\\
H_1(\mathcal{W}_{T^2, \{(0,-1), (-3,2)\}}); \Z) &=\Z/3\Z, ~\text{and}\\
H_2(\mathcal{W}_{T^2, \{(0,-1), (-3,2)\}}); \Z) &=\Z.
\end{align*}

\begin{figure}
	\begin{center}
		\includegraphics[width=7cm]{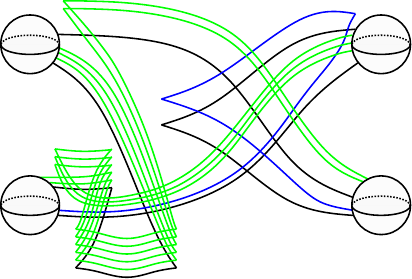}
		\caption{The Weinstein handlebody diagram of the complement of the total toric divisor of $\cptwo$ smoothed in two nodes.}
		\label{CP22start}
	\end{center}
\end{figure}

\begin{figure}
	\begin{center}
		\includegraphics[width=7 cm]{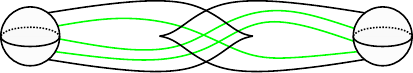}
		\caption{A simplified diagram of the complement of the total toric divisor of $\cptwo$ smoothed in two nodes, $\mathcal{W}_{T^2, \{(0,-1), (-3,2)\}}$.}
		\label{CP22final}
	\end{center}
\end{figure}

\begin{theorem}\label{thm:cubic}
  The Weinstein handlebody diagram of the complement of a smooth cubic in $\cptwo$ is shown in Figure~\ref{CP23final}.
\end{theorem}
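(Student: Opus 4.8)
The plan is to follow exactly the strategy used for the nodal cubic in Theorem~\ref{thm:nodalcubic}, now smoothing all three nodes instead of two. First I would recall that, by the uniqueness of the symplectic isotopy class of a smooth cubic curve in $\cptwo$~\cite{Sikorav}, the complement of any smooth cubic is symplectomorphic to the complement of the toric divisor of $\cptwo$ smoothed at all three of its nodes, so it suffices to analyze the latter. Using the reparametrized toric structure on $\cptwo$ from Figure~\ref{figure5}, the polytope is monotone, hence by Proposition~\ref{monotone=centered} it is $\{V_1,V_2,V_3\}$-centered with respect to all three vertices; and the slopes at these vertices, computed via Lemma~\ref{lemma slope} as differences of adjacent inward normals, are $(0,-1)$, $(3,-1)$, and $(-3,2)$. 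Theorem~\ref{thm:toricWein} then identifies the complement of an arbitrarily small neighborhood of the fully smoothed divisor, after completion, with the Weinstein manifold $\mathcal{W}_{T^2,\{(0,-1),(3,-1),(-3,2)\}}$.

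Next I would apply the algorithm of Section~\ref{s:procedure}, which is justified by Theorem~\ref{thm:cotangentdiagram}: draw the three curves of these slopes on the square model of $T^2$, fix the co-orientations coming from the orientations, and isotope all three into the cross-shaped region $A\cup B\cup C$, removing all removable bigons and pushing the curves off the boundary of the square in the positive Reeb direction as in Figures~\ref{CP2leg} and~\ref{CP2jet} (using the convention of Step~(\ref{step:isotopy}) so that a Gompf move $6$ is available near the first $1$-handle). Then transfer the picture to $J^1(S^1)$ and satellite it onto the Legendrian $\Lambda_0$, and its cusped-off variant, in the Gompf diagram for $T^*T^2$, taking care with the conventions near the $1$-handles and with the relative Reeb heights. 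This produces a Weinstein handlebody diagram in Gompf standard form with two $1$-handles and three $2$-handles attached along the Legendrian lifts $\Lambda_{(0,-1)}$, $\Lambda_{(3,-1)}$, $\Lambda_{(-3,2)}$.

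Finally I would simplify this diagram by Weinstein Kirby calculus---Legendrian Reidemeister moves, Gompf's moves (notably move $6$), handle slides, and $1$--$2$-handle cancellations---until it reaches the picture in Figure~\ref{CP23final}, exactly as was done for the two-node smoothing in passing from Figure~\ref{CP22start} to Figure~\ref{CP22final}. The main obstacle is precisely this last step: with three $2$-handles, each attached along a lift carrying several parallel strands (the strand counts grow with the slopes, e.g.\ three strands for the $(3,-1)$ and $(-3,2)$ curves), the initial diagram is intricate, and one must find the correct sequence of slides and cancellations---in particular cancelling one of the $1$-handles against a suitably chosen $2$-handle---to arrive at the claimed diagram. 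As a consistency check one can compute the fundamental group, homology, and intersection form of the resulting diagram using Lemma~\ref{slopehoms} together with the Thurston--Bennequin and linking formulas, and verify they agree with the invariants computed directly from the slope data $\{(0,-1),(3,-1),(-3,2)\}$, which guards against errors introduced during the simplification.
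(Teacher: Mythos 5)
Your proposal follows essentially the same route as the paper: identify the complement of the fully smoothed toric divisor with $\mathcal{W}_{T^2,\{(0,-1),(3,-1),(-3,2)\}}$ via Theorem~\ref{thm:toricWein} (using the monotone, hence fully centered, polytope of Figure~\ref{figure5} and the slopes from Lemma~\ref{lemma slope}), apply the algorithm justified by Theorem~\ref{thm:cotangentdiagram} to obtain the diagram of Figure~\ref{CP23start}, and simplify by Reidemeister moves and handle slides to Figure~\ref{CP23final}. Your additional remarks on the uniqueness of the symplectic isotopy class of the smooth cubic and the homological consistency check match the paper's surrounding discussion, so the argument is correct and not genuinely different.
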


\begin{proof}
Consider the case where we smooth the total toric divisor of $\cptwo$ in all three singularities. By Theorem \ref{thm:toricWein} the complement of the smoothed total toric divisor is Weinstein homotopic to $\mathcal{W}_{T^2, \{(0,-1), (-3,2), (3,-1)\}}$. We now use Theorem \ref{thm:cotangentdiagram} and apply our algorithm. Figure~\ref{CP2leg} shows the three curves $(0,-1), (-3,2),$ and $(3,-1)$ in $T^2$ isotoped so that they are push-offs of the boundary of the square in the positive Reeb direction. Figure~\ref{CP2jet} shows the identification of these three curves with curves in $J^1(S^1)$. Finally, the Weinstein handlebody diagram of $\mathcal{W}_{T^2, \{(0,-1), (-3,2), (3,-1)\}}$ is shown in Figure \ref{CP23start}. Through a sequence of Reidemeister moves and handle slides we obtain the simplified diagram in Figure \ref{CP23final}. 
\end{proof}

The fundamental group and homology of $\mathcal{W}_{T^2, \{(0,-1), (-3,2), (3,-1)\}}$, can be computed from the Weinstein handle diagram or the slopes of the attaching spheres, as in Lemma \ref{slopehoms}. In particular,
\begin{align*}
\pi_1(\mathcal{W}_{T^2, \{(0,-1), (-3,2), (3,-1)\}}; \Z) &=\Z/3\Z,\\
H_1(\mathcal{W}_{T^2, \{(0,-1), (-3,2), (3,-1)\}}; \Z) &=\Z/3\Z,~\text{and} \\
H_2(\mathcal{W}_{T^2, \{(0,-1), (-3,2), (3,-1)\}}; \Z) &=\Z\oplus\Z.\\
\end{align*}

\begin{figure}
	\begin{center}
		\includegraphics[width=7 cm]{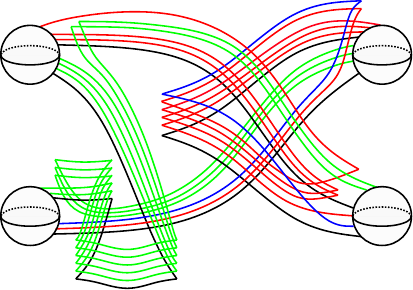}
		\caption{The Weinstein handlebody diagram of the complement of the total toric divisor of $\cptwo$ smoothed in all three nodes, $\mathcal{W}_{T^2, \{(0,-1), (-3,2), (3,-1)\}}$.}
		\label{CP23start}
	\end{center}
\end{figure}

\begin{figure}
	\begin{center}
		\includegraphics[width=7 cm]{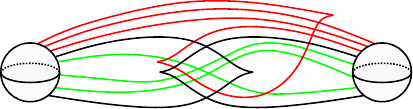}
		\caption{A simplified diagram of the complement of the total toric divisor of $\cptwo$ smoothed in all three nodes.}
		\label{CP23final}
	\end{center}
\end{figure}

\begin{remark} The complement of a smooth cubic in $\cptwo$, $\mathcal{W}_{T^2, \{(0,-1), (-3,2), (3,-1)\}}$ is a complex affine variety. The Weinstein handlebody diagram shown in Figure~\ref{CP23final} provides a new perspective on the symplectic topology of this complex affine variety. There are exact Lagrangian spheres, and tori in $\mathcal{W}_{T^2, \{(0,-1), (-3,2), (3,-1)\}}$ explicitly using decomposable $0$- and $1$-handle moves as defined in \cite{EtnNg18}. {From the Weinstein handlebody diagram we can also compute the intersection form. Let $A$ denote the red knot, $B$, the green knot and $C$ the black knot in Figure~\ref{CP23final}. Since $A$ and $B$ both pass through the $1$-handle $3$ times algebraically while $C$ passes through the $1$-handle $0$ times algebraically, $H_2(\mathcal{W}_{T^2, \{(0,-1), (-3,2), (3,-1)\}}; \Z)$ is generated by the union of the Seifert surfaces and the core of the $2$-handles of the links $C$ and $A-B$. The intersection form is then:
$$\begin{pmatrix}
tb(C)-1&lk(C, A)-lk(C,B)\\
lk(C,A)-lk(C,B)&tb(A-B)-1
\end{pmatrix}=\begin{pmatrix}
1-1&0\\
0&2-1
\end{pmatrix}=\begin{pmatrix}
0&0\\
0&1
\end{pmatrix}.$$}
{Note, to compute $tb(A-B)$, we orient $A$ and $B$ with opposite orientations, and use the signed number of crossings (of $A$ with $A$, $B$ with $B$, or $A$ with $B$) as a substitute for the writhe in the combinatorial formula. Equivalently this can be computed as $tb(A)+tb(B)-2lk(A,B)$ where we compute linking number by orienting $A$ and $B$ so that both go through the $1$-handle in the same direction.}
\end{remark}

\subsection{Smoothing nodes in $\cptwo\# 5\overline{\cptwo}$} \label{section:CP2bu5}

Consider $\cptwo \# 5\cptwobar$, whose Delzant polytope is illustrated in Figure~\ref{octagon}. If we want to smooth all eight singularities that map under the moment map to the vertices of the octagon, we first note the rays of the polytope are parallel. By Proposition \ref{prop:notexact}, the complement of the total toric divisor smoothed in all eight nodes is not an exact symplectic manifold and cannot support a Weinstein structure.

\begin{figure}
	\centering
	\includegraphics[width=4cm]{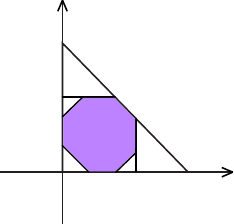}
	\caption{The Delzant polytope of  $\cptwo \# 5\cptwobar$.}
	\label{octagon}
\end{figure}

\begin{figure}
	\centering
	\includegraphics[width=4cm]{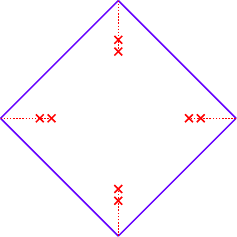}
	\caption{An almost toric base of  $\cptwo \# 5\cptwobar$.}
	\label{BU5}
\end{figure}
However, we can resolve this issue if we look at $\cptwo \# 5 \cptwobar$ as an almost toric manifold, and correspondingly deform the divisor. $\cptwo \# 5 \cptwobar$ admits an almost toric structure whose base diagram is shown in Figure~\ref{BU5}. We obtain this almost toric description by starting with the standard monotone toric structure on $(\cpone \times \cpone, \omega_{a,a})$ and performing a monotone blow up so that the chopped off corner cuts the original edges in half. We now have a toric manifold corresponding to $\cptwo \# 2 \cptwobar$. A nodal trade at the new corners gives an almost toric structure with base diagram shown on the left of Figure~\ref{BU2-3}.
  \begin{figure}
  	\centering
  	\includegraphics[width=4cm]{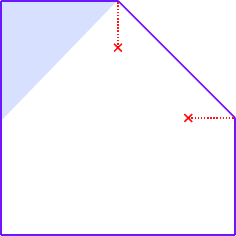}
  	\hspace{1cm}
  	\includegraphics[width=4cm]{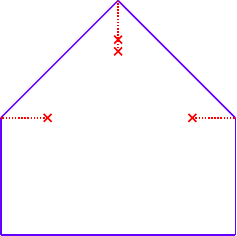}
  	\caption{An almost toric base of  $\cptwo \# 2\cptwobar$ on the left, $\cptwo \# 3\cptwobar$ on the right.}
  	\label{BU2-3}
  \end{figure}
Because we performed the nodal trade, the vertex at the top right of the shaded blue triangle is not a node of the almost toric divisor. Instead, its preimage is a circle just like points in the interior of an edge in the toric setting. Therefore, the preimage of the shaded blue triangle is a $4$-dimensional ball and we can perform a symplectic blow up by replacing this ball with a $\cpone$. This is an (almost-) toric blow up (see~\cite{Symington, LeungSymington} and also~\cite[Section 2.4]{Vianna}). After blowing up, the fiber of the top central corner is again one point and we can perform another nodal trade with the same eigenray (to prove this we do a mutation of the existing node to get a toric Delzant corner and perform the nodal trade as in Section~\ref{section:almosttoric}). After another nodal trade at the top left corner we get the polytope with nodes on the right of Figure~\ref{BU2-3} as the base diagram of an almost toric fibration on $\cptwo \# 3 \cptwobar$. We then perform this procedure of two nodal trades and a blow up two more times at each original corner to get the desired base of the almost toric tructure on $\cptwo \# 5 \cptwobar$. This almost toric structure can also be obtained from the one of Figure 18 $(B_1)$ in~\cite[Section 3.3]{Vianna} after performing a mutation and an $SL(2,\mathbb{Z})$ transformation.

The eigenlines for each singularity intersect in the barycenter of the polytope, thus our almost toric polytope is centered. By Section~\ref{section:almosttoric}, the complement in $\cptwo \# 5\cptwobar$ of the neighborhood of the almost toric divisor is a Weinstein domain $\mathcal{W}_{T^2, c}$ given by attaching 8 Weinstein $2$-handles to $D^* T^2$ along the following Legendrian attaching spheres (repeated slopes are parallel push-offs of each other):
$$\Lambda_{(1,0)},\Lambda_{(1,0)},~\Lambda_{(0,-1)},~\Lambda_{(0,-1)},~\Lambda_{(-1,0)},~\Lambda_{(-1,0)},~\Lambda_{(0,1)},~\Lambda_{(0,1)}.$$

\begin{theorem}\label{thm:octagonexample}
  The Weinstein handlebody diagram for the complement of the almost toric divisor 
  in $\cptwo \# 5\cptwobar$ corresponding to the almost toric fibration with base shown in Figure~\ref{BU5} is shown in Figure \ref{CP25}.
\end{theorem}

\begin{proof}
Figure \ref{CP25torus} shows the curves $c=\{(1,0),(1,0),(0,-1),(0,-1),(-1,0),(-1,0),(0,1),(0,1)\}$ on $T^2$. We can now apply steps (\ref{step:isotopy}) to (\ref{step:satellite}) of our algorithm. This identifies all our curves in $T^2$ with curves in $J^1(S^1)$ as in Figure \ref{CP25jet}. We then satellite the image of the curves in $J^1(S^1)$ onto the image of $S^1$ in the Gompf diagram of $T^*T^2$. The result is Figure~\ref{CP25}. 
\end{proof}

\begin{figure}
	\begin{center}
		\includegraphics[width=7cm]{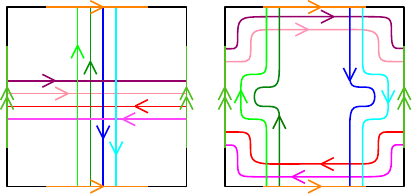}
		\caption{The curves $(1,0),(1,0),(0,-1),(0,-1),(-1,0),(-1,0),(0,1),(0,1)$ on $T^2$, and their resulting isotopies which are push-offs of the boundary of the square in the positive Reeb direction.}
		\label{CP25torus}
	\end{center}
\end{figure}

\begin{figure}
	\begin{center}
		\includegraphics[width=10 cm]{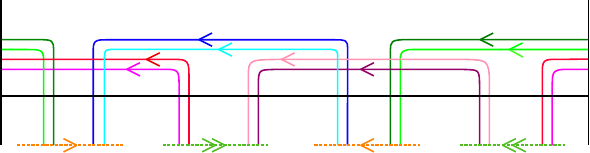}
		\caption{The curves in $J^1(S^1)$ which are identified with the curves $(1,0),(1,0),(0,-1),(0,-1),(-1,0),(-1,0),(0,1),(0,1)$ on $T^2$ as in Figure \ref{CP25torus}.}
		\label{CP25jet}
	\end{center}
\end{figure}

As in the previous example we perform a series of Reidemeister moves, Gompf moves, handle cancellations and handle slides, and obtain the leftmost diagram illustrated in Figure~\ref{CP25slide}. Note that both $1$-handles were canceled, and that if we slide the black trefoil under the red Legendrian unknot we obtain a $7$-component link of Legendrian unknots with maximal Thurston-Bennequin number equal to $-1$. The homology of 
$\mathcal{W}_{T^2, c},$ is easily computed from the Weinstein handle decomposition. In particular, 
\begin{align*}
H_1(\mathcal{W}_{T^2, c}; \Z) &=0 ~\text{and}\\
H_2(\mathcal{W}_{T^2, c};\Z) &=\Z^7.\\
 \end{align*}

Note that the second homology is generated by exact Lagrangian spheres built from the Lagrangian core of the $2$-handles and the Lagrangian disk fillings of the Legendrian attaching spheres.

\begin{figure}
	\begin{center}
		\includegraphics[width=7 cm]{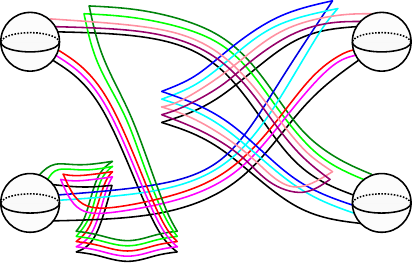}
		\caption{The Weinstein handle diagram of $\mathcal{W}_{T^2, c}$, where $c=\{(1,0),(1,0),(0,-1),(0,-1),(-1,0),(-1,0),(0,1),(0,1)\}.$}
		\label{CP25}
	\end{center}
\end{figure}

\begin{figure}
	\begin{center}
		\includegraphics[width=14 cm]{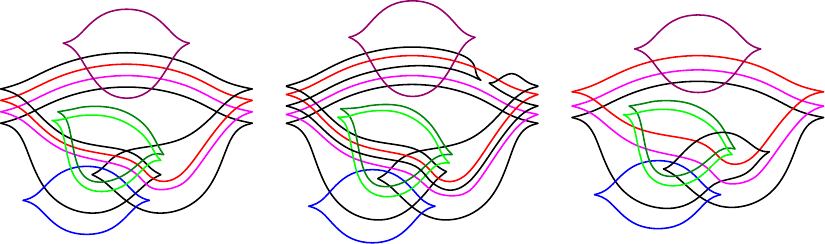}
		\caption{The leftmost diagram is obtained via a simplification of Figure \ref{CP25}. In the middle diagram, we perform an additional 2-handle slide of the black trefoil under the red unknot to obtain the link of Legendrian unknots in the rightmost picture.}
		\label{CP25slide}
	\end{center}
\end{figure}

\subsection{Higher genus}

We study a basic example of a Weinstein $4$-manifold $\mathcal{W}_{F, c}$ where $F$ is an orientable surface of genus $2$. This provides an example of the algorithm from Theorem~\ref{thm:cotangentdiagram} applied to a orientable surface of genus greater than one. 

\begin{theorem}\label{thm:genus2}
  The Weinstein handlebody diagram of the Weinstein manifold $\mathcal{W}_{F, c}$ where $F$ is an orientable surface of genus $2$, and $c$ is $\{\gamma_i\}_{i=1}^4$ as pictured in Figure \ref{g2octo} is given in Figure \ref{fig:g2pinch}.
\end{theorem}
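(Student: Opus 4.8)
The plan is to apply Theorem~\ref{thm:cotangentdiagram} directly, which states that the procedure of Section~\ref{s:procedure} produces a Weinstein handlebody diagram in standard form for $\mathcal{W}_{F,c}$ whenever $F$ is a surface and $c$ a finite collection of co-oriented curves. Here $F$ is the genus~$2$ surface, which we represent by the standard octagonal polygon with the identification word $a_1b_1a_1^{-1}b_1^{-1}a_2b_2a_2^{-1}b_2^{-1}$; the four homology classes $(1,0,0,0)$, $(0,1,0,0)$, $(0,0,-1,0)$, $(0,0,0,-1)$ in the basis $\{a_1,b_1,a_2,b_2\}$ of $H_1(F)$ are represented by the four core curves of the handles (with appropriate co-orientations given by the signs). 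So the statement is really just the assertion that running the algorithm on this input yields the picture in Figure~\ref{fig:g2pinch}; the content is the explicit computation, and the ``proof'' is a bookkeeping argument verifying that each step was carried out correctly.

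First I would set up the polygonal model: the octagon decomposes into a central disk $E$, an annulus $A$ surrounding it, four pairs of band regions $B_1,C_1,B_2,C_2$ (one pair for each of the four $1$-handles of the genus~$2$ handle decomposition), and a single region $D$ which is a neighborhood of the single vertex of the octagon (the $2$-handle of $F$). Following step~(\ref{step:co-normals})--(\ref{step:isotopy}), I would draw the four curves $\gamma_i$ together with their co-orientations and isotope each into the cross-shaped region $A\cup B_1\cup C_1\cup B_2\cup C_2$, pushing them in the direction opposite their co-orientations so they all traverse the annulus $A$ clockwise, and removing all bigons permitted by Remark~\ref{rk:isotopy} (opposite co-orientations allow the bigon move; matching co-orientations do not). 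Since each $\gamma_i$ is a single core curve, after the isotopy it runs once around $A$ and dips once into a single band region, so the configuration in $A$ is simple and its only crossings are forced by the interaction of the four curves in the annulus. I would also arrange the curves near the first $1$-handle as in Figure~\ref{fig:Gompf6A} so that a Gompf move~6 simplification is available.

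Next, in steps~(\ref{step:jet})--(\ref{step:satellite}), I would cut the octagon along a ray from the vertex to the center, unfold $A\cup B_1\cup C_1\cup B_2\cup C_2\cup D$ into the rectangular $J^1(S^1)$ model, transfer the curves, and then satellite this $J^1(S^1)$ picture onto the Legendrian $\Lambda_0^F$ in the Gompf diagram for $D^*F$ (the genus~$2$ version of Figure~\ref{fig:cotorus}, with four $1$-handles), following the corner conventions of Figure~\ref{curvecorner} and preserving relative Reeb heights. This produces a Weinstein handlebody diagram in standard Gompf form with four $1$-handles and five $2$-handles (the one $2$-handle of $D^*F$ plus the four $2$-handles along the $\Lambda_{\gamma_i}$). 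Finally, in step~(\ref{step:moves1}), I would simplify via Legendrian Reidemeister moves, the standard simplifications of Figure~\ref{fig:standard_simplifications}, Gompf move~6, and handle cancellations where possible, arriving at the diagram displayed in Figure~\ref{fig:g2pinch}. By Theorem~\ref{thm:cotangentdiagram} the resulting diagram represents a Weinstein manifold Weinstein homotopic to $\mathcal{W}_{F,c}$, which is exactly the claim.

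The main obstacle is not conceptual but computational and presentational: carefully tracking the four curves through the unfolding and the satellite operation near the four pairs of band regions, making sure the Reeb heights and crossing signs are recorded correctly, and then executing a correct (and sufficiently short) sequence of Kirby moves to reach the pictured normal form. The genus~$2$ case has more $1$-handles than the torus examples worked out earlier, so the diagrams are more cluttered and there is more room for sign or ordering errors; the honest work is checking that the figure is right. Once the figure is verified, the appeal to Theorem~\ref{thm:cotangentdiagram} closes the argument with no further subtlety, since all the theoretical justification (holonomy homotopies, the Morsification of the canonical structure on $D^*F$, Lemma~\ref{l:reeb}, and the satellite identification) has already been established in Sections~\ref{s:cotangent} and~\ref{sec:curves} for arbitrary surfaces $F$.
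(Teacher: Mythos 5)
Your proposal is correct and follows exactly the route the paper takes: the paper's proof is a two-sentence invocation of the algorithm of Section~\ref{s:procedure} (isotope the curves near $\partial(A\cup E)$, pass to the jet space, satellite onto the genus-$2$ Gompf diagram to get Figure~\ref{fig:g2egstart}, then simplify by handle slides and cancellations to reach Figure~\ref{fig:g2pinch}), with all theoretical justification deferred to Theorem~\ref{thm:cotangentdiagram}. Your more detailed bookkeeping of the octagonal decomposition, the co-orientation conventions, and the Kirby-move simplification is just an expanded version of the same argument.
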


\begin{proof}
We apply the algorithm from Theorem~\ref{thm:cotangentdiagram}. We use a polygonal representation of $F$ as an octagon with sides identified and where an annular region in the octagon is given by taking the complement of a closed disk in the center of the octagon, see Figure~\ref{g2octo}. Our first step is to isotope the co-oriented curves so that they lie in this region. Then, we identify this annular region with $J^1(S^1)$. Figure~\ref{fig:g2jet} shows the corresponding Legendrian curves. As in the case for the torus, we consider $L_1$, the attaching sphere of the $2$-handle in the Gompf handlebody diagram (see Figure~\ref{fig:cotorus}), and $L_0$, a Legendrian co-normal lift of the boundary of the $2$-dimensional zero handle of $F$ in the zero section of $D^*F$. We satellite the Legendrian curves in $J^1(S^1)$ along a neighborhood of $L_0\cup L_1$ and obtain Figure~\ref{fig:g2egstart}, which can be simplified to the Weinstein handlebody diagram in Figure~\ref{fig:g2pinch}.
\end{proof}

\begin{figure}[h!]
\centering
	\begin{tikzpicture}
	\node[inner sep=0] at (0,0) {\includegraphics[width=3.5cm]{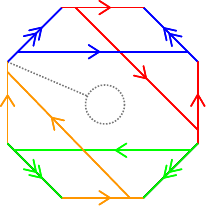}};
\node at (-1.2,-.5) {$\gamma_1$};
\node at (0.6,-1.1) {$\gamma_2$};
\node at (1.3, 0.4){$\gamma_3$};
\node at (-.5, 1.1){$\gamma_4$};
 \end{tikzpicture}
    \caption{The curves $\gamma_1$, $\gamma_2$, $\gamma_3$ and $\gamma_4$ in orange, green, red and blue respectively on a genus 2 surface. Removing the disk in the middle gives an annular region. Cut along the dotted lines an ``unfold" to obtain the rectangle corresponding to Figure~\ref{fig:g2jet}.}
    \label{g2octo}
\end{figure}

\begin{figure}
	\begin{center}
	\begin{tikzpicture}
	\node[inner sep=0] at (0,0) {\includegraphics[width=10cm]{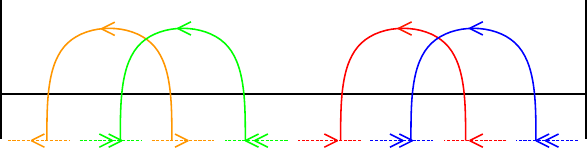}};
\node at (-3.8,1) {$\Lambda_{\gamma_1}$};
\node at (-1.2,1) {$\Lambda_{\gamma_2}$};
\node at (1.2, 1){$\Lambda_{\gamma_3}$};
\node at (3.8, 1){$\Lambda_{\gamma_4}$};
 \end{tikzpicture}
		\caption{The corresponding curves $\Lambda_{\gamma_i}$ $i=1, \ldots, 4$ in $J^1(S^1)$.}
		\label{fig:g2jet}
	\end{center}
\end{figure}

\begin{figure}
	\begin{center}
		\includegraphics[width=7cm]{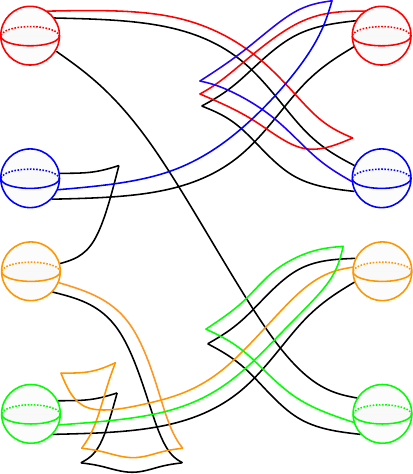}
		\caption{The Weinstein handlebody diagram corresponding to the manifold  $\mathcal{W}_{F, c}$ where $F$ is a genus $2$ surface and $c= \{\gamma_i\}^4_{i=1}$ as pictured in Figure~\ref{g2octo}.}
		\label{fig:g2egstart}
	\end{center}
\end{figure}

\begin{figure}
	\begin{center}
		\includegraphics[width=5 cm]{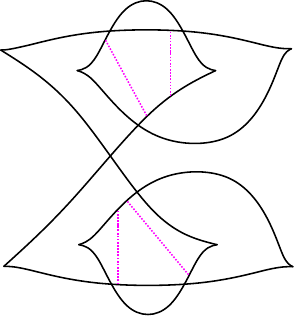}
		\caption{The Weinstein handlebody diagram from Figure \ref{fig:g2egstart} after a series of handle slides and cancellations. If we perform pinch moves along the pink dotted lines, we obtain a Legendrian isotopic to a standard unknot. See Remark~\ref{rem:pinch}.}
		\label{fig:g2pinch}
	\end{center}
\end{figure}

\subsection{Non orientable surfaces}
Another basic example to consider is when $F$ is a real projective plane. We use the proof of Theorem~\ref{thm:cotangentdiagram} to construct a Weinstein handlebody diagram of the Weinstein $4$-manifold $\mathcal{W}_{F, \gamma}$ when $\gamma \subset F$ the co-oriented curve in Figure~\ref{fig:crosscapcurve}.  

\begin{theorem}\label{thm:nonorientable}
  The Weinstein handlebody diagram of the Weinstein manifold $\mathcal{W}_{F,\gamma}$ is shown in Figure \ref{fig:crosscapfinal}, where $F$ is the real projective plane and $\gamma$ is the co-oriented curve shown in Figure~\ref{fig:crosscapcurve}.
\end{theorem}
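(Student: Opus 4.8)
The plan is to apply Theorem~\ref{thm:cotangentdiagram} directly to the pair $(F,\gamma)=(\mathbb{RP}^2,\gamma)$ and then simplify the resulting standard diagram. We begin with the Gompf handlebody diagram for $D^*F$ from Figure~\ref{fig:cotorus} (the non-orientable picture with a single $1$-handle, since $\mathbb{RP}^2=\#_1\mathbb{RP}^2$), together with its polygonal model as a bigon with both edges identified with the same orientation. As recalled in Section~\ref{s:cotangent}, the Legendrian $\Lambda_0^{F}$ in this diagram is the attaching sphere of the unique $2$-handle, and it runs over the single $1$-handle exactly twice, with the \emph{same} co-orientation on the two strands (this is the feature that distinguishes the non-orientable case from the orientable one). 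The polygon decomposes into the annulus $A$ around the central disk $E$, one region $B_1$ for the $1$-handle, and the region $D$ near the vertex, exactly as in the general recipe following the statement of Theorem~\ref{thm:cotangentdiagram}.

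First I would carry out Step~(\ref{step:isotopy}): isotope the co-oriented curve $\gamma$ of Figure~\ref{fig:crosscapcurve} inside $\mathbb{RP}^2$ until it lies in the cross-shaped region $A\cup B_1$, with all crossings pushed into the interior of $A$ and with bigons removed according to the co-orientation rule of Remark~\ref{rk:isotopy} and Figure~\ref{bigons}; following the paper's convention, I isotope across $E$ so that the strand traverses $A$ clockwise, i.e.\ with co-orientation matching the inward normal of $\partial(A\cup E)$, and I arrange the strands near the $1$-handle as in Figure~\ref{fig:Gompf6A} so that a Gompf move~$6$ becomes available later. Next, Step~(\ref{step:jet}): cut along the diagonal red arc and unfold $A\cup B_1\cup D$ into the rectangular $J^1(S^1)$ picture, transferring the curve via the unfolding diffeomorphism. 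Then Step~(\ref{step:satellite}): satellite this $J^1(S^1)$ front onto the union of $\Lambda_0^{F}$ with the Legendrian obtained by cusping off rather than passing through the $1$-handle, following the corner conventions of Figure~\ref{curvecorner} and preserving relative Reeb heights, with the appropriate non-orientable identification of the two strands through $B_1$. By Theorem~\ref{thm:cotangentdiagram}, the resulting standard-form diagram represents a Weinstein manifold Weinstein homotopic to $\mathcal{W}_{F,\gamma}$, hence a bona fide Weinstein handlebody diagram for it.

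Finally, Step~(\ref{step:moves1}): simplify the diagram using Legendrian Reidemeister moves, Gompf's moves (including move~$6$, as set up above), handle slides, and a handle cancellation to eliminate the $1$-handle where possible, as illustrated schematically in Figure~\ref{fig:standard_simplifications}; I expect this sequence to terminate at the diagram in Figure~\ref{fig:crosscapfinal}. To finish, one records the resulting picture and, if desired, reads off topological invariants from it using the formulas of Section~\ref{sec:applications} (noting that since $\mathbb{RP}^2$ is non-orientable, Proposition~\ref{prop:vanishingSH} does not apply, though one can still compute homology directly from the $2$-handle attaching data).

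\textbf{Main obstacle.} The delicate point is the satellite operation of Step~(\ref{step:satellite}) in the non-orientable setting near the $1$-handle, where $\Gamma_0$ and $\Gamma_1$ diverge and the two satellited strands run through $B_1$ with the \emph{same} co-orientation; by Remark~\ref{rk:isotopy} their lifts clasp rather than passing freely, so one must track the crossings and Reeb heights carefully through the corner convention of Figure~\ref{curvecorner} to get the correct framings. The second, more routine but still somewhat lengthy, difficulty is verifying that the chosen sequence of Kirby/Gompf moves in Step~(\ref{step:moves1}) actually reduces the algorithm's raw output to precisely Figure~\ref{fig:crosscapfinal}; this is a finite check but requires care in bookkeeping the Thurston--Bennequin framings (which, as noted in Section~\ref{sec:applications}, can change under Gompf move~$6$).
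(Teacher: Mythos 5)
Your proposal follows essentially the same route as the paper: isotope $\gamma$ to lie near $\partial(A\cup E)$, pass to the $J^1(S^1)$ model, and satellite onto the Gompf diagram for $D^*\rptwo$, invoking Theorem~\ref{thm:cotangentdiagram} for correctness. The only difference is that the paper stops there --- Figure~\ref{fig:crosscapfinal} is the direct output of the satellite step, so the additional simplification phase you anticipate in Step~(\ref{step:moves1}) is not needed to reach the stated diagram.
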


\begin{proof}
We apply the algorithm from Theorem~\ref{thm:cotangentdiagram}. We use a polygonal representation of $F$ as a disk with sides identified. There is an annular region in the disk given by taking the complement of a closed disk in the center of the disk, see Figure~\ref{fig:crosscapcurve}. Our first step is to isotope the co-oriented curve $\gamma$ so that it lies in this region. Then, we identify the annular region with $J^1(S^1)$. Figure~\ref{fig:crosscapjet} then shows the corresponding Legendrian curve $\Lambda_{\gamma}$. As in the case for the torus, we consider $L_1$, the attaching sphere of the $2$-handle in the Gompf handlebody diagram, (see Figure~\ref{fig:cotorus}), and $L_0$, the co-normal lift of the boundary of the $2$-dimensional zero handle in $F$. We then satellite the Legendrian curves in $J^1(S^1)$ along a neighborhood of $L_0\cup L_1$ and obtain Figure~\ref{fig:crosscapfinal}. Note that in this case, the curve is exactly a parallel copy of the attaching sphere of the $2$-handle.
\end{proof}

\begin{figure}
	\begin{center}
		\includegraphics[width=3.5 cm]{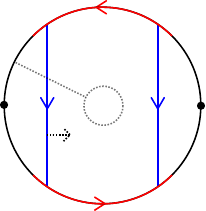}
		\caption{A co-oriented curve $\gamma$ in the real projective plane.  Removing the disk in the middle gives an annular region. Cut along the dotted lines an ``unfold" to obtain the rectangle corresponding to Figure~\ref{fig:crosscapjet}.}
		\label{fig:crosscapcurve}
	\end{center}
\end{figure}

\begin{figure}
	\begin{center}
		\includegraphics[width=10cm]{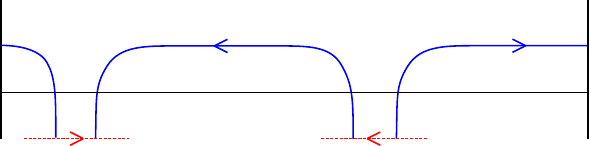}
		\caption{The correponding curve $\Lambda_{\gamma}$ in $J^1(S^1)$.}
		\label{fig:crosscapjet}
	\end{center}
\end{figure}

\begin{figure}
	\begin{center}
		\includegraphics[width=7 cm]{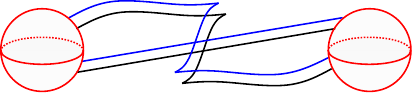}
		\caption{The Weinstein handlebody diagram of $\mathcal{W}_{F, \gamma}$ where $F$ is the real projective plane and $\gamma$ is the co-oriented curve shown in Figure~\ref{fig:crosscapcurve}.}
		\label{fig:crosscapfinal}
	\end{center}
\end{figure}

\begin{remark}
From Weinstein diagrams, we can build closed exact Lagrangians using decomposable handle moves as defined in \cite{EtnNg18}. For instance if we perform pinch moves (ie. 1-handle attachments) along the pink curves of Figure \ref{fig:g2pinch}, the Legendrian knot can be simplified to a Legendrian unknot of maximal tb, giving us a genus $2$ exact Lagrangian surface. Therefore this Weinstein manifold contains a closed exact Lagrangian surface of genus $2$. A pinch move between the black and blue components of the Legendrian link in Figure~\ref{fig:crosscapfinal} show that it bounds a genus $0$ exact Lagrangian surface. Therefore, this second Weinstein manifold contains a closed exact Lagrangian sphere.
\end{remark}\label{rem:pinch}

\bibliography{references_memoirs}
\bibliographystyle{amsalpha}

%
%
%
%

\end{document}